\newcommand{\C}{\mathbb{C}}
\newcommand{\CP}{\mathbb{C}\text{P}}
\newcommand{\R}{\mathbb{R}}
\newcommand{\Z}{\mathbb{Z}}
\newcommand{\Q}{\mathbb{Q}}
\newcommand{\M}{\mathcal{M}}
\newcommand{\J}{\mathcal{J}}
\newcommand{\F}{\mathcal{F}}
\newcommand{\U}{\mathcal U}
\renewcommand{\sl}{{\rm sl}}
\newcommand{\link}{{\rm link}}
\newcommand{\wind}{{\rm wind}}
\renewcommand{\P}{\mathcal{P}}
\newcommand{\util}{\widetilde u}
\newcommand{\vtil}{\widetilde v}
\newcommand{\jtil}{\widetilde J}
\newcommand{\jhat}{\widehat J}
\newcommand{\est}{e^{2\pi(s+it)}}
\newcommand{\maslov}{{\rm Maslov}}
\newcommand{\Img}{\text{image}}
\theoremstyle{plain}
\newtheorem{theorem}{Theorem}[section]
\newtheorem{proposition}[theorem]{Proposition}
\newtheorem{lemma}[theorem]{Lemma}
\newtheorem{corollary}[theorem]{Corollary}
\theoremstyle{definition}
\newtheorem{definition}[theorem]{Definition}
\theoremstyle{remark}
\newtheorem{remark}[theorem]{Remark}
\title[A Poincar\'e-Birkhoff Theorem for tight Reeb flows on $S^3$]{A Poincar\'e-Birkhoff Theorem for \\ tight Reeb flows on $S^3$}
\author{Umberto Hryniewicz}
\address[Umberto Hryniewicz]{Universidade Federal do Rio de Janeiro -- Departamento de Matem\'atica Aplicada, Av.\ Athos da Silveira Ramos 149, Rio de Janeiro RJ, Brazil 21941-909}
\email[Umberto Hryniewicz]{\texttt{umberto@labma.ufrj.br}}
\author{Al Momin}
\address[Al Momin]{Evermight -- http://evermight.com/}
\email[Al Momin]{\texttt{amomin@evermight.com}}
\author{Pedro A. S. Salom\~ao}
\address[Pedro A. S. Salom\~ao]{Universidade de S\~ao Paulo,  Instituto de Matem\'atica e Estat\'istica -- Departamento de Matem\'atica, Rua do Mat\~ao, 1010 - Cidade Universit\'aria - S\~ao Paulo SP, Brazil 05508-090}
\email[Pedro A. S. Salom\~ao]{psalomao@ime.usp.br}
\begin{document}

\begin{abstract}
We consider Reeb flows on the tight $3$-sphere admitting a pair of closed orbits forming a Hopf link. If the rotation numbers associated to the transverse linearized dynamics at these orbits fail to satisfy a certain resonance condition then there exist infinitely many periodic trajectories distinguished by their linking numbers with the components of the link. This result admits a natural comparison to the Poincar\'e-Birkhoff theorem on area-preserving annulus homeomorphisms. An analogous theorem holds on $SO(3)$ and applies to geodesic flows of Finsler metrics on $S^2$.
\end{abstract}

\maketitle

\section{Introduction}

Since the work of Poincar\'e and Birkhoff the notion of global surface of section has been used as an effective tool in finding periodic motions of Hamiltonian systems with two degrees of freedom, see~\cite{Po1,Po_book,Bi4,Bi3}. A global section of annulus-type on an energy level implies the existence of many closed orbits by the celebrated Poincar\'e-Birkhoff Theorem~\cite{Po2,Bi1,Bi2} if the associated return map satisfies a twist-condition.

Our goal is to describe a non-resonance condition for Reeb flows on the tight 3-sphere which implies the existence of infinitely many closed orbits, and generalizes the twist-condition on the Poincar\'e-Birkhoff Theorem to cases where a global surface of section might not be available. We assume instead that there is a pair of periodic orbits forming a Hopf link. The infinitesimal flow about the two components defines rotation numbers and, as we shall see, if these numbers do not satisfy a precise resonance condition then infinitely many closed orbits exist and are distinguished by their homotopy classes in the complement of the Hopf link. This lack of resonance can be seen as a twist-condition: one finds a non-empty open twist interval such that there is a closed orbit associated to every rational point in its interior.

In the presence of a disk-like global surface of section for the flow, an orbit corresponding to a fixed point of the return map and the boundary of the global section constitute a Hopf link. According to a remarkable result by Hofer, Wysocki and Zehnder~\cite{convex} this is the case for Reeb flows given by dynamically convex contact forms on the $3$-sphere. The return map restricted to the open annulus obtained by removing a fixed point is well-defined. In this case, the lack of resonance mentioned above is a twist-condition, and our result can be reduced to the Poincar\'e-Birkhoff Theorem, or rather to a generalization due to Franks~\cite{MR951509}. We will explain this analogy more thoroughly in Section~\ref{sec-intro_interpretation}.

There are examples of Hopf links and Reeb flows as above where both components of the link do not bound a disk-like global section. In this case a two-dimensional reduction is not available. To circumvent this difficulty, we use a different approach in place of the theory of global surfaces of section which is of a variational nature. The idea is to consider the homology of the abstract Conley index of a sufficiently large isolating block for the gradient flow of the action functional, as Angenent did in~\cite{angenent} for the energy. The analysis of~\cite{angenent} shows that properties of the curve-shortening flow are sufficient in order to define a Conley index associated to a so-called flat knot, which in special cases can be used to deduce existence results for closed geodesics on the 2-sphere. We shall consider instead cylindrical contact homology on the complement of the Hopf link, which is defined using the machinery of punctured pseudo-holomorphic curves in symplectizations as introduced by Hofer~\cite{93}. In this sense, the results are analogous to those of~\cite{angenent}, but in the more general setting of Reeb flows on the tight 3-sphere. We explain this analogy more thoroughly in Section~\ref{sec-intro_SO3}.

\subsection{Statement of main result}\label{section_main_result_major}

Recall that a 1-form $\lambda$ on a 3-manifold $V$ is a contact form if $\lambda \wedge d\lambda$ never vanishes. The 2-plane field
\begin{equation}\label{contact_structure}
  \xi = \ker \lambda
\end{equation}
is a co-oriented contact structure, and the associated Reeb vector field $X_\lambda$ is uniquely determined by
\begin{equation}\label{}
  \begin{array}{cc}
    i_{X_\lambda}\lambda = 1, & i_{X_\lambda}d\lambda = 0.
  \end{array}
\end{equation}
The contact structure $\xi$ is said to be tight if there are no overtwisted disks, that is, there does not exist an embedded disk $D\subset V$ such that $T\partial D \subset \xi$ and $T_pD \neq \xi_p, \ \forall p\in\partial D$. In this case we call $\lambda$ tight.

By a closed Reeb orbit we mean an equivalence class of pairs $P = (x,T)$ such that $T>0$ and $x$ is a $T$-periodic trajectory of $X_\lambda$, where pairs with the same geometric image and period are identified. The set of equivalence classes is denoted by $\P(\lambda)$. $P=(x,T)$ is called prime, or simply covered, if $T$ is the minimal positive period of $x$. Throughout a knot $L \subset V$ tangent to $\R X_\lambda$ is identified with the prime closed Reeb orbit it determines, in particular, $L$ inherits an orientation.

We are concerned with the study of the global dynamical behavior of Reeb flows associated to tight contact forms on $$ S^3 = \{(x_0,y_0,x_1,y_1)\in\R^4 \mid x_0^2+y_0^2+x_1^2+y_1^2=1\} $$ where $(x_0,y_0,x_1,y_1)$ are coordinates in $\R^4$. For instance consider the 1-form
\begin{equation}\label{std_Liouville_form}
  \lambda_0 = \frac{1}{2} (x_0dy_0 - y_0dx_0 + x_1dy_1 - y_1dx_1).
\end{equation}
It restricts to a tight contact form on $S^3$ inducing the so-called standard contact structure
\begin{equation}\label{std_contact_str}
  \xi_0 = \ker \lambda_0|_{S^3}.
\end{equation}
In dimension $3$ a contact structure $\xi$ induces an orientation of the underlying manifold $M$ in the following manner. If $p\in M$ then choose a contact form $\alpha$ defined near $p$ satisfying $\xi=\ker\alpha$. The $3$-form $\alpha\wedge d\alpha$ is nowhere vanishing on its domain of definition, and defines an orientation of $T_pM$ by declaring that a basis $\{v_1,v_2,v_3\} \subset T_pM$ is positive if, and only if, $\alpha\wedge d\alpha(v_1,v_2,v_3)>0$. This orientation of $T_pM$ is independent of the choice of $\alpha$, and we get a global orientation letting $p$ vary over $M$. If $M$ is already oriented then one calls $\xi$ positive if it induces the given orientation. Let us orient $S^3$ as the boundary of the unit ball in $\R^4$, which is oriented by $d\lambda_0\wedge d\lambda_0$. By a theorem due to Eliashberg~\cite{eli}, for every tight contact form $\lambda$ on $S^3$ defining a positive contact structure, there exists a diffeomorphism $\Phi:S^3\to S^3$ such that $\Phi^*\lambda=f\lambda_0$, for some smooth $f:S^3 \to (0,+\infty)$.

We use the term Hopf link to refer to a transverse link on $(S^3,\xi_0)$ which is transversally isotopic to $K_0 = L_0 \cup L_1$ where
\begin{equation}\label{std_hopf_link}
  L_i = \{ (x_0,y_0,x_1,y_1) \in S^3 \mid x_i=y_i=0 \}, \ \ i=0,1.
\end{equation}

\begin{remark}\label{std_representation_hopf_link}
Consider the set
\begin{equation}\label{adapted_link}
  \F = \{ f \in C^\infty(S^3,(0,+\infty)) \mid i_vdf=0 \ \forall v\in \xi_0|_{K_0} \}.
\end{equation}
The set $\F$ consists precisely of the functions $f:S^3\to(0,+\infty)$ such that the Reeb vector field of $f\lambda_0$ is tangent to $K_0$. Moreover, for every defining contact form $\lambda$ on $(S^3,\xi_0)$ admitting a pair of prime closed Reeb orbits that are components of a Hopf link, there exists some diffeomorphism $\Phi$ of $S^3$ such that $\Phi^*\lambda = f\lambda_0$, for some $f\in \F$, and $\Phi$ maps $K_0$ onto the Hopf link. To see this, first note that any such contact form is written as $\lambda=h\lambda_0$, for some $h:S^3\to \R\setminus\{0\}$ smooth.  Consider a transverse isotopy $g_t:K_0\to (S^3,\xi_0)$, $t\in[0,1]$, such that $g_0$ is the inclusion map $K_0\hookrightarrow S^3$ and $g_1(K_0)$ is a pair of prime closed Reeb orbits of $h\lambda_0$. By Theorem~2.6.12 from~\cite{geiges}, there exists a contact isotopy $\{\varphi_t\}_{t\in[0,1]}$ of $(S^3,\xi_0)$ such that $\varphi_0=id$ and $\varphi_t|_{K_0}\equiv g_t, \ \forall t$. Then $\varphi_1^*(h\lambda_0) = k\lambda_0$ for some $k:S^3\to \R\setminus\{0\}$ smooth. If $k$ is positive we take $f=k$ and $\Phi = \varphi_1$. If $k$ is negative we consider the diffeomorphism $T(x_0,y_0,x_1,y_1)=(x_0,-y_0,x_1,-y_1)$, which satisfies $T^*\lambda_0=-\lambda_0$, so we can take $\Phi = \varphi_1\circ T$ and $f=-k\circ T$. In both cases we must have $f\in\F$ since the Reeb vector field of $f\lambda_0$ is tangent to $K_0$.
\end{remark}

We define the transverse rotation number $\rho(P)$ of a closed Reeb orbit $P$ by looking at the rate at which the transverse linearized flow rotates around $P$, measured with respect to coordinates on the contact structure induced by a global positive frame. This is well-defined as a real number and equals half the mean Conley-Zehnder index. For a more detailed discussion see Section~\ref{cz_rotation_orbits} below.

Finally, we introduce some notation in order to simplify our statements. Given two pairs of real numbers $(s_0,t_0), (s_1,t_1)$ in the set $\{ (s,t) \mid s > 0 \mbox{ or } t > 0 \}$ we write $(s_0,t_0) < (s_1,t_1)$ if, viewed as vectors in $\R^2$, the argument of $(s_1,t_1)$ is greater than that of $(s_0,t_0)$ when measured counter-clockwise by cutting along the negative horizontal axis. A pair of integers $(p,q)$ will be called relatively prime if there is no integer $k > 1$ such that $(p/k,q/k)\in \mathbb{Z} \times \mathbb{Z}$. Our first main result reads as follows.

\begin{theorem}\label{thm-1}
Let $\lambda = f\lambda_0$, $f>0$, be a tight contact form on the $3$-sphere admitting prime closed Reeb orbits $L_0,L_1$ which are the components of a Hopf link. Define real numbers $\theta_0,\theta_1$ by
\begin{equation}
  \theta_i = \rho(L_i)-1, \ \ \text{ for } i=0,1,
\end{equation}
where $\rho$ is the transverse rotation number, and suppose that $(p,q)$ is a relatively prime pair of integers satisfying
\begin{equation}\label{non_resonance}
  \begin{array}{ccc}
    (\theta_0,1) < (p,q) < (1,\theta_1) & \text{or} & (1,\theta_1) < (p,q) < (\theta_0,1).
  \end{array}
\end{equation}
Then there exists a prime closed Reeb orbit $P \subset S^3\setminus (L_0 \cup L_1)$ such that $\link(P,L_0) = p$ and $\link(P,L_1) = q$.
\end{theorem}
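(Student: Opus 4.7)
The plan is to prove the theorem by constructing a version of cylindrical contact homology on the link complement $V = S^3 \setminus (L_0 \cup L_1)$, graded by free homotopy classes of loops, and showing that the non-resonance condition \eqref{non_resonance} forces a nontrivial generator in the class $(p,q)$. First I would reduce to the case in which $\lambda$ is non-degenerate on $V$: since $L_0, L_1$ and their transverse rotation numbers vary continuously under small perturbations, and \eqref{non_resonance} is an open condition on $(\theta_0,\theta_1)$, a generic $C^\infty$-perturbation of $\lambda$ supported in $V$ preserves both the Hopf link and the hypothesis while making all other closed orbits non-degenerate. Since $V$ is diffeomorphic to $T^2 \times \R$, one has $H_1(V) = \Z m_0 \oplus \Z m_1$ with $m_i$ a positively oriented meridian of $L_i$, and a knot $P \subset V$ represents the class $\link(P,L_0)\, m_0 + \link(P,L_1)\, m_1$. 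In particular $[L_0^k] = (0,k)$ and $[L_1^k] = (k,0)$, while the coprime pair $(p,q)$ lying strictly inside the open twist arc determined by $(\theta_0,1)$ and $(1,\theta_1)$ is never of either of those shapes, so no iterate of a Hopf-link component ever contributes to the class $(p,q)$.

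Next I would define a chain complex $CC^{(p,q)}(\lambda)$ generated by the good closed Reeb orbits of $\lambda$ in $V$ representing the class $(p,q)$, with differential counting rigid $J$-holomorphic cylinders in the symplectization $\R \times V$ for a generic $\lambda$-compatible almost complex structure $J$. Transversality is handled by the Hutchings--Nelson obstruction-bundle argument, which applies because coprimeness of $(p,q)$ forces every generator of this class to be somewhere injective, sidestepping the usual multiple-cover pathologies of contact homology. The key technical step is an invariance statement: for any smooth path $\{\lambda_\tau\}$ of tight contact forms on $S^3$ along which $L_0, L_1$ remain closed Reeb orbits and \eqref{non_resonance} continues to hold for the fixed $(p,q)$, the homology $HC^{(p,q)}(\lambda_\tau)$ is independent of $\tau$. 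The underlying mechanism is that a bifurcation of orbits in class $(p,q)$ into or out of a neighbourhood of some $L_i$ can only occur at a parameter where the linearized dynamics at $L_i$ is resonant with $(p,q)$, i.e.\ where $(p,q)$ crosses one of the rays $\R_+(\theta_0,1)$ or $\R_+(1,\theta_1)$; the non-resonance hypothesis rules this out along the path, while an SFT compactness argument adapted to the knot-complement setting prevents holomorphic cylinders from escaping into $L_0 \cup L_1$.

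Finally I would compute $HC^{(p,q)}(\lambda_{\mathrm{mod}})$ for a model contact form $\lambda_{\mathrm{mod}} = f\lambda_0$ with $f \in \F$, chosen so that its Reeb flow is Morse--Bott integrable on $V$ (e.g.\ a carefully tuned perturbation of a rational ellipsoid whose rotation numbers at $L_0, L_1$ are arranged to match a given $\rho_0, \rho_1$ for which the open twist arc contains the direction of $(p,q)$). A Bourgeois-type spectral-sequence calculation then produces at least one surviving generator in class $(p,q)$ whenever the open twist arc is nonempty, so $HC^{(p,q)}(\lambda_{\mathrm{mod}}) \neq 0$; combined with the invariance step this yields $HC^{(p,q)}(\lambda) \neq 0$, forcing the desired prime closed Reeb orbit $P \subset V$ with $\link(P,L_0) = p$ and $\link(P,L_1) = q$. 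The main obstacle I expect is the invariance step: one must carefully control how Reeb orbits of class $(p,q)$ can accumulate onto $L_0$ or $L_1$ along the deformation, translating the classical Poincar\'e--Birkhoff bifurcation analysis of twist maps of the annulus into rigidity statements for holomorphic cylinders in the symplectization of $V$, with the only input being the open inequality \eqref{non_resonance} on transverse rotation numbers.
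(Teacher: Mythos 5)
Your plan has the right skeleton---cylindrical contact homology in the Hopf-link complement graded by homotopy class, a Morse--Bott computation on a model integrable form, and a transfer to the given $\lambda$---and you correctly identify that coprimeness of $(p,q)$ forces somewhere-injectivity of the relevant cylinders (the paper uses exactly this, via Dragnev/Wendl transversality, not obstruction bundles). But the transfer step you propose contains genuine gaps. First, the complex $CC^{(p,q)}(\lambda)$ with no action truncation is not obviously well-defined: the class $(p,q)$ may contain infinitely many orbits, and SFT compactness for the cylinders counted by $\partial$, by $\partial^2$, and by the continuation maps requires an a priori energy bound that only comes from bounding the actions of the asymptotic orbits. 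The paper works exclusively with truncated complexes $C_*^{\leq T,(p,q)}$ under the extra hypothesis---which you omit---that there are no contractible closed Reeb orbits in $S^3\setminus K_0$ of action $\leq T$; this is essential to rule out limit buildings acquiring contractible planes, which are multiply covered and destroy transversality.

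Second, the paper does not prove an invariance-along-a-path theorem; it avoids the need for one. Its transfer is by sandwiching and neck-stretching: rescale the model so that $cf_+\lambda_0 < f\lambda_0 < f_+\lambda_0$ pointwise with all three forms coming from $\F$, show the cobordism chain map $j_*\circ\iota_*$ between the truncated homologies of the two ends is nonzero (Proposition~\ref{comp_j_iota}), and then stretch the neck along $\{f\lambda_0\}$. What forces the resulting limit building to produce an honest $(f\lambda_0)$-Reeb orbit in class $(p,q)$, rather than degenerate onto an iterate of $L_0$ or $L_1$, is the asymptotic-winding estimate of Lemmas~\ref{inequalities_windings} and~\ref{estimating_homotopy_lemma}, which translates the non-resonance~\eqref{non_resonance} into a contradiction between the linking numbers of the extremal eigenfunctions at the two ends of a broken piece asymptotic to $L_0^q$ or $L_1^p$ (this is the content of Lemma~\ref{existence_lemma}). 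Your ``bifurcation only at resonance'' heuristic points at the right phenomenon but does not supply this holomorphic-curve argument. Finally, perturbing to the non-degenerate case at the outset does not finish the job: after producing $(p,q)$-orbits $P_n$ for non-degenerate approximations $f_n\lambda_0 \to f\lambda_0$, one must rule out $P_n \to L_0^q$ or $L_1^p$; this is the non-collapsing argument of Section~\ref{deg_section}, again driven by $\theta_0(f)\neq p/q$, and your plan never returns to the degenerate $\lambda$.
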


In the above statement $P$, $L_0$ and $L_1$ are oriented by the Reeb vector field, $S^3$ is oriented by the contact structure $\xi_0$ as explained before, and the integers $\link(P,L_0)$ and $\link(P,L_1)$ are defined using these choices, see Figure~1 for an example with $p=7$ and $q=1$.

A weaker version of Theorem~\ref{thm-1} is found in~\cite{momin} under the restrictive assumption that the components of the Hopf link are irrationally elliptic Reeb orbits.

\subsection{Interpretation in terms of the Poincar\'e-Birkhoff Theorem}\label{sec-intro_interpretation}

In 1885 Poincar\'e \cite{Po1} introduced the rotation number \begin{equation}\label{eq.1}\rho(f)= \lim_{n\to \infty} \frac{F^n(x)}{n} \mod \mathbb{Z}\end{equation} of an orientation preserving circle homeomorphism $f:S^1 \to S^1$, $S^1 \equiv \mathbb{R} / \mathbb{Z},$ where $F:\mathbb{R}\to \mathbb{R}$ is one of its lifts. Notice that the limit in \eqref{eq.1} exists and does not depend on $x\in \mathbb{R}$ or on the lift $F$. He observed its intimate connection to the existence of periodic orbits.

\begin{figure}
\begin{center}
\includegraphics[width=100mm]{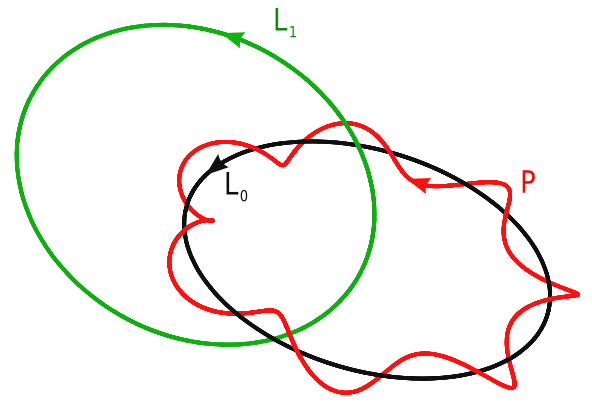}
\end{center}
\caption{A Hopf link $K_0 = L_0 \cup L_1$ and a closed Reeb orbit $P$ satisfying $\link(P,L_0)=7$, $\link(P,L_1)=1$.}
\end{figure}

\begin{theorem}[Poincar\'e]\label{teo.Poincare}
$f$ admits a periodic orbit if, and only if, $\rho(f)=p/q \in \mathbb{Q}/\mathbb{Z}$.
\end{theorem}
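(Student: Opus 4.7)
The plan is to prove both directions, reducing the harder implication to the fact that a lift with zero rotation number must have a fixed point.

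For the easy direction, suppose $f$ has a periodic orbit of period $q$, say $f^q(x_0)=x_0$ for some $x_0 \in S^1$. Choosing any lift $F$ of $f$ and a lift $\tilde x_0$ of $x_0$, there is an integer $p$ with $F^q(\tilde x_0) = \tilde x_0 + p$. Iterating, $F^{nq}(\tilde x_0) = \tilde x_0 + np$, hence
\begin{equation*}
\rho(f) = \lim_{n\to\infty} \frac{F^{nq}(\tilde x_0)}{nq} = \frac{p}{q} \pmod{\mathbb{Z}}.
\end{equation*}

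For the converse, the plan is first to establish the existence and the basepoint/lift independence of $\rho(f)$ as an element of $\mathbb{R}/\mathbb{Z}$. The key observation is that $\varphi_n(x) := F^n(x) - x$ satisfies $\varphi_n(x+1) = \varphi_n(x)$ (so it descends to $S^1$) and is quasi-additive, $|\varphi_{n+m}(x) - \varphi_n(x) - \varphi_m(x)| \leq 1$, because $F^n$ commutes with integer translations. By a standard subadditivity argument this gives convergence of $F^n(x)/n$ to a limit independent of $x$, and a change of lift shifts $F^n(x)/n$ by an integer, proving that $\rho(f)\in\R/\Z$ is well-defined.

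Next, assuming $\rho(f)=p/q$, I would set $G := F^q - p$, which is still a lift of $f^q$. From the definition one computes $\rho_{\mathbb{R}}(G) = q\,\rho_{\mathbb{R}}(F) - p$, where $\rho_{\mathbb{R}}$ denotes the real-valued rotation number of a given lift; by hypothesis this equals $0$. Therefore it suffices to prove the following key lemma: if $G\colon \R\to\R$ is an orientation-preserving homeomorphism commuting with integer translations and $\lim_n G^n(x)/n = 0$, then $G$ has a fixed point. Assume the contrary; then $h(x) := G(x)-x$ is continuous, never zero, and $\Z$-periodic, so by compactness of $S^1$ there is $\delta>0$ with either $h\geq \delta$ everywhere or $h\leq -\delta$ everywhere. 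In the first case $G^n(x) \geq x + n\delta$, so $G^n(x)/n \to \rho_{\mathbb{R}}(G) \geq \delta > 0$, contradicting $\rho_{\mathbb{R}}(G)=0$; the other case is symmetric. Thus $G$ has a fixed point $\tilde x$, meaning $F^q(\tilde x) = \tilde x + p$, so the projection of $\tilde x$ to $S^1$ is a $q$-periodic point of $f$.

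The main obstacle is the technical but classical step of proving that the limit defining $\rho(f)$ exists and depends neither on the point $x$ nor on the chosen lift; everything else is a short reduction to the fixed-point lemma, whose proof is the elementary compactness argument above.
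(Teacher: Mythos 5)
Your proof is correct, and both directions are handled by the standard argument. The paper states this as a classical result of Poincar\'e (citing \cite{Po1}) without supplying a proof, so there is nothing in the text to compare against; your reduction of the converse to the fixed-point lemma for a lift with zero real rotation number, after establishing well-definedness of $\rho$ via the quasi-additivity of $\varphi_n(x)=F^n(x)-x$, is exactly the textbook route.
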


If one considers an area preserving annulus homeomorphism $$f: S^1 \times [0,1] \to S^1 \times [0,1],$$ isotopic to the identity map, much can be said about the existence of periodic orbits when $f$ satisfies a twist hypothesis. To be more precise, let us first recall the widely known Poincar\'e-Birkhoff Theorem in its original form. Let $$ F: \mathbb{R} \times [0,1] \to \mathbb{R} \times [0,1] $$ be a lift of $f$ with respect to the covering map $\pi:\mathbb{R} \times [0,1] \to S^1 \times [0,1]$ and denote by $I\subset\R$ the open (possibly empty) interval bounded by the points
\begin{equation*}
\lim_{n\to \infty} \frac{p_1 \circ F^n(x,0)}{n} \ \ \mbox{ and } \ \ \lim_{n\to \infty} \frac{p_1 \circ F^n(x,1)}{n}.
\end{equation*}
Here $p_1:\mathbb{R} \times [0,1] \to \mathbb{R}$ is the projection onto the first factor.

\begin{theorem}[Poincar\'e-Birkhoff, see \cite{Bi1,Bi2,Po2}]\label{theoPB}\label{teo.PoincareBirkhoff}
If $I \cap \mathbb{Z} \neq \emptyset$ then $f$ has at least $2$ fixed points.
\end{theorem}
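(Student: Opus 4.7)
The plan is to reduce the statement to the classical Poincar\'e--Birkhoff twist theorem with boundary rotations of strictly opposite signs, and then to invoke Birkhoff's geometric argument on that reduced form.

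\textbf{Reduction.} Pick $k\in I\cap\mathbb{Z}$ and replace $F$ by the translated lift $F_k(x,t):=F(x,t)-(k,0)$. Since translation by $(k,0)$ commutes with the deck group of $\pi$, the map $F_k$ is again a lift of $f$. Any fixed point of $F_k$ in $\mathbb{R}\times[0,1]$ projects to a fixed point of $f$, and two fixed points of $F_k$ lying in different $\mathbb{Z}$-orbits project to distinct fixed points of $f$. Because $k$ is strictly interior to $I$, the asymptotic boundary rotation numbers of $F_k$, obtained from those of $F$ by subtracting $k$, have strictly opposite signs. It therefore suffices to prove: any area-preserving lift $G$ of an annulus homeomorphism isotopic to the identity, whose two asymptotic boundary rotation numbers have strictly opposite signs, admits at least two $\mathbb{Z}$-inequivalent fixed points.

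\textbf{Twist theorem.} For this reduced statement one follows Birkhoff. By attaching a thin collar on each side and extending by rigid translations of the appropriate signs, one may assume that the twist is strict and pointwise on each boundary and that $G$ is fixed-point free in a collar neighborhood of each boundary. Assume for contradiction that $G$ has no fixed points at all, and choose a simple arc $\gamma$ joining the two boundary circles. The opposite-sign twist forces the endpoints of $G(\gamma)$ to lie on opposite sides of those of $\gamma$, so $\gamma\cup G(\gamma)$, closed up with boundary arcs, bounds a region $R$ of positive area. Since $G$ is area-preserving and fixed-point free, the iterates $G^n(R)$ cannot remain pairwise disjoint, producing first a periodic point. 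A refinement, working with the displacement vector field $p\mapsto G(p)-p$ and its winding number around any essential loop in the annulus (nonzero by the opposite-sign twist), upgrades this to an honest fixed point. An index count along the boundary, where the winding of the displacement differs by a definite integer between the two boundary circles, then forces a second fixed point as well.

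\textbf{Main obstacle.} The hard part is entirely in the twist theorem itself: completing Birkhoff's geometric sketch rigorously, and in particular upgrading periodic points to fixed points and producing the second fixed point by an index argument, is notoriously delicate and required substantial clarification in the literature (Brown--Neumann, Franks, and others). A clean modern route combines the Brouwer plane translation theorem with area preservation: a fixed-point-free, area-preserving, orientation-preserving homeomorphism of the open annulus cannot exhibit the recurrence forced by the opposite-sign twist hypothesis, so at least one fixed point must exist, and a Lefschetz-type index count along the boundary produces the second.
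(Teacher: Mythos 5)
The paper does not prove Theorem~\ref{teo.PoincareBirkhoff}; it is stated as a classical result with citations, so there is no in-paper argument to compare against.

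Your reduction step (translate the lift by $k\in I\cap\mathbb{Z}$) is correct and standard. The genuine gap is the passage from opposite-signed boundary \emph{rotation numbers} to the pointwise twist $F_k(x,0)-x$ and $F_k(x,1)-x$ of opposite sign for all $x$, which is what Birkhoff's $1913$ argument actually uses: rotation numbers straddling $0$ control only the Birkhoff averages of the boundary displacement, not its sign pointwise. The collar repair you propose does not obviously work: ``extending by rigid translations'' requires the extended map to agree with the original boundary map on the seam, which is not a rigid translation, and the interpolation that is actually needed --- continuous, area-preserving, and fixed-point-free on the collar --- is exactly the delicate step (an arbitrary area-preserving interpolation between a circle map of small positive rotation number and a rigid rotation can pass through maps with fixed points). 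One can get pointwise twist by passing to a high iterate, but that yields periodic rather than fixed points of $f$, and converting them is precisely the later work of Franks and others; indeed it is what Theorem~\ref{teo.Franks} provides. The winding-number sketch for extracting fixed points is also suspect: along an invariant boundary circle the displacement $G(p)-p$ is tangent to the circle, and no topological index count alone can produce a fixed point, since pointwise twist without area preservation does not force one. You flag these issues yourself as ``notoriously delicate,'' but that concession means the proposal is a program outlining known difficulties rather than a proof.
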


A proof of a version of this theorem in the smooth category using pseudo-holomorphic curves can be found in~\cite{BH}.

A map $f$ on $S^1 \times [0,1]$ satisfying $I \neq \emptyset$ for some lift is said to satisfy a twist condition. Considering the iterates of $f$ one can find infinitely many periodic orbits under this twist condition. This argument can be found in \cite{neumann} where the following theorem is proved.

\begin{theorem}[Neumann~\cite{neumann}]\label{theoNeu} For any $q\in \mathbb{N}=\{1,2,\dots\}$, the number of periodic orbits of prime period $q$ is at least equal to  $$2\#\{p\in \mathbb{Z}: p/q\in I \text{ and } \gcd(p,q)=1\}.$$ \end{theorem}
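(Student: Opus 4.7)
The plan is to apply Theorem \ref{teo.PoincareBirkhoff} to a suitable iterate of $f$ and then use a Lefschetz-index argument to upgrade the two fixed points it produces into two geometrically distinct $f$-orbits.

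Fix an integer $p$ with $p/q\in I$ and $\gcd(p,q)=1$, and consider the area-preserving homeomorphism $f^q:S^1\times[0,1]\to S^1\times[0,1]$ together with the lift $\tilde F_p := F^q - (p,0)$. A direct computation from the definition of the endpoints of $I$ shows that the analogous interval associated to $\tilde F_p$ is $qI-p$, and the hypothesis $p/q\in I$ says precisely that $0$ lies in its interior. Theorem \ref{teo.PoincareBirkhoff} therefore applies to the pair $(f^q,\tilde F_p)$ and yields at least two fixed points of $f^q$ on the cylinder.

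The key refinement is that one can in fact extract two such fixed points $P_1^{(p)},P_2^{(p)}$ whose Lefschetz indices as fixed points of $f^q$ are \emph{opposite and nonzero}. The presence of a distinguished fixed point of nontrivial local index is built into Birkhoff's original proof, and the existence of a partner of opposite sign is forced by the vanishing of the Lefschetz number of $f^q$ on the annulus, whose Euler characteristic is zero. Any lift $(x,y)$ of such a point satisfies $F^q(x,y)=(x+p,y)$; if the prime $f$-period were some $q'<q$ then $F^{q'}(x,y)=(x+p',y)$ for some $p'$, forcing $p/q=p'/q'$ and contradicting $\gcd(p,q)=1$, so the prime $f$-period is exactly $q$. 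Moreover, if $P_1^{(p)}$ and $P_2^{(p)}$ lay on a common $f$-orbit they would be conjugate by a power of $f$ and would share the same Lefschetz index, contradicting the construction. Each admissible $p$ thus produces two distinct prime $f$-orbits of period $q$.

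Distinct admissible values of $p$ produce disjoint families of orbits: the number $p/q$ equals the rotation number $\lim_{n\to\infty}\frac{p_1\circ F^n(x,y)}{n}$ at any lift of a point on the orbit, which is an invariant of the orbit once $F$ is fixed. Summing over all integers $p$ with $p/q\in I$ and $\gcd(p,q)=1$ yields the stated lower bound $2\,\#\{p\in\mathbb{Z}:p/q\in I,\,\gcd(p,q)=1\}$. The principal obstacle is the refinement of Theorem \ref{teo.PoincareBirkhoff} used above: the bare statement recalled in the excerpt does not record the ``opposite Lefschetz indices'' information, and to extract it one must either revisit Birkhoff's proof directly or invoke the later strengthenings of Franks~\cite{MR951509}. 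Once this is in hand, the remainder of the argument is essentially bookkeeping with rotation numbers and periods.
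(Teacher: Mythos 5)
The paper does not supply a proof of this statement---it is quoted as Theorem~\ref{theoNeu} with a citation to Neumann~\cite{neumann}---so there is no internal argument to compare against, and the proposal must be judged on its own terms. Your overall strategy (iterate $f$, shift the lift by $(p,0)$, apply Theorem~\ref{teo.PoincareBirkhoff} to $f^q$, and then distinguish distinct $f$-orbits and distinct values of $p$ via the orbit's rotation number) is indeed the classical route to Neumann's theorem, and the rotation-number bookkeeping, the divisibility argument showing the prime $f$-period is exactly $q$, and the observation that distinct admissible $p$ produce disjoint families of orbits are all correct.

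The one soft spot is the Lefschetz-index step, and it is slightly worse than you acknowledge. You invoke the vanishing of the Lefschetz number of $f^q$ on the annulus, $\chi(S^1\times[0,1])=0$, to force a fixed point of negative index as a partner for Birkhoff's fixed point of positive index. But that global Lefschetz number is the sum of indices over \emph{all} fixed points of $f^q$, across every Nielsen class (i.e., over every integer $p'$ with $\tilde F_{p'}$-fixed lifts), not merely over those with rotation number $p/q$. Whenever $f^q$ has fixed points with other rotation numbers---for instance coming from other admissible $p'$, or from periodic orbits of period properly dividing $q$---the global identity $\sum \mathrm{ind}=0$ tells you nothing about the $p$-class alone. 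What the argument actually requires is the Nielsen-local statement that the index sum \emph{within} the $p$-class vanishes when $p/q$ lies strictly in the twist interval; this is a genuine refinement of Theorem~\ref{teo.PoincareBirkhoff} and is precisely the content one extracts from Franks~\cite{MR951509} or from a careful reading of Birkhoff. You do flag that a strengthening is needed, so I would not call this an error, but the phrase ``forced by the vanishing of the Lefschetz number of $f^q$'' should be replaced by the Nielsen-class version to make the chain of implications airtight. Relatedly, one must also know that the fixed points in the $p$-class are isolated (or argue around non-isolation), which is a standard but nontrivial preliminary reduction before any index count can be performed.
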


J. Franks generalized Theorem \ref{theoNeu}, providing the existence of periodic orbits under a much weaker twist condition, even when $f$ is not defined on the boundary.

\begin{theorem}[J. Franks, see~\cite{MR951509,Fr1,Fr}]\label{teo.Franks}
If there exist $z_1,z_2\in \mathbb{R} \times [0,1]$ such that
\begin{equation}\label{eq.3}
  \lim_{n\to \infty} \frac{p_1 \circ F^n (z_1)}{n} \leq \frac{p}{q} \leq \lim_{n \to \infty} \frac{p_1 \circ F^n (z_2)}{n},
\end{equation}
then $f$ has a periodic point $z$ with period $q$ and $$ \lim_{n\to \infty} \frac{p_1 \circ F^n(z_0)}{n} = \frac{p}{q}, $$ for any $z_0$ satisfying $z_0 \in \pi^{-1}(z)$.
\end{theorem}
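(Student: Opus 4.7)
The plan is to reduce Franks's theorem to a fixed-point problem for a derived lift, and then to derive a contradiction from the hypothetical absence of fixed points using Brouwer-type arguments together with area preservation.

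First, I would perform the standard reduction to rotation number zero. Let $G = T_{-p}\circ F^q$, where $T_{-p}(x,y) = (x-p,y)$ is horizontal translation by $-p$. Then $G$ is again an area-preserving lift of an annulus homeomorphism isotopic to the identity; any fixed point $z_0$ of $G$ projects to a point $z\in S^1\times[0,1]$ satisfying $f^q(z)=z$ and $\lim_n p_1 F^n(z_0)/n = p/q$, which are exactly the conclusions desired. Moreover, the hypothesis \eqref{eq.3} translates to $\liminf_n p_1 G^n(z_1)/n \le 0 \le \limsup_n p_1 G^n(z_2)/n$. It therefore suffices to produce a fixed point of $G$ under this normalized hypothesis.

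Next, I would argue by contradiction, assuming $G$ has no fixed points. After extending $G$ if necessary to an area-preserving homeomorphism of the open strip $\R\times(0,1)$, Brouwer's plane translation theorem supplies a translation arc through every point, and Le Calvez's equivariant foliation theorem promotes this to a $G$-invariant Brouwer foliation. Each leaf is a properly embedded line whose two complementary sides are consistently translated by $G$, so every orbit inherits a well-defined asymptotic horizontal drift determined by how its trajectory crosses the leaves. On the other hand, area preservation together with Poincar\'e recurrence forces the existence of nonwandering orbits, and in the Brouwer-foliation picture such orbits can only live on leaves of zero drift. Exploiting the hypothesis that drifts on both sides of $0$ are realized, together with an intermediate-value property for the drift along the foliation, one produces a nonwandering orbit of drift exactly $0$, from which standard Brouwer-theoretic arguments extract a genuine fixed point of $G$, contradicting the assumption.

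The principal obstacle is the passage from Brouwer's plane translation theorem -- a purely topological statement about fixed-point-free planar homeomorphisms -- to a globally coherent equivariant foliation carrying enough quantitative information to detect drift. This is where Le Calvez's deep foliation theorem is essential, and where area preservation is indispensable: without it, non-symplectic twist maps on which every orbit drifts off to infinity show that the conclusion genuinely fails. A secondary subtlety is the non-strictness of \eqref{eq.3}, which obliges one to separate the given witnesses $z_1, z_2$ from the newly produced periodic point by means of a limit argument in the borderline equality case.
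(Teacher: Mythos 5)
The paper does not prove Theorem~\ref{teo.Franks}: it is stated as a citation of Franks's work~\cite{MR951509,Fr1,Fr} and used as a black box in the informal discussion comparing Theorem~\ref{thm-1} with the Poincar\'e--Birkhoff theorem. So there is no proof in this paper to compare yours against; I can only assess your sketch on its own terms.

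Your opening reduction ($G = T_{-p}\circ F^q$, and a fixed point of $G$ yields the desired $q$-periodic point with rotation number $p/q$) is correct and is the standard first step; the computation $F^{nq}(z_0) = z_0 + (np,0)$ together with boundedness of intermediate increments does give $\lim_n p_1F^n(z_0)/n = p/q$. (One should implicitly assume $\gcd(p,q)=1$ so that the period is exactly $q$, but this is conventional.) Beyond that point, however, the sketch has genuine gaps and also departs substantially from the cited references. Franks's proof in~\cite{MR951509,Fr1,Fr} does not use an equivariant Brouwer foliation; Le~Calvez's foliation theorem postdates these papers by over fifteen years. Franks instead works directly with \emph{periodic disk chains}: he proves that an orientation-preserving plane homeomorphism admitting a periodic chain of free disks has a fixed point, and then uses area preservation (via Poincar\'e recurrence on the finite-area annulus) together with the two-sided rotation hypothesis to manufacture such a chain for $G$. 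Your Le~Calvez-based route is a legitimate modern alternative, but as written the crucial middle steps are only slogans. \textbf{First}, the claim that every orbit acquires a well-defined ``asymptotic horizontal drift'' from the way it crosses leaves is false as stated: rotation numbers of individual orbits need not exist, and a single orbit can cross a given leaf finitely or infinitely many times, so one needs the precise notion of the foliation's transverse rotation interval, not a pointwise drift. \textbf{Second}, ``nonwandering orbits can only live on leaves of zero drift'' is not a theorem without further structure; the useful statement in Le~Calvez's theory concerns essential closed leaves (or the existence thereof in the area-preserving case), and establishing the dichotomy essential-leaf/no-essential-leaf and handling both cases is where the real work is. \textbf{Third}, the passage ``standard Brouwer-theoretic arguments extract a genuine fixed point from a nonwandering orbit of drift $0$'' is exactly the content of Franks's periodic-disk-chain lemma, which is itself a nontrivial theorem, so it cannot be waved in as standard. \textbf{Finally}, once you restrict to the open strip $\R\times(0,1)$ you may lose the hypothesis entirely, because $z_1,z_2$ are permitted to lie on the boundary $\R\times\{0,1\}$ (indeed in the application to surfaces of section they typically do); some argument is needed to propagate the boundary rotation information to interior orbits. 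In sum: the reduction is right, the broad strategy (Brouwer theory plus area preservation) is right, but the middle of the argument is a sequence of unproved claims, and the machinery you invoke is not the one the paper cites.
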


Both limits in~\eqref{eq.3} are assumed to exist. Let us refer to the periodic orbits obtained in Theorem~\ref{teo.Franks} as the $p/q$-orbits. In~\cite{Fr} the reader also finds a version of the above statement on the open annulus.

Theorem~\ref{thm-1} can be reduced to Theorem~\ref{teo.Franks} in the case one of the components of the Hopf link bounds a disk-like global surface of section. We very briefly sketch this argument and do not give full details since the more general Theorem~\ref{thm-1} does not require this surface of section at all.

\begin{definition}\label{def-global_surface}
Let $\lambda$ be a tight contact form on $S^3$ and denote by $X_{\lambda}$ its Reeb vector field. We say that an embedded disk $\Sigma\subset S^3$ is a disk-like global surface of section for the Reeb flow if $\partial \Sigma = P$ is a closed orbit, $X_{\lambda}$ is transverse to $\mathring \Sigma$ and all orbits in $S^3 \setminus P$ intersect $\mathring \Sigma$ infinitely often, both forward and backward in time.
\end{definition}

Let $L_0 \cup L_1$ be a Hopf link formed by closed Reeb orbits and assume that $L_1$ bounds a disk-like global surface of section for the Reeb flow of $\lambda=f\lambda_0$. Define $\theta_0,\theta_1$ as in Theorem~\ref{thm-1}. Assuming, for simplicity, that $\lambda$ is non-degenerate then results from~\cite{HLS,HS} tell us that there is an open book decomposition of $S^3$ with binding $L_1$ and disk-like pages which are global surfaces of section. See also~\cite{Hryn,hry} for the dynamically convex case. In particular, there is a diffeomorphism $S^3\setminus L_1 \simeq \R/\Z \times B$ where $B\subset\C$ is the open unit ball, such that $L_0 \simeq \R/\Z \times \{0\}$ and if we denote by $\vartheta$ the $\R/\Z$-coordinate then the Reeb flow satisfies $d\vartheta(X_\lambda)>0$. Moreover, the Conley-Zehnder index of $L_1$ is at least $3$, which implies $\theta_1>0$. We assume, in addition, that our coordinates are such that $\R/\Z\times [0,1)$ is contained on an embedded disk spanning $L_0$. The first return map $g$ to the page $0\times B$ has $0$ as a fixed point and, introducing suitable polar coordinates $B\setminus 0 \simeq \R/\Z\times(0,1)$, we get an area-preserving diffeomorphism of $\R/\Z\times (0,1)$ still denoted by $g$. The open book decomposition also induces an isotopy from the identity to $g$. Lifting the identity on $\R/\Z\times(0,1)$ to the identity on $\R\times(0,1)$, this isotopy distinguishes a particular lift $\widetilde g$ of $g$ to $\R\times(0,1)$. The map $\widetilde g$ can now be continuously extended to $\R\times[0,1)$ by using the transverse linearized flow at $L_0$. Under additional assumptions we can also extend $\widetilde g$ continuously to $\R/\Z\times[0,1]$. Since $L_0$ has self-linking number $-1$, the canonical basis of $0\times\C$ induces, via the identification $S^3\setminus L_1 \simeq \R/\Z\times B$, a homotopy class of symplectic frames of $\ker\lambda|_{L_0}$ with respect to which the transverse linearized Reeb flow has rotation number equal to $\theta_0$. Hence the rotation number of $\widetilde g|_{\R\times0}$ is $\theta_0$. By a similar reasoning, the rotation number of $\widetilde g|_{\R\times1}$ is $1/\theta_1$. One obtains $(p,q)$-orbits as asserted in Theorem~\ref{thm-1} from Franks' $p/q$-orbits in Theorem~\ref{teo.Franks} taking $z_1 \in \R\times0$ and $z_2\in \R\times 1$ under the assumption that $\theta_0 < p/q < 1/\theta_1$ or $1/\theta_1 < p/q < \theta_0$.

What is unsatisfactory about this argument is that one may construct examples of Reeb flows and Hopf links $L_0 \cup L_1$ as above satisfying the hypotheses of Theorem~\ref{thm-1} but neither $L_0$ nor $L_1$ bound a global disk-like surface of section. Such an example is provided in Section~\ref{example-2} below when choosing $\theta_0, \theta_1$ to be both negative numbers: in this case, the discussion thereafter shows that there are periodic orbits $P_i$ having linking number $0$ with $L_i$, for $i = 0,1$, which clearly conflicts with the assumption of a global surface of section. For this reason, we approach the problem with a different set of tools. The argument we pursue has instead the spirit of an argument of Angenent~\cite{angenent}, which we will recall shortly.

\subsection{The unit tangent bundle of $S^2$} \label{sec-intro_SO3}

Poincar\'e observed the importance of studying area-preserving annulus homeomorphisms by finding annulus-type global sections for the restricted $3$-body problem. In his book~\cite{Bi3}, G. D. Birkohff proved that the geodesic flow of a Riemannian metric $g$ on $S^2$ with positive curvature also admits annulus-type global sections. In fact, one can always find a simple closed geodesic $\gamma : \R/T\Z \to S^2$, with minimal period $T$ and parametrized by arc-length. Its image separates $S^2$ in two closed disks $C_1$ and $C_2$. For each $x\in \text{image}(\gamma) = \partial C_1 = \partial C_2$, let $n(x)\in M$ be the normal vector to $\partial C_1$ pointing outside $C_1$, where $M = \{ (x,v) \in TS^2 \mid g(v,v) = 1 \} \simeq SO(3)$ is the unit tangent bundle, and let
\[
  \Sigma=\{(x,v)\in M: x\in \text{image}(\gamma) \mbox { and } g(v,n(x)) \geq 0\}.
 \]
Denote by $\gamma_r$ the reverse orbit $\gamma_r(t)=\gamma(-t)$ of $\gamma$. Then $\gamma$, $\gamma_r$ admit natural lifts $\dot\gamma$, $\dot\gamma_r$ to $M$ and $\Sigma$ is an annulus-type global surface of section for the geodesic flow with boundary $\partial \Sigma = \text{image}(\dot\gamma) \cup \text{image}(\dot\gamma_r)$. The first return Poincar\'e map to $\mathring \Sigma$ can be extended to the boundary $\partial \Sigma$ using the second conjugate point, and this induces an area preserving annulus homeomorphism $f:S^1 \times [0,1] \to S^1 \times [0,1]$ isotopic to the identity. By Theorem~\ref{teo.Franks}, $f$ admits all the $p/q$-orbits as long as the twist condition \eqref{eq.3} is satisfied for a lift $F$ of $f$.

It is well-known that one might not expect the existence of these types of $(p,q)$-orbits for $C^1$ volume preserving flows on a $3$-manifold. In fact, inserting a plug of Kuperberg-Schweitzer-Wilson type, see~\cite{Ku,Sch,Wi}, one can destroy them without creating new ones. However, as an example, such orbits still exist for geodesic flows on $S^2$, even when an annulus-type global section does not exist. To be more precise, we recall Angenent's result~\cite{angenent} on curve shortening flows applied to the existence of $(p,q)$-satellites of a simple closed geodesic $\gamma$. A Jacobi field over $\gamma$ is characterized by a solution $y:\mathbb{R} \to \mathbb{R}$ of
\begin{equation}\label{jacobi_field}
  y''(t) = -K(\gamma(t))y(t),
\end{equation}
where $K$ is the Gaussian curvature of $(S^2,g)$. For a non-trivial solution $y$, we can write $y'(t)+iy(t)= r(t)e^{i\theta(t)},\ t \in \R$, for $r$ and $\theta$ smooth with non-vanishing $r$. The inverse rotation number of $\gamma$, denoted by $\rho(\gamma)$, is defined by
\begin{equation}\label{rhogeod}
\rho(\gamma) = T \lim_{t \to \infty} \frac{\theta(t)}{2\pi t},
\end{equation}
where $T$ is the minimal period of $\gamma$. The inverse rotation number coincides with the transverse rotation number explained before and we may use both terminologies in the context of geodesic flows.

Let $p$ and $q\not=0$ be relatively prime integers and $n(t)$ be a continuous normal unit vector to a simple curve $\gamma:\R/\Z \to S^2$. A $(p,q)$-satellite of $\gamma$ is any smooth immersion $\R/\Z \to S^2$ equivalent to
\[
  \alpha_{\varepsilon}:\R/\Z \to S^2 \ \ \ \alpha_{\varepsilon}(t) = \exp_{\gamma(qt)} \left( \varepsilon \sin(2 \pi p t) n(qt) \right),
\]
where $\varepsilon>0$ is small and $\exp$ is any exponential map. By equivalent immersed curves we mean curves which are homotopic to each other on $S^2$ through immersed curves, but tangencies with $\gamma$ and self-tangencies are not allowed in the homotopy. The resulting equivalence classes are called flat-knot types relative to $\gamma$.

\begin{theorem}[Angenent~\cite{angenent}]\label{theo_ang}
Let $g$ be a smooth Riemannian metric on $S^2$, and $\gamma$ be a closed prime geodesic which is a simple curve. If the rational number $p/q \in (\rho(\gamma),1)\cup (1,\rho(\gamma))$ is written in lowest terms, then $g$ admits a closed geodesic $\gamma_{p,q}$ which is a $(p,q)$-satellite of $\gamma$. The geodesic $\gamma_{p,q}$ intersects $\gamma$ at exactly $2p$ points and self-intersects at $p(q-1)$ points.
\end{theorem}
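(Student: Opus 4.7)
The plan is to follow Angenent's curve-shortening flow approach applied to the space of immersed curves in the $(p,q)$-satellite flat-knot class of $\gamma$. Let $\mathcal{C}_{p,q}$ denote the open set of smooth immersions $\alpha : \R/\Z \to S^2$ equivalent to the model satellite $\alpha_\varepsilon$, so that homotopies inside $\mathcal{C}_{p,q}$ forbid tangencies with $\gamma$ and self-tangencies. The goal is to produce a critical point of the length functional $L$ inside $\mathcal{C}_{p,q}$.

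The first step is to evolve curves in $\mathcal{C}_{p,q}$ under the curve-shortening flow (CSF), the negative $L^2$-gradient flow of $L$. Two parabolic-maximum-principle arguments are essential. First, since $\gamma$ is itself stationary under CSF, the number of transverse intersections of an evolving curve with $\gamma$ is non-increasing in time, strictly decreasing only at instants of tangency. Second, by Angenent's intersection theorem for solutions of parabolic PDEs on surfaces, a self-tangency cannot appear in the interior of the flow. Together, these preserve the flat-knot type, so $\mathcal{C}_{p,q}$ is forward-invariant under CSF.

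Next I would set up an isolating neighborhood $N \subset \mathcal{C}_{p,q}$ for the semiflow. An upper barrier is provided by a sublevel set of $L$, since curves with bounded length enjoy uniform $C^k$ estimates up to singularity formation. The lower barrier is the subtle point: a priori a curve in $\mathcal{C}_{p,q}$ could collapse onto the $q$-fold cover $\gamma^q$. The twist hypothesis $p/q \in (\rho(\gamma),1) \cup (1,\rho(\gamma))$ rules this out. Indeed, normal perturbations of $\gamma^q$ are governed by the Jacobi equation \eqref{jacobi_field} on $\R/q\Z$, whose non-trivial solutions wind at rate $\rho(\gamma)$ per $\R/\Z$ period of the satellite parameter; the flat-knot type of a thin satellite records its normal winding as $p/q$. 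The strict inequality between $p/q$ and $\rho(\gamma)$ produces a spectral gap that prevents a sequence of curves in $\mathcal{C}_{p,q}$ from shrinking to $\gamma^q$ without first leaving $\mathcal{C}_{p,q}$ through a tangency. Excising a small neighborhood of $\gamma^q$ from the sublevel set therefore yields a genuine isolating block.

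Finally I would compute the Conley index $h(N)$ in Angenent's abstract framework for parabolic semiflows. The twist condition identifies $h(N)$ with the Conley index of a two-dimensional annulus twist model coming from the linearized Poincar\'e return map of the geodesic flow along $\gamma$; this model has non-trivial index whenever $p/q$ lies strictly between the boundary rotation numbers $\rho(\gamma)$ and $1$. Non-triviality of $h(N)$ forces a rest point of CSF in $N$, which is the required closed geodesic $\gamma_{p,q} \in \mathcal{C}_{p,q}$. The counts $\#(\gamma_{p,q} \cap \gamma) = 2p$ and $\#(\gamma_{p,q} \cap \gamma_{p,q}) = p(q-1)$ are flat-knot invariants, read off from the explicit model $\alpha_\varepsilon$. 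The hard part will be the Conley-index computation: CSF is only a semiflow with genuine singularity formation, so one must adapt Conley theory to the infinite-dimensional parabolic setting (Angenent's main technical contribution) and then justify the reduction to a finite-dimensional twist model via the spectral-gap analysis at $\gamma^q$.
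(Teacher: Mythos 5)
The paper does not prove this theorem: Theorem~\ref{theo_ang} is stated as a cited result (Angenent's 2005 Annals paper~\cite{angenent}), and the surrounding text only summarizes its flavor — curve shortening, Conley index of an isolating block, no surface of section. So there is no ``paper's own proof'' to compare against; I can only assess your sketch against Angenent's actual argument.

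Your outline correctly captures the skeleton of Angenent's strategy: evolve curves by curve shortening inside a flat-knot class, use Sturmian/parabolic intersection theory to keep the class forward-invariant, build an isolating block, and extract a rest point from a non-trivial Conley index. You also correctly flag that the heavy lifting is Angenent's adaptation of Conley theory to the non-compact, singularity-forming parabolic setting, and your observation that the intersection counts $2p$ and $p(q-1)$ are flat-knot invariants read off from the model $\alpha_\varepsilon$ is accurate, precisely because the equivalence relation forbids tangencies. Where the sketch is genuinely off: first, the lower barrier in Angenent's paper is not a ``spectral gap that prevents collapse onto $\gamma^q$'' in the sense you describe; Angenent's obstruction to collapse is more combinatorial/Sturmian, exploiting that the normal winding rate of a thin satellite with tangency-free evolution is pinched against the linearized rotation number along $\gamma^q$, and the length of a curve in the flat-knot class is bounded away from $qL(\gamma)$. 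Second, the claim that ``the twist condition identifies $h(N)$ with the Conley index of a two-dimensional annulus twist model coming from the linearized Poincar\'e return map'' misrepresents the method: the whole point of the comparison the present paper draws is that Angenent's argument does \emph{not} pass through a 2D annulus/return-map reduction (which needn't exist). Angenent instead computes the Conley index homology in the infinite-dimensional loop-space setting, using comparison principles and continuation to model metrics, not a dimensional reduction. Third, ``forward-invariance of $\mathcal{C}_{p,q}$'' is not literally true — singularities of CSF on $S^2$ can and do form, and curves can leave the class through blowup; part of Angenent's technical contribution is a singularity analysis (flow past singularities or isolation of them) that your sketch acknowledges only implicitly. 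With these caveats, the proposal is a reasonable plan-level reconstruction, but the three items above are where a careful reader of Angenent would object.
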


One remarkable aspect of Angenent's proof is that it does not use any surface of section: the geometric arguments available in the presence of a global surface of section are replaced with the analysis of the curve-shortening flow which allows for the definition of an isolating block in the sense of Conley theory~\cite{conley}. Theorem~\ref{theo_ang} is obtained by showing that a certain isolated invariant set has a non-trivial index.

According to Angenent the results from~\cite{angenent} were inspired by a question asked by Hofer in Oberwolfach 1993. Hofer asked if it was possible to apply the Floer homology construction to curve shortening, and which results could be obtained in this way. Here we apply Floer theoretic methods to generalize Theorem~\ref{theo_ang} to broader classes of Hamiltonian systems.

Let $g_0$ be the Euclidean metric on $\R^3$ restricted to $S^2 = \{ x\in \R^3 \mid g_0(x,x) = 1 \}$. In Section~\ref{sec-SO3}, we prove a version of Theorem~\ref{thm-1} on the unit sphere bundle $T^1S^2$ associated to $g_0$. Let $\lambda= f \bar \lambda_0$, $f>0$, be a contact form inducing the standard tight contact structure $\bar \xi_0 := \ker \bar \lambda_0$ on $T^1S^2$, where $\bar \lambda_0|_v \cdot w = g_0(v,d\Pi \cdot w)$ and $\Pi : T^1S^2 \to S^2$ is the bundle projection. Recall that there exists a natural double covering map $D:S^3 \to T^1S^2$ satisfying $D^* \bar \lambda_0=  4\lambda_0|_{S^3}$ and which sends the Hopf link $L_0 \cup L_1$ to the pair of closed curves $l_0 := D(L_0)$ and $l_1:=D(L_1)$, both transverse to $\bar \xi_0$. We call the link $l:=l_0 \cup l_1$ a Hopf link in $T^1S^2$, as well as any link which is transversally isotopic to it. The Hopf link $l$ is said to be in normal position. According to Theorem~2.6.12 from~\cite{geiges}, any Hopf link can be brought to normal position by an ambient contact isotopy. The homotopy class $[\gamma]\in \pi_1(T^1S^2 \setminus l,{\rm pt})$ of a closed curve $\gamma \subset T^1S^2 \setminus l$ is determined by two half-integers $$ \begin{array}{cccc} \wind_0(\gamma) \in \Z/2, & \wind_1(\gamma) \in \Z/2 & \text{satisfying} & \wind_0(\gamma)+\wind_1(\gamma) \in \Z. \end{array} $$ They are defined as follows: any lift of $\gamma$ to $S^3\setminus (L_0 \cup L_1)$ has well-defined arguments $\phi_0,\phi_1$ of the complex components $x_0+iy_0$ and $x_1+iy_1$, and $\wind_i(\gamma)$ is defined as the variation of a continuous lift of $\phi_i$ to $\R$ divided by $2\pi$, $i=0,1$. See Section~\ref{sec-SO3} for a more detailed discussion.

\begin{theorem}\label{teot}
Let $\lambda = f \bar \lambda_0$ be a contact form on $T^1S^2$ admitting prime closed Reeb orbits $l_i$, $i=1,2$, which are the components of a Hopf link $l$, assumed to be in normal position without loss of generality. Let $\eta_0$ and $\eta_1$ be the real numbers defined by
\begin{equation}\label{thetaeq}
\eta_i = 2\rho(l_i) - 1,\ i=0,1,
\end{equation}
where $\rho(l_i)$ are the transverse rotation numbers of $l_i$. Let $(p,q)\in \mathbb{Z}\times \mathbb{Z}$ be a relatively prime pair of integers. Assume that
\begin{equation}\label{non-resonance-eta-i}
\begin{array}{ccc} (1,\eta_1)<(p,q)<(\eta_0,1) & \mbox{ or } & (\eta_0,1)<(p,q)<(1,\eta_1). \end{array}
\end{equation}
Then one of the following holds.
\begin{itemize}
\item[(i)] If $p+q$ is even, then $\lambda$ admits a prime closed Reeb orbit $\gamma_{p,q} \subset T^1S^2 \setminus l$, non-contractible in $T^1S^2$, satisfying
\begin{equation}\label{windeven}
  \begin{array}{cc} \wind_0(\gamma_{p,q}) = p/2, & \wind_1(\gamma_{p,q}) = q/2. \end{array}
\end{equation}
\item[(ii)] If $p+q$ is odd, then $\lambda$ admits a prime closed Reeb orbit $\gamma_{p,q} \subset T^1S^2 \setminus l$, contractible in $T^1S^2$, satisfying
\begin{equation}\label{windodd}
  \begin{array}{cc} \wind_0(\gamma_{p,q}) = p, & \wind_1(\gamma_{p,q}) = q. \end{array}
\end{equation}
\end{itemize}
\end{theorem}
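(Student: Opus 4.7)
The plan is to deduce this from Theorem~\ref{thm-1} by lifting to the double cover $D:S^3\to T^1S^2$. Set $\tilde\lambda=D^*\lambda$. Since $D^*\bar\lambda_0=4\lambda_0|_{S^3}$, $\tilde\lambda$ is a tight contact form on $S^3$ inducing $\xi_0$, whose Reeb vector field is $D$-related to $X_\lambda$. The preimages $L_i:=D^{-1}(l_i)$ are prime closed Reeb orbits of $\tilde\lambda$ forming a Hopf link in $S^3$. Because $D|_{L_i}:L_i\to l_i$ is a double covering (as $S^3=SU(2)\to SO(3)$ is), the prime period of $L_i$ is twice that of $l_i$, so the transverse rotation numbers satisfy $\rho(L_i)=2\rho(l_i)$. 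Hence $\theta_i:=\rho(L_i)-1=2\rho(l_i)-1=\eta_i$, and the non-resonance condition~\eqref{non-resonance-eta-i} for $(p,q)$ coincides with~\eqref{non_resonance} for $\tilde\lambda$.

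Theorem~\ref{thm-1} then yields a prime closed Reeb orbit $P\subset S^3\setminus(L_0\cup L_1)$ of $\tilde\lambda$ with $\link(P,L_0)=p$ and $\link(P,L_1)=q$. Let $\sigma:S^3\to S^3$ denote the non-trivial deck transformation of $D$; it is a free involution preserving $\tilde\lambda$ and preserving the orientation of $S^3$, so $\sigma(P)$ is another prime closed Reeb orbit with the same linking numbers. Set $\gamma:=D(P)\subset T^1S^2\setminus l$. If $\sigma(P)=P$, then $D|_P:P\to\gamma$ is two-to-one and $\gamma$ is a prime, embedded, non-contractible closed Reeb orbit of $\lambda$ with $\wind_i(\gamma)=\link(P,L_i)/2$, i.e.\ $(p/2,q/2)$. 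Otherwise $D|_P$ is injective and $\gamma$ is prime, embedded, contractible in $T^1S^2$, with $\wind_i(\gamma)=\link(P,L_i)$, i.e.\ $(p,q)$.

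Case~(ii), $p+q$ odd, then follows immediately: the non-contractible alternative is ruled out by parity, since any non-contractible loop in $T^1S^2\setminus l$ has a lift in $S^3$ connecting antipodal points, and $\sigma$ shifts each argument $\phi_i$ by $\pi$, so both windings must be half-integers, forcing $p$ and $q$ both odd. Hence $\sigma(P)\neq P$ and $\gamma$ is the desired contractible orbit with $\wind=(p,q)$. Case~(i), $p+q$ even with $p$ and $q$ both odd, is the main obstacle: both scenarios for $\sigma(P)$ are a priori consistent with linking $(p,q)$, so Theorem~\ref{thm-1} alone may only produce a contractible orbit with $\wind=(p,q)$ rather than the required non-contractible one with $\wind=(p/2,q/2)$. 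To overcome this we carry out the cylindrical contact homology argument underlying Theorem~\ref{thm-1} either $\sigma$-equivariantly on $S^3\setminus(L_0\cup L_1)$ or, equivalently, directly on the quotient $T^1S^2\setminus l$, and use the non-resonance condition~\eqref{non-resonance-eta-i} to produce a non-trivial class in the summand of contact homology labeled by the relative homotopy class with winding $(p/2,q/2)$. Identifying this summand and verifying its non-triviality under the non-resonance hypothesis is the technical heart of the proof.
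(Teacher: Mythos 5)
Your proposal correctly reduces the case $p+q$ odd to Theorem~\ref{thm-1} via the double cover $D$ and correctly identifies that the case $p+q$ even (both $p,q$ odd) cannot be settled by linking numbers alone and requires reworking the cylindrical contact homology argument directly on $T^1S^2\setminus l$ in the non-contractible homotopy class $(\frac{p+q}{2},\frac{p-q}{2})$; this is exactly the route the paper follows. The technical step you flag at the end is carried out in the paper by adapting Propositions~\ref{prop_morse_bott_approx} and~\ref{comp_j_iota} and the SFT compactness and gluing theorems to $T^1S^2$, and then lifting the resulting splitting-cobordism cylinders back to $S^3$ to run the analogue of Lemma~\ref{existence_lemma}.
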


Theorem~\ref{teot} implies that if the resonance condition $\eta_0 = 1/\eta_1>0$ is not satisfied, then we obtain infinitely many $(p,q)$-orbits characterized by their homotopy classes in $T^1S^2 \setminus l$. This includes non-contractible orbits in $T^1S^2$.

Now we briefly discuss some applications of Theorem~\ref{teot} which, in particular, generalize Angenent's Theorem~\ref{theo_ang} to geodesic flows of Finsler metrics on the $2$-sphere.

Let $F:TS^2 \to \R$ be a Finsler metric with the associated unit tangent bundle $F^{-1}(1)$, and let $\mathcal L_F: T^*S^2 \setminus 0 \to TS^2\setminus 0$ be the associated Legendre transformation. This induces a cometric $F^* = F \circ \mathcal L_F$ on $T^*S^2$. Analogously we have $F_0 = \sqrt{g_0(\cdot,\cdot)}$, $\mathcal L_{F_0}$ and $F_0^*$ for the Euclidean metric. On $T^*S^2$ we have the tautological 1-form $\lambda_{\rm taut}$. The 1-form $\bar\lambda_F = (\mathcal L_F^{-1})^*\lambda_{\rm taut}$ is a contact form on $F^{-1}(1)$ inducing the contact structure $\bar\xi_F = \ker\bar\lambda_F$, and its Reeb flow coincides with the geodesic flow of $F$. Clearly $\bar \lambda_0 = (\mathcal L_{F_0}^{-1})^*\lambda_{\rm taut}$. Consider the map $\Psi : (F_0^*)^{-1}(1) \to (F^*)^{-1}(1)$, $p \mapsto p/F^*(p)$. Then
\begin{equation}\label{diffeo_G}
  G = \mathcal L_F \circ \Psi \circ \mathcal L_{F_0}^{-1} : (T^1S^2,\bar\xi_0) \to (F^{-1}(1),\bar\xi_F)
\end{equation}
defines a co-orientation preserving contactomorphism, that is, $G^*\bar\lambda_F = f\bar\lambda_0$ for some positive function $f$. A geodesic $\gamma$ of $F$ with unit speed admits a lift
\begin{equation}\label{lift_gamma_bar}
    \bar\gamma := G^{-1}(\dot\gamma)
\end{equation}
under the projection $\Pi$, which is a trajectory of the Reeb flow of $f\bar\lambda_0$. We call $\gamma$ contractible when $\bar\gamma$ is contractible in $T^1S^2$, or equivalently when $\dot\gamma$ is contractible in $F^{-1}(1)$.

\begin{corollary}~\label{cor-geod2}
Let $F$ be a Finsler metric on $S^2$, and $\gamma_0, \gamma_1$ be two closed geodesics that lift to a Hopf link $l = l_0 \cup l_1 \subset T^1 S^2$, that is, $l_0 = \bar\gamma_0$ and $l_1 = \bar \gamma_1$. Without loss of generality we assume $l$ is in normal position. Consider their inverse rotation numbers $\rho(l_i)$, $i=0,1$, and let $$ \eta_i=2\rho(l_i)-1, \ \ i=0,1. $$  If $(p,q)$ is a relatively prime pair of integers satisfying
\[
(\eta_0,1) < (p,q) < (1,\eta_1) \mbox{ or } (1,\eta_1) < (p,q) < (\eta_0,1)
\]
then we have one of the following cases.
\begin{enumerate}
 \item If $p+q$ is even, then $F$ admits a non-contractible prime closed geodesic $\gamma_{p,q}$ whose lift $\bar \gamma_{p,q}$ lies in $T^1S^2\setminus l$ and satisfies
\begin{equation*}
  \begin{array}{cc} \wind_0(\bar \gamma_{p,q}) = p/2, & \wind_1(\bar \gamma_{p,q}) = q/2. \end{array}
\end{equation*}
 \item If $p+q$ is odd, then $F$ admits a contractible prime closed geodesic $\gamma_{p,q}$ whose lift $\bar \gamma_{p,q}$ lies in $T^1S^2\setminus l$ and satisfies
\begin{equation*}
  \begin{array}{cc} \wind_0(\bar \gamma_{p,q}) = p, & \wind_1(\bar \gamma_{p,q}) = q. \end{array}
\end{equation*}
\end{enumerate}
\end{corollary}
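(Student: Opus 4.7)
The plan is to deduce Corollary~\ref{cor-geod2} directly from Theorem~\ref{teot} by pulling back the geodesic flow of $F$ to $T^1S^2$ via the contactomorphism $G$ from~\eqref{diffeo_G}. First, set $\lambda := G^*\bar\lambda_F = f\,\bar\lambda_0$, a contact form defining $\bar\xi_0$ on $T^1S^2$ whose Reeb flow is $G$-conjugate to that of $\bar\lambda_F$, and hence to the unit-speed geodesic flow of $F$ after projection by $\Pi$. By hypothesis, $l_0 = \bar\gamma_0$ and $l_1 = \bar\gamma_1$ are prime closed Reeb orbits of $\lambda$ forming a Hopf link in normal position, so the dynamical setup required by Theorem~\ref{teot} is in place.

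Because $G$ is a co-orientation preserving contactomorphism, $dG$ conjugates the transverse linearized Reeb flow along $l_i$ to that along $\dot\gamma_i$ and transports a global positive frame of $\bar\xi_0$ to one of $\bar\xi_F$. Consequently, the transverse rotation numbers $\rho(l_i)$ entering Theorem~\ref{teot} coincide with the inverse rotation numbers $\rho(\gamma_i)$ appearing in the corollary, so the $\eta_i$'s agree and the non-resonance condition~\eqref{non-resonance-eta-i} on $(p,q)$ is exactly the one required to apply Theorem~\ref{teot}. Invoking that theorem yields a prime closed Reeb orbit $\tilde\gamma_{p,q} \subset T^1S^2 \setminus l$ of $\lambda$ of the prescribed homotopy type in the complement of $l$, matching~\eqref{windeven} or~\eqref{windodd} and the respective contractibility assertion in $T^1S^2$.

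Setting $\gamma_{p,q} := \Pi \circ G(\tilde\gamma_{p,q})$ gives the desired closed geodesic of $F$: it is prime (``simple'' in the sense of the statement) because $G$ is a diffeomorphism and $\tilde\gamma_{p,q}$ was prime, and by~\eqref{lift_gamma_bar} its lift satisfies $\bar\gamma_{p,q} = G^{-1}(\dot\gamma_{p,q}) = \tilde\gamma_{p,q}$, which transfers the winding numbers and contractibility statements verbatim from Theorem~\ref{teot}. The only step genuinely requiring care -- which I expect to be the principal technical point rather than a deep obstacle -- is the invariance of $\rho$ and $\wind_i$ under $G$: the winding numbers are trivially preserved since $G$ fixes $l$ setwise and $\wind_i$ depends only on the homotopy class in $T^1S^2 \setminus l$, while preservation of $\rho$ reduces to $\bar\xi_0$ being globally trivial, so that the framing conventions on both sides of $G$ agree up to homotopy and the mean Conley--Zehnder index is left unchanged.
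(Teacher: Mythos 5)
Your proposal is correct and follows the same route as the paper, whose proof of the corollary is the one-line observation that it is immediate from Theorem~\ref{teot} because the Reeb flow of $\bar\lambda_F$ on $F^{-1}(1)$ is the unit-speed geodesic flow of $F$. You have merely unpacked the mechanics of the reduction: set $\lambda = G^*\bar\lambda_F = f\bar\lambda_0$, observe that the hypotheses (Hopf link, rotation numbers, relatively prime $(p,q)$) translate verbatim, apply Theorem~\ref{teot}, and then transport the resulting orbit back through $G$ to obtain the closed geodesic $\gamma_{p,q}$ with $\bar\gamma_{p,q}=\tilde\gamma_{p,q}$. One small point: the corollary's hypothesis is already phrased directly in terms of $\rho(l_i)$, the transverse rotation number of the Reeb orbit $l_i$ of $f\bar\lambda_0$, so the care you take to identify it with a rotation number ``on the other side of $G$'' is not strictly needed; the paper has already noted in Section~1.3 that, in the geodesic context, the inverse rotation number and the transverse rotation number agree.
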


Jacobi fields~\eqref{jacobi_field} are now defined using flag curvatures $K = K(T_\gamma S^2,\dot\gamma)$. To give a concrete example, Corollary~\ref{cor-geod2} can be applied to a pair of simple closed geodesics which intersect each other at exactly two points in $S^2$. Corollary~\ref{cor-geod2} also applies to any Finsler metric admitting an embedded circle $C \subset S^2$ which is a geodesic when suitably parametrized in both directions. In fact, $C$ and its reversed $C_r$ lift to components of a Hopf link which can be transversally isotoped to normal position. Note that the rotation numbers $\eta_0$, $\eta_1$ may not be related in this case, so that the ``twist interval'' may be empty.  This is the case in the examples of Katok~\cite{Katok}.

We specialize the discussion even further now, to make the comparison with Theorem \ref{theo_ang} clearer.  We shall say that a simple closed geodesic $\gamma$ of a Finsler metric on $S^2$ is reversible if the curve $t\mapsto \gamma(-t)$ is a reparametrization of another geodesic $\gamma_r$ and if, in addition, the inverse rotation numbers $\rho(\gamma)$ and $\rho(\gamma_r)$ coincide. The geodesics $\gamma$ and $\gamma_r$ determine a link in the unit sphere bundle $F^{-1}(1)$ defined by
\[
  l_\gamma = \{ \dot\gamma(t) \mid t\in\R \} \cup \{ \dot\gamma_r(t) \mid t\in\R \}
\]
where $\gamma$ and $\gamma_r$ are assumed to be parametrized by arc-length.  For example, if the Finsler metric $F$ is itself reversible and it has a simple closed geodesic $\gamma$, then $\gamma$ is reversible. Any $(p,q)$-satellite relative to $\gamma$ distinguishes a homotopy class in $F^{-1}(1)\setminus l_\gamma$.

\begin{corollary}\label{cor-geod3}
Let $F$ be a Finsler metric on $S^2$ admitting a reversible simple closed geodesic $\gamma$, and let $\rho\geq0$ denote its inverse rotation number. Let $p,q\in\Z\setminus0$ satisfy $\gcd(|p|,|q|)=1$. If $p/q \in (\rho,1)\cup (1,\rho)$ then there exists a geodesic $\gamma_{p,q}$ such that its velocity vector $\dot\gamma_{p,q}$ is homotopic in $F^{-1}(1) \setminus l_\gamma$ to the normalized velocity vector of a $(p,q)$-satellite of $\gamma$.
\end{corollary}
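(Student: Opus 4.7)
The plan is to reduce Corollary~\ref{cor-geod3} to Theorem~\ref{teot} by transferring the problem to $T^1S^2$ via the contactomorphism $G$ of~\eqref{diffeo_G}. Under $G$ the geodesic flow of $F$ is conjugated to the Reeb flow of $f\bar\lambda_0$ on $T^1 S^2$, and the link $l_\gamma \subset F^{-1}(1)$ is identified with a link $l = l_0 \cup l_1 \subset T^1 S^2$ formed by the closed Reeb orbits $l_0 = G^{-1}(\dot\gamma)$ and $l_1 = G^{-1}(\dot\gamma_r)$. After an ambient contact isotopy $l$ is in normal position, and by reversibility $\rho(l_0) = \rho(l_1) = \rho$, so $\eta_0 = \eta_1 = 2\rho - 1 =: \eta$ in the notation of Theorem~\ref{teot}.

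The central step is to compute the winding numbers of the velocity lift $\bar\alpha = \dot\alpha_\varepsilon / F(\dot\alpha_\varepsilon)$ of a small $(p,q)$-satellite $\alpha_\varepsilon$ of $\gamma$. In a tubular neighborhood of $\gamma$ the satellite wraps $q$ times longitudinally and executes $p$ transverse oscillations, so $\bar\alpha$ decomposes into $q$ longitudinal loops around $l_0$ together with $p$ meridional loops around $l_0$, while remaining far from $l_1$. Lifting to $S^3\setminus(L_0 \cup L_1)$ through the double cover $D:S^3\to T^1S^2$ and computing the variations of $\phi_0,\phi_1$, a single longitudinal loop contributes $(1/2,1/2)$ to $(\wind_0,\wind_1)$---this is forced by the fact that the lift of such a loop fails to close in $S^3$ and its endpoints must be related by the antipodal deck transformation, making both arguments vary by odd multiples of $\pi$---while each meridional loop contributes $(1,0)$. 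With suitable orientation conventions this yields
\begin{equation*}
\wind_0(\bar\alpha) = p - q/2, \qquad \wind_1(\bar\alpha) = q/2,
\end{equation*}
both integers when $q$ is even (hence $p$ odd and $\bar\alpha$ contractible in $T^1S^2$) and both half-integers when $q$ is odd (so $\bar\alpha$ is non-contractible).

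I then invoke Theorem~\ref{teot} with an auxiliary pair $(p',q')$ matched to these windings: set $(p',q') = (2p-q,\,q)$ in Case (i) when $q$ is odd (both odd, $\gcd(2p-q,q) = \gcd(2p,q) = 1$, giving $(\wind_0,\wind_1)=(p'/2,q'/2)=(p-q/2,q/2)$), and $(p',q') = (p-q/2,\,q/2)$ in Case (ii) when $q$ is even (with $p'+q'=p$ odd and $\gcd(p',q')$ dividing $\gcd(p,q)=1$, giving $(\wind_0,\wind_1)=(p',q')=(p-q/2,q/2)$). In both cases the identity $p'/q' = 2p/q-1$ is an affine map sending $\rho\mapsto\eta$ and $1\mapsto 1$ bijectively, so the hypothesis $p/q \in (\rho,1)\cup(1,\rho)$ translates to $p'/q' \in (\eta,1)\cup(1,\eta)$. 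A short case analysis on the sign and magnitude of $\eta$ (subdividing $\rho>1$, $1/2<\rho<1$, $\rho=1/2$, and $0\le\rho<1/2$) verifies that this interval of ratios is always contained in the twist region required by Theorem~\ref{teot}.

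Theorem~\ref{teot} then produces a prime closed Reeb orbit $\gamma_{p,q} \subset T^1S^2 \setminus l$ with the same winding numbers as $\bar\alpha$. Since the fundamental group $\pi_1(T^1 S^2 \setminus l) \simeq \Z^2$ is free abelian on two generators---a consequence of the fact that $S^3 \setminus (L_0 \cup L_1) \to T^1 S^2 \setminus l$ is a double cover whose total space has fundamental group $\Z^2$ and whose deck action on homology is trivial---two closed curves are freely homotopic if and only if they share the same $(\wind_0,\wind_1)$. Hence $\gamma_{p,q}$ is freely homotopic to $\bar\alpha$ in $T^1 S^2 \setminus l$, and transferring back through $G$ yields a closed geodesic of $F$ whose velocity vector is homotopic in $F^{-1}(1) \setminus l_\gamma$ to the normalized velocity of a $(p,q)$-satellite of $\gamma$, as required. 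The hard part is the winding-number computation, which demands careful bookkeeping of orientations arising from the Hopf double cover $D$; once these conventions are pinned down the rest is a routine application of Theorem~\ref{teot}.
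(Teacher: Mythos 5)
Your proof follows the same overall route as the paper's: transfer to $T^1S^2$ via $G$, compute the winding numbers $(\wind_0,\wind_1)$ of the velocity lift of a $(p,q)$-satellite, and then feed a suitably adjusted pair $(p',q')$ into Theorem~\ref{teot} so that the orbit it produces lands in the right homotopy class of $T^1S^2\setminus l$. The reduction via the affine change $p'/q' = 2p/q - 1$ and the verification that $\gcd(p',q')=1$ are correct, and the observation that classes in $\pi_1(T^1S^2\setminus l)\simeq\Z^2$ are detected by $(\wind_0,\wind_1)$ is also fine.

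The gap is in the winding-number computation, which is exactly the content of the paper's Lemma~\ref{lempq} and is the technical heart of the corollary. You assert that ``a single longitudinal loop contributes $(1/2,1/2)$'' while ``each meridional loop contributes $(1,0)$,'' but then claim the total is $(p-q/2,\,q/2)$. With your stated contributions the sum is $(p+q/2,\,q/2)$, not $(p-q/2,\,q/2)$ -- a genuine arithmetic inconsistency that you paper over with ``with suitable orientation conventions.'' This is not just a sign convention that can be chosen freely: the correct sign is pinned down by the self-linking number $\sl(L_0)=-1$, which is the Bennequin-forced framing discrepancy between the disc framing (used to measure $\phi_0$) and the contact framing along which the satellite travels. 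In the paper's Lemma~\ref{lempq} this appears as the term $\wind(W\circ\hat\Gamma,Y)=-1$ per period, producing the $-q$ in $2|p|-q$ and hence the $-q/2$ in $\wind_0$. Your heuristic decomposition into ``$q$ longitudinal plus $p$ meridional loops'' does not see this framing shift, so the longitudinal contribution comes out as $+1/2$ instead of $-1/2$. In addition, the argument you give that ``both arguments vary by odd multiples of $\pi$'' only constrains $\wind_0$ modulo $1$ for a half-loop; it does not determine the sign, and it is precisely that sign that the self-linking computation provides. Without this the proof does not actually establish $\wind_0(\bar\alpha)=p-q/2$, and the subsequent application of Theorem~\ref{teot} would produce an orbit in the wrong homotopy class.
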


The proofs of Corollaries~\ref{cor-geod2}, \ref{cor-geod3} are found in Sections~\ref{sec-t1s2},~\ref{sec-reversibleFinsler} respectively. Under appropriate pinching conditions on the flag curvatures, it is possible to show that certain $(p,q)$-satellites do not exist when $p/q$ is out of the twist interval, see~\cite{global} for a non-existence result of $(1,2)$-satellites. \\

\noindent {\bf Organization of the paper.} In Section~\ref{background} we describe basic facts about the Conley-Zehnder index and pseudo-holomorphic curves. In Section~\ref{section_contact_homology} we recall the definition of cylindrical contact homology in the complement of a Hopf link from~\cite{momin}. Section~\ref{section_computing} is devoted to computing contact homology for special model forms. Theorem~\ref{thm-1} is proved in the non-degenerate case in Section~\ref{proof_non_deg_section} combining the results from the previous sections. In Section~\ref{deg_section} we pass to the degenerate case by a limiting argument. Section~\ref{sec-SO3} is devoted to proving Theorem~\ref{teot} and its applications to geodesics. Proofs of theorems related to contact homology in the complement of the Hopf link are included in the appendix, for completeness. \\

\noindent {\bf Acknowledgement.} This paper was partially developed while the authors visited the Institute for Advanced Study for the thematic year on Symplectic Dynamics. The authors would like to thank the IAS for the hospitality. This material is based upon work supported by the National Science Foundation under agreement No. DMS-0635607. Any opinions, findings and conclusions or recommendations expressed in this material are those of the authors and do not necessarily reflect the views of the National Science Foundation. U.H. was partially supported by CNPq grant 309983/2012-6; A.M. was partially supported through a postdoc at the Purdue University Department of Mathematics. P.S. was partially; P.S. was partially supported by CNPq grant 303651/2010-5 and by FAPESP grant 2011/16265-8.

\section{Background}\label{background}

\subsection{The Conley-Zehnder index in $2$ dimensions}

Here we review the basic facts about the Conley-Zehnder index for symplectic paths in dimension 2. Denoting by $Sp(1)$ the group of $2\times 2$ symplectic matrices, consider the set
\begin{equation*}\label{}
  \Sigma^* = \left\{ \varphi : [0,1] \to Sp(1) \text{ is piecewise smooth} \mid \varphi(0) = I,\ \det \left[\varphi(1) - I\right] \not= 0 \right\}.
\end{equation*}
Our convention is that piecewise smooth functions are always continuous. Throughout this Section we may freely identify $\R^2\simeq \C$ via the isomorphism $(x,y) \mapsto x+iy$.

\subsubsection{The axiomatic characterization}

According to~\cite{fols}, the Conley-Zehnder index can be axiomatically characterized as follows.

\begin{theorem}\label{axioms_cz}
There exists a unique surjective map $\mu : \Sigma^* \rightarrow \Z$ satisfying
\begin{itemize}
 \item {\bf Homotopy:} If $\varphi_s$ is a homotopy of arcs in $\Sigma^*$ then $\mu\left(\varphi_s\right)$ is constant.
 \item {\bf Maslov index:} If $\psi:\left(\R/\Z,0\right) \rightarrow \left(Sp(1),I\right)$ is a loop  and $\varphi\in\Sigma^*$ then $\mu(\psi\varphi) = 2\maslov(\psi) + \mu(\varphi)$.
 \item {\bf Invertibility:} If $\varphi \in \Sigma^*$ and $\varphi^{-1}(t) := \varphi(t)^{-1}$ then $\mu\left(\varphi^{-1}\right) = - \mu(\varphi)$.
 \item {\bf Normalization:} $\mu\left(t\mapsto e^{i\pi t}\right) = 1$.
\end{itemize}
\end{theorem}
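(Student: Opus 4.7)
The plan is to establish existence by an explicit construction and to deduce uniqueness by analysing the path-component structure of $\Sigma^*$ and then using the four axioms to pin down $\mu$ on a representative of each component.

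For existence I would use the winding-interval description, which is particularly clean in dimension two. Given $\varphi\in\Sigma^*$ and a nonzero $v_0\in\R^2$, pick a continuous lift $\theta_{v_0}:[0,1]\to\R$ of the argument of $t\mapsto\varphi(t)v_0$ and set $\Delta(v_0):=(\theta_{v_0}(1)-\theta_{v_0}(0))/(2\pi)$. A standard calculation shows that $I(\varphi):=\overline{\Delta(\R^2\setminus 0)}$ is a closed interval of length at most $1/2$, and the condition $\det(\varphi(1)-I)\neq 0$ places $I(\varphi)$ in exactly one of a few configurations relative to $\tfrac12\Z$, sorted by whether $\varphi(1)$ is elliptic, positive hyperbolic, or negative hyperbolic. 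In each configuration $I(\varphi)$ picks out a canonical integer $\mu(\varphi)$; one then directly verifies Homotopy (continuity of $I(\varphi)$ in $\varphi$), Maslov (pointwise multiplication by $\psi$ shifts each $\theta_{v_0}$ by $2\pi\,\maslov(\psi)$), Invertibility (inversion negates arguments), and Normalization ($I(e^{i\pi t})=\{1/2\}$ gives $\mu=1$).

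For uniqueness, suppose $\mu'$ is another map satisfying the four axioms. By Homotopy it suffices to compare $\mu$ and $\mu'$ on one representative of each path-component of $\Sigma^*$. Using $\pi_1(Sp(1))=\Z$ together with the fact that $Sp(1)^*:=\{M\in SL(2,\R):\operatorname{tr}M\neq 2\}$ has two path-components, a long-exact-sequence analysis of the evaluation fibration $\varphi\mapsto\varphi(1)$ shows that $\pi_0(\Sigma^*)$ is the disjoint union of two $\Z$-torsors for the $\maslov$-action, one over each of $\{\operatorname{tr}>2\}$ and $\{\operatorname{tr}<2\}$. On the $\{\operatorname{tr}<2\}$ torsor, Normalization gives $\mu'(e^{i\pi t})=1$ and repeated application of Maslov with the loops $\psi_k(t)=e^{i2\pi k t}$ determines $\mu'(e^{i\pi(2k+1)t})=2k+1$ for every $k\in\Z$. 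For the $\{\operatorname{tr}>2\}$ torsor, the canonical representative $\varphi_0(t)=\operatorname{diag}(2^t,2^{-t})$ is related to its inverse $\varphi_0^{-1}(t)=\operatorname{diag}(2^{-t},2^t)$ by the explicit $\Sigma^*$-homotopy $\varphi_s(t)=R_{\pi st/2}\,\varphi_0(t)\,R_{-\pi st/2}$, after a small reduction within the fiber. Invertibility then forces $\mu'(\varphi_0)=-\mu'(\varphi_0)=0$, and Maslov propagates to the rest of the torsor, matching the existence construction.

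The main obstacle I expect is precisely the step identifying $\varphi_0$ with $\varphi_0^{-1}$ inside a single component of $\Sigma^*$: without it the axioms determine $\mu'$ on the $\{\operatorname{tr}>2\}$ torsor only modulo $2$. Carrying out the identification requires checking (i) that the homotopy $\varphi_s$ stays inside $\Sigma^*$, which follows because $\varphi_s(1)$ is always a conjugate of $\operatorname{diag}(2,1/2)$ with trace $5/2\neq 2$; and (ii) that the path $\varphi_1$ at $s=1$ lies in the same $\maslov$-class over $\varphi_0(1)^{-1}$ as $\varphi_0^{-1}$, which follows because both are symmetric positive definite for each $t$ and hence have trivial $SO(2)$-factor in their polar decompositions. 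Once these checks are made, the remaining uniqueness arguments reduce to routine manipulations using Homotopy and Maslov.
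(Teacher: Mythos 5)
The paper does not prove this statement: it cites it to~\cite{fols} (Hofer--Wysocki--Zehnder) and, in Section~\ref{geom_description_section}, records only the winding-interval construction of $\mu$ together with the unproved assertion that it ``satisfies the axioms.'' So there is no in-paper proof to compare against line by line. Your existence half coincides with the paper's geometric description~\eqref{geom_cz_def}, and your uniqueness half is a genuinely self-contained argument that the paper delegates to the reference.

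The core of your uniqueness argument is sound and, in fact, identifies the one non-obvious point correctly: the Normalization and Maslov axioms pin $\mu'$ down over $\{\operatorname{tr}<2\}$, but over $\{\operatorname{tr}>2\}$ there is no normalization, and one must use Invertibility together with the observation that $\varphi_0(t)=\operatorname{diag}(2^t,2^{-t})$ lies in the same component of $\Sigma^*$ as $\varphi_0^{-1}$. Your conjugation homotopy $\varphi_s(t)=R_{\pi st/2}\varphi_0(t)R_{-\pi st/2}$ does stay in $\Sigma^*$ (each $\varphi_s(1)$ is conjugate to $\operatorname{diag}(2,1/2)$, trace $5/2\neq 2$), and the $s=1$ endpoint $\varphi_1$ together with $\varphi_0^{-1}$ are both paths into the space of symmetric positive-definite $SL(2,\R)$ matrices, which is simply connected, so they are homotopic rel endpoints and hence in the same component of $\Sigma^*$. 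Then $\mu'(\varphi_0)=\mu'(\varphi_0^{-1})=-\mu'(\varphi_0)$ forces $\mu'(\varphi_0)=0$, and the Maslov axiom propagates.

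One point deserves more care than your write-up gives it: the assertion that ``a long-exact-sequence analysis\ldots\ shows $\pi_0(\Sigma^*)$ is the disjoint union of two $\Z$-torsors for the $\maslov$-action.'' The evaluation fibration alone does not give this; for each component $V$ of $Sp(1)^*$, the $\Z$-action is free and transitive on $\pi_0(\Sigma^*_V)$ only if the image of $\pi_1(V)\to\pi_1(Sp(1))\cong\Z$ is trivial. That is a real lemma: for instance $\pi_1(\{\operatorname{tr}>2\})\cong\Z$ (it is $(1,\infty)\times(\R P^1\times\R P^1\setminus\Delta)$), so one actually has to check that its generator $s\mapsto R_{\pi s}\operatorname{diag}(2,1/2)R_{-\pi s}$ dies in $\pi_1(SL(2,\R))$ -- which it does, since it is a loop of symmetric positive-definite matrices and hence has constant $SO(2)$-part in polar decomposition. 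This is the classical ``every loop in $Sp^*$ is contractible in $Sp$'' lemma of Conley--Zehnder/Salamon--Zehnder. That said, your uniqueness argument does not actually need freeness: transitivity of the Maslov action on the components over each piece of $Sp(1)^*$ follows directly (homotope the endpoint within the connected set $V$, then apply a rel-endpoint homotopy in $Sp(1)$), and freeness of the action is then forced a posteriori by the Homotopy axiom once the values $\mu'(\psi_k\varphi)$ are computed. Replacing the $\Z$-torsor appeal with this elementary transitivity argument both closes the gap and shortens the proof.
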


We shall need more concrete descriptions of the index $\mu$.

\subsubsection{A geometric description}\label{geom_description_section}

If $\varphi:([0,1],\{0\}) \to (Sp(1),I)$ is a piecewise smooth path, consider the unique piecewise smooth functions $r,\theta:[0,1]\times[0,1] \to \R$ satisfying $\varphi(t) e^{i2\pi s} = r(t,s)e^{i\theta(t,s)}$, $r(t,s)>0$ and $\theta(0,s)=2\pi s$, for every $t$ and $s$. Here we identify $\R^2$ with $\C$. Let $\Delta:[0,1]\to \R$ be the piecewise smooth function defined by $2\pi\Delta(s) = \theta(1,s)-2\pi s$ and we consider the winding interval
\begin{equation}\label{winding_interval}
  I(\varphi) = \{ \Delta(s) \mid s\in [0,1] \}.
\end{equation}
It is possible to show that $I(\varphi)$ has length strictly less than $1/2$ and $\partial I(\varphi) \cap \Z \neq\emptyset \Rightarrow \varphi \not\in\Sigma^*$. The first fact is proved in~\cite[Appendix]{fols} and the second fact is proved in~\cite[Section 2.1]{hry}. If $\varphi \in \Sigma^*$ then define
\begin{equation}\label{geom_cz_def}
  \mu(\varphi) = \ \left\{ \begin{aligned} &2k \ \text{ if } k \in I(\varphi) \\ &2k+1 \ \text{ if } I(\varphi) \subset (k,k+1). \end{aligned} \right.
\end{equation}
Then $\mu$ satisfies the axioms of Theorem~\ref{axioms_cz}.

The path $\varphi$ can be continuously extended to all of $[0,+\infty)$ by
\begin{equation}\label{extension_varphi}
  t\mapsto \varphi(t-\lfloor t\rfloor)\varphi(1)^{\lfloor t\rfloor}
\end{equation}
where $\lfloor t \rfloor$ denotes the unique integer satisfying $\lfloor t\rfloor \leq t <\lfloor t\rfloor+1$. If $\varphi(1)$ has no roots of unity in its spectrum then for each integer $k\geq 1$ the path $\varphi^{(k)}(t) = \varphi(kt)$, $t\in[0,1]$, belongs to $\Sigma^*$. The following lemma is well-known and easy to check using the above description of the index, the argument is implicit in~\cite[Appendix]{fols}.

\begin{lemma}\label{cz_index_iterations_lemma}
Suppose $\varphi(1)$ has no roots of unity in its spectrum. The following assertions hold.
\begin{itemize}
  \item If $\sigma(\varphi(1)) \cap \R = \emptyset$ then $\exists \alpha\not\in \Q$ such that  $I(\varphi^{(k)}) \subset (\lfloor k\alpha \rfloor, \lfloor k\alpha \rfloor+1)$ and $\mu(\varphi^{(k)}) = 2 \lfloor k\alpha \rfloor + 1$, $\forall k\geq 1$.
  \item If $\sigma(\varphi(1)) \subset (0,+\infty)$ then $\exists l\in \Z$ such that $l\in I(\varphi)$ and $\mu(\varphi^{(k)}) = 2kl, \ \forall k\geq 1$.
  \item If $\sigma(\varphi(1)) \subset (-\infty,0)$ then $\exists l\in \Z$ such that $l+1/2 \in I(\varphi)$ and $\mu(\varphi^{(k)}) = k(2l+1)$, $\forall k\geq 1$. Moreover
      \[
        \begin{aligned}
          & k \in 2\Z+1 \Rightarrow I(\varphi^{(k)}) \subset (\lfloor k(l+1/2) \rfloor, \lfloor k(l+1/2) \rfloor + 1) \\
          & k \in 2\Z \Rightarrow k(l+1/2) \in I(\varphi^{(k)}).
        \end{aligned}
      \]
\end{itemize}
\end{lemma}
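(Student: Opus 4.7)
The plan is to use the geometric description of $\mu$ from Section~\ref{geom_description_section}, analyzing the winding interval $I(\varphi^{(k)})$ through the behavior of $\Delta^{(k)}$. The starting observation is that real eigenvectors of $\varphi(1)$ select special values of $\Delta$: if $e^{i 2\pi s_*}$ is an eigenvector of $\varphi(1)$ with positive eigenvalue then $\Delta(s_*)\in\Z$, and with negative eigenvalue then $\Delta(s_*)\in\Z+\frac{1}{2}$. Thus in the elliptic case $\Delta$ avoids $\frac{1}{2}\Z$, while in the hyperbolic cases the eigendirections produce the integer $l$ (resp.\ half-integer $l+\frac{1}{2}$) asserted by the statement, with $\mu(\varphi)=2l$ (resp.\ $2l+1$) as an immediate consequence of~\eqref{geom_cz_def}.

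Next I derive a cocycle formula for the iterate. Splitting $[0,1]=\bigcup_{j=0}^{k-1}[j/k,(j+1)/k]$ and using the extension~\eqref{extension_varphi} to write $\varphi^{(k)}(j/k+\tau/k)=\varphi(\tau)\varphi(1)^j$, a careful tracking of angles yields
\[
  \Delta^{(k)}(s) = \sum_{j=0}^{k-1}\Delta(s_j),
\]
where $s_0=s$ and $s_{j+1}$ is the fractional part of $s_j+\Delta(s_j)$; equivalently, the $s_j$ are the iterates of $s$ under the orientation-preserving circle homeomorphism $T:\R/\Z\to\R/\Z$ given by $T(s)=s+\Delta(s)\pmod{1}$, which encodes the action of $\varphi(1)$ on rays of $\R^2$.

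For the hyperbolic cases I evaluate this cocycle at an eigendirection $s_*$, which is a fixed point of $T$. In the positive hyperbolic case $\Delta(s_*)=l\in\Z$, giving $\Delta^{(k)}(s_*)=kl\in I(\varphi^{(k)})$ and hence $\mu(\varphi^{(k)})=2kl$ by~\eqref{geom_cz_def}. In the negative hyperbolic case $\Delta(s_*)=l+\frac{1}{2}$, giving $\Delta^{(k)}(s_*)=k(l+\frac{1}{2})$: when $k$ is even this value is an integer in $I(\varphi^{(k)})$ and $\mu(\varphi^{(k)})=k(2l+1)$, whereas when $k$ is odd it is a half-integer and $\varphi(1)^k$ remains negative hyperbolic, so $I(\varphi^{(k)})$ contains no integer; combined with the length bound $|I(\varphi^{(k)})|<\frac{1}{2}$, this forces $I(\varphi^{(k)})\subset\bigl(\lfloor k(l+\tfrac{1}{2})\rfloor,\lfloor k(l+\tfrac{1}{2})\rfloor+1\bigr)$, and again $\mu(\varphi^{(k)})=k(2l+1)$.

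The elliptic case is the main obstacle and I treat it by reduction to the rotation model together with a loop correction. Since $\varphi(1)$ has eigenvalues $e^{\pm i2\pi\alpha_0}$ with $\alpha_0\in(0,1)$ irrational by the no-roots-of-unity hypothesis, write $\varphi(1)=S^{-1}R_{\alpha_0}S$ and set $\psi(t)=S^{-1}R_{\alpha_0 t}S$, where $R_\beta$ denotes rotation by angle $2\pi\beta$; note that $\psi^{(k)}(t)=S^{-1}R_{k\alpha_0 t}S$. Deforming $S$ to $I$ through the connected group $SL(2,\R)$ keeps this path inside $\Sigma^*$ (since $k\alpha_0\notin\Z$) and homotopes it to the pure rotation $t\mapsto R_{k\alpha_0 t}$, for which $\Delta\equiv k\alpha_0$; hence $\mu(\psi^{(k)})=2\lfloor k\alpha_0\rfloor+1$. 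Now the loop $\ell(t):=\varphi(t)\psi(t)^{-1}$ at $I$ has some Maslov index $m\in\Z$, and a direct computation using $\varphi(1)=\psi(1)$ together with the extension formula gives $\varphi^{(k)}(t)\cdot\psi^{(k)}(t)^{-1}=\ell(kt-\lfloor kt\rfloor)$, i.e.\ the $k$-fold concatenation of $\ell$, whose Maslov index equals $km$. Setting $\alpha:=\alpha_0+m\notin\Q$, the Maslov axiom then delivers
\[
  \mu(\varphi^{(k)}) = \mu(\psi^{(k)})+2km = 2\lfloor k\alpha_0\rfloor+1+2km = 2\lfloor k\alpha\rfloor+1,
\]
and the inclusion $I(\varphi^{(k)})\subset(\lfloor k\alpha\rfloor,\lfloor k\alpha\rfloor+1)$ then reads off from~\eqref{geom_cz_def} since the resulting Conley-Zehnder index is odd.
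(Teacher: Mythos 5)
The paper itself offers no proof of this lemma (it is introduced with ``well-known and easy to check''), so there is no in-paper argument to compare against; I can only assess your proof on its own terms. Your overall strategy — observe that real eigendirections of $\varphi(1)$ force $\Delta$ to take values in $\Z$ or $\Z+\tfrac{1}{2}$, derive the cocycle formula for $\Delta^{(k)}$ (which is also recorded in the paper in~\ref{mean_rotation_section}), evaluate along eigendirections in the hyperbolic cases, and reduce the elliptic case to a rotation via conjugation and the Maslov axiom — is sound, and the elliptic computation is particularly clean.

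There is one inaccuracy you should fix. In the negative hyperbolic case the eigendirection $s_*$ is \emph{not} a fixed point of $T$ on $\R/\Z$: since $\Delta(s_*)=l+\tfrac12$ with $l\in\Z$, one has $T(s_*)=s_*+\tfrac12\pmod 1$, so $s_*$ is a period-$2$ point, with $T$-orbit $\{s_*,s_*+\tfrac12\}$. Your claim $\Delta^{(k)}(s_*)=k(l+\tfrac12)$ is nonetheless correct, but it needs the extra observation that $\Delta(s_*+\tfrac12)=\Delta(s_*)$. This follows either from the antipodal symmetry of $\Delta$ (by linearity $\varphi(t)(-v)=-\varphi(t)v$, hence $\theta(t,s+\tfrac12)=\theta(t,s)+\pi$ and $\Delta(s+\tfrac12)=\Delta(s)$ for every $s$), or from the fact that $s_*+\tfrac12$ is also an eigendirection for the same negative eigenvalue, so $\Delta(s_*+\tfrac12)\in\Z+\tfrac12$, and since $|I(\varphi)|<\tfrac12$ it must equal $\Delta(s_*)$. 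With that one-line addition the cocycle sum telescopes as you claim, and the rest of the negative hyperbolic (and positive hyperbolic and elliptic) analysis goes through. A small stylistic point: in the elliptic reduction you should make it explicit that $S$ can be chosen in $SL(2,\R)=Sp(1)$, which is what licenses the deformation of $S$ to the identity inside the group; this is automatic once $\alpha_0$ is chosen so that $\varphi(1)$ is $SL(2,\R)$-conjugate to $R_{\alpha_0}$ rather than $R_{-\alpha_0}$.
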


\subsubsection{An analytic description}\label{analytical_description_section}

Let $\varphi : ([0,1],\{0\}) \to (Sp(1),I)$ be a piecewise smooth map. The path of symmetric matrices $S = -i\dot\varphi\varphi^{-1}$ is piecewise continuous, where we identify $$ i \simeq \begin{pmatrix} 0 & -1 \\ 1 & 0 \end{pmatrix}. $$ As is explained in~\cite{props2},
\begin{equation}\label{}
  L = -i\partial_t - S
\end{equation}
is an unbounded self-adjoint operator in $L^2(\R/\Z,\R^2)$ with domain $W^{1,2}(\R/\Z,\R^2)$. Its spectrum, which is discrete, consists of real eigenvalues accumulating only at $\pm\infty$. Geometric and algebraic multiplicities coincide, see Chapter III \S6 from~\cite{kato} for the definition of algebraic multiplicity. An eigenvector $v$ does not vanish unless $v\equiv0$. Writing $v(t) = \rho(t)e^{i\vartheta(t)}$ we define its winding number as $\wind(v) = (\vartheta(1)-\vartheta(0))/2\pi$. This definition does not depend on the choice of the eigenvector for a given eigenvalue, thus we denote it by $\wind(\nu)$ with $\nu\in \sigma(L)$, see~\cite{props2}. For every $k\in\Z$ there are exactly two eigenvalues, counting multiplicities, with winding number $k$, and $\nu_0\leq\nu_1 \Rightarrow \wind(\nu_0)\leq \wind(\nu_1)$ if $\nu_0,\nu_1\in\sigma(L)$.

Following~\cite{props2} we distinguish two eigenvalues
\[
  \nu^{<0} = \max \{\nu \in \sigma(L) \mid \nu<0 \}, \ \nu^{\geq 0} = \min \{ \nu\in\sigma(L) \mid \nu\geq 0 \}
\]
and denote $\wind^-(L) = \wind(\nu^{<0})$, $\wind^+(L) = \wind(\nu^{\geq 0})$. Later $\nu^{<0},\nu^{\geq 0}$ will be referred as the extremal eigenvalues and $\wind^\pm(L)$ will be called the extremal asymptotic windings. Defining $p(L) = 0$ if $\wind^-(L)=\wind^+(L)$ or $p(L) = 1$ if $\wind^-(L)<\wind^+(L)$ we set
\begin{equation}\label{analytical_cz_def}
  \tilde \mu(\varphi) = 2\wind^-(L) + p(L).
\end{equation}

\begin{lemma}\label{extremal_interval_comparison}
If $I(\varphi)$ is the winding interval~\eqref{winding_interval} then $\wind^-(L) < \max I(\varphi)$ and $\wind^+(L) \geq \min I(\varphi)$, with strict inequality when $\varphi \in \Sigma^*$. Moreover, if $\varphi(1)$ is positive hyperbolic then $\wind^-(L)=\wind^+(L)$.
\end{lemma}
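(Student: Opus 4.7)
The plan is to introduce the perturbed family $\Psi_\nu:[0,1]\to Sp(1)$ defined by $\dot\Psi_\nu = J_0(S+\nu I)\Psi_\nu$, $\Psi_\nu(0)=I$ (so $\Psi_0=\varphi$), together with its winding function $\Delta_\nu(s)$ defined for $\Psi_\nu$ exactly as $\Delta(s)$ was defined for $\varphi$ in \eqref{winding_interval}. Since an eigenvector $v$ of $L$ at eigenvalue $\nu$ is precisely a non-zero $1$-periodic solution of $\dot v = J_0(S+\nu I)v$, one has $v(t)=\Psi_\nu(t)v(0)$, and writing $v(0)=r_0\,e^{i2\pi s_0}$ the total winding of $v$ is $\Delta_\nu(s_0)$. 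The central analytic input will be a strict monotonicity: $\nu\mapsto \Delta_\nu(s)$ is strictly increasing for each fixed $s$.

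This monotonicity is a Sturm-type comparison. In polar coordinates the angle of a non-zero solution satisfies $\dot\theta = \langle S(t)e^{i\theta},e^{i\theta}\rangle + \nu$, where $\nu$ enters only as an additive constant. Two solutions with the same initial angle but $\nu_1<\nu_2$ cannot re-cross, since at a putative first re-crossing $\dot\theta_2-\dot\theta_1=\nu_2-\nu_1>0$ contradicts the fact that $\theta_2-\theta_1$ must approach $0$ from above. Both inequalities then follow: for an eigenvector of $L$ at $\nu^{<0}<0$ with initial direction $e^{i2\pi s_0}$, strict monotonicity gives $\wind^-(L) = \Delta_{\nu^{<0}}(s_0) < \Delta_0(s_0) \leq \max I(\varphi)$; symmetrically, $\wind^+(L)\geq \min I(\varphi)$, and if $\varphi\in\Sigma^*$ then $\det(\varphi(1)-I)\neq 0$ rules out $0\in\sigma(L)$, so $\nu^{\geq 0}>0$ and the second inequality also becomes strict.

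For the positive hyperbolic claim, $\varphi(1)$ has eigenvalues $\lambda,1/\lambda$ with $\lambda>0$, $\lambda\neq 1$, and the eigendirections $e^{i2\pi s_\pm}$ both satisfy $\Delta_0(s_\pm)=l$, where $l$ is the unique integer in $I(\varphi)$ (unique because $|I(\varphi)|<1/2$). I would follow the eigenstructure of $\Psi_\nu(1)$ over the maximal open interval around $0$ on which $\Psi_\nu(1)$ stays positive hyperbolic, which is an open condition since $\mathrm{tr}\,\Psi_\nu(1)>2$ is. On this interval the two eigendirections $s_\pm(\nu)$ vary continuously, $\Delta_\nu(s_\pm(\nu))$ is a continuous integer, and hence constantly equals $l$. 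The endpoints are the nearest $\nu$'s on each side at which $\mathrm{tr}\,\Psi_\nu(1)=2$; these are eigenvalues of $L$, namely $\nu^{<0}$ and $\nu^{\geq 0}$, and the corresponding $1$-eigenvectors inherit winding $l$ from the interior, giving $\wind^-(L)=\wind^+(L)=l$. The main obstacle will be making this continuity argument rigorous at the endpoints, where $\Psi_\nu(1)$ may become nontrivially parabolic or even equal to $I$; the joint continuity of $\Delta_\nu(s)$ in $(\nu,s)$ must then be used to identify the winding of the limiting $1$-eigenspace with the interior value $l$.
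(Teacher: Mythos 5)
Your proof is correct in outline, and the first half is the paper's own argument in different clothing: the paper compares a $\nu$-eigensolution $v$ with the solution $u$ of the unperturbed equation having the same initial condition by studying $z(t)=v(t)\overline{u(t)}$, showing $\dot\vartheta=\nu<0$ at every time when $v\in\R u$ and concluding $\arg z(1)<\arg z(0)=0$; that is exactly the strict angular comparison your Sturm-type non-re-crossing argument delivers, so the two proofs of the inequalities $\wind^-(L)<\max I(\varphi)$ and $\wind^+(L)>\min I(\varphi)$ are interchangeable. For the positive-hyperbolic statement the routes genuinely diverge. The paper argues by contradiction, first proving that $\partial I_\mu\cap\Z\neq\emptyset$ forces $\mu\in\sigma(L)$ (a short computation with $\partial_s r$ and $\partial_s\theta$ at a critical point of $\Delta_\mu$) and then trapping $I_\mu\subset(m,m+1)$ for $\mu\in(\nu^{<0},\nu^{\geq0})$, which contradicts $I_0$ containing an integer when $\varphi(1)$ is positive hyperbolic. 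You instead argue directly, carrying the constant integer winding $\Delta_\nu(s_\pm(\nu))=l$ of the two eigendirections across the maximal positive-hyperbolic interval around $0$ out to its endpoints $\nu^{<0},\nu^{\geq0}$. Both rest on the same continuation principle applied on the gap in $\sigma(L)$; your version is cleaner to motivate, and you are right to flag that the passage to the endpoints is where the remaining work lies. To close it you need two facts beyond joint continuity of $(\nu,s)\mapsto\Delta_\nu(s)$: when the endpoint matrix is a nontrivial parabolic, the two eigenlines of $\Psi_\nu(1)$ converge to its unique fixed line, because they are roots of a quadratic in a slope coordinate with continuously varying coefficients whose discriminant vanishes at the endpoint; and when the endpoint matrix equals the identity, the function $s\mapsto\Delta_\nu(s)$ at that $\nu$ is continuous and integer-valued, hence constant, and the identity $\Delta_\nu(s+1/2)=\Delta_\nu(s)$ disposes of sign ambiguities in the limiting direction. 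With those supplied, your argument is complete and slightly more unified than the paper's.
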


\begin{proof}
Write $I(\varphi)=[a,b]$, fix some $\nu \in \sigma(L) \cap (-\infty,0)$ and choose an eigenvector $v(t)$ for $\nu$. We consider $u(t)= \varphi(t) v(0)$, $z(t) = v(t)\overline{u(t)}$ and choose a piecewise smooth $\vartheta(t) \in \R$ such that $z(t) \in \R^+e^{i\vartheta(t)}$. Then $z$ satisfies $$ -i\dot z = (Sv)\bar u - v(\overline{Su}) + \nu z. $$ Whenever $v\in \R u$ we have $z\in \R$ and $(Sv)\bar u - v(\overline{Su}) \in i\R$, implying $\Re[-i\dot z/z] = \dot\vartheta = \nu < 0$ at these points (both lateral limits). So the total angular variation $\vartheta(1)-\vartheta(0)$ of $z$ is strictly negative since $u(0)=v(0)$, in other words, the total angular variation of $v$ is strictly smaller than that of $u$, which implies $\wind(\nu)<b$. The other inequalities are proved analogously.

To prove the assertion about the positive hyperbolic case, consider for any $\mu\in\R$ the winding interval $I_\mu$ associated to the differential equation $-i\dot u - Su = \mu u$. In particular, $I(\varphi) = I_0$. We claim that $\mu$ is an eigenvalue of $L=-i\partial_t-S$ if $\partial I_\mu \cap \Z \neq \emptyset$, in which case $\exists k\in \Z$ such that $\{k\} = \partial I_\mu \cap \Z$ and $\wind(\mu) = k$. Indeed, the fundamental solution $\varphi_\mu(t)$ of $-i\partial_t-S=\mu$ is a path in $Sp(1)$ starting at the identity. Define smooth functions $r,\theta:\R\times\R/\Z\times[0,1] \to \R$ by requiring
\[
\varphi_\mu(t)e^{i2\pi s} = r(\mu,s,t)e^{i2\pi\theta(\mu,s,t)}, \ \ r(\mu,s,t)>0, \ \ \theta(\mu,s,0) \in [0,1).
\]
We have $I_\mu = \{\theta(\mu,s,1)-\theta(\mu,s,0) \mid s\in\R/\Z \}$. Assume that $k\in\partial I_\mu \cap \Z$. If $s_0$ satisfies $\theta(\mu,s_0,1)-\theta(\mu,s_0,0) = k$ we must have $\partial_s\theta(\mu,s_0,1)=1$. Now we claim that $r(\mu,s_0,1)=1$ is an eigenvalue of $\varphi_\mu(1)$, which implies that $\mu$ is an eigenvalue of $L$ with winding $k$. We compute $\varphi_\mu(1)e^{i2\pi s_0} = r(\mu,s_0,1)e^{i2\pi\theta(\mu,s_0,1)} = r(\mu,s_0,1)e^{i2\pi s_0}e^{i2\pi k} = r(\mu,s_0,1)e^{i2\pi s_0}$ and
\[
\begin{aligned}
\varphi_\mu(1)&ie^{i2\pi s_0} = \frac{1}{2\pi} \left. \frac{d}{ds} \right|_{s=s_0} \varphi_\mu(1)e^{i2\pi s} = \frac{1}{2\pi} \left. \frac{d}{ds} \right|_{s=s_0} r(\mu,s,1)e^{i2\pi\theta(\mu,s,1)} \\
&= \frac{1}{2\pi} \partial_sr(\mu,s_0,1) e^{i2\pi \theta(\mu,s_0,1)} + \frac{1}{2\pi} i 2\pi \partial_s\theta(\mu,s_0,1)r(\mu,s_0,1)e^{i2\pi\theta(\mu,s_0,1)} \\
&= \frac{1}{2\pi} \partial_sr(\mu,s_0,1) e^{i2\pi s_0} + r(\mu,s_0,1)ie^{i2\pi s_0}
\end{aligned}
\]
Hence $1=\det \varphi_\mu(1) = r(\mu,s_0,1)^2 \Rightarrow r(\mu,s_0,1)=1$, and the claim follows.

Assume $m=\wind^-(L)<\wind^+(L)$. Hence $\wind^+(L)=m+1$, $m\in I_{\nu^{<0}}$ and $m+1 \in I_{\nu^{\geq0}}$. $\partial I_\mu \cap \Z = \emptyset$ $\forall \mu\in(\nu^{<0},\nu^{\geq0})$ because $L$ has no eigenvalues in $(\nu^{<0},\nu^{\geq0})$. Since $I_\mu$ varies continuously with $\mu$ and $|I_\mu|<1/2 \ \forall\mu$, we must have $m = \min I_{\nu^{<0}}$, $m+1 = \max I_{\nu^{\geq0}}$ and $I_\mu \subset (m,m+1) \ \forall \mu\in(\nu^{<0},\nu^{\geq0})$. This prevents $\varphi(1)$ from being positive hyperbolic since, otherwise, $\nu^{\geq0}>0$ and $I_0$ would contain an integer.
\end{proof}

Lemma~\ref{extremal_interval_comparison} and the non-trivial fact $p = \wind^+-\wind^-$, which was already used in the above lemma, imply together that $\mu(\varphi) = \tilde\mu(\varphi) \ \forall \varphi \in \Sigma^*$, where $\mu$ and $\tilde\mu$ are defined in~\eqref{geom_cz_def} and~\eqref{analytical_cz_def} respectively.

\begin{corollary}\label{extremal_winding_rotation}
Let $\varphi:([0,1],\{0\})\to (Sp(1),I)$ be a piecewise smooth path such that $\varphi(1)$ has no roots of unity in the spectrum. Extending $\varphi$ to $[0,+\infty)$ by~\eqref{extension_varphi}, consider the paths $\varphi^{(k)}(t) = \varphi(kt)$ and their associated self-adjoint operators $L^{(k)}$. If $\sigma(\varphi(1)) \cap \R = \emptyset$ then
\[
  \begin{array}{cccc}
    \wind^-(L^{(k)}) = \lfloor k\alpha \rfloor & \text{and} & \wind^+(L^{(k)}) = \lfloor k\alpha \rfloor + 1 & \forall k\geq 1
  \end{array}
\]
where $\alpha\not\in \Q$ is the unique number satisfying $\mu(\varphi^{(k)}) = 2\lfloor k\alpha \rfloor+1, \ \forall k$. If $\varphi(1)$ is hyperbolic, $\sigma(\varphi(1)) \subset (0,+\infty)$ and $l\in\Z$ satisfies $\mu(\varphi^{(k)}) = 2kl, \ \forall k$ then
\[
  \wind^-(L^{(k)}) = \wind^+(L^{(k)}) = kl \ \ \forall k \geq 1.
\]
If $\varphi(1)$ is hyperbolic, $\sigma(\varphi(1)) \subset (-\infty,0)$, and $l\in \Z$ satisfies $\mu(\varphi^{(k)}) = k(2l+1), \ \forall k$ then
\[
  \begin{aligned}
    & k \geq 1 \text{ is even } \Rightarrow \wind^-(L^{(k)}) = \wind^+(L^{(k)}) = k(l+1/2), \\
    & k \geq 1 \text{ is odd } \Rightarrow \left\{ \begin{aligned} & \wind^-(L^{(k)}) = \lfloor k(l+1/2) \rfloor \\ & \wind^+(L^{(k)}) = \lfloor k(l+1/2) \rfloor + 1. \end{aligned} \right.
  \end{aligned}
\]
\end{corollary}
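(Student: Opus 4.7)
The plan is to combine the identity $\mu(\varphi)=2\wind^-(L)+p(L)$ (stated just before the corollary) with $p(L)=\wind^+(L)-\wind^-(L)\in\{0,1\}$ and the iteration formulas for the Conley-Zehnder index from Lemma~\ref{cz_index_iterations_lemma}. In each of the three spectral cases, either the parity of $\mu(\varphi^{(k)})$ or the positive hyperbolic clause of Lemma~\ref{extremal_interval_comparison} pins down $p(L^{(k)})$, after which $\wind^-$ and $\wind^+$ are determined by arithmetic.

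First, for the elliptic case $\sigma(\varphi(1))\cap\R=\emptyset$, Lemma~\ref{cz_index_iterations_lemma} gives $\mu(\varphi^{(k)})=2\lfloor k\alpha\rfloor+1$. This is odd, so $p(L^{(k)})=1$ and hence $\wind^-(L^{(k)})=\lfloor k\alpha\rfloor$, $\wind^+(L^{(k)})=\lfloor k\alpha\rfloor+1$. As a sanity check one can use that $I(\varphi^{(k)})\subset(\lfloor k\alpha\rfloor,\lfloor k\alpha\rfloor+1)$ together with Lemma~\ref{extremal_interval_comparison} forces $\wind^-\leq\lfloor k\alpha\rfloor$ and $\wind^+\geq\lfloor k\alpha\rfloor+1$, both by strict inequalities since $\varphi^{(k)}\in\Sigma^*$.

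Next, for the positive hyperbolic case, observe that $\varphi(1)^k$ remains positive hyperbolic for every $k\geq 1$, so the last assertion of Lemma~\ref{extremal_interval_comparison} forces $p(L^{(k)})=0$. Then $\mu(\varphi^{(k)})=2kl$ yields $\wind^-(L^{(k)})=\wind^+(L^{(k)})=kl$ directly.

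Finally, for the negative hyperbolic case, split according to the parity of $k$. When $k$ is even, $\varphi(1)^k$ is positive hyperbolic, so once again $p(L^{(k)})=0$ by Lemma~\ref{extremal_interval_comparison}, and $\mu(\varphi^{(k)})=k(2l+1)$ (which is even for even $k$) gives $\wind^-(L^{(k)})=\wind^+(L^{(k)})=k(l+1/2)$. When $k$ is odd, $\varphi(1)^k$ is still negative hyperbolic and $\mu(\varphi^{(k)})=k(2l+1)$ is odd, forcing $p(L^{(k)})=1$; then
\[
\wind^-(L^{(k)})=\frac{k(2l+1)-1}{2}=kl+\frac{k-1}{2}=\lfloor k(l+1/2)\rfloor,
\]
since $k/2\notin\Z$ when $k$ is odd, and $\wind^+(L^{(k)})=\wind^-(L^{(k)})+1$. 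There is no serious obstacle here; the main point requiring care is the bookkeeping in the negative hyperbolic case, in particular verifying the floor computation and noticing that the positive hyperbolic clause of Lemma~\ref{extremal_interval_comparison} applies to $\varphi^{(k)}$ for even $k$ even though the original $\varphi(1)$ is negative hyperbolic.
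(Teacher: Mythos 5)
Your proof is correct and takes the route the paper intends: the corollary is an immediate consequence of the identity $\mu(\varphi) = 2\wind^-(L) + p(L)$ with $p(L)=\wind^+(L)-\wind^-(L)\in\{0,1\}$ together with the iteration formulas of Lemma~\ref{cz_index_iterations_lemma}, and your case-by-case bookkeeping (including the floor computation for odd $k$ in the negative hyperbolic case) is accurate. A minor remark: the parity of $\mu(\varphi^{(k)})$ alone already pins down $p(L^{(k)})$ in every case, so the appeal to the positive hyperbolic clause of Lemma~\ref{extremal_interval_comparison} is a harmless redundancy rather than an extra ingredient.
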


\subsubsection{Mean index and rotation number}\label{mean_rotation_section}

Let $\varphi:\R\to Sp(1)$, $\varphi(0)=I$, be the solution of a 1-periodic linear Hamiltonian system $\dot\varphi = iS\varphi$, that is, $S(t)$ is a 1-periodic smooth path of symmetric matrices. This is equivalent to $\varphi(t+1)=\varphi(t)\varphi(1)$ for all $t$.

As in the geometrical description of the index in Section~\ref{geom_description_section}, consider the unique smooth $\theta:\R\times \R\to \R$ satisfying $\varphi(t)e^{i2\pi s} \in \R^+ e^{i\theta(t,s)}$ and $\theta(0,s)=2\pi s$. Then $\theta(t,s+1) = \theta(t,s)+2\pi$ so that $s\mapsto f(s) := \theta(1,s)/2\pi$ satisfies $f(s+1)=f(s)+1$ and induces an orientation preserving self-diffeomorphism of $\R/\Z$. It can be written in the form $f(s) = s+ \Delta(s)$, where $\Delta(s)$ is a 1-periodic smooth function used to define the winding interval in~\eqref{winding_interval}: $I(\varphi|_{[0,1]}) = \{ \Delta(s) \mid s\in [0,1] \}$. The associated rotation number
\begin{equation}\label{rotation_number_paths}
  \rho(\varphi) = \lim_{k\to +\infty} \frac{\Delta(s) + \Delta(f(s)) + \dots + \Delta(f^{k-1}(s))}{k}
\end{equation}
which is independent of $s\in[0,1]$, is well-defined and of particular interest to us.

As before we may consider the iterated path $\varphi^{(k)}(t) = \varphi(kt)$, $t\in[0,1]$, and the associated angular variation $s\mapsto \Delta^{(k)}(s)$. By the 1-periodicity of $S$ we must have $$ \Delta^{(k)}(s) = \Delta(s) + \Delta(f(s)) + \dots + \Delta(f^{k-1}(s)) $$ so that $\Delta^{(k)}(s)/k \to \rho$ as $k\to+\infty$, $\forall s$. In view of formula~\eqref{analytical_cz_def} and Lemma~\ref{extremal_interval_comparison} we have that $2\Delta^{(k)}(s)-\mu(\varphi^{(k)})$ is uniformly bounded in $k$, for each fixed $s$. Thus the so-called mean index
\begin{equation}\label{mean_index_def}
  \bar \mu(\varphi) = \lim_{k\to \infty} \frac{\mu(\varphi^{(k)})}{k}
\end{equation}
is well-defined and

\begin{lemma}\label{mean_rotation_lemma}
$\bar\mu(\varphi) = 2\rho(\varphi)$.
\end{lemma}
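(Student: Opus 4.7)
The proof will essentially unpack what is already hinted at in the paragraph preceding the lemma: that $2\Delta^{(k)}(s)-\mu(\varphi^{(k)})$ is bounded uniformly in $k$.

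The plan is to use the analytic description of the index. For each $k\geq 1$ I would consider the self-adjoint operator $L^{(k)} = -i\partial_t - S^{(k)}$ on $L^{2}(\R/\Z,\R^{2})$ associated to $\varphi^{(k)}$, and invoke formula \eqref{analytical_cz_def} to write
\[
\mu(\varphi^{(k)}) \;=\; 2\wind^{-}(L^{(k)}) + p(L^{(k)}), \qquad p(L^{(k)}) \in \{0,1\}.
\]
Next, by Lemma~\ref{extremal_interval_comparison} the extremal asymptotic windings satisfy
\[
\wind^{-}(L^{(k)}) \;\le\; \max I(\varphi^{(k)}), \qquad \wind^{+}(L^{(k)}) \;\ge\; \min I(\varphi^{(k)}),
\]
together with $\wind^{+}(L^{(k)}) - \wind^{-}(L^{(k)}) \in \{0,1\}$. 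Combining these with the fact (recalled in \S\ref{geom_description_section}) that the winding interval $I(\varphi^{(k)})$ always has length strictly less than $1/2$, I would deduce that
\[
\min I(\varphi^{(k)}) - 1 \;\le\; \wind^{-}(L^{(k)}) \;\le\; \max I(\varphi^{(k)}) \;\le\; \min I(\varphi^{(k)}) + \tfrac{1}{2}.
\]

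Now fix $s \in [0,1]$. Since $\Delta^{(k)}(s) \in I(\varphi^{(k)})$ by definition of the winding interval, both $\Delta^{(k)}(s)$ and $\wind^{-}(L^{(k)})$ lie within a common interval of length at most $3/2$. Therefore
\[
\bigl|\,2\Delta^{(k)}(s) - \mu(\varphi^{(k)})\,\bigr|
\;\le\; 2\bigl|\Delta^{(k)}(s) - \wind^{-}(L^{(k)})\bigr| + p(L^{(k)})
\;\le\; 4,
\]
which is the uniform bound asserted in the text preceding \eqref{mean_index_def}. Dividing by $k$ and passing to the limit, I would conclude
\[
\bar\mu(\varphi) \;=\; \lim_{k\to\infty}\frac{\mu(\varphi^{(k)})}{k}
\;=\; \lim_{k\to\infty}\frac{2\Delta^{(k)}(s)}{k}
\;=\; 2\rho(\varphi),
\]
where the last equality is the very definition \eqref{rotation_number_paths} of $\rho(\varphi)$, together with the identity $\Delta^{(k)}(s) = \Delta(s) + \Delta(f(s)) + \cdots + \Delta(f^{k-1}(s))$ that the text has already recorded from $1$-periodicity of $S$.

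The main obstacle, if any, is merely bookkeeping: one has to be careful that Lemma~\ref{extremal_interval_comparison} is applicable whether or not $\varphi^{(k)} \in \Sigma^{*}$, since the mean index is meant to make sense for every path. Fortunately the analytic definition \eqref{analytical_cz_def} of $\mu$, as well as the non-strict versions of the inequalities for $\wind^{\pm}$, go through for any piecewise smooth path starting at $I$; the strict inequalities and the equality $\mu=\tilde\mu$ are only needed when $\varphi \in \Sigma^{*}$. So the argument above applies uniformly in $k$ without any case distinction, and the proof reduces to the elementary estimate displayed above.
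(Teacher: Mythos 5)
Your argument is correct and is exactly the one the paper sketches in the paragraph preceding the lemma: combine the analytic formula $\mu(\varphi^{(k)})=2\wind^-(L^{(k)})+p(L^{(k)})$ with the two-sided bounds from Lemma~\ref{extremal_interval_comparison} and the fact that $|I(\varphi^{(k)})|<1/2$ to show $|2\Delta^{(k)}(s)-\mu(\varphi^{(k)})|$ is bounded uniformly in $k$, then divide by $k$ and pass to the limit using $\Delta^{(k)}(s)/k\to\rho(\varphi)$. The remark about applicability outside $\Sigma^*$ is a sensible precaution and is handled exactly as you suggest, via the analytic description~\eqref{analytical_cz_def}.
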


\subsubsection{Conley-Zehnder index and transverse rotation number of periodic orbits}\label{cz_rotation_orbits}

Consider the flow $\phi_t$ of the Reeb vector field $X_\lambda$ associated to a contact form $\lambda$ on the 3-manifold $V$. Throughout the rest of the paper we assume that any closed orbit $P$ has a marked point in its geometric image, and when we write $P=(x,T)$ it will be understood that $x(t)$ is chosen so that the marked point is $x(0)$.

The Reeb flow preserves $\lambda$, so we get a path of $d\lambda$-symplectic linear maps $d\phi_t: \xi_{x(0)} \to \xi_{x(t)}$ when $x(t)$ is a trajectory of $X_\lambda$. $P = (x,T)$ is non-degenerate if $1$ is not in the spectrum of $d\phi_T:\xi_{x(0)} \to \xi_{x(0)}$, and $\lambda$ will be called non-degenerate if this holds for every $P \in \P(\lambda)$; here $\P(\lambda)$ is the set defined in Section~\ref{section_main_result_major}. This is a residual condition in the set of contact forms on $V$ equipped with the $C^\infty$-topology.

Let $P=(x,T)$ be a closed Reeb orbit. The contact structure $\xi$ is given by~\eqref{contact_structure}, and we denote $x_T(t) = x(Tt)$. The orbit $(x,kT)$ is denoted by $P^k$. Fix a homotopy class $\beta$ of smooth $d\lambda$-symplectic trivializations of the bundle $(x_T)^*\xi$. A trivialization $\Psi:(x_T)^*\xi \to \R/\Z\times \R^2$ in class $\beta$ can be used to represent the linear maps $d\phi_{Tt} : \xi_{x(0)} \to \xi_{x(Tt)}$ as a path of symplectic matrices
\[
  \begin{array}{cc}
    \varphi : \R \to Sp(1), & \varphi(t) = \Psi_t \circ d\phi_{Tt} \circ (\Psi_0)^{-1}.
  \end{array}
\]
It satisfies $\varphi(t+1) = \varphi(t)\varphi(1) \ \forall t$, that is, $\varphi$ solves a 1-periodic linear Hamiltonian system as in Section~\ref{mean_rotation_section}. We define the transverse rotation number of $P$ with respect to the homotopy class $\beta$ as
\begin{equation}\label{eq_defn_rot_number}
  \rho(P,\beta) = \rho(\varphi)
\end{equation}
where $\rho(\varphi)$ is the rotation number~\eqref{rotation_number_paths}. Note that its value depends only on the homotopy class $\beta$ of the chosen trivialization, since for two trivializations in class $\beta$ the numerator inside the limit in~\eqref{rotation_number_paths} will differ by a quantity uniformly bounded in $k$. We also define
\begin{equation}\label{}
  \mu_{CZ}(P,\beta) = \mu(\varphi)
\end{equation}
where $\mu$ is the index for symplectic paths discussed in Section~\ref{analytical_description_section}. The class $\beta$ induces a homotopy class of $d\lambda$-symplectic trivializations of $(x_{kT})^*\xi$ for every $k\geq 1$ in an obvious way, which we denote by $\beta^k$. Lemma~\ref{mean_rotation_lemma} implies $$ \rho(P,\beta) = \lim_{k\to \infty} \frac{1}{2k}\mu_{CZ}(P^k,\beta^k). $$

\begin{remark}[Winding numbers]\label{winding_numbers}
Let $E$ be an oriented rank-2 real vector bundle over $\R/\Z$. If $Z$ and $W$ are non-vanishing continuous sections of $E$ then the relative winding number $\wind(W,Z) \in \Z$ is defined as follows. Let $Z'$ be any non-vanishing continuous section such that $\{Z(t),Z'(t)\}$ is an oriented basis for $E_t$, $\forall t$. Then $W(t) = a(t)Z(t) + b(t)Z'(t)$ for unique continuous functions $a,b : \R/\Z \to \R$, and we set $\wind(W,Z) = \theta(1)-\theta(0) \in \Z$, where $\theta \in C^0([0,1],\R)$ satisfies $a+ib \in \R^+e^{i2\pi\theta}$. When $E$ is endowed with a symplectic or complex structure then we use the induced orientation to compute relative winding numbers. Note also that $\wind(W,Z)$ depends only on the homotopy classes of non-vanishing sections of both $W$ and $Z$.
\end{remark}

If a trivialization $\Psi'$ in another class $\beta'$ is used to represent $d\phi_{Tt}$, we get numbers $\rho(P,\beta')$ and $\mu_{CZ}(P,\beta')$ satisfying $$ \begin{array}{ccc} \rho(P,\beta') = \rho(P,\beta) + m & \text{ and } & \mu_{CZ}(P,\beta') = \mu_{CZ}(P,\beta) + 2m \end{array} $$ where $m\in \Z$ is the Maslov index of the loop of symplectic maps $\Psi'_t \circ (\Psi_t)^{-1}$. Note that $m = \wind((\Psi_t)^{-1} \cdot u, (\Psi_t')^{-1} \cdot u)$ for any fixed non-zero vector $u\in \R^2$.

\subsection{Pseudo-holomorphic curves}

We take a moment to review the basics of pseudo-holomorphic theory in symplectic cobordisms. In the following discussion we fix a closed co-oriented contact 3-manifold $(V,\xi)$.

\subsubsection{Cylindrical almost-complex structures}\label{cyl_alm_cpx_str}

The space $\xi^\bot\setminus 0$, the annihilator of $\xi$ in $T^*V$ minus the zero section, can be naturally endowed with the symplectic form $\omega_\xi = d\alpha_{\rm taut}$, where $\alpha_{\rm taut}$ is the tautological 1-form on $T^*V$. The given co-orientation of $\xi$ orients the line bundle $TV/\xi \to V$ and, consequently, also $(TV/\xi)^* \simeq \xi^\bot$. We single out the component $W_{\xi} \subset \xi^\bot\setminus 0$ consisting of positive covectors, which we call the symplectization of $(V,\xi)$.

A choice of contact form $\lambda$ on $V$ satisfying~\eqref{contact_structure} and inducing the co-orientation of $\xi$ induces a symplectomorphism
\begin{equation}\label{coordinates_symplectization}
  \begin{aligned}
    \Psi_\lambda : (W_\xi,\omega_\xi) &\to (\R\times V,d(e^a\lambda)) \\
    \theta &\mapsto \left( \ln \frac{\theta}{\lambda} , \tau(\theta) \right)
  \end{aligned}
\end{equation}
where $a$ denotes the $\R$-coordinate and $\tau:T^*V \to V$ is the bundle projection. The free additive $\R$-action on the right side corresponds to $(c,\theta) \mapsto e^c\theta$ on the left side.

The bundle $\xi \to V$ becomes symplectic with the bilinear form $d\lambda$. We will denote by $\J_+(\xi)$ the set of $d\lambda$-compatible complex structures on $\xi$, which will be endowed with the $C^\infty$-topology. It does not depend on the choice of positive contact form $\lambda$ satisfying~\eqref{contact_structure}. As is well-known, $\J_+(\xi)$ is non-empty and contractible. Any $J \in \J_+(\xi)$ and $\lambda$ as above induce an almost complex structure $\jtil$ on $\R\times V$ by
\begin{equation}\label{alm_cpx_str_1}
  \begin{array}{cc}
    \jtil \cdot \partial_a = X_{\lambda}, & \jtil|_\xi = J
  \end{array}
\end{equation}
where $\xi$ is seen as a $\R$-invariant subbundle of $T(\R\times V)$. It is compatible with $d(e^a\lambda)$. The pull-back $\jhat = (\Psi_\lambda)^*\jtil$ is then a $\omega_\xi$-compatible almost complex structure on $W_\xi$. The set of $\jhat$ that arise in this way will be denoted by $\J(\lambda)$.

\subsubsection{Cylindrical ends}\label{cyl_ends_alm_cpx_str}

The fibers of $\tau: W_{\xi} \to V$ can be ordered in the following way: for given $\theta_0,\theta_1 \in \tau^{-1}(x)$, we write $\theta_0 \prec \theta_1$ (resp. $\theta_0 \preceq \theta_1$) when $\theta_1 / \theta_0 > 1$ (resp. $\theta_1 / \theta_0 \geq 1$). Given two positive contact forms $\lambda_-,\lambda_+$ for $\xi$, we define $\lambda_- \prec \lambda_+$ if $\lambda_-|_x \prec \lambda_+|_{x}$ pointwise and, in this case, we set
\[
 \overline{W}(\lambda_-,\lambda_+) = \left \{ \theta \in W_{\xi} \mid \lambda_-|_{\tau(\theta)} \preceq \theta \preceq \lambda_+|_{\tau(\theta)} \right\}
\]
which is an exact symplectic cobordism between $(V,\lambda_-),(V, \lambda_+)$. Let
\[
  \begin{split}
    W^-(\lambda_-) &= \left \{ \theta \in W_{\xi} \mid \theta \preceq \lambda_-|_{\tau(\theta)} \right\}, \\
    W^+(\lambda_+) &= \left \{ \theta \in W_{\xi} \mid \lambda_+|_{\tau(\theta)} \preceq \theta \right\}.
  \end{split}
\]
It follows that
\[
  W_{\xi} = W^-(\lambda_-) \bigcup_{\substack{\partial^+ W^-(\lambda_-) = \\ \partial^- \overline{W}(\lambda_-,\lambda_+)}} \overline{W}(\lambda_-,\lambda_+) \bigcup_{\substack{\partial^+ \overline{W}(\lambda_-,\lambda_+) \\ =  \partial^- W^+(\lambda_+)}} W^+(\lambda_+).
\]

An almost-complex structure $\bar J$ satisfying
\begin{itemize}
 \item $\bar J$ coincides with $\widehat{J}_+ \in \J(\lambda_+)$ on a neighborhood of $W^+(\lambda_+)$,
 \item $\bar J$ coincides with $\widehat{J}_- \in \J(\lambda_-)$ on a neighborhood of $W^-(\lambda_-)$,
 \item $\bar J$ is $\omega_{\xi}$-compatible
\end{itemize}
is an almost-complex structure with cylindrical ends. The set of such almost-complex structures will be denoted by $\J(\jhat_-,\widehat{J}_+)$.  It is well-known that this is a non-empty contractible set.  For $\bar J \in \J(\widehat{J}_-,\widehat{J}_+)$ the almost-complex manifold $(W_\xi,\bar J)$ is said to have \emph{cylindrical ends} $W^+(\lambda_+)$ and $W^-(\lambda_-)$.

\subsubsection{Splitting almost-complex structures}\label{splitting_alm_cpx_str}

Suppose we are given positive contact forms $\lambda_- \prec \lambda \prec \lambda_+$ for $\xi$. Let $\jhat_- \in \J(\lambda_-)$, $\jhat \in \J(\lambda)$ and $\jhat_+ \in \J(\lambda_+)$ be cylindrical almost-complex structures, and consider almost-complex structures $J_1 \in \J(\jhat_-,\jhat)$, $J_2 \in \J(\jhat,\jhat_+)$. Let us denote by $g_c(\theta) = e^c\theta$ the $\R$-action on $W_\xi$. Then there is a smooth family of almost-complex structures $\bar J_R$, $R \geq 0$, given by
\[
  \bar J_R = \left\{ \begin{aligned} &(g_{-R})^*J_2 \ \mbox{ on } W^+(\lambda) \\ &(g_R)^*J_1 \ \mbox{ on } W^-(\lambda) \end{aligned} \right.
\]
which is smooth since $\jhat$ is $\R$-invariant. We may denote $\bar J_R = J_1 \circ_R J_2$ if the dependence on $J_1$ and $J_2$ needs to be made explicit.

Note that if $\epsilon_0>0$ is small enough then $J_1 \circ_R J_2 \in \J(\jhat_-,\jhat_+)$ for all $0 < R \leq \epsilon_0$. For each $R>0$ we take a function $\varphi_R:\R\to \R$ satisfying $\varphi_R(a) = a+R$ if $a\leq -R-\epsilon_0$, $\varphi_R(a) = a-R$ if $a\geq R+\epsilon_0$ and $\varphi_R' > 0$ everywhere. The family $\{\varphi_R\}$ can always be arranged so that $\sup_{R,a} |\varphi_R'(a)| \leq 1$ and $$ \inf \{ \varphi_R'(a) \mid a\in (-\infty,-R] \cup [R,+\infty) \text{ and } R>0 \} \geq \frac{1}{2}. $$ In particular, the inverse function $\varphi_R^{-1}$ has derivative bounded in the intervals $(-\infty,\varphi_R(-R)]$ and $[\varphi_R(R),+\infty)$ uniformly in $R$. Consider the diffeomorphisms $\psi_R : \R\times V \to \R\times V$, $\psi_R(a,x) = (\varphi_R(a),x)$ and
\begin{equation}\label{diffeo_Phi_R}
  \Phi_R = \Psi_\lambda^{-1} \circ \psi_R \circ \Psi_\lambda : W_{\xi} \to W_{\xi}.
\end{equation}
It is straightforward to check that
\begin{equation}\label{jprime_R}
  J'_R := (\Phi_R)_*(J_1 \circ_R J_2)
\end{equation}
belongs to $\J(\jhat_-,\jhat_+)$, for every $R$ large.

\subsubsection{Finite-energy curves in symplectizations}\label{curves_symplectizations}

Let us fix a positive contact form $\lambda$ satisfying~\eqref{contact_structure}.

Consider the set $\Lambda = \{ \phi : \R\to\R \mid \phi(\R) \subset [0,1], \ \phi'\geq 0 \}$. For each $\phi \in \Lambda$ we denote by $\lambda_\phi$ the 1-form $(\Psi_\lambda)^*(\phi\lambda)$, where $\phi\lambda$ denotes the 1-form $(a,x) \mapsto \phi(a)\lambda|_x$ on $\R\times V$ and $\Psi_\lambda$ is the diffeomorphism~\eqref{coordinates_symplectization}.

\begin{definition}[Hofer~\cite{93}]\label{fe_curve_symp}
Let $(S,j)$ be a closed Riemann surface, $\Gamma \subset S$ be finite and $\jhat \in \J(\lambda)$. A finite-energy $\jhat$-curve is a pseudo-holomorphic map $$ \util : (S\setminus \Gamma,j) \to ( W_\xi,\jhat ) $$ satisfying
\begin{equation}\label{energy_symp}
  0< E(\util) = \sup_{\phi\in\Lambda} \int_{S\setminus\Gamma} \util^*d\lambda_\phi < \infty.
\end{equation}
The quantity $E(\util)$ is called the Hofer-energy.
\end{definition}

Each integrand in the definition of the energy is non-negative and $\util$ is constant when $E(\util)=0$. The elements of $\Gamma$ are the so-called punctures.

\begin{remark}[Cylindrical coordinates]\label{cyl_coord}
Fix $z\in\Gamma$ and choose a holomorphic chart $\psi:(U,z) \to (\psi(U),0)$, where $U$ is a neighborhood of $z$. We identify $[s_0,+\infty) \times \R/\Z$ with a punctured neighborhood of $z$ via $(s,t) \simeq \psi^{-1}(e^{-2\pi(s+it)})$, for $s_0 \gg 1$, and call $(s,t)$ positive cylindrical coordinates centered at $z$. We may also identify $(s,t) \simeq \psi^{-1}(\est)$ where $s<-s_0$ and, in this case, $(s,t)\in (-\infty,-s_0]\times \R/\Z$ are negative coordinates. In both cases we write $\util(s,t) = \util\circ\psi^{-1}(e^{-2\pi(s+it)})$ or $\util(s,t) = \util\circ\psi^{-1}(e^{2\pi(s+it)})$.
\end{remark}

Let $(s,t)$ be positive cylindrical coordinates centered at some $z\in\Gamma$, and write $ \Psi_\lambda \circ \util(s,t) = (a(s,t),u(s,t))$. $E(\util)<\infty$ implies that
\begin{equation}\label{mass_defn}
  m = \lim_{s\to+\infty} \int_{\{s\}\times \R/\Z} u^*\lambda
\end{equation}
exists. This number is the mass of $\util$ at $z$, and does not depend on the choice of coordinates. The puncture $z$ is called positive, negative or removable when $m>0$, $m<0$ or $m=0$ respectively, and $\util$ can be smoothly extended to $(S\setminus\Gamma)\cup\{z\}$ when $z$ is removable. Moreover, $a(s,t) \to \epsilon\infty$ as $s\to+\infty$, where $\epsilon$ is the sign of $m$.

\subsubsection{Finite-energy curves in cobordisms}\label{curves_cobordisms}

Let $\lambda_- \prec \lambda_+$ be positive contact forms for $\xi$ and consider $\jhat_\pm \in \J(\lambda_\pm)$, $\bar J \in \J(\jhat_-,\jhat_+)$. Recall the symplectomorphisms $\Psi_{\lambda_\pm} : (W_\xi,\omega_\xi) \to (\R\times V,d(e^a\lambda_\pm))$, the collection $\Lambda$ and the 1-forms $\lambda_{\pm,\phi}$ on $W_\xi$ with $\phi \in \Lambda$.

\begin{definition}[\cite{sftcomp}]\label{fe_curve_cobordism}
Let $(S,j)$ be a closed Riemann surface and $\Gamma \subset S$ be finite. A finite-energy $\bar J$-curve is a pseudo-holomorphic map $$ \util : (S\setminus \Gamma,j) \to ( W_\xi,\bar J ) $$ satisfying
\begin{equation}\label{energy_cobordism}
  0< E_-(\util) + E_+(\util) + E_0(\util) < \infty
\end{equation}
where the various energies above are defined as
\begin{equation*}\label{}
  \begin{aligned}
    & E_+(\util) = \sup_{\phi\in\Lambda} \int_{\util^{-1}(W^+(\lambda_+))} \util^*d\lambda_{+,\phi} \\
    & E_-(\util) = \sup_{\phi\in\Lambda} \int_{\util^{-1}(W^-(\lambda_-))} \util^*d\lambda_{-,\phi}
  \end{aligned}
\end{equation*}
and
\begin{equation*}\label{}
  E_0(\util) = \int_{\util^{-1}(\overline W(\lambda_-,\lambda_+))} \util^*\omega_\xi.
\end{equation*}
\end{definition}

As before, the elements of $\Gamma$ are called punctures. A puncture $z\in\Gamma$ is called positive if
\begin{itemize}
  \item there exists a neighborhood $U$ of $z$ in $S$ such that $\util(U\setminus \{z\}) \subset W^+(\lambda_+)$,
  \item writing $\Psi_{\lambda_+} \circ \util = (a,u)$ on $U\setminus \{z\}$ we have that $a(\zeta) \to +\infty$ as $\zeta\to z$.
\end{itemize}
Analogously $z$ is called negative if
\begin{itemize}
  \item there exists a neighborhood $U$ of $z$ in $S$ such that $\util(U\setminus \{z\}) \subset W^-(\lambda_-)$,
  \item writing $\Psi_{\lambda_-} \circ \util = (a,u)$ on $U\setminus \{z\}$ we have that $a(\zeta) \to -\infty$ as $\zeta\to z$.
\end{itemize}
Finally $z$ is said to be removable if $\util$ can be smoothly extended to $(S\setminus\Gamma) \cup \{z\}$. It turns out that the set of punctures can be divided into positive, negative and removable, see~\cite{sftcomp}.

\subsubsection{Finite-energy curves in splitting cobordisms}\label{fe_curves_splitting_cob}

As in Section~\ref{splitting_alm_cpx_str} we consider positive contact forms $\lambda_- \prec \lambda \prec \lambda_+$ for $\xi$, select $\jhat_- \in \J(\lambda_-)$, $\jhat \in \J(\lambda)$, $\jhat_+ \in \J(\lambda_+)$, and $J_1 \in \J(\jhat_-,\jhat)$, $J_2 \in \J(\jhat,\jhat_+)$. Then for each $R>0$ we have an almost complex structure $\bar J_1\circ_R \bar J_2$ which takes particular forms in various regions on $W_\xi$:
\begin{itemize}
  \item $\bar J_1\circ_R \bar J_2 = \jhat_+$ on $g_R(W^+(\lambda_+)) = W^+(e^R\lambda_+)$,
  \item $\bar J_1\circ_R \bar J_2 = \jhat$ on $\overline W(e^{-R}\lambda,e^R\lambda)$ and
  \item $\bar J_1\circ_R \bar J_2 = \jhat_-$ on $g_{-R}(W^-(\lambda_-)) = W^-(e^{-R}\lambda_-)$.
\end{itemize}

\begin{definition}[\cite{sftcomp}]\label{fe_curve_split_cobordism}
Let $(S,j)$ be a closed Riemann surface and $\Gamma \subset S$ be finite. A finite-energy $(\bar J_1\circ_R \bar J_2)$-curve is a pseudo-holomorphic map $$ \util : (S\setminus \Gamma,j) \to ( W_\xi,\bar J_1\circ_R \bar J_2 ) $$ satisfying
\begin{equation}\label{energy_splitting}
  0< E_{\lambda_-}(\util) + E_{\lambda_+}(\util) + E_\lambda(\util) + E_{(\lambda,\lambda_+)}(\util) + E_{(\lambda_-,\lambda)}(\util) < \infty
\end{equation}
where
\begin{equation*}\label{}
  \begin{aligned}
    & E_{\lambda_+}(\util) = \sup_{\phi\in\Lambda} \int_{\util^{-1}(W^+(e^R\lambda_+))} \util^*d\lambda_{+,\phi} \\
    & E_{\lambda}(\util) = \sup_{\phi\in\Lambda} \int_{\util^{-1}(\overline W(e^{-R}\lambda,e^R\lambda))} \util^*d\lambda_{\phi} \\
    & E_{\lambda_-}(\util) = \sup_{\phi\in\Lambda} \int_{\util^{-1}(W^-(e^{-R}\lambda_-))} \util^*d\lambda_{-,\phi}
  \end{aligned}
\end{equation*}
and
\begin{equation*}\label{}
  \begin{aligned}
    & E_{(\lambda,\lambda_+)}(\util) = \int_{\util^{-1}(\overline W(e^R\lambda,e^R\lambda_+))} \util^*(e^{-R}\omega_\xi) \\
    & E_{(\lambda_-,\lambda)}(\util) = \int_{\util^{-1}(\overline W(e^{-R}\lambda_-,e^{-R}\lambda))} \util^*(e^R\omega_\xi).
  \end{aligned}
\end{equation*}
Note that all integrands are pointwise non-negative.
\end{definition}

As before punctures are divided into positive, negative and removable, see~\cite{sftcomp}.

\subsubsection{A restricted class of almost-complex structures}\label{restricted_class_alm_cpx_str}

Consider $\jhat_\pm \in \J(\lambda_\pm)$, where $\lambda_\pm = f_\pm\lambda_0$ are positive contact forms on $S^3$ with $\lambda_0$ as in~\eqref{std_Liouville_form}, and $f_\pm \in \F$ satisfy $f_-<f_+$ pointwise. Here $\F$ is the set of functions $f:S^3\to(0,+\infty)$ such that $f\lambda_0$ realizes the standard Hopf link $K_0$ as a pair of closed Reeb orbits. Later we will need to consider the subset
\begin{equation}\label{}
  \J(\jhat_-,\jhat_+ : K_0) \subset \J(\jhat_-,\jhat_+)
\end{equation}
of almost complex structures for which $\tau^{-1}(K_0)$ is a complex submanifold, where $\tau:W_{\xi_0}\to S^3$ is projection onto the base point. It is easy to check that it is non-empty and, when equipped with the $C^\infty$-topology, it is a contractible space.

Note also that if $\lambda = f\lambda_0$ is another contact form for some $f\in\F$ satisfying $f_-<f<f_+$ pointwise, $\jhat \in \J(\lambda)$, $\bar J_1 \in \J(\jhat_-,\jhat:K_0)$ and $\bar J_2 \in \J(\jhat,\jhat_+:K_0)$ then $\tau^{-1}(K_0)$ is also a complex submanifold with respect to $\bar J_1 \circ_R\bar J_2$. Moreover, $J'_R = (\Phi_R)_*(\bar J_1 \circ_R\bar J_2) \in \J(\jhat_-,\jhat_+:K_0)$, where $\Phi_R$ is the map~\eqref{diffeo_Phi_R}.

\subsubsection{Asymptotic operators and asymptotic behavior}\label{asymptotic_behavior_section}

Let $P = (x,T) \in \P(\lambda)$ and denote $x_T(t) = x(Tt)$. Any given $J \in \J_+(\xi)$ induces an inner product for sections of $(x_T)^*\xi$ by
\begin{equation}\label{L2_inner_prod}
\left< \eta,\zeta \right> = \int_0^1 (d\lambda)_{x_T(t)} (\eta(t), J_{x_T(t)} \cdot \zeta(t)) dt
\end{equation}
On the corresponding space of square-integrable sections there is an unbounded self-adjoint operator defined by
\begin{equation}\label{asymp_op_def}
A_P \cdot \eta = J(-\nabla_t\eta + T\nabla_\eta X_\lambda)
\end{equation}
where $\nabla$ is a choice of torsionless connection on $TV$; $A_P$ does not depend on this choice.

Let us fix a homotopy class $\beta$ of $d\lambda$-symplectic trivializations of $(x_T)^*\xi$ and choose some $\Psi$ in class $\beta$. Then $A_P$ is represented as $-J(t)\partial_t - S(t)$, where $J(t)$ is the representation of $(x_T)^*J$ and $S(t)$ is some smooth 1-periodic path of $2\times 2$-matrices.  If $\Psi$ is $(d\lambda,J)$-unitary\footnote{There is always a unitary trivialization in any homotopy class.} then $J(t) \equiv i$ and $S(t)$ is symmetric for all~$t$, so that $A_P$ has all the spectral properties described in Section~\ref{analytical_description_section}. In particular, if $\eta$ is non-trivial and satisfies $A_P \cdot \eta = \nu \eta$ for some eigenvalue $\nu$ of $A_P$, then $v(t) = \Psi_t \cdot \eta(t) \in \R^2$ does not vanish and satisfies $-i\dot v-Sv=\nu v$. Defining a continuous $\vartheta:[0,1] \to \R$ by $v(t) \in\R^+e^{i\vartheta(t)}$ the integer
\begin{equation}\label{}
  \wind(\nu,P,\beta) = \frac{\vartheta(1)-\vartheta(0)}{2\pi}
\end{equation}
does not depend on the choice of $\eta$ in the eigenspace of $\nu$. If $\eta_1,\eta_2 \in \sigma(A_P)$ then $\eta_1\leq\eta_2 \Rightarrow \wind(\nu_1,P,\beta) \leq \wind(\nu_2,P,\beta)$. Moreover, if $\beta'$ is another homotopy class of $d\lambda$-symplectic trivializations and $\Psi'$ is in class $\beta'$ then
\begin{equation}\label{}
  \wind(\nu,P,\beta') = \wind(\nu,P,\beta) + m, \ \ \ \forall \nu \in\sigma(A_P)
\end{equation}
where $m$ is the Maslov number of the loop $\Psi'_t\circ (\Psi_t)^{-1}$.

We define $\wind^{\geq 0}(P,\beta)$ and $\wind^{<0}(P,\beta)$ to be the winding of the smallest non-negative and largest negative eigenvalues of $A_P$ with respect to $\beta$, respectively. In view of~\eqref{analytical_cz_def} we have
\begin{equation}\label{}
  \mu_{CZ}(P,\beta) = 2\wind^{<0}(P,\beta) + p
\end{equation}
where $p=0$ if $\wind^{\geq 0}(P,\beta)=\wind^{<0}(P,\beta)$ or $p=1$ if not. As a consequence of Corollary~\ref{extremal_winding_rotation} we get

\begin{lemma}\label{inequalities_windings}
Let $P =(x,T) \in \P(\lambda)$ and assume $P^k = (x,kT)$ is non-degenerate $\forall k\geq 1$. If we fix a homotopy class $\beta$ of $d\lambda$-symplectic trivializations of $(x_T)^*\xi$ then
\begin{itemize}
  \item $P$ is elliptic if, and only if, $\rho(P,\beta) = \alpha \not\in \Q$. In this case
      \begin{equation*}\label{}
        \begin{array}{ccc}
          \wind^{\geq 0}(P^k,\beta^k) = \lfloor k\alpha \rfloor +1 & \wind^{<0}(P^k,\beta^k) = \lfloor k\alpha \rfloor & \forall k\geq 1.
        \end{array}
      \end{equation*}
  \item $P$ is hyperbolic with positive Floquet multipliers if, and only if, $\rho(P,\beta) = l \in \Z$. In this case
      \begin{equation*}\label{}
        \begin{array}{cc}
          \wind^{\geq 0}(P^k,\beta^k) = kl = \wind^{<0}(P^k,\beta^k) & \forall k\geq 1.
        \end{array}
      \end{equation*}
  \item $P$ is hyperbolic with negative Floquet multipliers if, and only if, $\rho(P,\beta) = l +1/2$ for some $l \in \Z$. In this case
      \begin{equation*}\label{}
        \begin{aligned}
          & k \text{ is even} \Rightarrow \wind^{<0}(P^k,\beta^k) = \wind^{\geq 0}(P^k,\beta^k) = k(l+1/2)  \\
          & k \text{ is odd} \Rightarrow \left\{ \begin{aligned} & \wind^{<0}(P^k,\beta^k) = \lfloor k(l+1/2) \rfloor \\ & \wind^{\geq 0}(P^k,\beta^k) = \lfloor k(l+1/2) \rfloor + 1. \end{aligned} \right.
        \end{aligned}
      \end{equation*}
\end{itemize}
Here $\beta^k$ denotes the homotopy class of $d\lambda$-symplectic trivializations of $(x_{kT})^*\xi$ induced by $\beta$.
\end{lemma}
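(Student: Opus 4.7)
The plan is to reduce the lemma to Corollary~\ref{extremal_winding_rotation} by identifying the asymptotic operator $A_{P^k}$ with the operator $L^{(k)}$ from~\ref{geom_description_section} under a suitable unitary trivialization. First I would fix a $(d\lambda,J)$-unitary trivialization $\Psi$ in class $\beta$; then $A_P$ is represented by $L = -i\partial_t - S(t)$ on $L^2(\R/\Z,\R^2)$ as in~\ref{asymptotic_behavior_section}, and the transverse linearized Reeb flow is represented by the unique path $\varphi:\R \to Sp(1)$, $\varphi(0) = I$, solving $\dot\varphi = iS\varphi$. Since $S$ is $1$-periodic, $\varphi$ satisfies $\varphi(t+1) = \varphi(t)\varphi(1)$ as in~\ref{mean_rotation_section}. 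A standard rescaling argument identifies the asymptotic operator $A_{P^k}$ with the operator $L^{(k)}$ associated to the $k$-th iterate $\varphi^{(k)}(t) = \varphi(kt)$, and under this identification the windings of eigenvectors of $A_{P^k}$ from~\ref{asymptotic_behavior_section} coincide with $\wind^\pm(L^{(k)})$ from~\ref{analytical_description_section}. Hence it suffices to translate the three cases of Corollary~\ref{extremal_winding_rotation}.

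Next I would establish the equivalences between the dynamical type of $P$ and the arithmetic nature of $\rho(P,\beta)$. The assumption that every iterate $P^k$ is non-degenerate means that $\varphi(1)$ has no root of unity in its spectrum, placing us in one of the three cases of Lemma~\ref{cz_index_iterations_lemma}. Combining this with Lemma~\ref{mean_rotation_lemma} and the identity $\bar\mu(\varphi) = \lim_k \mu(\varphi^{(k)})/k$, one obtains: if $\sigma(\varphi(1)) \cap \R = \emptyset$ (i.e.\ $P$ is elliptic) then $\rho(P,\beta) = \alpha \not\in \Q$; if $\sigma(\varphi(1)) \subset (0,+\infty)$ (positive hyperbolic) then $\rho(P,\beta) = l \in \Z$; and if $\sigma(\varphi(1)) \subset (-\infty,0)$ (negative hyperbolic) then $\rho(P,\beta) = l + 1/2$ for some $l \in \Z$. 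The converse implications follow immediately since these three types of rotation numbers are mutually exclusive and together exhaust the non-degenerate case.

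Finally, in each of the three cases I would simply transcribe Corollary~\ref{extremal_winding_rotation} into the language of Section~\ref{asymptotic_behavior_section}: the equalities $\wind^{<0}(P^k,\beta^k) = \wind^-(L^{(k)})$ and $\wind^{\geq 0}(P^k,\beta^k) = \wind^+(L^{(k)})$ give the formulas stated in the lemma, including the parity-dependent formulas in the negative hyperbolic case. The main obstacle, such as it is, is bookkeeping: one must carefully check that passing from the trivialization $\beta$ of $(x_T)^*\xi$ to the induced trivialization $\beta^k$ of $(x_{kT})^*\xi$ corresponds precisely to the operation $\varphi \mapsto \varphi^{(k)}$ used in Corollary~\ref{extremal_winding_rotation}, and that the winding integers defined via eigenvectors of $A_{P^k}$ match those defined via the path $\varphi^{(k)}$; this is standard but requires the explicit rescaling of time by a factor of $k$ together with the transformation law for $S(t)$ under this rescaling.
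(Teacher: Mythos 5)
Your proposal is correct and follows the same route as the paper, which simply states Lemma~\ref{inequalities_windings} as a direct consequence of Corollary~\ref{extremal_winding_rotation}. You have spelled out the bookkeeping the paper leaves implicit: the choice of unitary trivialization in class $\beta$, the rescaling identifying $A_{P^k}$ with $L^{(k)}$, and the equivalence between dynamical type and the arithmetic nature of $\rho(P,\beta)$ via Lemmas~\ref{cz_index_iterations_lemma} and~\ref{mean_rotation_lemma}.
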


\begin{definition}[Martinet Tube]\label{martinet_tube_def}
Let $P = (x,T) \in \P(\lambda)$ and $T_{\rm min}$ be the underlying minimal positive period of $x$. A Martinet tube for $P$ is a pair $(U,\Phi)$ where $U$ is an open neighborhood of $x(\R)$ in $V$ and $\Phi: U \to \R/\Z\times B$ is a diffeomorphism ($B\subset \R^2$ is an open ball centered at the origin) satisfying
\begin{itemize}
  \item $\Phi^*(f(d\theta+xdy)) = \lambda$ where $(\theta,x,y)$ are the coordinates on $\R/\Z\times \R^2$ and the smooth positive function $f$ satisfies $f|_{\R/\Z\times 0}\equiv T_{\rm min}$ and $df|_{\R/\Z\times 0} \equiv 0$.
  \item $\Phi(x(T_{\rm min}t)) = (t,0,0)$.
\end{itemize}
\end{definition}

\begin{remark}
If $P=(x,T)$, $T_{\rm min}$ are as in the above definition and $\eta(t) \in \xi_{x(t)}$, $t\in \R/T_{\rm min}\Z$, is a smooth non-vanishing vector then there exists a Martinet tube $(U,\Phi)$ for $P$ such that $d\Phi_{x(t)} \cdot \eta(t) = \partial_x$ for every $t\in\R/T_{\rm min}\Z$.
\end{remark}

The precise asymptotic behavior of pseudo-holomorphic curves is studied by Hofer, Wysocki and Zehnder when $\lambda$ is non-degenerate. We will now summarize the main results of~\cite{props1}. Consider a non-degenerate contact form $\lambda$ for $\xi$, a closed connected Riemann surface $(S,j)$, a finite subset $\Gamma\subset S$ and a $\jhat \in \J(\lambda)$. Suppose $$ \util:(S,j) \to (W_\xi,\jhat) $$ is a non-constant finite-energy pseudo-holomorphic map.

\begin{theorem}[Hofer, Wysocki and Zehnder]\label{precise_asymptotics}
Let $(s,t)$ be positive holomorphic cylindrical coordinates at $z$ as in Remark~\ref{cyl_coord} if $z$ is a positive puncture, or negative holomorphic cylindrical coordinates at $z$ if it is a negative puncture, and let us write $\Psi_\lambda \circ \util(s,t) = (a(s,t),u(s,t)) \in \R\times V$. Then there exists $P = (x,T) \in \P(\lambda)$ and constants $r,a_0,t_0 \in \R$, $r>0$, such that $u(s,t) \to x(T(t+t_0))$ in $C^\infty$ as $|s|\to\infty$ and
\[
\lim_{|s|\to\infty} e^{r|s|} \left( \sup_t |D^\gamma[a(s,t)-Ts-a_0]|\right) = 0, \ \forall \gamma.
\]
Let $(U,\Phi)$ be a Martinet tube for $P$, so that one finds $s_0 \in \R$ such that $u(s,t) \in U$ when $|s|\geq |s_0|$, and write $\Phi \circ u(s,t) = (\theta(s,t),z(s,t)) \in \R \times \R^2$ (the universal covering of $\R/\Z\times\R^2$). Then
\[
\lim_{|s|\to\infty} e^{r|s|} \left( \sup_t |D^\gamma[\theta(s,t)-k(t+t_0)]|\right) = 0, \ \forall \gamma,
\]
where $k$ is the multiplicity of $P$. Either $(\tau\circ\util)^*d\lambda\equiv0$ for $|s|\gg1$ or the following holds. There exists an eigenvalue $\mu$ for $A_P$, an eigensection $\eta:\R/\Z\to (x_T)^*\xi$ for $\mu$, and functions $\alpha(s) \in \R$, $R(s,t) \in \R^2$ defined for $|s| \gg 1$ such that $\mu>0$ if $z$ is negative, $\mu<0$ if $z$ is positive, and if we represent $\eta(t) \simeq e(t) \in \R^2$ using the coordinates induced by $\Phi$ then, up to rotation of the cylindrical coordinates,
\[
z(s,t) = e^{\int_{s_0}^s \alpha(\tau)d\tau}(e(t) + R(s,t))
\]
for $|s|\gg1$, where $\alpha$ and $R$ satisfy
\[
\begin{aligned}
& \lim_{|s|\to\infty} \sup_t |D^\gamma R(s,t)| = 0, \ \forall \gamma \\
& \lim_{|s|\to\infty} |D^j [\alpha(s)-\mu]| = 0, \ \forall j.
\end{aligned}
\]
\end{theorem}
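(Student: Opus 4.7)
Following Hofer--Wysocki--Zehnder~\cite{props1}, the proof splits into three stages: $C^\infty$ convergence to a Reeb orbit, linearization inside a Martinet tube, and a spectral--energy argument producing the exponential rate $\mu$ and the eigensection $\eta$. The first step is to use the finite-energy condition together with the monotonicity inequality for pseudo-holomorphic curves to obtain a uniform bound on $|d\util|$ on cylindrical neighborhoods $[s,s+1]\times \R/\Z$ of the puncture, which rules out bubbling. Arzel\`a--Ascoli then extracts limits of $u(s,\cdot)$ as $s\to\epsilon\infty$, and the mass identity $\int u^*\lambda \to m = \epsilon T$ combined with the pseudo-holomorphic equation forces every such limit to be a reparametrization of a closed Reeb orbit of period $T$. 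Non-degeneracy of $\lambda$ makes this limiting orbit isolated, so the limit is independent of the extracted subsequence and $u(s,\cdot)\to x(T(\cdot+t_0))$ in $C^\infty$; moreover, $\partial_s a\to T$ integrates to $a(s,t) = Ts + a_0 + o(1)$.

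Next, fix a Martinet tube $(U,\Phi)$ for $P$ as in Definition~\ref{martinet_tube_def}, and for $|s|$ large write $\Phi\circ u(s,t) = (\theta(s,t),z(s,t))$ with $z\to 0$. The $\jhat$-holomorphic equation for $\util$ translates, in these coordinates, into a perturbed Cauchy--Riemann equation
\[
  \partial_s z + J_0(t)\,\partial_t z + S(s,t)\,z = N(s,t,z),
\]
where $S(s,\cdot)$ converges uniformly to the symmetric path representing the asymptotic operator $A_P$ in the $\Phi$-trivialization, and $N(s,t,z) = O(|z|^2)$. A parallel computation yields $\partial_s\theta = k + O(|z|) + O(|\partial_t z|)$, where $k$ is the multiplicity of $P$, identified with the degree of $\theta(s,\cdot) : \R/\Z\to\R/\Z$.

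The analytical heart is the third stage. Set $\psi(s) = \tfrac{1}{2}\int_{\R/\Z}|z(s,t)|^2\,dt$. Combining the perturbed equation with the spectral gap of $A_P$ produces a differential inequality $\psi''(s) \geq \delta^2 \psi(s)$ for all $|s|$ sufficiently large, with $\delta > 0$ controlled by the distance from $0$ to $\sigma(A_P)$. Since $\psi\to 0$, this forces $\psi(s) \leq C e^{-2\delta|s|}$, and an elliptic bootstrap upgrades this to exponential decay of all derivatives of $z$. Feeding this back into the equations for $\theta$ and $a$ yields the claimed decay of $\theta(s,t)-k(t+t_0)$ and of $a(s,t)-Ts-a_0$. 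Finally, projecting $z(s,\cdot)$ onto eigenspaces of $A_P$ and refining the energy argument isolates a single eigenvalue $\mu$ and an associated eigensection $\eta$, producing the representation $z(s,t) = e^{\int_{s_0}^s\alpha(\tau)\,d\tau}(e(t)+R(s,t))$ with $\alpha(s)\to\mu$ and $R\to 0$ in $C^\infty$; the sign $\mu<0$ at positive punctures (resp.\ $\mu>0$ at negative punctures) is forced by $z\to 0$ at infinity.

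The hardest step will be the refinement at the end of Stage~3: the nonlinear term $N$ can a priori mix the eigenspace projections of $z$, so pinning down a single eigenvalue $\mu$ with a genuine eigensection as leading term requires an iterative improvement in which the rough exponential bound is progressively strengthened to a sharp one, each pass reducing the size of the remainder until only the contribution from a single eigenspace of $A_P$ survives.
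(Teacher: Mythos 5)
The paper does not supply a proof of this theorem: it is quoted verbatim from Hofer, Wysocki and Zehnder~\cite{props1}, as the surrounding text makes explicit. Your proposal should therefore be read as a reconstruction of the HWZ argument, and as such it is faithful in structure: gradient bounds via monotonicity to rule out bubbling; Arzel\`a--Ascoli plus the mass identity and nondegeneracy to identify a unique asymptotic orbit; Martinet-tube coordinates reducing $\bar\partial_{\jhat}\util = 0$ to a perturbed Cauchy--Riemann equation for $z$; and a convexity/spectral-gap estimate giving exponential decay and, after refinement, a single-eigenmode leading term.

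Two points deserve attention. First, the theorem as stated contains a dichotomy: either $(\tau\circ\util)^*d\lambda\equiv 0$ near the puncture (so the end is a cover of the trivial cylinder over $P$, in which case $z\equiv 0$ and there is no eigensection to extract), or the asymptotic formula holds. Your Stage~3 quantity $\psi(s)=\tfrac12\int_{\R/\Z}|z(s,t)|^2\,dt$ only produces information when it is not identically zero for large $|s|$; you should flag that the argument splits into these two cases before invoking the differential inequality, since a nonzero initial value is what forces strict exponential decay rather than vanishing. Second, the Stage~1 claim that $\partial_s a \to T$ ``integrates to'' $a(s,t) = Ts + a_0 + o(1)$ is too strong on its own: $C^0$-convergence of $\partial_s a$ gives $a(s,t)/s\to T$ but not the existence of the constant $a_0$. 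As you correctly indicate at the end of Stage~3, that refinement must be fed back in once exponential decay of $\partial_s a - T$ is available, so the Stage~1 statement should be weakened to convergence of the derivative only. Your identification of the final refinement --- isolating a single eigenvalue despite the eigenmode mixing produced by the quadratic error term --- as the delicate step is accurate; in~\cite{props1} this is handled by tracking the normalized profile $\zeta(s,\cdot)=z(s,\cdot)/\|z(s,\cdot)\|_{L^2}$ and showing it $C^\infty$-converges to a unit eigenvector of $A_P$, with $\alpha(s)=\partial_s\log\|z(s,\cdot)\|_{L^2}$ converging to the corresponding eigenvalue.
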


\begin{remark}
The same asymptotic behavior as described in Theorem~\ref{precise_asymptotics} holds near non-removable punctures of finite-energy curves in cobordisms and splitting corbordisms defined in Sections~\ref{cyl_ends_alm_cpx_str} and~\ref{splitting_alm_cpx_str}, respectively, assuming that the contact forms in the ends are non-degenerate.
\end{remark}

\begin{remark}\label{exact_nature}
By the exact nature of all the $2$-forms appearing in the integrands of the integrals involved in the energy of pseudo-holomorphic maps in cobordisms and in splitting cobordisms, we obtain the following statement: \\

\noindent {\it If $\lambda_- \prec \lambda \prec \lambda_+$ are positive contact forms for $\xi$ then there exists $C>0$ such that the following holds. For every $\jhat_+ \in \J(\lambda_+)$, $\jhat \in \J(\lambda)$, $\jhat_- \in \J(\lambda_-)$, $\bar J_1 \in \J(\jhat_-,\jhat)$, $\bar J_2 \in \J(\jhat,\jhat_+)$, $R>0$ and finite-energy $(\bar J_1 \circ_R \bar J_2)$-holomorphic map $\util$ we have $$ E(\util) \leq C \mathcal A_+(\util) $$ where $\mathcal A_+(\util)$ denotes the sum of the $\lambda_+$-actions of the closed $\lambda_+$-Reeb orbits which are the asymptotic limits of $\util$ at the positive punctures. An analogous statement holds for finite-energy $\bar J_1$-holomorphic maps.}
\end{remark}

\section{Contact homology in the complement of the Hopf link}\label{section_contact_homology}

We will now review the cylindrical contact chain complex for contact forms $h \lambda_0$, $h \in \F$, following~\cite{momin}. For completeness all necessary statements and proofs are included.

Before starting with our constructions we establish some notation. Let $f>0$ be a smooth function on $S^3$ and denote $\lambda = f\lambda_0$. If $P = (x,T) \in \P(\lambda)$ then we denote by $x_T:\R/\Z\to S^3$ the map $t\mapsto x(Tt)$, and $P^k := (x,kT)$, $\forall k\geq 1$. A homotopy class $\beta$ of $d\lambda$-symplectic trivializations of $(x_T)^*\xi_0$ induces a homotopy class of $d\lambda$-symplectic trivializations of $(x_{kT})^*\xi_0$ which is denoted by $\beta^k$ (the $k$-th iterate of $\beta$).

We will be dealing with various tight contact forms on $S^3$, and sometimes we need to indicate the dependence  on the contact form of the invariants $\rho$ and $\mu_{CZ}$ discussed in Section~\ref{cz_rotation_orbits}, and also of the spectral winding numbers described in Section~\ref{asymptotic_behavior_section}. When $P =(x,T) \in \P(\lambda)$ and the homotopy class $\beta$ of $d\lambda$-symplectic trivializations of $(x_T)^*\xi_0$ is given then we may write $\rho(P,\beta,\lambda)$, $\mu_{CZ}(P,\beta,\lambda)$, $\wind(\nu,P,\beta,\lambda)$, $\wind^{\geq 0}(P,\beta,\lambda)$ and $\wind^{<0}(P,\beta,\lambda)$ to stress the dependence on $\lambda$. The symplectic vector bundle $(\xi_0,d\lambda) \to S^3$ is trivial and we fix a global symplectic frame. For every $P = (x,T) \in \P(\lambda)$, the homotopy class of $d\lambda$-symplectic trivializations of $(x_T)^*\xi_0$ induced by this global frame is denoted by $\beta_P$. It does not depend on the particular choice of global frame. Note that $(\beta_P)^k = \beta_{P^k}$. We may write $\rho(P,\lambda)$, $\mu_{CZ}(P,\lambda)$, etc to denote the various invariants computed with respect to the global frame. When $f\in \F$ then $L_0$ and $L_1$ are closed Reeb orbits of $f\lambda_0$, and we denote $$ \theta_i(f) = \rho(L_i,f\lambda_0)-1  \ \ \ (i=0,1) $$ where the rotation number $\rho(L_i,f\lambda_0)$ is computed with respect to the global $d\lambda$-symplectic trivialization of $\xi_0$.

\subsection{The chain complex}\label{chain_complex_subsection}

To define cylindrical contact homology of $$ \lambda = h\lambda_0, \ h\in\F $$ up to action $T$ in the complement of $K_0$ we need to assume certain conditions:
\begin{itemize}
\item[(a)] Every closed Reeb orbit of $\lambda$ with action $\leq T$ is non-degenerate.
\item[(b)] There are no closed Reeb orbits of $\lambda$ in $S^3\setminus K_0$ with action $\leq T$ which are contractible in $S^3 \backslash K_0$.
\item[(c)] The transverse Floquet multipliers of the components $L_0,L_1$ of $K_0$, seen as prime closed Reeb orbits of $\lambda$, are of the form $e^{i2\pi\alpha}$ with $\alpha \not\in \Q$. In particular, every iterate $L_0^n,L_1^n$ is non-degenerate and elliptic.
\end{itemize}

We always identify
\begin{equation}\label{pi1_isomorphism}
\begin{array}{cc}
\pi_1(S^3\setminus K_0,{\rm pt}) \stackrel{\sim}{\to} \Z\times\Z, & [\gamma] \mapsto (p,q)
\end{array}
\end{equation}
where $$ p = \link(\gamma,L_0) \ \text{ and } \ q = \link(\gamma,L_1). $$ Fix a homotopy class of loops in $S^3\setminus K_0$ represented by a relatively prime pair $(p,q)$ of integers, i.e., there exists no integer $k\geq 2$ such that $(p/k,q/k) \in \Z \times \Z$. In particular, no closed loop in this homotopy class can be multiply covered. We also fix a number $T>0$.

Let $\mathcal{P}^{\leq T, (p,q)}(\lambda)$ be the set of closed $\lambda$-Reeb orbits contained in $S^3\setminus K_0$ which represent the homotopy class $(p,q)$ and have action $\leq T$. The field $\Z / 2 \Z$ will be denoted by $\mathbb{F}_2$. Consider, for each $k\in \Z$, the vector space $C_k^{\leq T, (p,q)}(\lambda)$ over $\mathbb{F}_2$ freely generated by closed orbits in $\mathcal{P}^{\leq T,(p,q)}(\lambda)$ of Conley-Zehnder index $k+1$:
\[
C_k^{\leq T, (p,q)}(\lambda) = \bigoplus_{\substack{P \in \mathcal{P}^{\leq T,(p,q)}(\lambda) \\ \mu_{CZ}(P) = k+1}} \mathbb{F}_2 \cdot q_P.
\]
The degree of the orbit $P$, or of the generator $q_P$, is defined as $|P| = |q_P| = \mu_{CZ}(P)-1$. We consider the direct sum over the degrees $k \in \Z$ as a graded vector space.

\begin{remark}
In general for SFT, one cannot use coefficients $\mathbb{F}_2$. But, since we only consider homotopy classes of loops which cannot contain multiply covered orbits, it is possible in this particular case. In fact, since $(p,q)$ is assumed to be a relatively prime pair of integers, all orbits in $\P^{\leq T,(p,q)}(\lambda)$ are simply covered and, consequently, SFT-good. In this way we do not need to consider orientations of moduli spaces of holomorphic curves.
\end{remark}

We turn these graded vector spaces into a chain complex as follows. Select a $d\lambda_0$-compatible complex structure $J: \xi_0 \to \xi_0$, and extend it to $\jhat \in \J(\lambda)$ on $W_{\xi_0}$ as explained in Section~\ref{cyl_alm_cpx_str}. Here $W_{\xi_0} \subset T^*S^3$ is the positive symplectization of $(S^3,\xi_0)$ equipped with its natural symplectic structure $\omega_{\xi_0}$ which is the restriction to $W_{\xi_0}$ of the canonical $2$-form. On $W_{\xi_0}$ there is a free $\R$-action $$ g_c: \theta \mapsto e^c\theta, \ c\in\R. $$ The projection onto the base point is denoted by $$ \tau : W_{\xi_0} \to S^3. $$

Denote by $\M^{\leq T,(p,q)}_{\jhat}(P,P')$ the space of equivalence classes of $\jhat$-holomorphic finite-energy maps $\util : \R\times \R/\Z \simeq S^2\setminus \{0,\infty\} \to W_{\xi_0}$ with one positive and one negative puncture, asymptotic at the positive puncture to $P \in \mathcal{P}^{\leq T, (p,q)}(\lambda)$ and at the negative puncture to $P' \in \mathcal{P}^{\leq T, (p,q)}(\lambda)$, with the additional property that the image of $\util$ does not intersect $\tau^{-1}(K_0)$, modulo holomorphic reparametrizations. Here we identify $\R\times \R/\Z\simeq S^2\setminus\{0,\infty\}$ via $(s,t)\simeq \est$, equip $\R\times \R/\Z$ with its standard complex structure, the positive puncture is $\infty$ and the negative puncture is $0$. Any two such cylinders $\util,\vtil$ are equivalent if there exists $(\Delta s,\Delta t) \in \R\times \R/\Z$ such that $\vtil(s,t) = \util(s+\Delta s,t+\Delta t)$. Note that we do not quotient out by the $\R$-action $\{g_c\}$ on the target manifold. Strictly speaking $\M^{\leq T,(p,q)}_{\jhat}(P,P')$ is not a set of maps, but we may write $\util \in \M^{\leq T,(p,q)}_{\jhat}(P,P')$ when a map $\util$ represents an element of this moduli space.

Since $(p,q)$ is a relatively prime pair of integers, every orbit in $\P^{\leq T,(p,q)}(\lambda)$ is simply covered and, consequently, results of~\cite{props1} imply that curves representing elements of $\M^{\leq T,(p,q)}_{\jhat}(P,P')$ must be somewhere injective. Consider the set $\J_{\rm reg}(\lambda) \subset \J(\lambda)$ of almost complex structures satisfying the following property: if $\jhat \in \J_{\rm reg}(\lambda)$ then all cylinders (representing elements) in $\M^{\leq T,(p,q)}_{\jhat}(P,P')$ are regular in the sense of Fredholm theory for all $P,P' \in \P^{\leq T,(p,q)}(\lambda)$. This is standard and means that, in the appropriate functional analytic set-up, the linearized Cauchy-Riemann operator at a cylinder representing an element of $\M^{\leq T,(p,q)}_{\jhat}(P,P')$ is a surjective Fredholm map whenever $P,P' \in \P^{\leq T,(p,q)}(\lambda)$; see~\cite{wendl_transv} for a nice description of the analytic set-up. The set $\J_{\rm reg}(\lambda)$ depends on $T$ and $(p,q)$, but we do not make this explicit in the notation. Results of~\cite{drag} show that $\J_{\rm reg}(\lambda)$ is a residual subset of $\J(\lambda)$. Consequently, the spaces $\M^{\leq T,(p,q)}_{\jhat}(P,P')$, for all $P,P' \in \P^{\leq T, (p,q)}(\lambda)$, have the structure of a finite dimensional manifold when $\widehat J \in \J_{\rm reg}(\lambda)$, with dimension ${\rm Ind}(\util) = \mu_{CZ}(P) - \mu_{CZ}(P')$ whenever this quantity is $\geq 0$. When this quantity is $>0$ then the $\R$-action $\{g_c\}$ on the target induces an $\R$-action on $\M^{\leq T,(p,q)}_{\jhat}(P,P')$ which is smooth and free. If ${\rm Ind}(\util) = 0$ and $\M^{\leq T,(p,q)}_{\jhat}(P,P') \neq \emptyset$ then $P=P'$, $\util$ is a trivial cylinder and the $\R$-action on the moduli space is trivial.

\begin{theorem}\label{compactness_index_0_differential}
If $\jhat \in \J_{\rm reg}(\lambda)$ and $P,P' \in \P^{\leq T,(p,q)}(\lambda)$ satisfy $\mu_{CZ}(P') = \mu_{CZ}(P)-1$ then the space $\M^{\leq T,(p,q)}_{\jhat}(P,P')/\R$ is finite.
\end{theorem}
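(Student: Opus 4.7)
The plan is to prove finiteness by combining two properties of $\M^{\leq T,(p,q)}_{\jhat}(P,P')/\R$: discreteness and sequential compactness. Discreteness is immediate from the regularity hypothesis, since $\jhat\in\J_{\rm reg}(\lambda)$ makes $\M^{\leq T,(p,q)}_{\jhat}(P,P')$ a smooth manifold of Fredholm dimension $\mu_{CZ}(P)-\mu_{CZ}(P')=1$, and because $P\neq P'$ every cylinder in it is nonconstant, so the $\R$-action by translations is free and the quotient is a $0$-dimensional manifold.

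For compactness, take a sequence $[\util_n]\in \M^{\leq T,(p,q)}_{\jhat}(P,P')/\R$ and fix representatives $\util_n$. Remark~\ref{exact_nature} bounds the Hofer energies uniformly by a multiple of the $\lambda$-action of $P$, which is at most $T$, so the compactness theorem of~\cite{sftcomp} yields, along a subsequence, convergence to a holomorphic building $\util_\infty$. Exactness of $\omega_{\xi_0}$ rules out sphere bubbling, and the nontrivial connecting components pair up into a chain $P=Q_0,Q_1,\dots,Q_k=P'$ in $\P(\lambda)$ whose actions are all at most $T$. Positivity of intersection with the $\jhat$-complex submanifold $\tau^{-1}(K_0)$, combined with the vanishing intersection of each $\util_n$, forces every component of $\util_\infty$ to be either disjoint from $\tau^{-1}(K_0)$ or a trivial cover of $\tau^{-1}(L_0)$ or $\tau^{-1}(L_1)$.

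Now I identify the possible intermediate orbits. If $Q_j\subset S^3\setminus K_0$, the connecting cylinder immediately above $Q_j$ realizes a free homotopy in $S^3\setminus K_0$ from $P$ to $Q_j$; because $\pi_1(S^3\setminus K_0)\cong\Z\times\Z$ is abelian, $Q_j$ inherits the class $(p,q)$, and hypothesis (b) together with non-degeneracy places $Q_j\in\P^{\leq T,(p,q)}(\lambda)$. If instead $Q_j=L_i^m$, the adjacent connecting cylinder asymptotes to $L_i^m$ while avoiding $\tau^{-1}(L_i)$, and the asymptotic intersection number of such a cylinder with $\tau^{-1}(L_i)$ is determined by the winding of the asymptotic eigensection provided by Theorem~\ref{precise_asymptotics} relative to the extremal windings $\wind^{<0},\wind^{\geq 0}$ computed in Corollary~\ref{extremal_winding_rotation}. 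The irrationality of $\rho(L_i)$ from hypothesis (c) together with the relative primality of $(p,q)$ prevents this asymptotic defect from being cancelled by the nonnegative geometric intersections, contradicting that the limit intersection number with $\tau^{-1}(K_0)$ must vanish. Therefore no $Q_j$ is a cover of $L_0$ or $L_1$.

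With every $Q_j\in\P^{\leq T,(p,q)}(\lambda)$ and every connecting cylinder living in $W_{\xi_0}\setminus\tau^{-1}(K_0)$, regularity gives nonnegative Fredholm indices that sum to $1$. An index-$0$ cylinder in $\M^{\leq T,(p,q)}_{\jhat}(Q_j,Q_{j+1})/\R$ is the quotient of a free $\R$-action on a $0$-manifold, hence empty unless $Q_j=Q_{j+1}$ and the cylinder is trivial. So exactly one level is a nontrivial index-$1$ cylinder, the remaining levels are trivial cylinders absorbed by the $\R$-reparametrization, and $[\util_\infty]$ defines an element of $\M^{\leq T,(p,q)}_{\jhat}(P,P')/\R$. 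The main obstacle is the asymptotic intersection step excluding $Q_j=L_i^m$: it is the one place where one genuinely needs to exploit both that $\tau^{-1}(K_0)$ is a $\jhat$-complex submanifold and the precise exponential asymptotics of Theorem~\ref{precise_asymptotics}, rather than purely topological or Fredholm-theoretic input.
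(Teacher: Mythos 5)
Your overall strategy matches the paper's: extract an SFT-limit, rule out breaking along covers of $L_0, L_1$, and finish with Fredholm index additivity. However, there are two genuine gaps in the execution.

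First, you assert that the nontrivial components of the limiting building "pair up into a chain $P=Q_0,\dots,Q_k=P'$", but this is precisely where a key argument is missing. A priori the limit is a \emph{tree}, not a chain: beyond the stem connecting $P$ to $P'$ there may be branches terminating in finite-energy planes. Exactness of $\omega_{\xi_0}$ only removes \emph{closed} sphere bubbles, not planes. The paper's step (II) shows that any such plane $\util_{\hat v}$ would have to intersect $\tau^{-1}(K_0)$: if its asymptotic orbit $\hat P$ lies in $K_0$, the plane must pass through $\tau^{-1}(K_0)$ by topology; if $\hat P$ lies in $S^3\setminus K_0$, then $\hat P$ would be contractible in $S^3\setminus K_0$ (it bounds the projected plane), which is forbidden by hypothesis (b). Either way, stability of intersections with the $\jhat$-complex submanifold $\tau^{-1}(K_0)$ yields intersections of $\tau\circ\util_n$ with $K_0$ for large $n$, a contradiction. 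Without this step, the Fredholm bookkeeping at the end does not close — a plane has index $\mu_{CZ}(\hat P)-1$ which need not be non-negative in the way your accounting requires, and in any case the indices no longer split along a chain.

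Second, your exclusion of breaking orbits $Q_j=L_i^m$ via asymptotic intersection numbers is stated too loosely, and your invocation of the relative primality of $(p,q)$ is a red herring: it plays no role in this step. What actually drives the contradiction is that hypothesis (c) forces $L_i$ to be irrationally elliptic, so every $\mu_{CZ}(L_i^m)$ is \emph{odd} (Lemma~\ref{cz_index_iterations_lemma}), hence $\wind^{<0}(L_i^m)<\wind^{\geq 0}(L_i^m)$. Theorem~\ref{precise_asymptotics} then says that, inside a Martinet tube, the loops $t\mapsto \tau\circ\util(s,t)$ approaching $L_i^m$ from the positive side wind strictly less than those approaching from the negative side; since every such loop can be approximated by a loop of the form $t\mapsto\tau\circ\util_n(\rho_n e^{i2\pi(t+t_n)})$ and these all have the same linking number with $L_i$ (the $\util_n$ avoid $\tau^{-1}(K_0)$), one obtains a contradiction directly — no appeal to Siefring's generalized intersection theory, nor to the arithmetic of $(p,q)$, is needed. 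If you do want to phrase this in the language of asymptotic intersection numbers, you must explain why the asymptotic defect at the breaking orbit is \emph{strictly} positive and why the total intersection number of the building with the trivial cylinder over $L_i$ equals the common (zero) geometric intersection number of the $\util_n$; as written this is not justified.
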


See Section~\ref{proofs_thms_chain_complex} in the appendix for a proof. Therefore, it makes sense to define the following degree $-1$ map:
\begin{equation}\label{differential_defn}
  \begin{aligned}
    \partial(\lambda,J&)_*:\ C_*^{\leq T,(p,q)}(\lambda) \rightarrow C_{*-1}^{\leq T,(p,q)}(\lambda) \\
    & q_P \mapsto \sum_{\substack{P' \in \mathcal{P}^{\leq T,(p,q)}(\lambda) \\ |P'| = *-1}} \#_2  \left( \M^{\leq T,(p,q)}_{\jhat}(P,P')/\R \right) q_{P'}
  \end{aligned}
\end{equation}
on generators, where $\#_2$ denotes the number of elements in a set $(\mathrm{mod} \mbox{ } 2)$ as an element of $\mathbb{F}_2$.

\begin{theorem}\label{d2=0}
If $\jhat \in \J_{\rm reg}(\lambda)$ then $\partial_{k-1} \circ \partial_k = 0$, $\forall k\in \Z$.
\end{theorem}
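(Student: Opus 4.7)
The plan is to show that for any $P, P'' \in \P^{\leq T,(p,q)}(\lambda)$ with $\mu_{CZ}(P) - \mu_{CZ}(P'') = 2$, the coefficient of $q_{P''}$ in $\partial^2 q_P$ vanishes mod~$2$. This coefficient equals
\[
  \sum_{\substack{P' \in \P^{\leq T,(p,q)}(\lambda) \\ \mu_{CZ}(P') = \mu_{CZ}(P) - 1}} \#_2 \! \left( \M^{\leq T,(p,q)}_{\jhat}(P,P')/\R \right) \cdot \#_2 \! \left( \M^{\leq T,(p,q)}_{\jhat}(P',P'')/\R \right),
\]
and the strategy is to realize it as the mod-$2$ count of boundary points of a compact $1$-manifold. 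By regularity of $\jhat$, $\M := \M^{\leq T,(p,q)}_{\jhat}(P,P'')$ is a smooth $2$-manifold on which the translation $\R$-action is free, so $\M/\R$ is a smooth $1$-manifold.

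The next step is to compactify $\M/\R$ by applying the SFT compactness theorem to an arbitrary sequence. Uniform energy bounds follow from Remark~\ref{exact_nature}, since the asymptotic ends are fixed at $P, P''$ of action $\leq T$. Any limit is a pseudo-holomorphic building in the symplectization, and the possible degenerations must be restricted as follows. First, closed $\jhat$-holomorphic spheres in $W_{\xi_0}$ do not exist because $\omega_{\xi_0}$ is exact, which rules out both sphere bubbling and nodal degenerations of the cylindrical domain; so every level is a connected cylinder. Second, because $L_0, L_1$ are closed Reeb orbits, the tubes $\tau^{-1}(L_0), \tau^{-1}(L_1) \subset W_{\xi_0}$ are $\jhat$-holomorphic submanifolds, and positivity of intersections forces each limit component either to avoid $\tau^{-1}(K_0)$ or to be a branched cover of a trivial cylinder over $L_0$ or $L_1$. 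The assumption that $(p,q)$ is a relatively prime pair of integers forces all curves in $\M$ to be somewhere injective, which together with positivity of intersections with $\tau^{-1}(L_0), \tau^{-1}(L_1)$ excludes breakings whose intermediate asymptotic orbit is an iterate of $L_0$ or $L_1$. Hence every intermediate asymptotic orbit of the limiting building lies in $\P^{\leq T,(p,q)}(\lambda)$.

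Since the Fredholm index is additive across the non-trivial levels and each such level has index $\geq 1$, while the total index is $2$, the limiting building has exactly two non-trivial levels each of index $1$. This identifies the boundary of $\overline{\M/\R}$ with disjoint union of products
\[
  \left( \M^{\leq T,(p,q)}_{\jhat}(P,P')/\R \right) \times \left( \M^{\leq T,(p,q)}_{\jhat}(P',P'')/\R \right)
\]
over those $P' \in \P^{\leq T,(p,q)}(\lambda)$ with $\mu_{CZ}(P') = \mu_{CZ}(P)-1$. A standard gluing argument then exhibits each broken pair as the endpoint of a unique one-parameter family in $\M/\R$, so that $\overline{\M/\R}$ is a compact topological $1$-manifold with boundary. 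Its mod-$2$ boundary count therefore vanishes, which yields $\partial^2 q_P = 0$.

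The main technical difficulties will be two. The first is carrying out the gluing analysis rigorously enough to upgrade $\overline{\M/\R}$ to a genuine topological $1$-manifold with the correct boundary description. The second, more delicate, is excluding exotic limit configurations in which some level lies inside $\tau^{-1}(L_0 \cup L_1)$ or in which an intermediate asymptotic orbit is an iterate of $L_0$ or $L_1$; this is where the combination of the primitivity of $(p,q)$ with positivity of intersections against the $\jhat$-holomorphic tubes over the Hopf link components plays the essential role.
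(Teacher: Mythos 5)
Your overall architecture is correct and matches the paper's: express the coefficient of $q_{P''}$ in $\partial^2 q_P$ as a signed count, compactify $\M^{\leq T,(p,q)}_{\jhat}(P,P'')/\R$ by SFT compactness, show the boundary consists exactly of once-broken index-$(1,1)$ buildings, and conclude by gluing. But there are two genuine gaps in the compactness analysis, precisely where the hypotheses (b) and (c) from~\ref{chain_complex_subsection} must enter.

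First, exactness of $\omega_{\xi_0}$ rules out \emph{closed} pseudo-holomorphic spheres, but it does not rule out finite-energy planes bubbling off as side branches of the limiting building. These planes are punctured curves, not closed ones, so the exactness argument doesn't touch them, and your claim that ``every level is a connected cylinder'' is unjustified at this point. The paper's Step~(II) excludes them as follows: a bubbled-off plane $\util_{\hat v}$ caps an orbit $\hat P$, which is therefore contractible in $S^3$. If $\hat P\not\subset K_0$, hypothesis~(b) says $\hat P$ is non-contractible in $S^3\setminus K_0$, so the plane's image must hit $\tau^{-1}(K_0)$; if $\hat P\subset K_0$ then the plane caps a multiple of $L_0$ or $L_1$ and must hit the tube over the other link component. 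Either way, Carleman's similarity principle makes the intersection set discrete and positivity of intersections transports the intersections to $\util_n$, a contradiction.

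Second, your claimed exclusion of orbit breaking at iterates $L_i^k$ via ``primitivity of $(p,q)$ plus positivity of intersections'' does not work. Positivity of intersections only shows the nearby limit cylinders avoid $\tau^{-1}(K_0)$; it is perfectly compatible with a limit that is asymptotic to $L_0^k$ at an intermediate node while staying in the complement of the tube. And the intermediate orbit $L_0^k$ is \emph{not} in homotopy class $(p,q)$, so the somewhere-injectivity you get from primitivity applies only to curves asymptotic to orbits in $\P^{\leq T,(p,q)}(\lambda)$, not to these candidate breakings. The paper's Step~(I) uses hypothesis~(c): because $L_0$, $L_1$ have irrational transverse rotation numbers, all iterates $L_i^k$ are elliptic with odd Conley--Zehnder index, hence $\wind^{\geq 0}(L_i^k,\beta) > \wind^{<0}(L_i^k,\beta)$. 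The precise asymptotics (Theorem~\ref{precise_asymptotics}) then force the loops near the negative end of $\util_{v_{i_0}}$ and the positive end of $\util_{v_{i_1+1}}$ to have \emph{different} linking numbers with $L_0$, contradicting the fact that both are approximated by loops in $\Img(\tau\circ\util_n)\subset S^3\setminus K_0$ and hence homotopic there. Without this winding-number comparison the argument does not close, and it is exactly where the ellipticity condition~(c) is essential.
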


The proof is also deferred to the appendix, see Section~\ref{proofs_thms_chain_complex}. As a consequence $$ (\oplus_{*\in\Z} C_*^{\leq T,(p,q)}(\lambda),\partial(\lambda,J)_*) $$ is a chain complex. Its homology is denoted by
\begin{equation}\label{homology_notation}
HC^{\leq T,(p,q)}_*(\lambda,J).
\end{equation}

\subsection{Chain maps}\label{chain_map_paragraph}

Let $T>0$ and $h_+,h_- \in \F$ be such that $\lambda_\pm = h_\pm\lambda_0$ satisfy conditions (a), (b) and (c) described in Section~\ref{chain_complex_subsection}. Let also $(p,q)$ be a pair of relatively prime integers. In this case we may choose $J_\pm \in \J_+(\xi_0)$ such that $\jhat_\pm \in \J_{\rm reg}(\lambda_\pm)$ and the chain complexes $(C^{\leq T,(p,q)}_*(\lambda_+),\partial_{(\lambda_+,J_+)})$ and $(C^{\leq T,(p,q)}_*(\lambda_-),\partial_{(\lambda_-,J_-)})$ are well-defined. There is a natural way to define a chain map between these chain complexes as long as $h_+ > h_-$ pointwise and the associated rotation numbers satisfy
\begin{equation}\label{crucial_chain_map}
  \begin{array}{cccc}
    \theta_i(h_{\pm}) \not\in \Q, & \theta_0(h_+) \geq \theta_0(h_-) & \text{and} & \theta_1(h_+) \geq \theta_1(h_-).
  \end{array}
\end{equation}

As is explained in Section~\ref{restricted_class_alm_cpx_str}, the space $\J(\jhat_-,\jhat_+:K_0)$ is non-empty and contractible. For any $\bar J \in \J(\jhat_-,\jhat_+:K_0)$, $P \in \P^{\leq T,(p,q)}(\lambda_+)$ and $P' \in \P^{\leq T,(p,q)}(\lambda_-)$ we consider the space $\M^{\leq T,(p,q)}_{\bar J}(P,P')$ of equivalence classes of finite-energy $\bar J$-holomorphic cylinders with image in $W_{\xi_0} \setminus \tau^{-1}(K_0)$ which are asymptotic to $P$ at the positive puncture and to $P'$ at the negative puncture, modulo holomorphic reparametrizations.

Let $\J_{\rm reg}(\jhat_-,\jhat_+:K_0) \subset \J(\jhat_-,\jhat_+:K_0)$ be the set of $\bar J$ for which the following holds: every element of $\M_{\bar J}^{\leq T,(p,q)}(P,P')$ is regular in the sense of Fredholm theory whenever $P \in \P^{\leq T,(p,q)}(\lambda_+)$ and $P' \in \P^{\leq T,(p,q)}(\lambda_-)$, see~\cite{wendl_transv}. As before, this set of regular almost complex structures depends on $(p,q)$ and $T$, but we do not make this explicit in the notation. Standard arguments~\cite{MR2200047,drag,mcdsal,momin} show that the set $\J_{\rm reg}(\jhat_-,\jhat_+:K_0)$ contains a residual subset of $\J(\jhat_-,\jhat_+:K_0)$. It is crucial here that $P,P'$ are simply covered, which is the case since the pair $(p,q)$ is relatively prime. Then $\M_{\bar J}^{\leq T,(p,q)}(P,P')$ becomes a smooth manifold of dimension $\mu_{CZ}(P) - \mu_{CZ}(P')$ since there are no orbifold points (every element is represented by a somewhere injective map).

\begin{theorem}\label{compactness_index_0_chain_map}
If $\bar J \in \J_{\rm reg}(\jhat_-,\jhat_+:K_0)$ and $P \in \P^{\leq T,(p,q)}(\lambda_+)$, $P' \in \P^{\leq T,(p,q)}(\lambda_-)$ satisfy $\mu_{CZ}(P') = \mu_{CZ}(P)$ then the space $\M^{\leq T,(p,q)}_{\bar J}(P,P')$ is finite.
\end{theorem}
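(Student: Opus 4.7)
The plan is to deduce finiteness of $\M^{\leq T,(p,q)}_{\bar J}(P,P')$ from compactness combined with Fredholm regularity. The relative primality $\gcd(p,q)=1$ ensures that every Reeb orbit in the homotopy class $(p,q)$ is simply covered, so every cylinder representing an element of $\M^{\leq T,(p,q)}_{\bar J}(P,P')$ is somewhere injective. By hypothesis $\bar J \in \J_{\rm reg}(\jhat_-,\jhat_+:K_0)$, so standard Fredholm theory implies the moduli space is a smooth manifold of dimension $\mu_{CZ}(P) - \mu_{CZ}(P') = 0$, and it only remains to prove compactness.

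Let $\util_n \in \M^{\leq T,(p,q)}_{\bar J}(P,P')$ be an arbitrary sequence. Remark~\ref{exact_nature} supplies a uniform energy bound $E(\util_n) \leq C \, \mathcal A_+(\util_n) \leq CT$, and the exactness of the symplectic form on $W_{\xi_0}$ rules out closed pseudo-holomorphic bubbles. The SFT compactness theorem of Bourgeois-Eliashberg-Hofer-Wysocki-Zehnder then yields a subsequential limit in the form of a holomorphic building consisting of finitely many symplectization levels over $(S^3,\lambda_+)$ at the top, a single cobordism level modeled on $\bar J$ in the middle, and finitely many symplectization levels over $(S^3,\lambda_-)$ at the bottom. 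Telescoping $\lambda_\pm$-actions along the building bounds the action of every intermediate breaking orbit by $T$.

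The heart of the argument is to show that the limiting building cannot break non-trivially. Since $\tau^{-1}(K_0)$ is a complex submanifold with respect to all the almost complex structures involved (by definition of $\jhat_\pm \in \J(\lambda_\pm)$ and of $\J(\jhat_-,\jhat_+:K_0)$), and each $\util_n$ has image disjoint from $\tau^{-1}(K_0)$, positivity of intersections in the $4$-dimensional target forces every limit component to be either disjoint from $\tau^{-1}(K_0)$ or entirely contained in it. A component contained in $\tau^{-1}(K_0)$ must be a branched cover of a trivial cylinder over $L_0$ or $L_1$; matching such a component to the rest of the building contradicts the preservation of the relatively prime homotopy class $(p,q)$ across the breaking, so no such components appear and every intermediate breaking orbit lies in $\P^{\leq T,(p,q)}(\lambda_\pm)$, hence is simply covered. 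A Fredholm index count now closes the argument: the sum of the Fredholm indices of the components of the limit equals $\mu_{CZ}(P) - \mu_{CZ}(P') = 0$; regularity of $\jhat_\pm$ forces every non-trivial symplectization-level component to have index $\geq 1$ (its $\R$-unparametrized moduli space has dimension index $-\,1 \geq 0$), while regularity of $\bar J$ forces every cobordism-level component to have index $\geq 0$. The only configuration compatible with a total index of zero is a building whose sole non-trivial component is a single cobordism-level cylinder of index $0$ from $P$ to $P'$, sandwiched between trivial cylinders. This limit represents an element of $\M^{\leq T,(p,q)}_{\bar J}(P,P')$, proving compactness and hence finiteness.

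The main obstacle is the positivity-of-intersections step: ruling out limit components contained in $\tau^{-1}(K_0)$ and ensuring that every intermediate breaking orbit remains in the simply covered class $(p,q)$. This is entirely parallel to, and no harder than, the analogous argument underpinning Theorem~\ref{compactness_index_0_differential} treated in the appendix, and it relies essentially on the complex submanifold property of $\tau^{-1}(K_0)$ together with the relative primality of $(p,q)$.
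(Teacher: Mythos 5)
Your outline correctly identifies the main structural elements — regularity from $\bar J \in \J_{\rm reg}(\jhat_-,\jhat_+:K_0)$ and somewhere injectivity from $\gcd(p,q)=1$ give a $0$-dimensional manifold; Remark~\ref{exact_nature} gives the energy bound; SFT compactness gives a limiting building; and the goal is to rule out non-trivial breaking. The Fredholm index count at the end is also fine. But there is a genuine gap in the middle step, and it is located precisely where you claim the argument is ``entirely parallel to, and no harder than'' the one for Theorem~\ref{compactness_index_0_differential}.

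Positivity of intersections against the $\bar J$-complex submanifold $\tau^{-1}(K_0)$ only tells you that each \emph{component} of the limiting building is either disjoint from $\tau^{-1}(K_0)$ or entirely contained in it. Ruling out the latter does \emph{not} imply that the intermediate breaking orbits avoid $K_0$: two adjacent limit components that are both disjoint from $\tau^{-1}(K_0)$ can perfectly well break at an orbit of the form $L_0^k$ or $L_1^k$ sitting on the link. That is the dangerous degeneration, and your proposal says nothing about how to exclude it. The paper's step~(I) handles exactly this: one assumes some stem orbit $P_{e_{i_0}}$ equals $L_0^k$, invokes the HWZ asymptotic formula (Theorem~\ref{precise_asymptotics}) at the two cylinders abutting this orbit to get eigenfunctions $\zeta_0$, $\zeta_1$ with $\wind(\zeta_0) > k\theta_0(h_{\pm})$ and $\wind(\zeta_1) < k\theta_0(h_{\pm})$ via Lemma~\ref{inequalities_windings}, and then derives a contradiction between the two linking numbers. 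But that contradiction \emph{requires} the monotonicity hypothesis $\theta_0(h_-) \le \theta_0(h_+)$, $\theta_1(h_-) \le \theta_1(h_+)$ from~\eqref{crucial_chain_map}, because the two abutting cylinders may live in cobordism levels associated to different contact forms $\lambda_\pm$. Without that inequality, the two winding estimates need not conflict and breaking at $K_0$ cannot be ruled out.

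This is also why the chain-map case is \emph{not} a re-run of the differential case: in Theorem~\ref{compactness_index_0_differential} all cylinders are $\jhat$-holomorphic for a single form $\lambda$ satisfying condition~(c), so $\mu_{CZ}(L_0^m)$ is odd for all $m$ and the two extremal asymptotic windings are automatically distinct, giving the contradiction with no reference to any monotonicity. In the chain-map setting there are two contact forms, and one needs the ordered-rotation-number hypothesis as an additional input. Your proposal neither invokes that hypothesis nor carries out the asymptotic eigenvalue comparison, so the compactness claim is unjustified as written.
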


The proof is found in Section~\ref{proofs_thms_chain_homotopy} in the appendix. After selecting $\jhat_\pm \in \J_{\rm reg}(\lambda_\pm)$, any $\bar{J} \in \J_{\mathrm{reg}}(\jhat_-,\jhat_+:K_0)$ can be used to define a chain map given by
\begin{equation}\label{chain_map_phi_j_bar}
\begin{aligned}
  \Phi(\bar{J})_* & : C_{*}^{\leq T, (p,q)}(\lambda_+) \rightarrow C_*^{\leq T, (p,q)}(\lambda_-) \\
  & q_P \mapsto \sum_{\substack{P' \in \mathcal{P}^{\leq T,(p,q)}(\lambda_-) \\ |P'| = *}} \left( \#_2 \M^{\leq T,(p,q)}_{\bar{J}}(P,P') \right) q_{P'}
\end{aligned}
\end{equation}
on generators, where again $\#_2$ denotes the number of elements in a set $(\mathrm{mod} \mbox{ } 2)$ as an element of $\mathbb{F}_2$. The number of elements in each such $\M^{\leq T,(p,q)}_{\bar J}(P,P')$ is finite by Theorem~\ref{compactness_index_0_chain_map} so that this map is well-defined. That $\Phi(\bar J)_*$ is a chain map is the content of the next statement. The proof is postponed to the appendix, see Section~\ref{proofs_thms_chain_homotopy}.

\begin{theorem}\label{chain_homotopy_thm}
$\Phi(\bar{J})_{*-1} \circ \partial(\lambda_+,J_+)_* - \partial(\lambda_-,J_-)_* \circ \Phi(\bar{J})_* = 0$
\end{theorem}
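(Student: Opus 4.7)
The plan is the standard chain-homotopy argument from SFT/Floer theory: identify the coefficient of $q_{P'}$ in $(\Phi(\bar J)_{*-1}\circ\partial(\lambda_+,J_+)_* - \partial(\lambda_-,J_-)_*\circ\Phi(\bar J)_*)(q_P)$ with the count (mod $2$) of the boundary of a compactified $1$--dimensional moduli space of cobordism cylinders, and then verify that this count is zero because the boundary of a compact $1$--manifold has an even number of points. Fix $P\in \P^{\leq T,(p,q)}(\lambda_+)$ and $P'\in \P^{\leq T,(p,q)}(\lambda_-)$ with $\mu_{CZ}(P')=\mu_{CZ}(P)-1$, and set $\M := \M^{\leq T,(p,q)}_{\bar J}(P,P')$. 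By the regularity hypothesis on $\bar J$, $\M$ is a smooth $1$--manifold.

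The next step is compactness. By Remark~\ref{exact_nature}, the total energy of every element of $\M$ is uniformly bounded by the $\lambda_+$--action of $P$, so the SFT compactness theorem of~\cite{sftcomp} provides a compactification $\overline\M$ whose boundary points are stable holomorphic buildings of total Fredholm index $1$. Exactness of $d\lambda_\pm$ and $d\lambda$ on the respective levels rules out sphere bubbles. Since $(p,q)$ is relatively prime, every asymptotic orbit occurring at a break lies in $\P^{\leq T,(p,q)}(\lambda_\pm)$ and is simply covered, so no orbifold issues arise and each non--trivial piece of the building has Fredholm index $\geq 1$. Consequently each boundary point of $\overline\M$ is a two--level broken configuration of one of the following two types:
\begin{itemize}
  \item[(I)] an index--$1$ $\widehat J_+$--cylinder in $W^+(\lambda_+)$ from $P$ to some $P''\in \P^{\leq T,(p,q)}(\lambda_+)$ with $\mu_{CZ}(P'')=\mu_{CZ}(P)-1$, followed by an index--$0$ $\bar J$--cylinder from $P''$ to $P'$;
  \item[(II)] an index--$0$ $\bar J$--cylinder from $P$ to some $P''\in \P^{\leq T,(p,q)}(\lambda_-)$ with $\mu_{CZ}(P'')=\mu_{CZ}(P)$, followed by an index--$1$ $\widehat J_-$--cylinder in $W^-(\lambda_-)$ from $P''$ to $P'$.
\end{itemize}
These contribute respectively to $\Phi(\bar J)_{*-1}\circ\partial(\lambda_+,J_+)_*$ and $\partial(\lambda_-,J_-)_*\circ\Phi(\bar J)_*$. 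A standard gluing theorem asserts that each such broken configuration is the limit of a unique end of $\M$, so the count of type (I) breakings equals the $q_{P'}$--coefficient of $\Phi(\bar J)_{*-1}\circ\partial(\lambda_+,J_+)_*(q_P)$, and similarly for type (II). Since $\overline\M$ is a compact $1$--manifold, the total number of boundary points is even, yielding the chain--homotopy identity mod $2$.

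The main obstacle is verifying that no boundary configuration escapes onto the Hopf link $K_0$, i.e.\ that no limit building contains a component asymptotic to an iterate $L_i^k$. The $\bar J$--holomorphic curves in $\M$ avoid the complex submanifold $\tau^{-1}(K_0)$, and by positivity of intersection this property passes to limits on the interior of each level, so such a component can only appear via an asymptotic limit at a puncture. Excluding this requires that the rotation--number hypothesis~\eqref{crucial_chain_map} be used: the condition $\theta_i(h_+)\geq \theta_i(h_-)$ together with $\theta_i(h_\pm)\notin\Q$ and Lemma~\ref{inequalities_windings} forces $\mu_{CZ}(L_i^k,h_+\lambda_0)\geq \mu_{CZ}(L_i^k,h_-\lambda_0)$ for every $k\geq 1$, which together with the action filtration and the linking--number constraint on $(p,q)$ obstructs the Fredholm index of any hypothetical cobordism cylinder having $L_i^k$ as an asymptote. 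This analysis, together with the compactness and gluing steps above, is carried out in detail in section~\ref{proofs_thms_chain_homotopy} of the appendix.
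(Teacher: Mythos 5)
Your overall plan is correct and matches the paper: analyze the one-dimensional moduli space $\M^{\leq T,(p,q)}_{\bar J}(P,P')$ with $\mu_{CZ}(P')=\mu_{CZ}(P)-1$, compactify it via SFT compactness, identify the boundary with broken configurations of types (I) and (II), and conclude by gluing and the parity of boundary points of a compact $1$-manifold. You also correctly identify the critical obstacle: ruling out limit buildings whose asymptotic orbits land on $K_0$.

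However, your sketch of the mechanism for that key exclusion step is incorrect. You propose that the hypothesis $\theta_i(h_+)\geq\theta_i(h_-)$ gives $\mu_{CZ}(L_i^k,h_+\lambda_0)\geq\mu_{CZ}(L_i^k,h_-\lambda_0)$, and that this "obstructs the Fredholm index" of any cobordism cylinder asymptotic to $L_i^k$. This cannot work as stated: for $k\geq 2$ the orbits $L_i^k$ are multiply covered, so they do not lie in $\P^{\leq T,(p,q)}(\lambda_\pm)$, and the regularity statement $\bar J\in\J_{\rm reg}(\jhat_-,\jhat_+:K_0)$ (which only applies to cylinders with simply covered asymptotics) gives no lower bound on the index of a level asymptotic to $L_i^k$. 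Without transversality for such curves there is no Fredholm-index positivity to invoke, and the index inequality you derive does not by itself forbid the building.

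The argument that actually works, and the one carried out in~\ref{proofs_thms_chain_homotopy}, is topological rather than Fredholm-theoretic: one assumes some edge orbit of the stem equals $L_0^k$, lets $i_0$ and $i_1$ be the first and last such indices, and examines the asymptotic eigenfunctions of the level $\util_{v_{i_0}}$ at its negative puncture (positive eigenvalue $\nu_0>0$) and of $\util_{v_{i_1+1}}$ at its positive puncture (negative eigenvalue $\nu_1<0$). Since $L_0$ is irrationally elliptic and $\theta_0(h_\pm)\notin\Q$, Lemma~\ref{inequalities_windings} gives the strict winding estimates $\wind(\zeta_0)>k\theta_0(h_\bullet)$ and $\wind(\zeta_1)<k\theta_0(h_\bullet)$, where $\bullet$ depends on which side of the $\bar J$-level the curve sits; the monotonicity $\theta_0(h_-)\leq\theta_0(h_+)$ (together with $i_0\leq i_1$, hence $i_0\geq i^*\Rightarrow i_1\geq i^*$) then forces $\link(U_0(-s,\cdot),L_0)>\link(U_1(s,\cdot),L_0)$ for $s\gg1$. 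But both loops arise as limits of loops $t\mapsto\tau\circ\util_n(R_ne^{i2\pi(t+t_n)})$ on the original cylinders, which avoid $\tau^{-1}(K_0)$, so they must represent the same class in $\pi_1(S^3\setminus K_0)$ and in particular have the same linking number with $L_0$ — a contradiction. This linking-number mechanism is what the hypothesis~\eqref{crucial_chain_map} is designed to feed, not a Fredholm index count.
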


\subsection{Comparing chain maps}

We consider $h_\pm$ exactly as in Section~\ref{chain_map_paragraph}, together with regular choices $\jhat_\pm \in \J_{\rm reg}(\lambda_\pm)$ and regular choices $\bar J_0,\bar J_1 \in \J_{\rm reg}(\jhat_-,\jhat_+:K_0)$, so that we have chain maps $\Phi(\bar J_0)_*,\Phi(\bar J_1)_*$. Here we denoted $\lambda_\pm = h_\pm\lambda_0$. We would like to show that they induce the same map at the level of homology.

To this end, we consider the space $\widetilde \J(\bar J_0,\bar J_1 : K_0)$ of smooth homotopies
\[
  t\in [0,1] \mapsto \bar J_t \in \J(\jhat_-,\jhat_+:K_0)
\]
from $\bar J_0$ to $\bar J_1$. For orbits $P \in \P^{\leq T,(p,q)}(\lambda_+)$ and $P' \in \P^{\leq T,(p,q)}(\lambda_-)$ we set
\begin{equation}\label{}
  \M^{\leq T,(p,q)}_{\{\bar J_t\}}(P,P') = \{ (t,[\util]) \mid t\in[0,1] \text{ and } [\util] \in \M^{\leq T,(p,q)}_{\bar J_t}(P,P') \}
\end{equation}
where $\M^{\leq T,(p,q)}_{\bar J_t}(P,P')$ is as defined in Section~\ref{chain_map_paragraph}. Using standard arguments, in a similar way as it is done in~\cite[Section 3.2]{mcdsal}, one finds a residual set $$ \widetilde \J_{\rm reg}(\bar J_0,\bar J_1 : K_0) \subset \widetilde \J(\bar J_0,\bar J_1 : K_0) $$ such that if $\{\bar J_t\} \in \widetilde \J_{\rm reg}(\bar J_0,\bar J_1 : K_0)$ then $\M^{\leq T,(p,q)}_{\{\bar J_t\}}(P,P')$ is a smooth manifold of dimension $\mu_{CZ}(P) - \mu_{CZ}(P') + 1$, for every pair of orbits $P,P'$ as above. It is crucial here that for every $t$ all cylinders in $\M^{\leq T,(p,q)}_{\bar J_t}(P,P')$ are necessarily somewhere injective, which is true since the orbits $P \in \P^{\leq T,(p,q)}(\lambda_+)$ and $P' \in \P^{\leq T,(p,q)}(\lambda_-)$ are simply covered. Thus there are no orbifold points. As before, we may achieve regularity by a perturbation keeping the tangent space of $\tau^{-1}(K_0)$ complex invariant along the path of almost complex structures.

\begin{theorem}\label{degree_1_map_compactness}
Whenever $P \in \P^{\leq T,(p,q)}(\lambda_+)$ and $P' \in \P^{\leq T,(p,q)}(\lambda_-)$ are such that $\mu_{CZ}(P) = \mu_{CZ}(P')-1$ then the space $\M^{\leq T,(p,q)}_{\{\bar J_t\}}(P,P')$ is finite. Moreover, if $(t,[\util]) \in \M^{\leq T,(p,q)}_{\{\bar J_t\}}(P,P')$ then $t\neq 0,1$.
\end{theorem}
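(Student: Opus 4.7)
The parametric moduli space is, by the regularity of $\{\bar J_t\}$, a smooth manifold of dimension $\mu_{CZ}(P)-\mu_{CZ}(P')+1=0$, so finiteness is equivalent to compactness. The plan is to take a sequence $(t_n,[\util_n]) \in \M^{\leq T,(p,q)}_{\{\bar J_t\}}(P,P')$, pass to a subsequence with $t_n \to t_\infty \in [0,1]$, and analyze the limit via SFT compactness in the sense of~\cite{sftcomp}. The uniform energy bound needed to apply the compactness theorem is supplied by Remark~\ref{exact_nature} (the exact nature of the cobordism), since every $\util_n$ has its positive asymptotic among the finitely many orbits in $\P^{\leq T,(p,q)}(\lambda_+)$, whose $\lambda_+$-actions are bounded by $T$.

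After applying SFT compactness, the sequence $\util_n$ converges to a pseudo-holomorphic building whose top levels are $\jhat_+$-curves in $W^+_{\xi_0}$, whose main level is a $\bar J_{t_\infty}$-curve in the cobordism, and whose bottom levels are $\jhat_-$-curves in $W^-_{\xi_0}$. The next step is to argue that all components of the limit building are cylinders with asymptotics in the homotopy class $(p,q)$ and that their images avoid $\tau^{-1}(K_0)$. The latter follows from positivity of intersections: $\tau^{-1}(K_0)$ is a pseudo-holomorphic submanifold for each $\bar J_t$ (and for $\jhat_\pm$), so a limit of curves disjoint from it either remains disjoint from it or is contained in it; the prescribed asymptotics $P,P' \subset S^3\setminus K_0$ rule out the latter possibility. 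To see that only cylinders appear: no closed sphere bubble can form since $\omega_{\xi_0}$ is exact, and no plane bubble can form since it would produce a contractible Reeb orbit in $S^3\setminus K_0$ of action $\leq T$, forbidden by condition (b) in~\ref{chain_complex_subsection} (applied to both $\lambda_\pm$). Since each level is then a disjoint union of cylinders connecting matching asymptotics, and the total homotopy class from $P$ down to $P'$ is $(p,q)$, every intermediate orbit lies in the class $(p,q)$ and is therefore simply covered.

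With the building structure pinned down, the proof concludes by an index count. Let $h^+$ and $h^-$ denote the number of non-trivial cylindrical sub-levels in $W^+_{\xi_0}$ and $W^-_{\xi_0}$ respectively. By regularity of $\jhat_\pm$, each such non-trivial cylinder has Fredholm index $\geq 1$ (its quotient modulo the $\R$-action has non-negative dimension). By the parametric regularity of $\{\bar J_t\}$, the main cobordism cylinder at parameter $t_\infty$ lies in a space whose dimension in the parametric family is $\geq 0$, hence its (non-parametric) Fredholm index is $\geq -1$. Since Fredholm indices add across the building and must total $\mu_{CZ}(P)-\mu_{CZ}(P') = -1$, we obtain
\begin{equation*}
-1 \;\geq\; (h^+ + h^-) \cdot 1 + (-1),
\end{equation*}
forcing $h^+ = h^- = 0$. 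Therefore no breaking occurs and the limit is a single cylinder at parameter $t_\infty$, representing an element of $\M^{\leq T,(p,q)}_{\{\bar J_t\}}(P,P')$. This establishes compactness, hence finiteness.

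For the final assertion that $t \neq 0,1$, note that if some $(t,[\util])$ had $t \in \{0,1\}$ then $[\util]$ would lie in $\M^{\leq T,(p,q)}_{\bar J_0}(P,P')$ or $\M^{\leq T,(p,q)}_{\bar J_1}(P,P')$; but these have virtual dimension $\mu_{CZ}(P)-\mu_{CZ}(P') = -1$ and are empty since $\bar J_0,\bar J_1 \in \J_{\rm reg}(\jhat_-,\jhat_+:K_0)$. The analogous conclusion for limit points $t_n \to t_\infty \in \{0,1\}$ follows from the same emptiness together with the ``no breaking'' output of the previous paragraph. The main obstacle in this argument is not the index bookkeeping but justifying the cylindrical structure of the limit---i.e., ruling out sphere and plane bubbles and, more subtly, pair-of-pants type breaking in which extra asymptotic orbits are introduced; this is exactly where the exactness of $\omega_{\xi_0}$, the absence of contractible short orbits in $S^3\setminus K_0$, and the relatively prime assumption on $(p,q)$ combine to keep the whole building cylindrical with all asymptotics in class $(p,q)$.
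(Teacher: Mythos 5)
Your outline matches the paper's strategy at a high level — SFT compactness, exclusion of degenerate breaking, index additivity, and the emptiness of the $t=0,1$ moduli spaces — but there is a genuine gap in the step where you argue that every level of the limit building is a cylinder with asymptotics in $\P^{\leq T,(p,q)}$. You claim this follows from positivity of intersections together with the fact that the prescribed asymptotics $P,P'$ lie in $S^3\setminus K_0$. This only shows that each limit \emph{curve} is disjoint from (or contained in, which you correctly exclude) the pseudo-holomorphic cylinder $\tau^{-1}(K_0)$. It does not show that the \emph{intermediate asymptotic orbits} lie in $S^3\setminus K_0$: a cylinder whose image is disjoint from $\tau^{-1}(K_0)$ can perfectly well converge at a puncture to a cover $L_0^k$ or $L_1^k$, and these orbits do not belong to $\P^{\leq T,(p,q)}$. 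They may be multiply covered, so your appeal to regularity of $\jhat_\pm$ and $\{\bar J_t\}$ (which requires somewhere injective curves) and hence your index bound $\geq 1$ per $\jhat_\pm$-level and $\geq -1$ for the cobordism level no longer apply to those building levels, and the index count collapses. The statement ``every intermediate orbit lies in the class $(p,q)$'' is exactly the thing that needs to be proved, and it does not follow from the cylinders avoiding $\tau^{-1}(K_0)$. A related but more easily repaired issue appears in your treatment of plane bubbles: a plane can be asymptotic to an orbit \emph{on} $K_0$, in which case condition (b) gives no contradiction; one must instead observe that such a plane projects to a disk in $S^3$ filling a cover of $L_0$ or $L_1$ and therefore hits the other component, producing intersections with $\tau^{-1}(K_0)$ that are incompatible with the approximating sequence.

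What the paper actually does to exclude intermediate orbits on $K_0$ (step~(I) in~\ref{proofs_thms_chain_homotopy}, which the proof of this theorem refers back to) is an explicit winding-number argument. It identifies the first and last stem orbits equal to $L_0^k$, uses Theorem~\ref{precise_asymptotics} to express the transverse asymptotics of the adjacent cylinders in terms of eigensections of the asymptotic operators, translates the winding of those eigensections into linking numbers with $L_0$ via the self-linking number $\sl(L_0)=-1$ and Lemma~\ref{inequalities_windings}, and finally exploits the monotonicity hypothesis $\theta_0(h_-)\leq\theta_0(h_+)$ and $\theta_1(h_-)\leq\theta_1(h_+)$ from~\eqref{crucial_chain_map} (together with the fact that the $\jhat_+$-levels sit above the $\jhat_-$-levels) to produce a strict inequality between the linking numbers of two slice loops of $\util_n$ that must in fact coincide with $p$. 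Your proposal never invokes~\eqref{crucial_chain_map}, which is the load-bearing hypothesis here; without it the non-collapse claim is simply not true in general. Once this step is supplied, the rest of your write-up — the energy bound via Remark~\ref{exact_nature}, the index bookkeeping, and the $t\neq 0,1$ conclusion from emptiness of the negative-virtual-dimension moduli spaces for $\bar J_0,\bar J_1$ — is in line with the paper's argument.
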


In the above statement we assume that $\{\bar J_t\} \in \widetilde \J_{\rm reg}(\bar J_0,\bar J_1 : K_0)$ and that $\bar J_0,\bar J_1 \in \J_{\rm reg}(\jhat_-,\jhat_+:K_0)$. See Section~\ref{proofs_thms_degree_1_map} in the appendix for a proof. Following a usual procedure, we define a degree $+1$ map
\begin{equation}\label{chain_maps_phis_bar}
\begin{aligned}
T(\{\bar J_t\})_* & : C_*^{\leq T, (p,q)}(\lambda_+) \rightarrow C_{*+1}^{\leq T, (p,q)}(\lambda_-) \\
& q_{P} \mapsto \sum_{\substack{P' \in \mathcal{P}^{\leq T,(p,q)}(\lambda_-) \\ |P'| = *+1}} \left( \#_2 \M^{\leq T,(p,q)}_{\{\bar J_t\}}(P,P') \right) q_{P'}
\end{aligned}
\end{equation}
The sum above is finite by Theorem~\ref{degree_1_map_compactness}.

\begin{theorem}\label{comparing_chain_maps_thm}
$$ \Phi(\bar J_1)_* - \Phi(\bar J_0)_* = T(\{\bar J_t\})_{*-1} \circ \partial(\lambda_+,J_+)_* - \partial(\lambda_-,J_-)_{*+1} \circ T(\{\bar J_t\})_*. $$
\end{theorem}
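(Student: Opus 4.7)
The plan is to run the standard parametrized moduli space/cobordism-argument for chain homotopies, applied to the $1$-dimensional moduli spaces
\[
\M^{\leq T,(p,q)}_{\{\bar J_t\}}(P,P') \qquad \text{with } \mu_{CZ}(P)=\mu_{CZ}(P'),
\]
and to identify the boundary of a suitable compactification with the four terms appearing in the claimed identity.

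First I would set up the count. Fix $P\in \P^{\leq T,(p,q)}(\lambda_+)$ with $|P|=k$ and $P'\in\P^{\leq T,(p,q)}(\lambda_-)$ with $|P'|=k$, so $\M^{\leq T,(p,q)}_{\{\bar J_t\}}(P,P')$ is a smooth $1$-manifold by the transversality statement preceding Theorem~\ref{degree_1_map_compactness}. The projection onto the $t$-factor is smooth, and at $t=0,1$ the fibers are exactly the zero-dimensional moduli spaces counted by $\Phi(\bar J_0)_*$ and $\Phi(\bar J_1)_*$ in degree $k$. The strategy is to compactify this $1$-manifold, show the compactification is a compact $1$-manifold with boundary, and match the boundary points with the coefficients of $\Phi(\bar J_1)_*-\Phi(\bar J_0)_*$ and of $T(\{\bar J_t\})_{*-1}\circ\partial(\lambda_+,J_+)_*-\partial(\lambda_-,J_-)_{*+1}\circ T(\{\bar J_t\})_*$, acting on $q_P$.

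Next I would run SFT compactness for the parametrized family $\{\bar J_t\}$. A sequence $(t_n,[\util_n])$ in the moduli space has a subsequence with $t_n\to t^*\in[0,1]$ and, by the SFT compactness theorem of~\cite{sftcomp} combined with the uniform energy bound from Remark~\ref{exact_nature}, the $\util_n$ converge to a holomorphic building. The non-negativity/relative-primeness of $(p,q)$ (so every relevant orbit is simply covered) together with transversality of $\bar J_0,\bar J_1$ rules out the limits being at $t\in\{0,1\}$ except as the obvious boundary fibers; indeed the argument used in Theorem~\ref{compactness_index_0_chain_map} (proved in the appendix) gives sequential compactness of each unbroken fiber. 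For $t^*\in(0,1)$, a broken limit must consist of a single cobordism floor (the middle $\bar J_{t^*}$-cylinder) together with one additional floor of $\jhat_+$- or $\jhat_-$-cylinders, since: (i) no side bubbling can occur, as $(p,q)$ is relatively prime so all limit cylinders are somewhere injective and dimension counting forces every component into the expected index; (ii) no curves in $S^3\setminus K_0$ can be contractible in $S^3\setminus K_0$ and have action $\leq T$ by condition (b) of~\ref{chain_complex_subsection}, which excludes bubbling off of planes; and (iii) a $\bar J_{t^*}$-part of index $-1$ combined with a single nontrivial cylinder in the symplectization of index $+1$ is the only index-preserving splitting allowed.

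This leaves precisely two types of broken configurations, which compactify the $1$-manifold:
\begin{itemize}
\item A top floor $\util^+\in\M^{\leq T,(p,q)}_{\jhat_+}(P,Q)/\R$ with $\mu_{CZ}(Q)=\mu_{CZ}(P)-1$, followed by $\util^0\in\M^{\leq T,(p,q)}_{\bar J_{t^*}}(Q,P')$; these pairs contribute the coefficient of $q_{P'}$ in $T(\{\bar J_t\})_{*-1}\circ\partial(\lambda_+,J_+)_*\,q_P$.
\item A middle floor $\util^0\in\M^{\leq T,(p,q)}_{\bar J_{t^*}}(P,Q')$ followed by a bottom floor $\util^-\in\M^{\leq T,(p,q)}_{\jhat_-}(Q',P')/\R$ with $\mu_{CZ}(Q')=\mu_{CZ}(P)+1$; these contribute the coefficient of $q_{P'}$ in $\partial(\lambda_-,J_-)_{*+1}\circ T(\{\bar J_t\})_*\,q_P$.
\end{itemize}
Standard gluing in symplectic cobordisms (applied in the parametrized setting, using that $\bar J_{t^*}$ and the cylindrical structures are regular for the relevant pairs since we may shrink $\widetilde\J_{\rm reg}$ further if necessary) identifies a neighborhood of each such broken configuration in the compactification with a half-open interval, so the compactification is a compact $1$-manifold with boundary whose boundary set is exactly
\[
\bigl(\{0\}\times \M^{\leq T,(p,q)}_{\bar J_0}(P,P')\bigr)\sqcup \bigl(\{1\}\times\M^{\leq T,(p,q)}_{\bar J_1}(P,P')\bigr)\sqcup \text{(broken configurations)}.
\]

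Counting boundary points modulo $2$ gives zero, and the resulting identity is precisely the coefficient of $q_{P'}$ in $\Phi(\bar J_1)_*\,q_P-\Phi(\bar J_0)_*\,q_P - T(\{\bar J_t\})_{*-1}\circ\partial(\lambda_+,J_+)_*\,q_P + \partial(\lambda_-,J_-)_{*+1}\circ T(\{\bar J_t\})_*\,q_P$. Summing over $P'\in\P^{\leq T,(p,q)}(\lambda_-)$ with $|P'|=k$ and over all generators $q_P$ of $C_k^{\leq T,(p,q)}(\lambda_+)$ yields the asserted chain homotopy identity. The main technical obstacle is the SFT-compactness plus gluing step: ruling out every form of degeneration other than the two listed breakings (in particular any bubbling or hidden multiple covers), which is handled here because $(p,q)$ relatively prime forces every limit cylinder to be simply covered and hence somewhere injective, so transversality of $\bar J_0,\bar J_1$ and of $\{\bar J_t\}$ suffices to exclude exceptional strata in the compactification.
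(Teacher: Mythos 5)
Your proposal follows the same cobordism argument the paper uses: compactify the one-dimensional parametrized moduli space $\M^{\leq T,(p,q)}_{\{\bar J_t\}}(P,P')$ in the index-zero case, identify the boundary with the ends $t=0,1$ plus two-level broken configurations, and conclude by the mod-$2$ count; the paper's proof in the appendix defers the compactness and gluing analysis to the earlier arguments exactly as you do. One caveat worth flagging: relative primeness of $(p,q)$ only forces orbits that actually lie in $\P^{\leq T,(p,q)}(\lambda_\pm)$ to be simply covered, and does not by itself exclude intermediate asymptotic orbits of the limit building living on $K_0$ (which are covers of $L_0$ or $L_1$ and hence multiply covered for covering multiplicity $\geq 2$). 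Ruling those out is the nontrivial step (I)--(III) of the appendix proofs, which combines positivity of intersections with $\tau^{-1}(K_0)$ and the monotonicity $\theta_j(h_-)\leq\theta_j(h_+)$ from~\eqref{crucial_chain_map}; so the ``main technical obstacle'' you identify is handled by that argument and not by relative primeness alone.
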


The proof is deferred to Section~\ref{proofs_thms_degree_1_map} in the appendix.

\section{Computing contact homology}\label{section_computing}

Our goal here is to compute contact homology in the complement of the Hopf link for special classes of contact forms. The main results in this section are Proposition~\ref{prop_morse_bott_approx} and Proposition~\ref{comp_j_iota}. We freely use the notation established in Section~\ref{section_contact_homology}.

\subsection{A class of model contact forms}\label{example-2}

Let $\theta_0,\theta_1 \in \R\setminus\Q$, and let $\gamma(t) = (x(t), y(t))$ for $t \in [0,1]$ be a smooth embedded curve in the first quadrant of $\R^2$ satisfying the following properties:
\begin{itemize}
\item $x(0) > 0, y(0) = 0$, and $y'(0) > 0$;
\item $x(1) = 0, y(1) > 0$, and $x'(1) < 0$;
\item $x  y' - x'  y > 0$ for all $t \in [0,1]$, equivalently, $\gamma$ and $\gamma'$ are never co-linear;
\item $x' y'' - x'' y' \neq 0$ for all $t \in [0,1]$;
\item $(y'(0),-x'(0)) \in \R^+(1,\theta_1)$;
\item $(y'(1),-x'(1)) \in \R^+(\theta_0,1)$.
\end{itemize}

It is always possible to find such a curve $\gamma$ if $(\theta_0,1) \not\in \R^+(1,\theta_1)$. We can define a star-shaped hypersurface $S_\gamma$ in $\C^2 \simeq \R^4$ associated to $\gamma$ by
\[
S_{\gamma} = \{(x_0,y_0,x_1,y_1) \in \R^4 \mid (r_0^2,r_1^2) \in \gamma([0,1]) \}
\]
where $(x_k,y_k) \simeq x_k + iy_k = r_ke^{i\phi_k}$ are polar coordinates ($k=0,1$). To see that $S_\gamma$ is a smooth hypersurface, consider a smooth function $F:\R^2\to\R$ such that $0\in\R$ is a regular value of $F$ and $\gamma([0,1]) = F^{-1}(0) \cap \{x\geq 0,y\geq 0\}$. Then $S_\gamma = H^{-1}(0)$, where $H:\R^4\to\R$ is defined by $H(x_0,y_0,x_1,y_1) = F(r_0^2,r_1^2)$. The third condition above guarantees that $S_\gamma$ is star-shaped and $\lambda_0$ restricts to a contact form on $S_\gamma$ inducing the contact structure $\xi_0=\ker\lambda_0|_{S_\gamma}$.

We parametrize leaves of the characteristic foliation of $S_\gamma$ as trajectories of the Reeb vector field $X_0$ determined by $\lambda_0$ on $S_\gamma$. Assuming that $F<0$ on the bounded component of $\{x\geq0,y\geq0\}\setminus\gamma([0,1])$ we obtain
\begin{align*}
X_0 = a_0(r_0^2,r_1^2) \partial_{\phi_0} + a_1(r_0^2,r_1^2) \partial_{\phi_1} \in \R^+ \left( y'(t) \partial_{\phi_0} - x'(t) \partial_{\phi_1} \right)
\end{align*}
where $t$ is uniquely determined by $(r_0^2,r_1^2) = \gamma(t)$, since $(y',-x')$ points to the unbounded component of $\{x\geq0,y\geq0\}\setminus\gamma([0,1])$. The sets $\bar L_0 = S_\gamma \cap (0\times\C)$, $\bar L_1 = S_\gamma \cap (\C\times0)$ are closed orbits and their iterates have Conley-Zehnder indices
\[
\mu_{CZ}(\bar L_0^k) = 2 \lfloor k(1 + \theta_0) \rfloor + 1 \qquad \mu_{CZ}(\bar L_1^k) = 2 \lfloor k(1 + \theta_1) \rfloor + 1.
\]
To see this note that the period of $\bar L_0$ as a prime periodic $X_0$-orbit is $2\pi/a_1(0,r_1^2)$. The transverse linearized flow of $X_0$ restricted to $\xi_0|_{\bar L_0}$ rotates $$ \frac{2\pi}{a_1(0,r_1^2)}a_0(0,r_1^2) = 2\pi\frac{y'(1)}{-x'(1)} = 2\pi\theta_0 $$ after the first period, measured with respect to the frame $\{\partial_{x_0},\partial_{y_0}\}$ of $\xi_0|_{\bar L_0}$. Thus it rotates exactly $2\pi(1+\theta_0)$ after the first period with respect to a global frame of $\xi_0$. This last claim follows from the fact that $\bar L_0$ has self-linking number $-1$, and can also be alternatively verified by explicitly writing down a global frame and comparing it with $\{\partial_{x_0},\partial_{y_0}\}$. A similar reasoning applies to $\bar L_1$.

To obtain the remaining orbits notice that each point $(r_0^2,r_1^2) \in \gamma((0,1))$ determines an invariant torus foliated by Reeb trajectories. These trajectories are all closed or all non-periodic when $a_0$ and $a_1$ are dependent or independent over $\Q$, respectively. The former happens precisely when the line determined by the corresponding normal $(y',-x')$ goes through points in the integer lattice.

Consider $$ f_{\theta_0,\theta_1}:S^3 \rightarrow (0,\infty) $$ determined by $\sqrt{f_{\theta_0,\theta_1}(z)} z \in S_{\gamma}, \forall z\in S^3$. The diffeomorphism $\Psi:S^3\to S_\gamma$, $\Psi(z) = \sqrt{f_{\theta_0,\theta_1}(z)} z$ satisfies $\Psi^*\lambda_0 = f_{\theta_0,\theta_1}\lambda_0$. Moreover, the components $L_0 = S^3 \cap (0\times \C)$ and $L_1 = S^3 \cap (\C\times 0)$ of the standard Hopf link are mapped onto $\bar L_0$, $\bar L_1$ respectively, which implies that $f_{\theta_0,\theta_1} \in \mathcal F$; see Remark~\ref{std_representation_hopf_link} for more details. Summarizing we have
\begin{itemize}
\item If $(p,q)$ is a relatively prime pair of integers satisfying~\eqref{non_resonance} then there is a unique torus foliated by prime closed orbits of the Reeb dynamics associated to $f_{\theta_0,\theta_1}\lambda_0|_{S^3}$, each closed orbit representing the homotopy class $(p,q) \in \Z\times \Z \simeq \pi_1(S^3\setminus (L_0\cup L_1),\rm{pt})$.
\end{itemize}
Uniqueness comes from strict concavity/convexity of $\gamma$, which is ensured by the fourth condition on $\gamma$.

\subsection{Perturbation of $f_{\theta_0,\theta_1}\lambda_0$ and computation of contact homology}

The forms $f_{\theta_0,\theta_1}\lambda_0$ defined above satisfy a weak non-degeneracy hypothesis.

\begin{definition}[Hofer, Wysocki and Zehnder~\cite{props4}, Bourgeois~\cite{Bourgeois_thesis}]\label{morse-bott_def}
Suppose that $\lambda$ is a contact form on a manifold $M$.  We say $\lambda$ is Morse-Bott non-degenerate if
\begin{enumerate}
  \item the action spectrum is discrete,
  \item for any given action value $T$, the set of points lying on closed orbits of action $T$ is a smooth embedded submanifold $N_T$ of $M$,
  \item the rank of $d\lambda|_{TN_T}$ is locally constant along $N_T$, and
  \item if $\phi_t(p)$ denotes the Reeb flow then $\ker(d\phi_T(x) - I) = T_x N_T$, $\forall x \in N_T$.
\end{enumerate}
\end{definition}

\begin{proposition}\label{prop_morse_bott_approx}
The forms $f_{\theta_0,\theta_1}\lambda_0$ are Morse-Bott non-degenerate contact forms on $S^3$ when $\theta_0,\theta_1 \not\in \Q$. Suppose $(p,q)$ is a relatively prime pair of integers satisfying~\eqref{non_resonance}, and denote by $N_{(p,q)}$ the unique 2-torus invariant under the Reeb flow of $f_{\theta_0,\theta_1}\lambda_0$ foliated by prime closed Reeb orbits in the homotopy class $(p,q)$. Let $T_{(p,q)}>0$ be their common prime period and let $S_{(p,q)}$ be the circle obtained by the quotient of $N_{(p,q)}$ by the Reeb flow. For any $S > T_{(p,q)}$ and $\delta>0$ we may find a contact form $f_S \lambda_0$ arbitrarily $C^\infty$-close to $f_{\theta_0,\theta_1}\lambda_0$ with the following properties:
\begin{itemize}
  \item there are no closed $(f_S\lambda_0)$-Reeb orbits of action at most $S$ in $S^3\setminus K_0$ which are contractible in $S^3 \backslash K_0$,
  \item the set $\mathcal{P}^{\leq S,(p,q)}(f_S\lambda_0)$ is in 1-1 correspondence with the critical points of a chosen perfect Morse function on $S_{(p,q)}$ and their Conley-Zehnder indices differ by $1$,
  \item $f_S \lambda_0$ is non-degenerate up to action $S$,
  \item the actions of the closed Reeb orbits in $\mathcal{P}^{\leq S,(p,q)}(f_S\lambda_0)$ lie in the interval $(T_{(p,q)}-\delta, T_{(p,q)}+\delta)$,
  \item $f_S$ agrees with $f_{\theta_0,\theta_1}$ near $K_0$.
\end{itemize}
Moreover, there exists a suitable $J_S \in \J_+(\xi_0)$ such that $\jhat_S$ is regular with respect to homotopy class $(p,q)$ and action bound $S$, as explained in Section~\ref{chain_complex_subsection}, and
\[
HC_*^{\leq S,(p,q)}(f_S\lambda_0,J_S) \cong H_{*-s}(S^1;\mathbb{F}_2),
\]
for some $s\in\Z$.
\end{proposition}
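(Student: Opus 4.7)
The plan is to proceed in four stages: first verify the Morse--Bott property of $f_{\theta_0,\theta_1}\lambda_0$, then perform a standard Bourgeois-type perturbation localized near the torus $N_{(p,q)}$, then select an almost complex structure for which regularity holds, and finally compute the differential via the Morse--Bott/Morse-theoretic correspondence.

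\textbf{Step 1: Morse--Bott property.} Because $\theta_0,\theta_1\notin\Q$, the orbits $L_0,L_1$ and all their iterates are isolated and non-degenerate. Every remaining closed orbit lies on one of the invariant tori obtained from points $\gamma(t)\in\gamma((0,1))$ for which the normal $(y'(t),-x'(t))$ is a positive rational multiple of some $(p,q)$; the strict concavity/convexity assumption $x'y''-x''y'\neq 0$ guarantees that for each relatively prime $(p,q)$ there is at most one such $t$. At the associated torus $N_{(p,q)}$ a direct computation of the transverse linearized flow, using the explicit formula for $X_0$ above, shows that $\ker(d\phi_{T_{(p,q)}}-I)=T_xN_{(p,q)}$ and that $d\lambda$ has rank $0$ on $TN_{(p,q)}$. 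This verifies the four items of Definition~\ref{morse-bott_def}.

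\textbf{Step 2: Perturbation to non-degenerate.} Fix a perfect Morse function $g$ on $S_{(p,q)}\cong S^1$ with exactly one maximum and one minimum. Using Bourgeois' construction (\cite{sftcomp}, or equivalently the local model of \cite{momin}), I would build a function $f_S$ supported in an arbitrarily small neighborhood of $N_{(p,q)}$, disjoint from all other tori $N_{(p',q')}$ of action $\leq S$ and from a neighborhood of $K_0$, such that $f_S\lambda_0$ is non-degenerate up to action $S$ and the only closed orbits in homotopy class $(p,q)$ with action $\leq S$ are two orbits $P_{\max},P_{\min}$ corresponding to the two critical points of $g$. Their Conley--Zehnder indices differ by $1$, and by taking the perturbation small enough their actions lie in $(T_{(p,q)}-\delta,T_{(p,q)}+\delta)$ and no new contractible orbits in $S^3\setminus K_0$ of action $\leq S$ are created (here I would also use that $\theta_0,\theta_1\notin\Q$ to exclude contractible orbits coming from $L_0,L_1$ and their iterates). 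Since the perturbation is supported away from $K_0$, the last property $f_S=f_{\theta_0,\theta_1}$ near $K_0$ holds by construction.

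\textbf{Step 3: Regular $J_S$ and differential.} Choose $J_S\in\J_+(\xi_0)$ whose extension $\jhat_S$ is adapted to the Morse--Bott geometry: near $N_{(p,q)}$ take $J_S$ to be the standard model almost complex structure compatible with the foliation by orbits, elsewhere generic so that $\jhat_S\in\J_{\rm reg}(f_S\lambda_0)$ for homotopy class $(p,q)$ and action bound $S$. The Morse--Bott correspondence (see~\cite{sftcomp}) identifies finite-energy cylinders in the class $(p,q)$ from $P_{\max}$ to $P_{\min}$, for $f_S$ sufficiently close to $f_{\theta_0,\theta_1}$, with pairs consisting of a trivial holomorphic cylinder over a Morse--Bott orbit and a gradient trajectory of $g$ on $S_{(p,q)}\cong S^1$. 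A perfect Morse function on a circle has exactly two gradient flow lines from its maximum to its minimum, so $\#_2\M^{\leq S,(p,q)}_{\jhat_S}(P_{\max},P_{\min})/\R=0$.

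\textbf{Step 4: Homology computation.} The chain complex $C_*^{\leq S,(p,q)}(f_S\lambda_0)$ is thus generated by $P_{\max}$ and $P_{\min}$ in two consecutive degrees and the differential vanishes. Writing $s=|P_{\min}|$, this gives
\[
HC_*^{\leq S,(p,q)}(f_S\lambda_0,J_S)\cong \mathbb{F}_2\langle P_{\min}\rangle\oplus\mathbb{F}_2\langle P_{\max}\rangle\cong H_{*-s}(S^1;\mathbb{F}_2),
\]
as desired.

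The main technical obstacle will be Step 3: realizing the Morse--Bott-to-Morse correspondence for cylinders with the additional constraint that they avoid $\tau^{-1}(K_0)$, together with achieving regularity through a perturbation of $J_S$ that keeps $\tau^{-1}(K_0)$ complex. For the former, one uses that a Morse--Bott broken cylinder with positive asymptotic in class $(p,q)$ cannot have additional levels with asymptotics at iterates of $L_0$ or $L_1$ without violating the homotopy class or the action bound $S$, which restricts the admissible configurations to the ones captured by the gradient flow of $g$. For the latter, the standard transversality arguments in~\cite{drag,momin} apply because all orbits involved are simply covered.
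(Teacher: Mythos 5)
Your Step~2 has a concrete gap: you propose to support the perturbation \emph{only} in a small neighborhood of $N_{(p,q)}$, disjoint from all other tori of action $\leq S$, and yet claim that the resulting $f_S\lambda_0$ is non-degenerate up to action $S$. That cannot work: all the other $X_0$-invariant tori $N_{(p',q')}$ with action below $S$ survive unchanged, so they remain Morse--Bott degenerate and $f_S\lambda_0$ fails non-degeneracy up to action $S$. The paper perturbs \emph{every} such torus simultaneously --- it chooses compatible functions $\bar g_T$ on each $N_T$ with $T<S$, arranged to vanish on $\bar K_0$, and sets $f_\epsilon = 1+\epsilon \bar g_S$ --- and then proves, by a limiting argument together with an analysis of the linearizations $DZ(P_{\rm max})$, $DZ(P_{\rm min})$, that for $\epsilon$ small each torus contributes exactly two non-degenerate orbits of action $\leq S$ and no other periodic orbits appear.

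Step~3 is where you diverge most substantially from the paper, and not in a way that closes the argument. You invoke the Morse--Bott-to-Morse correspondence as a black box and acknowledge at the end that justifying it (in particular, uniqueness of the index-one cylinders and their confinement to $W_{\xi_0}\setminus\tau^{-1}(K_0)$) is ``the main technical obstacle'' --- but you do not resolve it. The paper takes a deliberately different and self-contained route precisely to avoid that machinery: it constructs the two cylinders \emph{explicitly} (from the gradient flow of $-1/f_\epsilon$ on the circle, lifted into the invariant torus), proves their \emph{uniqueness} via Siefring's intersection theory (first showing $\wind(\eta_\pm)$ equals the extremal windings using the index jump $\mu_{CZ}(P_{\max})=\mu_{CZ}(P_{\min})+1$ and the inequality $\wind_\pi\geq 0$, then deducing $[\util]*[\vtil]=0$ for distinct solutions and deriving a contradiction from a hypothetical third solution at the even puncture), and obtains \emph{regularity} from Wendl's automatic transversality criterion $\mathrm{Ind}>-\chi+\#\Gamma_0+2Z(d\util)$. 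So the homology computation is not a consequence of the Bourgeois correspondence in the paper; it is a consequence of an independent existence/uniqueness/regularity analysis. Your proposal leaves exactly the part of the proof that the paper considers the crux undone, and substitutes a reference whose applicability here (simultaneous confinement away from $\tau^{-1}(K_0)$, regularity of the perturbed moduli space) is precisely what is in question.
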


The remaining paragraphs in this section are devoted to proving Proposition~\ref{prop_morse_bott_approx}. The idea of perturbing a Morse-Bott non-degenerate contact form is originally due to Bourgeois~\cite{Bourgeois_thesis}. In the proof below we provide all the details for this perturbation in our particular case. As the reader will notice, we resolve the analytical difficulties that arise in the computation of contact homology by a new and independent argument that relies on the intersection theory for punctured pseudo-holomorphic curves developed by Siefring~\cite{Sie}.

\subsubsection{Verifying the Morse-Bott property and perturbing $f_{\theta_0,\theta_1}\lambda_0$}\label{paragraph_perturb}

We work directly on the star-shaped hypersurface $S_\gamma = \{ F(r_0^2,r_1^2) = 0 \}$, where $F : \R^2 \to \R$ is the smooth function associated to the special curve $\gamma$ described in Section~\ref{example-2}. The Hopf link is represented by $\bar L_0 = S_\gamma \cap (0 \times \C)$ and $\bar L_1 = S_\gamma \cap (\C\times 0)$, and we write $\bar K_0 = \bar L_0 \cup \bar L_1$. First we need to show that $\lambda_0|_{S_\gamma}$ is Morse-Bott non-degenerate on $S_\gamma$.

Let us assume $\gamma$ is strictly convex, the case when $\gamma$ is strictly concave is analogous. It is convenient to introduce the function
\[
\vartheta(r_0,r_1) := \text{argument of the vector } (D_1F(r_0^2,r_1^2),D_2F(r_0^2,r_1^2)).
\]
This is a well-defined smooth function on $S_\gamma\setminus \bar K_0$ assuming all values in an interval $(a,b) \subset (-\pi/2,\pi)$ in view of the defining properties of the curve $\gamma$. Strict convexity of $\gamma$ shows that $\vartheta$ is a global unambiguously defined smooth parameter on the embedded arc $\gamma \setminus \{\text{end points}\}$, and that we can choose $\vartheta$ to be strictly increasing when $\gamma$ is prescribed counter-clockwise. One finds that $d\vartheta$, $d\phi_0$ and $d\phi_1$ are pointwise linearly independent. Thus we get coordinates $(\vartheta,\phi_0,\phi_1)$ which define a diffeomorphism $S_\gamma \setminus \bar K_0 \simeq (a,b) \times \R/2\pi\Z\times \R/2\pi\Z$ and we can write
\[
\lambda_0|_{S_\gamma} = h_0(\vartheta) d\phi_0 + h_1(\vartheta) d\phi_1
\]
for suitable functions uniquely determined by $h_0(\vartheta) = \frac{1}{2}r_0^2$, $h_1(\vartheta) = \frac{1}{2}r_1^2$. The argument of the vector $(h_0,h_1)$ also varies monotonically with $\vartheta$, and the denominator in the following expression for Reeb vector field associated to $\lambda_0|_{S_\gamma}$
\[
X_0 = \frac{h_1' \partial_{\phi_0} - h_0' \partial_{\phi_1}}{h_0h_1' - h_1h_0'}
\]
is strictly positive. In the following we write $\lambda_0$ instead of $\lambda_0|_{S_\gamma}$ for simplicity and denote by $\varphi_t$ the flow of $X_0$.

Note that the action spectrum is discrete. Indeed, the closed orbits are either iterates of the components of $\bar K_0$ or lie in invariant tori determined by values of $\vartheta \in (a,b)$ such that $h_1'(\vartheta)$ and $-h_0'(\vartheta)$ are dependent over $\Q$. The periods of orbits in such a torus are $\{kT(\vartheta) \mid k=1,2,\dots \}$ where $$ T(\vartheta) = \min \left\{ t>0 \text{ such that } \frac{t(h_1'(\vartheta),-h_0'(\vartheta))}{h_0(\vartheta)h_1'(\vartheta) - h_1(\vartheta)h_0'(\vartheta)} \in 2\pi\Z \times 2\pi\Z \right\}. $$ Given any $M>0$ there are only finitely many values $\vartheta_1,\dots,\vartheta_N$ such that $T(\vartheta_i)$ is defined and satisfies $T(\vartheta_i)\leq M$. Moreover, for each $i$ there are only finitely many positive integers $k_i^1,\dots,k_i^{J_i}$ satisfying $k_i^jT(\vartheta_i)\leq M$. Hence the intersection of the action spectrum with $[0,M]$ is finite, for all $M>0$. We proved that $\lambda_0$ satisfies condition (1) in Definition~\ref{morse-bott_def}.

For each value $T$ in the action spectrum, the set $N_T$ of fixed points of $\varphi_T$ is a submanifold of $S_\gamma$ consisting of a finite collection of tori together with at most two circles (corresponding to $\bar L_0$ or $\bar L_1$). The Reeb flow induces a smooth action of the circle $\R/T\Z$ on $N_T$. In our simple situation the quotient $S_T$ of $N_T$ by this action is a finite collection of circles together with at most two additional isolated points. Another particular feature of our model forms is that if $k>1$ then $N_{T}$ is a collection of components of $N_{kT}$ and, consequently, $S_{T}$ is a collection of components of $S_{kT}$.

In view of strict convexity/concavity of $\gamma$ there is one, and only one, invariant torus foliated by periodic orbits representing the homotopy class $(i,j) \in \Z\times \Z \simeq \pi_1(S_\gamma\setminus \bar K_0,{\rm pt})$ for each relatively prime pair of integers $(i,j)$ satisfying $(\theta_0,1) < (i,j) < (1,\theta_1)$ or $(1,\theta_1) < (i,j) < (\theta_0,1)$. All such tori are singled out by fixing a certain value of $\vartheta$. The torus corresponding to $(p,q)$ will be denoted $\mathbb T_{(p,q)}$ and the period of the prime closed Reeb orbits there by $T_{(p,q)}$, so that $\mathbb T_{(p,q)}$ is a component of $N_{T_{(p,q)}}$.

We fix $S>T_{(p,q)}+1$ not in the action spectrum, and choose inductively for $T<S$ in the action spectrum a smooth function $\bar g_T : N_T \to \R$ invariant by the Reeb flow such that $\bar g_{kT}|_{N_T} \equiv \bar g_T$ whenever $kT<S$ and $k\geq1$, inducing perfect Morse functions $g_T$ on the orbit spaces $S_T$. These induce a smooth function on $\bar g_S: \cup_{T<S}N_T \to \R$ which is constant along Reeb trajectories. If $S$ is larger than the actions of $\bar L_0$ and $\bar L_1$ then $\bar K_0 = \bar L_0 \cup \bar L_1$ is a pair of components of $\cup_{T<S}N_T$ and we may assume without loss of generality that $\bar g_S$ vanishes on $\bar K_0$. To be more concrete, consider an invariant torus $\mathbb T = \{\vartheta = \vartheta^*\}$ foliated by closed orbits of period $T<S$, for some value $\vartheta = \vartheta_* \in (a,b)$. Assume that $\bar g_{T'}$ has been chosen for all values $0<T'<T$ in the action spectrum, satisfying the above compatibility conditions. If $\mathbb T \subset N_{T'}$ for some action value $T'<T$ then the function $\bar g_T$ is already there, so we assume $\mathbb T \subset N_T\setminus (\cup_{T'<T} N_{T'})$. Consider $(x,y)$ defined on the universal covering by
\begin{equation}\label{coord_xy}
\begin{pmatrix} x \\ y \end{pmatrix} = \begin{pmatrix} h_0'(\vartheta^*) & h_1'(\vartheta^*) \\ h_0(\vartheta^*) & h_1(\vartheta^*) \end{pmatrix} \begin{pmatrix} \phi_0 \\ \phi_1 \end{pmatrix}.
\end{equation}
Denoting $d = h_1h_0' - h_0h_1'$ and
\[
\begin{array}{ccc} \Delta_1(\vartheta) = \frac{h_0(\vartheta)h_1(\vartheta^*)-h_1(\vartheta)h_0(\vartheta^*)}{d(\vartheta^*)} & & \Delta_2(\vartheta) = \frac{h_1(\vartheta)h_0'(\vartheta^*)-h_0(\vartheta)h_1'(\vartheta^*)}{d(\vartheta^*)} \end{array}
\]
we obtain
\begin{equation}\label{mat_ident}
\begin{pmatrix} \Delta_1'(\vartheta^*) & \Delta_1(\vartheta^*) \\ \Delta_2'(\vartheta^*) & \Delta_2(\vartheta^*) \end{pmatrix} = \begin{pmatrix} 1 & 0 \\ 0 & 1 \end{pmatrix}.
\end{equation}
A brief calculation gives $$ \begin{array}{ccc} \lambda_0 = \Delta_1 dx + \Delta_2 dy & \text{and} & X_0 = \frac{1}{\Delta}(-\Delta_2'\partial_x + \Delta_1'\partial_y), \end{array} $$ where $\Delta := \Delta_1'\Delta_2 - \Delta_1\Delta_2'$. In particular $X_0|_{\mathbb T} = \partial_y$, so that invariant functions on $\mathbb T$ depend only on $x$. The variable $x$ is periodic and, denoting the period of $x$ by $L$, we set $\bar g_T|_{\mathbb T} = \cos (2\pi x/L)$. Repeating this construction for the (finitely many) tori foliated by Reeb orbits of period $T$ not contained in $\cup_{T'<T}N_{T'}$, and setting $\bar g_T = 0$ on $\bar K_0 \cap N_T$, we obtain the desired function on $\cup_{T'\leq T}N_{T'}$. Doing this for all action values $T<S$ we get the desired function on $\cup_{T<S}N_T$.

We will use the special coordinates $x,y$ defined in~\eqref{coord_xy} adapted to some invariant torus $\mathbb T = \{\vartheta = \vartheta^*\} \subset N_T$, for some $T<S$, to study the linearized Reeb flow $d\varphi_t$ on $\mathbb T$. If $V = A \partial_\vartheta + B \partial_x + C \partial_y \in T(S_{\gamma} \setminus \bar K_0)|_{\mathbb T}$ then $V(t) = d\varphi_t \cdot V = A(t) \partial_\vartheta + B(t) \partial_x + C(t) \partial_y$ satisfies
\[
\begin{array}{ccc} \dot A(t) = 0, & \dot B(t) = -\Delta_2''A, & \dot C(t) = 0. \end{array}
\]
We have $\Delta_2''(\vartheta^*) = (h_1''(\vartheta^*)h_0'(\vartheta^*) - h_0''(\vartheta^*)h_1'(\vartheta^*))/d(\vartheta^*) \neq0$ by strict convexity/concavity of $\gamma$, and this implies that $\ker (d\varphi_T|_{\mathbb T} - I) = T\mathbb T$, that is, condition (4) in Definition~\ref{morse-bott_def} holds at $\mathbb T$. Conditions (2) and (3) are easy to check using the above constructions. This concludes the proof that $\lambda_0|_{S_\gamma}$ is Morse-Bott non-degenerate.

We will extend the function $\bar g_T$ to a small neighborhood of $\mathbb T$. More precisely, if $I$ is a small neighborhood of $\vartheta^*$ and $\beta : I \to [0,1]$ is a smooth compactly supported function such that $\beta\equiv1$ near $\vartheta^*$ then we define
\begin{equation}\label{function_g}
\bar g_T(\vartheta,x) = \beta(\vartheta)\cos(2\pi x/L).
\end{equation}
Let $\vartheta_1<\dots<\vartheta_n$ be so that
\begin{equation}\label{tori}
(\cup_{T<S}N_T) \setminus \bar K_0 = \bigcup_{i=1}^n \mathbb T_i, \ \text{ with } \mathbb T_i := \{\vartheta = \vartheta_i\}.
\end{equation}
We can repeat the above constructions near each $\mathbb T_i$ to get an extension of $\bar g_S$ to a small neighborhood of $\cup_i \mathbb T_i$. Then $\bar g_S$ can be further extended to $S_\gamma$ by zero outside of this neighborhood of $\cup_i\mathbb T_i$.

We consider, as in~\cite{sftcomp}, the 1-form
\begin{equation}\label{lambda_epsilon}
  \lambda_\epsilon := f_\epsilon \lambda_0, \ \text{ where } \ f_\epsilon = 1 + \epsilon \bar g_S,
\end{equation}
for small values $\epsilon>0$. Note that $f_\epsilon-1$ is $C^\infty$-small and supported near $\cup_i \mathbb T_i$, where the $\mathbb T_i$ are defined in~\eqref{tori}. The Reeb vector field of $\lambda_\epsilon$ will be denoted $X_\epsilon$.

Fixing a torus $\mathbb T = \{ \vartheta = \vartheta^* \}$ among the $\mathbb T_i$, we will analyze the flow of $X_\epsilon$ near $\mathbb T$. Any prime closed Reeb orbit $P$ in $\mathbb T$ has period $T<S$ and satisfies $\link(P,\bar L_0)=r$, $\link(P,\bar L_1)=s$ for certain $r,s\in\Z$. Recall that according to our constructions, after introducing coordinates $x,y$ as in~\eqref{coord_xy}, $\bar g_S$ takes the form $\bar g_S = \beta(\vartheta)\cos(2\pi x/L)$ near $\mathbb T$, where $\beta:\R\to[0,1]$ has support on a small interval $I$ centered at $\vartheta^*$ and equals to $1$ near $\vartheta^*$. Here $L$ is the period of $x$. The vector field $X_\epsilon = X^\vartheta_\epsilon \partial_\vartheta + X^x_\epsilon \partial_x + X^y_\epsilon \partial_y$ is determined by the functions
\begin{equation}\label{comp_Reeb_epsilon}
\begin{array}{ccc}
X^\vartheta_\epsilon = \partial_x\left( -\frac{1}{f_\epsilon} \right) \frac{\Delta_2}{\Delta},
& X^x_\epsilon = -\frac{\partial_\vartheta(f_\epsilon\Delta_2)}{f_\epsilon^2\Delta},
& X^y_\epsilon = \frac{\partial_\vartheta(f_\epsilon\Delta_1)}{f_\epsilon^2\Delta}.
\end{array}
\end{equation}

We wish to understand the dynamics of $X_\epsilon$ on the neighborhood $\mathcal O = \{\vartheta \in I\}$ of $\mathbb T$. Note that the vector field $X_\epsilon$ does not depend on $y$, $f_\epsilon^2\Delta X^\vartheta_\epsilon = (\partial_xf_\epsilon) \Delta_2$ and $f_\epsilon^2\Delta X^x_\epsilon = -\partial_\vartheta(f_\epsilon\Delta_2)$. Thus, periodic orbits of $X_\epsilon$ on $\mathcal O$ project to periodic orbits and rest points of the vector field $Z = Z^\vartheta \partial_\vartheta + Z^x \partial_x = (\partial_xf_\epsilon) \Delta_2 \partial_\vartheta - \partial_\vartheta(f_\epsilon\Delta_2) \partial_x$ on the annulus $A = \{(\vartheta,x) \mid \vartheta \in I, \ x\in\R/L\Z\}$. More explicitly we have
\begin{equation}\label{comp_Z}
\begin{aligned} & Z^\vartheta = -\frac{\epsilon2\pi}{L}\beta(\vartheta)\Delta_2(\vartheta) \sin(2\pi x/L), \\ & Z^x = -\epsilon\beta'(\vartheta)\Delta_2(\vartheta) \cos(2\pi x/L) - (1+\epsilon\beta(\vartheta)\cos(2\pi x/L))\Delta_2'(\vartheta). \end{aligned}
\end{equation}
In view of~\eqref{mat_ident}, of $\Delta_2''(\vartheta^*)\neq0$ and of the fact that $I$ is small, we have $\Delta_2 \neq 0$ on $I$, $\Delta_2'$ does not vanish on $I\setminus \{\vartheta^*\}$ and has different signs on both components of $I\setminus \{\vartheta^*\}$. We assume $\Delta_2''(\vartheta^*)>0$, so that $\Delta'_2>0$ on $\{\vartheta\in I \mid \vartheta>\vartheta^*\}$ and $\Delta'_2<0$ on $\{\vartheta\in I \mid \vartheta<\vartheta^*\}$, the other case is analogous. It follows from~\eqref{comp_Z} that if $\epsilon$ is small enough then the zeros of $Z$ are $P_{\rm max} = (\vartheta^*,0)$ and $P_{\rm min} = (\vartheta^*,L/2)$. These rest points correspond to the only periodic $\lambda_0$-Reeb orbits on $\mathbb T$ which survive as periodic $\lambda_\epsilon$-orbits: they are the maximum and the minimum of $g_T$ on $S_T$.

We claim that for $\epsilon$ small enough the only periodic orbits of $X_\epsilon$ inside $\mathcal O$ with $\lambda_\epsilon$-action $\leq S$ correspond to the rest points $P_{\rm max},P_{\rm min}$. If not, we find $\epsilon_n \to 0^+$ and a sequence of periodic $X_{\epsilon_n}$-trajectories $\gamma_n$ in $\mathcal O$ with $\lambda_\epsilon$-action $\leq S$ different from them. Up to a subsequence, there exists a periodic $\lambda_0$-Reeb orbit $\gamma$ in $\mathcal O$ with action $\leq S$ such that $\gamma_n \to \gamma$. But the only such orbits correspond to the points in the circle $\Lambda = \{(\vartheta^*,x) \mid x\in \R/L\Z\}$. Thus the projections $\Gamma_n$ of $\gamma_n$ to the annulus $A$ are periodic orbits of $Z$ converging to a point $P_* \in \Lambda$. It must be the case that $P_* = P_{\rm min}$. In fact, $Z^\vartheta$ has a definite sign near any point in $\Lambda \setminus \{P_{\rm max},P_{\rm min}\}$, which implies $P_* \in \{P_{\rm max},P_{\rm min}\}$. It is easy to check, using the above formulas and the assumption $\Delta_2''(\vartheta^*)>0$, that the characteristic equation of $DZ(P_{\rm max})$ looks like $t^2 - \epsilon_n k^2 = 0$, while that of $DZ(P_{\rm min})$ is of the form $t^2 + \epsilon_n k^2 = 0$, for certain values $k\neq0$. Thus $P_{\rm min}$ is elliptic while $P_{\rm max}$ is hyperbolic, and we cannot have $\Gamma_n \to P_{\rm max}$ since, otherwise, $\Gamma_n$ would bound a disk containing no singularities or containing $P_{\rm max}$ as the only singularity, contradicting $\chi(\mathbb D)=1$. This implies $\Gamma_n \to P_{\rm min}$. To obtain the desired contradiction, note that the eigenvalues of $DZ(P_{\rm min})$ are purely imaginary and proportional to $\sqrt{\epsilon_n}$ in absolute value. Thus the orbits $\Gamma_n$ take very long time to close up when $n$ is large, contradicting the fact that they have uniformly bounded period (which can be estimated in terms of $S$).

\begin{remark}
The characteristic equations of $DZ(P_{\rm max})$, $DZ(P_{\rm min})$ calculated above show that the corresponding periodic $\lambda_\epsilon$-Reeb orbit are non-degenerate. The orbits corresponding to $P_{\rm min}$, $P_{\rm max}$ clearly link $r$ times with $\bar L_0$ and $s$ times with $\bar L_1$.
\end{remark}

\begin{remark}
The above calculations assumed $\Delta_2''(\vartheta^*)>0$, and the case $\Delta_2''(\vartheta^*)<0$ is treated analogously. In the latter case $P_{\rm min}$ is hyperbolic and $P_{\rm max}$ is elliptic.
\end{remark}

Our arguments so far have proved that for each $X_0$-invariant torus $\mathbb T_i\subset \cup_{T'<S}N_{T'}$ there exists $\epsilon_i>0$ and a small neighborhood $\mathcal O_i$ of $\mathbb T_i$ such that $\lambda_\epsilon = \lambda_0$ on $S_\gamma \setminus (\cup_i\mathcal O_i)$ and, moreover, such that if $\epsilon<\epsilon_i$ then there are precisely two prime closed $\lambda_\epsilon$-Reeb orbits in $\mathcal O_i$ with action $\leq S$. These are reparametrizations of two closed $\lambda_0$-Reeb orbits in $\mathbb T_i$, are non-degenerate and have action close to $T_i$, where $T_i$ is the period of the prime closed $\lambda_0$-Reeb orbits foliating $\mathbb T_i$. One of them is hyperbolic and the other is elliptic, both have transverse Floquet multipliers close to $1$. Let $k_i$ be such that $k_iT_i$ is the maximum value in the action spectrum of $\lambda_0$ of the form $kT_i$ which is smaller than $S$, with $k\geq 1$. Since the Floquet multipliers of the two surviving orbits are close to $1$, all their interates up to the $k_i$-th iterate are also non-degenerate, and are precisely the closed $\lambda_\epsilon$-Reeb orbits in $\mathcal O_i$ with period $<S$ since their prime periods are close to $T_i$. Since $X_\epsilon=X_0$ on $S_\gamma \setminus (\cup_i\mathcal O_i)$, any closed $X_\epsilon$-orbit not contained in $\cup_i\mathcal O_i \cup \bar L_0 \cup \bar L_1$ has period larger than $S$.

Therefore, taking $\epsilon>0$ small enough there exists a diffeomorphism $\Phi:S^3\to S_\gamma$ such that $\Phi^*\lambda_\epsilon = f_S\lambda_0|_{S^3}$ with $f_S:S^3\to(0,+\infty)$ close to $f_{\theta_0,\theta_1}$ in $C^\infty$, satisfying the requirements of Proposition~\ref{prop_morse_bott_approx}.

\subsubsection{Existence, uniqueness and regularity of finite-energy cylinders}

We keep using the constructions made above. Let $\vartheta^*$ be determined by $\mathbb T_{(p,q)} = \{\vartheta=\vartheta^*\}$, where $\mathbb T_{(p,q)}$ is the unique $X_0$-invariant torus foliated by prime closed $\lambda_0$-Reeb orbits in homotopy class $(p,q)$. In the following we denote $T = T_{(p,q)}$ the prime period of the closed $\lambda_0$-Reeb orbits foliating $\mathbb T_{(p,q)}$, for simplicity. According to our constructions, there exists a small open interval $I$ centered at $\vartheta^*$ such that on the neighborhood $\mathcal O = \{\vartheta\in I\}$ we have
\begin{equation}\label{f_eps_near_torus}
\begin{array}{ccc} \lambda_\epsilon = f_\epsilon\lambda_0, & \mbox{with} & f_\epsilon = 1+\epsilon \cos(2\pi x/L). \end{array}
\end{equation}
Here $I$ is small enough such that $\beta|_I\equiv1$. We make use of the parameters $x,y$ adapted to $\mathbb T_{(p,q)}$ defined in~\eqref{coord_xy}. It follows from~\eqref{comp_Reeb_epsilon} that the Reeb vector field $X_\epsilon = X^\vartheta_\epsilon \partial_\vartheta + X^x_\epsilon \partial_x + X^y_\epsilon \partial_y$ is given by
\begin{equation*}
\begin{array}{ccc}
X^\vartheta_\epsilon = \left( -\frac{1}{f_\epsilon} \right)' \frac{\Delta_2}{\Delta},
& X^x_\epsilon = \left( -\frac{1}{f_\epsilon} \right)\frac{\Delta_2'}{\Delta},
& X^y_\epsilon = \left( \frac{1}{f_\epsilon} \right)\frac{\Delta_1'}{\Delta}.
\end{array}
\end{equation*}
where $'$ denotes differentiation with respect to $x$ when applied to functions of $x$, or differentiation with respect to $\vartheta$ when applied to functions of $\vartheta$. The critical points of $-\frac{1}{f_\epsilon}$ are $x=0$ (maximum) and $x=L/2$ (minimum). It was proved in Section~\ref{paragraph_perturb} above that the only closed $\lambda_\epsilon$-Reeb orbits contained in $S_\gamma\setminus \bar K_0$ representing the homotopy class $(p,q)$ with $\lambda_\epsilon$-action $<S$ are $P_{\rm max} := (x_{\rm max},T_{\rm max}=T(1+\epsilon))$, $P_{\rm min} := (x_{\rm min},T_{\rm min}=T(1-\epsilon))$ where 
\[
\begin{array}{ccc}
x_{\rm max}(t) \simeq (\vartheta^*,0,t(1+\epsilon)^{-1}) & \text{and} & x_{\rm min}(t) \simeq (\vartheta^*,L/2,t(1-\epsilon)^{-1}).
\end{array}
\]
The vectors $e^1_\epsilon = \partial_\vartheta$, $e^2_\epsilon = (\Delta_2\partial_x - \Delta_1\partial_y)/f_\epsilon$ define a frame of $\xi_0 = \ker\lambda_0$ near $\mathbb T_{(p,q)}$ which is $d\lambda_\epsilon$-symplectic on $\mathbb T_{(p,q)}$. We represent the linearized flows along $P_{\rm max}$ and $P_{\rm min}$ in the frame $\{e^1_\epsilon,e^2_\epsilon\}$ as symplectic paths of matrices $\varphi_+(t)$, $\varphi_-(t)$, respectively. They satisfy
\[
\dot\varphi_\pm(t) = \begin{pmatrix} 0 & \mp c_\epsilon \\ -\Delta_2''(\vartheta^*) & 0 \end{pmatrix} \varphi_\pm(t)
\]
where $c_\epsilon \to 0^+$ as $\epsilon\to0$. If $\Delta_2''(\vartheta^*)>0$ then $\mu(\varphi_+) = 0$, $\mu(\varphi_-) = -1$, and if $\Delta_2''(\vartheta^*)<0$ then $\mu(\varphi_+) = 1$, $\mu(\varphi_-) = 0$. In any case $\mu_{CZ}(P_{\rm max}) - \mu_{CZ}(P_{\rm min}) = 1$.

We consider $J_\epsilon \in \J_+(\xi_0)$ satisfying $J_\epsilon \cdot e^1_\epsilon = e^2_\epsilon$ near $\mathbb T_{(p,q)}$, and define an $\R$-invariant almost complex structure $\jtil_\epsilon$ on $\R\times S_\gamma$ by $$ \begin{array}{ccc} \jtil_\epsilon \cdot \partial_a = X_\epsilon, & & \jtil_\epsilon|_{\xi_0} \equiv J_\epsilon, \end{array} $$ where above we see $\xi_0$ as a $\R$-invariant subbundle of $T(\R\times S_\gamma)$.

For each $\epsilon>0$ small we consider the following elliptic problem.
\begin{equation*}
{\rm (PDE_\epsilon)} \ \ \ \ \ \ \ \left\{ \begin{aligned} & \util = (a,u) : \R\times\R/\Z \to \R\times S_\gamma \ \mbox{ is smooth and satisfies} \\ & \bar\partial_{\jtil_\epsilon}(\util)=0, \ 0<E(\util)<\infty, \\ & \util \mbox{ is asymptotic to } P_{\rm max} \mbox{ at } \{+\infty\}\times \R/\Z, \\ & \util \mbox{ is asymptotic to } P_{\rm min} \mbox{ at } \{-\infty\}\times \R/\Z. \end{aligned} \right.
\end{equation*}
We call two solutions $\util=(a,u),\vtil=(b,v)$ of ${\rm (PDE_\epsilon)}$ equivalent if there are constants $c,\Delta s,\Delta t$ such that $v(s,t) = u(s+\Delta s,t+\Delta t)$ and $b(s,t) = a(s+\Delta s,t+\Delta t) + c$. The set of equivalence classes of solutions will be denoted by $\M_\epsilon$. The conclusion of the proof of Proposition~\ref{prop_morse_bott_approx} is a consequence of the statement below.

\begin{lemma}\label{lemma_cyl_p,q}
If $\epsilon$ is small enough then $\M_\epsilon$ has exactly two elements, and every solution $\util=(a,u)$ of ${\rm (PDE_\epsilon)}$ satisfies
\begin{itemize}
\item $u(\R\times \R/\Z) \subset \mathbb T_{(p,q)}$,
\item the linearized Cauchy-Riemann equation at $\util$ is a surjective Fredholm operator.
\end{itemize}
\end{lemma}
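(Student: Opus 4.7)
The plan is to combine an explicit construction of cylinders lying in $\R\times\mathbb{T}_{(p,q)}$ with a uniqueness argument driven by Siefring's intersection theory for punctured pseudo-holomorphic curves~\cite{Sie}, and then to conclude regularity by invoking automatic transversality. I would proceed in three steps.

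First I would solve the Cauchy--Riemann equation explicitly for maps $\util=(a,u)$ whose image $u$ is constrained to the torus $\mathbb{T}_{(p,q)}=\{\vartheta=\vartheta^*\}$. In the frame $\{e^1_\epsilon,e^2_\epsilon\}$ of $\xi_0$ near the torus and the coordinates $(\vartheta,x,y)$ from~\eqref{coord_xy}, the equation $\bar\partial_{\jtil_\epsilon}\util=0$ together with the ansatz $\vartheta\circ u\equiv\vartheta^*$ reduces, using the formulas~\eqref{comp_Reeb_epsilon}, to a two-dimensional elliptic system whose bounded solutions with asymptotic orbits $P_{\max}$ and $P_{\min}$ correspond bijectively to the gradient trajectories of the perfect Morse function $g(x)=\cos(2\pi x/L)$ on $S_{(p,q)}\cong\R/L\Z$. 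Since $g$ has precisely two such trajectories (the two complementary arcs joining $x=0$ and $x=L/2$), this produces two equivalence classes $[\vtil_1],[\vtil_2]\in\M_\epsilon$ whose representatives have image in $\R\times\mathbb{T}_{(p,q)}$.

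Next I would prove that these are the only elements of $\M_\epsilon$ and that every representative has image in $\R\times\mathbb{T}_{(p,q)}$. Let $\util\in\M_\epsilon$ be arbitrary. Since $P_{\max}$ and $P_{\min}$ are simply covered, both $\util$ and the $\vtil_i$ are somewhere injective, so Siefring's generalized intersection product $\util*\vtil_i$ is a well-defined non-negative integer dominating the geometric intersection count. Using Lemma~\ref{inequalities_windings} together with the explicit form of $\varphi_\pm$ computed in the text, one would read off the extremal asymptotic windings of the linearized Reeb flow at $P_{\max}$ and $P_{\min}$ with respect to the frame $\{e^1_\epsilon,e^2_\epsilon\}$; these turn out to make the asymptotic contribution to $\util*\vtil_i$ vanish, so that $\util*\vtil_i$ equals precisely the number of geometric intersections. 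Combined with the fact that the pair $\vtil_1\cup\vtil_2$ asymptotically fills $\R\times\mathbb{T}_{(p,q)}$ near each puncture, positivity of intersections then forces $\util$ to coincide with one of the $\vtil_i$ as a parametrized curve (up to the allowed reparametrizations), yielding both finiteness of $\M_\epsilon$ and the confinement of its representatives to $\R\times\mathbb{T}_{(p,q)}$.

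Regularity of the two $\vtil_i$ would then follow from Wendl's automatic transversality criterion~\cite{wendl_transv}: the Fredholm index of the linearized operator at $\vtil_i$ equals $\mu_{CZ}(P_{\max})-\mu_{CZ}(P_{\min})=1$, the domain has genus zero, both punctures are simply covered, and the parities of the Conley--Zehnder indices at $P_{\max}$ and $P_{\min}$ lie in the range for which the criterion applies. The hardest step will be the intersection-theoretic uniqueness/confinement argument, because $\R\times\mathbb{T}_{(p,q)}$ is not a $\jtil_\epsilon$-holomorphic submanifold for $\epsilon>0$; one must extract from Siefring's asymptotic analysis enough numerical information to rule out cylinders which leave the torus, and this requires careful bookkeeping of the extremal windings at the asymptotic ends on both sides of the comparison.
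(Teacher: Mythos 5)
Your overall architecture—explicit construction of two cylinders in $\R\times\mathbb{T}_{(p,q)}$, uniqueness via Siefring's intersection theory, regularity via Wendl's automatic transversality—matches the paper, and the first and third steps are essentially correct. But the uniqueness/confinement step contains a genuine gap.

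The weak point is the inference that knowing the generalized intersection numbers $\util*\vtil_i$ vanish and observing that $\vtil_1\cup\vtil_2$ ``asymptotically fills'' $\R\times\mathbb{T}_{(p,q)}$ forces $\util$ to coincide with one of the $\vtil_i$. Vanishing of $\util*\vtil_i$ only tells you that $\util$ is geometrically and asymptotically disjoint from both $\vtil_i$; it does not, by itself, identify $\util$ with one of them, and it does not force $\util$ to lie on the torus. A cylinder asymptotic to $P_{\max}$ and $P_{\min}$ could a priori lie entirely in (the closure of) one of the two solid-torus components of $\R\times(S_\gamma\setminus\mathbb{T}_{(p,q)})$, disjoint from $\vtil_1$, $\vtil_2$ and from $\R\times\mathbb{T}_{(p,q)}$ away from the orbits; positivity of intersections offers no obstruction to that. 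So the step ``positivity of intersections then forces $\util$ to coincide with one of the $\vtil_i$'' does not follow.

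What actually closes the argument is a parity observation that you have omitted. Since $\mu_{CZ}(P_{\max})-\mu_{CZ}(P_{\min})=1$, exactly one of the two indices is even; at the corresponding even (hyperbolic) orbit the extremal asymptotic eigenvalue on the relevant side has a one-dimensional eigenspace, and it is the unique eigenvalue with that winding. The windings of the asymptotic eigensections of all cylinders in $\M_\epsilon$ are pinned to the extremal values (this is where Lemma~\ref{inequalities_windings} and the equality $\mu_{CZ}(P_{\max})=\mu_{CZ}(P_{\min})+1$ enter, via the nonnegativity of $\wind_\pi$ from~\cite{props2}). Consequently, among any three distinct elements of $\M_\epsilon$, two must approach the even orbit along asymptotic eigensections that are positive multiples of one another; Siefring's Theorem~2.5 then forces their generalized intersection number to be strictly positive. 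This contradicts the vanishing of the generalized intersection number between any two distinct elements (which is obtained from~\cite[Corollary~5.9]{Sie}, using the extremal windings and the fact that the images of the maps avoid $P_{\max}$ and $P_{\min}$). That contradiction is what bounds $\#\M_\epsilon\le 2$; the two explicit cylinders then account for all of $\M_\epsilon$ and, in particular, every solution lies in $\R\times\mathbb{T}_{(p,q)}$. You need to restore this parity/eigenspace-dimension argument to make the uniqueness and confinement claims go through.
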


\begin{proof}
The existence of two elements in $\M_\epsilon$ is proved by explicitly exhibiting solutions of (PDE$_\epsilon$) using the two Morse trajectories of the function $-\frac{1}{f_\epsilon}$ on the circle $\R/L\Z$.

Up to a harmless abuse of notation, the symplectization of $\mathcal O = \{\vartheta\in I\}$ is equipped with coordinates $(a,\vartheta,x,y)$. Let $x_\epsilon(s)$ solve
\begin{equation}\label{flow_eqn}
\dot x_\epsilon = \left( -\frac{T}{f_\epsilon} \right)' \circ x_\epsilon, \ \ x_\epsilon(0) = \frac{L}{4}
\end{equation}
and consider the parametrized cylinder $\util_\epsilon(s,t) = (a_\epsilon(s),\vartheta^*,x_\epsilon(s),Tt)$, where $$ a_\epsilon(s) = T \int_0^s f_\epsilon\circ x_\epsilon(r) dr. $$
Plugging formulas
\[
\begin{aligned}
& \partial_x = f_\epsilon \Delta_1 X_\epsilon - f_\epsilon\Delta_1X^\vartheta_\epsilon e^1_\epsilon + f_\epsilon^2 X^y_\epsilon e^2_\epsilon \\
& \partial_y = f_\epsilon \Delta_2 X_\epsilon - f_\epsilon\Delta_2X^\vartheta_\epsilon e^1_\epsilon - f_\epsilon^2X^x_\epsilon e^2_\epsilon
\end{aligned}
\]
we get
\begin{equation}\label{form_J_eps}
\jtil_\epsilon = \begin{pmatrix} 0 & 0 & -f_\epsilon\Delta_1 & -f_\epsilon\Delta_2 \\ X^\vartheta_\epsilon & 0 & -f_\epsilon^2X^y_\epsilon & f_\epsilon^2X^x_\epsilon \\ X^x_\epsilon & \Delta_2/f_\epsilon & -X^\vartheta_\epsilon\Delta_1\Delta_2 & -X^\vartheta_\epsilon\Delta_2^2 \\ X^y_\epsilon & -\Delta_1/f_\epsilon & X^\vartheta_\epsilon\Delta_1^2 & X^\vartheta_\epsilon \Delta_1\Delta_2 \end{pmatrix}.
\end{equation}
Note that all coefficients are functions of $x$ and $\vartheta$ only. Thus
\begin{equation}\label{J_at_u}
\jtil_\epsilon(\util_\epsilon(s,t)) = \begin{pmatrix} 0 & 0 & 0 & -f_\epsilon \\ \left(-\frac{1}{f_\epsilon}\right)' & 0 & -f_\epsilon & 0 \\ 0 & 1/f_\epsilon & 0 & -\left(-\frac{1}{f_\epsilon}\right)' \\ 1/f_\epsilon & 0 & 0 & 0 \end{pmatrix},
\end{equation}
where all coefficients are evaluated at $x_\epsilon(s)$.

We claim that the map $\util_\epsilon$ solves ${\rm (PDE_\epsilon)}$. Indeed, note that $$ \partial_s \util_\epsilon = \begin{pmatrix} Tf_\epsilon\circ x_\epsilon \\ 0 \\ \dot x_\epsilon \\ 0 \end{pmatrix} = \begin{pmatrix} Tf_\epsilon\circ x_\epsilon \\ 0 \\ -T(\frac{1}{f_\epsilon})'\circ x_\epsilon \\ 0 \end{pmatrix} $$ and, using~\eqref{J_at_u}, we get $$ \jtil_\epsilon(\util_\epsilon) \cdot \partial_t\util_\epsilon = T \jtil_\epsilon(\util_\epsilon) \cdot \partial_y = T \begin{pmatrix} -f_\epsilon\circ x_\epsilon \\ 0 \\ (\frac{1}{f_\epsilon})'\circ x_\epsilon \\ 0 \end{pmatrix}. $$ Adding these two equations we get $\bar\partial_{\jtil_\epsilon}(\util_\epsilon)=0$. One checks $E(\util_\epsilon)<\infty$ easily.

Thus $\util_\epsilon$ is one of the cylinders in the statement of Lemma~\ref{lemma_cyl_p,q}. The other cylinder can be obtained by considering a solution of $\dot x_\epsilon = (-T/f_\epsilon)'\circ x_\epsilon$ satisfying $x_\epsilon(0)=3L/4$.

To prove the uniqueness of cylinders we use Siefring's intersection theory from \cite{Sie}. Let us denote by $\util_1:=\util_{\epsilon,1}=(a_1,u_1)$ and $\util_2:=\util_{\epsilon,2}=(a_2,u_2)$ the finite energy cylinders found above. We fix $\epsilon>0$ small and omit the dependence on $\epsilon$ in the notation for simplicity. Recall that both $\util_1$ and $\util_2$ solve ${\rm PDE_\epsilon}$ and are asymptotic to $P_{\max}=(x_{\max},T_{\max})$ and $P_{\min}=(x_{\min},T_{\min})$ as $s \to +\infty$ and $s\to -\infty$, respectively, where $T_{\max}$ and $T_{\min}$ are their prime periods. Let $x_{T_{\max}}(t):=x_{\max}(tT_{\max})$ and $x_{T_{\min}}(t):=x_{\min}(tT_{\min})$.

One can easily check that   $$\left\{ \begin{aligned} & \util_i \mbox{ and } u_i \mbox{ are embeddings for } i=1,2, \\ &  {\rm Image}(u_i) \cap P_{\max} = {\rm Image}(u_i) \cap P_{\min} = \emptyset  \mbox { for } i=1,2, \\ &  {\rm Image}(u_1) \cap {\rm Image}(u_2)= \emptyset. \end{aligned} \right.$$

Recall that according to our conventions established in the beginning of Section~\ref{section_contact_homology}, whenever $P = (x,T)$ is a closed $\lambda_\epsilon$-Reeb orbit we denote by $\beta_P$ the homotopy class of $d\lambda_\epsilon$-symplectic trivializations of $(x_T)^*\xi_0$ induced by a global $d\lambda_\epsilon$-symplectic trivialization of $\xi_0$. In the following we will denote this homotopy class by $\beta$, for simplicity, without a direct reference to the orbit $P$.

Let $A_{P_{\max}}$ and $A_{P_{\min}}$ be the respective asymptotic operators defined in Section~\ref{asymptotic_behavior_section} associated to $P_{\max}$ and $P_{\min}$. As we saw, we have well-defined winding numbers $\wind^{\geq 0}(P_{\max},\beta),$ $\wind^{<0}(P_{\max},\beta),$ $\wind^{\geq 0}(P_{\min},\beta)$ and $\wind^{<0}(P_{\min},\beta)$ associated to $A_{P_{\max}}$ and $A_{P_{\min}}$.

For each $\util=(a,u)$ representing an element in $\M_\epsilon$ one can find eigensections $\eta_+:\R / \Z \to (x_{T_{\max}})^*\xi_0$ and $\eta_-:\R / \Z \to (x_{T_{\min}})^*\xi_0$ of  $A_{P_{\max}}$ and $A_{P_{\min}}$, respectively, with $A_{P_{\max}}\eta_+ = \mu_+ \eta_+$ and  $A_{P_{\min}}\eta_- = \mu_- \eta_-$, $\mu_+<0$, $\mu_->0$, and a diffeomorphism $\psi : \R \times \R/\Z \to \R \times \R/\Z$ such that \begin{equation}\label{sie1}  \begin{aligned} u\circ \psi(s,t)= \exp_{x_{T_{\max}}(t)} \{e^{\mu_+ s}(\eta_+(t)+r_+(s,t))\} \mbox { for } s\gg 0, \\ u \circ \psi(s,t)= \exp_{x_{T_{\min}}(t)}\{e^{\mu_-s}(\eta_-(t)+r_-(s,t))\} \mbox { for } s\ll 0, \end{aligned}\end{equation}  where $|r_{\pm} (s,t)| \to 0$ as $s \to \pm \infty$ uniformly in $t$.  Here, $\exp$ denotes the exponential map of the Riemannian metric $g_\epsilon$ on $S_\gamma$ given by $g_\epsilon = \lambda_\epsilon \otimes \lambda_\epsilon + d\lambda_\epsilon(\cdot ,J_\epsilon \cdot )$. Let 
\[
\begin{aligned} 
U_+(s,t):= & e^{\mu_+ s}(\eta_+(t)+r_+(s,t)) \ \text{defined for} \ s\gg0, \\ U_-(s,t):= & e^{\mu_- s}(\eta_-(t)+r_-(s,t)) \ \text{defined for} \ s\ll0.
\end{aligned}
\]
The triple $(U_+,U_-,\psi)$ is called an asymptotic representative of $\util$.  For a proof of the existence of an asymptotic representative, see~\cite{Sie0}. Here, differently from~\cite{Sie0,Sie}, we represent both ends simultaneously. We may use the notation $\eta_+(\util), \mu_+(\util)$ etc to emphasize their dependence on $\util.$

In view of the asymptotic behavior at the ends, we observe that   \begin{equation}\label{desig} \begin{aligned} \wind(\eta_+,\beta) \leq &  \wind^{<0}(P_{\max},\beta) \\ \wind(\eta_-,\beta) \geq & \wind^{\geq 0}(P_{\min},\beta). \end{aligned} \end{equation} We claim that 
\begin{equation}\label{eq_prop1}
\begin{aligned} \wind(\eta_+,\beta) = & \wind^{<0}(P_{\max},\beta) \\ \wind(\eta_-,\beta)= & \wind^{\geq 0}(P_{\min},\beta). \end{aligned}
\end{equation}

To see this recall that $\mu_{CZ}(P_{\max})= \mu_{CZ}(P_{\min})+1$. From the definition of the Conley-Zehnder index, this implies that 
\begin{equation} \label{cz1}
\begin{aligned}
2\wind^{<0}(P_{\max},\beta)+p_+=&\mu_{CZ}(P_{\max}) \\ 
= & \mu_{CZ}(P_{\min})+1 \\ = & 2\wind^{<0}(P_{\min},\beta)+p_-+1,
\end{aligned} 
\end{equation} 
where $p_+=\wind^{\geq 0}(P_{\max},\beta)-\wind^{<0}(P_{\max},\beta)$ and $p_- = \wind^{\geq 0}(P_{\min},\beta)-\wind^{<0}(P_{\min},\beta).$ Since $p_+\in \{0,1\}$ and $p_-\in \{0,1\}$, we conclude by parity reasons in \eqref{cz1} that \begin{equation}\label{cz2} p_++p_-=1.\end{equation} From \cite[Proposition 5.6]{props2}, we have \begin{equation}\label{proper2} 0 \leq \wind_\pi(\util) = \wind(\eta_+,\beta)- \wind(\eta_-,\beta) \end{equation} where the (non-negative integer-valued) invariant $\wind_\pi(\util)$ was introduced in~\cite{props2}. It follows from \eqref{desig},~\eqref{cz1}-\eqref{proper2} that  $$\begin{aligned}
0 \leq & \wind(\eta_+,\beta)- \wind(\eta_-,\beta) \\ \leq & \wind^{< 0}(P_{\max},\beta)- \wind^{\geq 0}(P_{\min},\beta) \\ = & \wind^{< 0}(P_{\max},\beta)- \wind^{< 0}(P_{\min},\beta)-p_-\\ = & \frac{1-p_--p_+}{2}=  0.\end{aligned} $$ We conclude that $$\begin{aligned} \wind(\eta_+,\beta)= & \wind(\eta_-,\beta),\\ \wind^{< 0}(P_{\max},\beta)=& \wind^{\geq 0}(P_{\min},\beta).\end{aligned}$$ Now observe again from \eqref{desig} that  $$\wind(\eta_+,\beta)\leq \wind^{< 0}(P_{\max},\beta)=\wind^{\geq 0}(P_{\min},\beta)\leq \wind(\eta_-,\beta),$$ which proves~\eqref{eq_prop1}.

Now we claim that
\begin{equation}\label{eq_prop2}
{\rm Image}(u) \cap P_{\max} =  {\rm Image}(u) \cap P_{\min} = \emptyset.
\end{equation}
To prove this first note that, since $\eta_\pm(t)$ never vanishes the map $u \circ \psi(s,t)$ does not intersect $P_{\rm max},P_{\rm min}$ when $|s|$ is large enough. We will construct a homotopy between $u$ and $u_1$ so that no intersections with $P_{\rm max}$ and $P_{\rm min}$ are created or destroyed near the ends of the domain. This implies that the algebraic intersection numbers of $u$ and $u_1$ with both $P_{\max}$ and $P_{\min}$ coincide. Since all intersections count positively and $u_1$ does not intersect $P_{\max}$ and $P_{\min}$, the claim follows.

The homotopy will be constructed in two steps. Let $(U_+,U_-,\psi)$, $(U_{1+},U_{1-},$
$\psi_1)$ be asymptotic representatives of $\util=(a,u)$ and $\util_1=(a_1,u_1)$, respectively, with eigensections $\eta_+,\eta_-,\eta_{1+}$ and $\eta_{1-}$.   We will denote $u \circ \psi$ and $u_1 \circ \psi_1$ simply by $u$ and $u_1$, respectively. By~\eqref{eq_prop1}, we know that $\wind(\eta_+,\beta)=\wind(\eta_{1+},\beta)$ and $\wind(\eta_-,\beta)=\wind(\eta_{1-},\beta)$. From the properties of the asymptotic operator $A_{P_{\max}}$ explained in~\cite{props2}, we have three possibilities (see Lemma 3.5 in~\cite{props2}):

\begin{itemize}\item[(i)] $\eta_+(t)=c\eta_{1+}(t)$ for a positive constant $c$ for all $t$. \item[(ii)] $\eta_+(t)$ and $\eta_{1+}(t)$ are linearly independent for all $t$. \item[(iii)] $\eta_+(t)=c\eta_{1+}(t)$ for a negative constant $c$ for all $t$. \end{itemize}

Cases (i) and (ii) are treated similarly. Given $M>0$ large, choose a smooth function $\gamma_M:\R \times [0,1] \to [0,1]$ satisfying $\gamma_M(s,\mu)=\mu$ if $s>M$ and $\gamma_M(s,\mu)=0$ if $s< M-1$. Define the homotopy $H:[0,1]\times \R \times \R/\Z \to S_\gamma$  by $$H(\mu,s,t) = \exp_{x_{T_{\max}}(t)}\{ (1-\gamma_M(s,\mu))U_+(s,t)+\gamma_M(s,\mu)U_{1+}(s,t)\}$$ for $s\geq M-1$ and $H(\mu,s,t)=u(s,t)$ for $s<M-1$. To see that no intersection with $P_{\rm max}$ is created or destroyed for $s> M$, if $M$ is sufficiently large, note that for $s > M$ we have $$\begin{aligned} & (1-\gamma_M(s,\mu))U_+(s,t)+\gamma_M(s,\mu)U_{1+}(s,t) \\ & =(1-\mu)e^{\mu_+ s}(\eta_+(t)+r_+(s,t)) + \mu e^{\mu_{1+}s}(\eta_{1+}(t)+r_{1+}(s,t)), \end{aligned}$$ which never vanishes since, for each large $s$, $\eta_+(t)+r_+(s,t)$ and $\eta_{1+}(t)+r_{1+}(s,t)$ are never a negative multiple of each other for all $t$. New intersections with $P_{\rm min}$ do not appear since this homotopy is supported near $P_{\max}$.

In case (iii), given $\varepsilon>0$ small and $M>0$ large, let $H : [0,1]\times \R \times \R/\Z \to S_\gamma$ be the homotopy defined by $$H(\mu,s,t)=\exp_{x_{T_{\max}}(t)} R(\varepsilon \gamma_M(s,\mu))U_+(s,t), $$ for $s\geq M-1$  and $H(\mu,s,t)=u(s,t)$ for $s<M-1$. Here, $\varepsilon>0$ is small and $R(\theta)$ denotes the rotation by an angle $\theta$ in the fiber coordinates given by an {\it a priori} choice of trivialization in class $\beta$. Clearly no intersection with $P_{\max}$ is created or destroyed if $s>M$ and the same holds for intersections with $P_{\min}$. After performing this first homotopy, we proceed as in case (ii) in order to construct a second homotopy to $u_1$ near $P_{\max}$.

Now we proceed in the same way to construct a homotopy supported near $P_{\min}$ so that we end up with a map $\bar u: \R \times \R/\Z \to S_\gamma$ which coincides with $u_1$ for $|s|>M\gg1$  and the algebraic intersection numbers of $\bar u$ with both $P_{\max}$ and $P_{\min}$ coincide with those of $u$.

Now, choose a point $p \in S_{\gamma}$ not contained in the images of $\bar u$ and $u_1$ and consider a diffeomorphism $\Psi: S_\gamma \setminus \{p\} \to \R^3$. Define the homotopy $H_1:[0,1] \times \R\times\R/\Z \to S_\gamma$ between $\bar u$ and $u_1$ by $$H_1(\mu,s,t)=\Psi^{-1} ((1-\mu)\Psi \circ \bar u(s,t) + \mu \Psi \circ u_1(s,t)).$$ Note that this homotopy is supported in $\{|s|\leq M\}$ and, therefore the algebraic intersection numbers of $\bar u$ and $u_1$ with both $P_{\max}$ and $P_{\min}$ coincide.  We conclude that $u$ does not intersect either $P_{\max}$ or $P_{\min}$ by positivity of intersections, and~\eqref{eq_prop2} is proved.

In~\cite{Sie}, a generalized intersection number $[\util] * [\vtil]\in \Z$ is defined for two pseudo-holomorphic curves $\util=(a,u)$ and $\vtil=(b,v)$. It counts  the actual algebraic intersection number between $\util$ and $\vtil$ plus the asymptotic intersection number, which is computed by carefully analyzing their asymptotic behavior at the punctures. An application of~\eqref{eq_prop1},~\eqref{eq_prop2} and Corollary 5.9 from~\cite{Sie} (see conditions (1) and (3)) gives \\

\noindent {Claim A.} Let $\util=(a,u),\vtil=(b,v)$ represent classes in $\M_\epsilon$. Then $[\tilde u] * [\tilde v]=0$. \\

\begin{definition}[Siefring~\cite{Sie}]
Let $\util,\vtil$ represent distinct elements in $\M_\epsilon$ and $(U_+(\util),U_-(\util),\psi(\util))$, $(U_+(\vtil),U_-(\vtil),\psi(\vtil))$ be their asymptotic representatives with eigensections $\eta_+(\util),\eta_-(\util),$ $\eta_+(\vtil)$ and $\eta_-(\vtil)$. We say that $\util$ and  $\vtil$ approach $P_{\rm max}$ in the same direction if $\eta_+(\util)=c \eta_+(\vtil)$ for a positive constant $c$.  Similarly, we say that $\util$ and $\vtil$ approach $P_{\rm min}$ in the same direction if $\eta_-(\util)=c \eta_-(\vtil)$ for a positive constant $c$.
\end{definition}

Now we finally prove that  $\M_\epsilon$ has exactly two elements, namely the equivalence classes of $\util_1$ and $\util_2$. Assume indirectly the existence of a third element in $\M_\epsilon$ represented by $\util_3$. Since $\mu_{CZ}(P_{\max})=\mu_{CZ}(P_{\min})+1$, either $\mu_{CZ}(P_{\max})$ or $\mu_{CZ}(P_{\min})$ is even. Assume without loss of generality that $\mu_{CZ}(P_{\max})$ is even. Let $(U_{i+},U_{i-},\psi_i)$  be asymptotic representatives of $\util_i$, $i=1,2,3$, with respective eigensections $\eta_{i+}$. From~\eqref{eq_prop1} we see that $\wind(\eta_{i+},\beta)= \wind(\eta_{j+},\beta)$, $\forall i,j\in\{1,2,3\}$. Since $\mu_{CZ}(P_{\max})$ is even, we must have $\eta_{i+}(t)= c_{ij}\eta_{j+}(t)$, $\forall t$ for non-vanishing constants $c_{ij}$. Here it was used that the negative extremal eigenvalue of the asymptotic operator at an even hyperbolic orbit has $1$-dimensional eigenspace, and is the only negative eigenvalue with that given winding number. It follows that there exist $i_0 \neq j_0\in \{1,2,3\}$ so that $c_{i_0j_0}>0$. Theorem~2.5 from~\cite{Sie} implies that $[\util_{i_0}] * [\util_{j_0}]>0.$ However, this contradicts Claim A and proves uniqueness of cylinders.

To handle regularity we use Theorem~1 from~\cite{wendl_transv}. The (unparametrized) Fredholm index with no asymptotic constraints of the solutions $\util_\epsilon$ constructed above is ${\rm Ind}(\util_\epsilon) = \mu_{CZ}(P_{\rm max})-\mu_{CZ}(P_{\rm min}) = 1$. We identify $\R\times\R/\Z \simeq \C P^1\setminus \Gamma$ via $(s,t) \simeq [e^{2\pi(s+it)}:1]$ where $\Gamma = \{[0:1],[1:0]\}$, and see $\util_\epsilon$ as a pseudo-holomorphic map defined in $\C P^1\setminus \Gamma$. Since $\emptyset = \partial \C P^1$, Remark~1.2 from~\cite{wendl_transv} tells us that $\util_\epsilon$ is regular if
\begin{equation}\label{ineq_fredholm_index}
1 = {\rm Ind}(\util_\epsilon) > -\chi(\C P^1) + \#\Gamma_0 + 2Z(d\util_\epsilon).
\end{equation}
Here $\Gamma_0 \subset \Gamma$ is the set of punctures where the Conley-Zehnder index of the corresponding asymptotic orbit is even and $Z(d\util_\epsilon)$ is the sum of the order of the critical points of $\util_\epsilon$. Note from the definition of $\util_\epsilon$ that $d\util_\epsilon(z)\neq 0$, $\forall z$. Now, since $\mu_{CZ}(P_{\rm max})-\mu_{CZ}(P_{\rm min})=1$ we have $\#\Gamma_0=1$ and, consequently, the right hand side of~\eqref{ineq_fredholm_index} is equal to $-1$. The proof of Lemma~\ref{lemma_cyl_p,q} is now finished. 
\end{proof}

Proposition~\ref{prop_morse_bott_approx} follows immediately from Lemma~\ref{lemma_cyl_p,q}.

\subsection{A non-trivial chain map}\label{inclusions}

We choose $0<c<1$, $T>0$ and consider a contact form $\lambda = h\lambda_0$ with $h\in\F$, satisfying the conditions:
\begin{itemize}
\item All closed $\lambda$-Reeb orbits with action $\leq T/c$ are non-degenerate;
\item There is no closed $\lambda$-Reeb orbit in $S^3\setminus K_0$ with action $\leq T/c$ which is contractible in $S^3\setminus K_0$;
\item The transverse Floquet multipliers of $L_0,L_1$ seen as prime closed $\lambda$-Reeb orbits are of the form $e^{i2\pi\alpha}$ with $\alpha \not\in \Q$.
\end{itemize}

Let $(p,q)$ be a relatively prime pair of integers, which represent an element in $\pi_1(S^3\setminus K_0,{\rm pt}) \simeq \Z\times\Z$ via the isomorphism~\eqref{pi1_isomorphism}. Consider $\jhat_+ \in \J(\lambda)$ induced by some $d\lambda_0$-compatible complex structure $J : \xi_0 \to \xi_0$ as explained in Section~\ref{cyl_alm_cpx_str}. We assume that $\jhat_+$ is regular with respect to the homotopy class $(p,q)$ and action bound $\leq T/c$, see Section~\ref{chain_complex_subsection} for more details. It follows that the almost complex structure $\jhat_- \in \J(c\lambda)$ induced by $J$ and $c\lambda$ is also regular with respect to the homotopy class $(p,q)$ and action bound $\leq T$. In fact, consider the diffeomorphisms 
\[
\begin{array}{cc}
\varphi : \R\times S^3 \to \R\times S^3, & \varphi(a,x) = (\frac{1}{c}a,x)
\end{array}
\]
and 
\[
K = (\Psi_{\lambda})^{-1}  \circ \varphi \circ \Psi_{\lambda} : W_{\xi_0} \to W_{\xi_0}
\]
where $\Psi_\lambda$ is the diffeomorphism~\eqref{coordinates_symplectization}. Then $K^*\jhat_+ = \jhat_-$, so that finite-energy $\jhat_-$-holomorphic cylinders are precisely of the form $K^{-1}\circ \util$, where $\util$ is some finite-energy $\jhat_+$-cylinder. This observation also shows that the obvious identification defined by
\begin{equation}\label{map_j_*}
  \begin{aligned}
    j_* : C^{\leq T/c,(p,q)}_*(\lambda) &\simeq C^{\leq T,(p,q)}_*(c\lambda) \\
    q_{(x(t),T')} &\simeq q_{(x(t/c),cT')}
  \end{aligned}
\end{equation}
where $(x(t),T') \in \P^{\leq T/c,(p,q)}(\lambda)$ and $(x(t/c),cT') \in \P^{\leq T,(p,q)}(c\lambda)$, is a chain map that induces an isomorphism at the homology level. In fact, there is a 1-1 correspondence between the relevant moduli spaces used to define the differentials $\partial_{(c\lambda,J)}$ and $\partial_{(\lambda,J)}$, proving that $\partial_{(c\lambda,J)} \circ j_* = j_* \circ \partial_{(\lambda,J)}$.

By Theorem~\ref{chain_homotopy_thm} there is a well-defined chain map $$ \Phi(\bar J)_* : C^{\leq T,(p,q)}_*(\lambda) \to C^{\leq T,(p,q)}_*(c\lambda) $$ for any given $\bar J \in \J_{\rm reg}(\jhat_-,\jhat_+ : K_0)$ (with respect to action bound $\leq T/c$ and homotopy class $(p,q)$). Consider the inclusion map
\begin{equation}\label{map_iota_*}
\begin{aligned}
\iota_* : C^{\leq T,(p,q)}_*(\lambda) &\hookrightarrow C^{\leq T/c,(p,q)}_*(\lambda) \\
q_{(x,T')} &\mapsto  q_{(x,T')}.
\end{aligned}
\end{equation}

\begin{lemma}\label{j_iota_relation_chain}
The chain maps $\Phi(\bar J)_*$ and $j_* \circ \iota_*$ are chain homotopic.
\end{lemma}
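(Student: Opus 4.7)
The plan is to exhibit a specific $\bar J_0 \in \J(\widehat{J}_-,\widehat{J}_+:K_0)$ for which $\Phi(\bar J_0)_* = j_* \circ \iota_*$ on the nose, and then invoke Theorem~\ref{comparing_chain_maps_thm} applied to a smooth homotopy from $\bar J_0$ to $\bar J$ inside $\J(\widehat{J}_-,\widehat{J}_+:K_0)$ (which exists by contractibility and can be perturbed into $\widetilde\J_{\mathrm{reg}}(\bar J_0,\bar J:K_0)$). The induced degree-one map \eqref{chain_maps_phis_bar} then provides the required chain homotopy between $\Phi(\bar J)_*$ and $j_* \circ \iota_*$.

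To construct $\bar J_0$, work in $\Psi_\lambda$-coordinates $(a,x) \in \R \times S^3$, where $\widehat{J}_+\partial_a = X_\lambda$, $\widehat{J}_-\partial_a = c^{-1}X_\lambda$, and $\widehat{J}_\pm|_{\xi_0} = J$. Pick a smooth non-increasing function $\mu : \R \to [1,c^{-1}]$ with $\mu(a) = c^{-1}$ for $a \leq \ln c$ and $\mu(a) = 1$ for $a \geq 0$, and define $\bar J_0$ by $\bar J_0\partial_a = \mu(a)X_\lambda$ together with $\bar J_0|_{\xi_0} = J$. A direct check using $\omega_{\xi_0} = e^a(da\wedge\lambda + d\lambda)$ confirms that $\bar J_0$ is $\omega_{\xi_0}$-compatible; it matches $\widehat{J}_+$ on $W^+(\lambda)$, matches $\widehat{J}_-$ on $W^-(c\lambda)$, and preserves $T\tau^{-1}(K_0) = \R\partial_a \oplus \R X_\lambda$, so $\bar J_0 \in \J(\widehat{J}_-,\widehat{J}_+:K_0)$.

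For each generator $P = (x,T') \in \P^{\leq T,(p,q)}(\lambda)$ set $j(P) = (x(\cdot/c),cT') \in \P^{\leq T,(p,q)}(c\lambda)$; a time reparametrization shows that the linearized transverse Reeb flows along $P$ and along $j(P)$ agree, hence $\mu_{CZ}(j(P)) = \mu_{CZ}(P)$. Solving the ODE $f'(s) = T'/\mu(f(s))$ produces a strictly increasing proper function $f:\R \to \R$ with $f(s) - T's$ bounded for $s \gg 0$ and $f(s) - cT's$ bounded for $s \ll 0$, and the map $\util_P(s,t) = (f(s),x(T'(t+t_0)))$ is a finite-energy $\bar J_0$-holomorphic cylinder in $W_{\xi_0} \setminus \tau^{-1}(K_0)$ from $P$ to $j(P)$ of Fredholm index $0$ (the image avoids $\tau^{-1}(K_0)$ since $(p,q)\neq(0,0)$). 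After a small generic perturbation of $\bar J_0$ supported away from a neighborhood of the model cylinders $\util_P$, one arranges $\bar J_0 \in \J_{\mathrm{reg}}(\widehat{J}_-,\widehat{J}_+:K_0)$ while retaining each $\util_P$ as an element of the corresponding index-$0$ moduli space.

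The crucial step, and the main obstacle, is to show that for $\mu_{CZ}(P) = \mu_{CZ}(P')$ the moduli space $\M^{\leq T,(p,q)}_{\bar J_0}(P,P')$ equals $\{[\util_P]\}$ if $P' = j(P)$ and is empty otherwise. This will be handled by a careful application of Siefring's generalized intersection pairing \cite{Sie}, in close parallel with \textbf{Claim A} in the proof of Lemma~\ref{lemma_cyl_p,q}: the equality $\mu_{CZ}(P) = \mu_{CZ}(P')$ controls the winding of the asymptotic eigensections of any candidate cylinder $\util$ at both punctures, and positivity of $[\util]*[\util_P]$ combined with the explicit asymptotic data of $\util_P$ forces $\util$ to be a reparametrization of $\util_P$ (when $P' = j(P)$) or produces a contradiction (when $P' \neq j(P)$). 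Granting this uniqueness/emptiness step, $\Phi(\bar J_0)q_P = q_{j(P)} = (j_* \circ \iota_*)q_P$ on every generator, so $\Phi(\bar J_0) = j_* \circ \iota_*$ as chain maps, and Theorem~\ref{comparing_chain_maps_thm} completes the proof.
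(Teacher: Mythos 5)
Your overall strategy is the right one, and is in fact the strategy of the paper: exhibit a specific $\bar J_0\in\J(\jhat_-,\jhat_+ : K_0)$ for which $\Phi(\bar J_0)_* = j_*\circ\iota_*$ holds on the nose, and then invoke Theorem~\ref{comparing_chain_maps_thm}. Moreover, the $\bar J_0$ you write down (determined by $\bar J_0\cdot\partial_a = \mu(a)X_\lambda$, $\bar J_0|_{\xi_0}=J$ with a monotone interpolating $\mu$) is the same almost-complex structure $\bar J'$ the paper uses. What you are missing is the one idea that makes the computation painless: a biholomorphism to the $\R$-invariant case.

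Concretely, the paper solves the ODE $G'(a) = 1/g(G(a))$, $G(0)=0$, and checks that the diffeomorphism $H = \Psi_\lambda^{-1}\circ(G\times\mathrm{id})\circ\Psi_\lambda$ of $W_{\xi_0}$ satisfies $H^*\bar J' = \jhat_+$. This accomplishes three things at once, each of which is a gap in your argument. First, regularity of $\bar J'$ is \emph{inherited} from $\jhat_+\in\J_{\rm reg}(\lambda)$ rather than arranged by a perturbation; your proposal to perturb $\bar J_0$ ``supported away from a neighborhood of the model cylinders'' and still conclude regularity while keeping exactly the cylinders $\util_P$ is not a standard transversality argument and would require its own justification. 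Second, the biholomorphism gives a bijection $\M^{\leq T,(p,q)}_{\bar J'}(P,P')\cong \M^{\leq T,(p,q)}_{\jhat_+}(P,P'')$ with $j_*(q_{P''})=q_{P'}$, and for the $\R$-invariant $\jhat_+$ the index-$0$ moduli space $\M^{\leq T,(p,q)}_{\jhat_+}(P,P'')$ is either empty ($P''\neq P$) or consists solely of the trivial cylinder ($P''=P$), simply because the free $\R$-action on a positive-dimensional component would be incompatible with index zero. This instantly yields both the uniqueness and the emptiness that you flag as the ``crucial step'' and propose to attack with Siefring's intersection pairing --- but you never carry that computation out, and it is not clear it would go through as stated: the situation differs from Claim~A of Lemma~\ref{lemma_cyl_p,q} in that here $\mu_{CZ}(P)=\mu_{CZ}(P')$ rather than differing by $1$, and $P$ and $j(P)$ have the \emph{same} geometric image in $S^3$ (they differ only by reparametrization and choice of contact form), which makes the asymptotic intersection bookkeeping more delicate. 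Third, even if you established uniqueness of the cylinder from $P$ to $j(P)$, ruling out cylinders from $P$ to $P'\neq j(P)$ is an independent claim that your sketch does not address; the biholomorphism handles it automatically. In short: keep your $\bar J_0$, but replace the Siefring-based count with the change-of-variables $H$ and let $\R$-invariance of $\jhat_+$ do the work.
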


\begin{proof}
We claim that $j_*\circ \iota_* = \Phi(\bar J')_*$, for some $\bar J' \in \J_{\rm reg}(\jhat_-,\jhat_+:K_0)$ regular with respect to action bound $\leq T/c$ and homotopy class $(p,q)$. To see this consider a function $g:\R\to\R$ such that $g \equiv 1/c$ near $(-\infty,\ln c]$, $g\equiv 1$ near $[0,+\infty)$ and $g' \leq 0$, and define a $d(e^a\lambda)$-compatible almost complex structure $\jtil$ on $\R\times S^3$ by
\[
\begin{array}{cccc}
\jtil \cdot \partial_a = g X_\lambda, & \jtil \cdot X_\lambda = -\frac{1}{g}\partial_a  & \text{ and } & \jtil|_{\xi_0} \equiv J.
\end{array}
\]
Recalling the map $\Psi_\lambda$ from~\eqref{coordinates_symplectization} we note that $\bar J' := (\Psi_\lambda)^*\jtil \in \J(\jhat_-,\jhat_+:K_0)$.

Let us consider a positive diffeomorphism $G:\R\to\R$ solution of the initial value problem
\[
\begin{array}{cc}
G'(a) = \frac{1}{g(G(a))}, & G(0) = 0.
\end{array}
\]
This can be used to define diffeomorphisms
\[
\begin{array}{cc}
F: \R\times S^3 \to \R\times S^3, & F(a,x) = (G(a),x)
\end{array}
\]
and
\begin{equation}\label{}
\begin{array}{cc}
H: W_{\xi_0} \to W_{\xi_0}, & H = (\Psi_{\lambda})^{-1} \circ F \circ \Psi_{\lambda}.
\end{array}
\end{equation}
One checks that $H^*\bar J' = \jhat_+$. In fact, $F^*\jtil \equiv J$ on $\xi_0$ and
\[
\begin{aligned}
(F^*\jtil)|_{(a,x)} \cdot \partial_a &= dF^{-1}|_{(G(a),x)} \cdot \jtil|_{(G(a),x)} \cdot dF|_{(a,x)} \cdot \partial_a \\
&= G'(a) \ dF^{-1}|_{(G(a),x)} \cdot \jtil|_{(G(a),x)} \cdot \partial_a \\
&= G'(a) g(G(a)) \ dF^{-1}|_{(G(a),x)} \cdot X_\lambda|_x \\
&= X_\lambda|_x
\end{aligned}
\]
which gives the desired conclusion since $F^*\jtil$ is an almost complex structure. This proves $\bar J' \in \J_{\rm reg}(\jhat_-,\jhat_+:K_0)$ since $\jhat_+ \in \J_{\rm reg}(\lambda)$ is regular with respect to action bound $\leq T/c$ and homotopy class $(p,q)$. Consider an orbit $P \in \P^{\leq T,(p,q)}(\lambda)$. Then
\begin{equation}\label{map_phi_j'}
  \Phi(\bar J')_*(q_P) = \sum_{\substack{P' \in \P^{\leq T,(p,q)}(c\lambda) \\ \mu_{CZ}(P) = \mu_{CZ}(P')}} \left(\#_2 \M^{\leq T,(p,q)}_{\bar J'}(P,P') \right) q_{P'}.
\end{equation}
Recall the set $\M^{\leq T,(p,q)}_{\jhat_+}(P,P'')$ of finite-energy $\jhat_+$-holomorphic cylinders with image in $\tau^{-1}(S^3\setminus K_0)$ asymptotic to $P,P'' \in \P^{\leq T,(p,q)}(\lambda)$ at the positive and negative punctures, respectively, modulo holomorphic reparametrizations. This set was defined in Section~\ref{chain_complex_subsection} and we do not quotient out by the $\R$-action on the target. $\M^{\leq T,(p,q)}_{\jhat_+}(P,P'')$ is a smooth manifold of dimension $\mu_{CZ}(P) - \mu_{CZ}(P'')$ since $\jhat_+ \in \J_{\rm reg}(\lambda)$ and $(p,q)$ is relatively prime. The biholomorphism $H$ induces a 1-1 correspondence between moduli spaces $$ \M^{\leq T,(p,q)}_{\bar J'}(P,P') \ \text{ and } \ \M^{\leq T,(p,q)}_{\jhat_+}(P,P'') $$ where $j_*(q_{P''}) = q_{P'}$. However, $\M^{\leq T,(p,q)}_{\jhat_+}(P,P'')$ is empty when $P'' \neq P$, or consists of a single (trivial) cylinder when $P'' = P$. We conclude that the right side of~\eqref{map_phi_j'} is equal to $j_*(q_{P}) = j_* \circ \iota_*(q_{P})$ and, in particular, that $j_* \circ \iota_* = \Phi(\bar J')_*$. The lemma now follows from Theorem~\ref{comparing_chain_maps_thm}.
\end{proof}

We will now apply the above discussion to our model contact forms. Let us choose $\theta_0,\theta_1 \not\in \Q$, and let $f_{\theta_0,\theta_1}\lambda_0$ be the model contact forms discussed in Section~\ref{example-2}.  If $(p,q)$ is a relatively prime pair of integers satisfying~\eqref{non_resonance}, then the closed $(f_{\theta_0,\theta_1}\lambda_0)$-Reeb orbits in $S^3\setminus K_0$ representing the homotopy class $(p,q)$ are necessarily prime orbits and have the same period, which we denote by $T_{(p,q)}>0$.

Select $0<c<1$, $T>T_{(p,q)}$ and $S>T/c$. By Proposition~\ref{prop_morse_bott_approx}, there is $f_S \in \F$ arbitrarily $C^\infty$-close to $f_{\theta_0,\theta_1}$ and some $d\lambda_0$-compatible complex structure $J_S : \xi_0 \to \xi_0$ such that the homology $HC^{\leq S,(p,q)}_*(f_S\lambda_0,J_S)$ is well-defined and 
\[
\begin{aligned}
HC^{\leq T,(p,q)}_*(f_S\lambda_0,J_S) = HC^{\leq T/c,(p,q)}_*(f_S\lambda_0,J_S) & = HC^{\leq S,(p,q)}_*(f_S\lambda_0,J_S) \\
& \cong H_{*-s}(S^1;\mathbb{F}_2)
\end{aligned}
\]
for some $s\in\Z$. It is also clear that the homology of these complexes are in fact generated by the same closed Reeb orbits, and the differentials count the same cylinders. In particular, we have shown that the inclusion map $\iota_*$ defined in~\eqref{map_iota_*}
\[
(C^{\leq T,(p,q)}_*(f_S\lambda_0),\partial_{(f_S\lambda_0,J_S)}) \overset{\iota_*}{\hookrightarrow} (C^{\leq T/c,(p,q)}_*(f_S\lambda_0),\partial_{(f_S\lambda_0,J_S)})
\]
is non-trivial at the level of homology. Since
\[
j_* : (C^{\leq T/c,(p,q)}_*(f_S\lambda_0),\partial_{(f_S\lambda_0,J_S)}) \to (C^{\leq T,(p,q)}_*(cf_S\lambda_0),\partial_{(cf_S\lambda_0,J_S)})
\]
is an isomorphism of chain complexes, and therefore an isomorphism at the homology level, we obtain the following statement.

\begin{proposition}\label{comp_j_iota}
Choosing $T$, $c$, $S$, $f_S$ and $J_S$ as above, the map $$ j_* \circ \iota_* : HC^{\leq T,(p,q)}_*(f_S\lambda_0,J_S) \to HC^{\leq T,(p,q)}_*(cf_S\lambda_0,J_S) $$ is non-zero.
\end{proposition}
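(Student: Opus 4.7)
The plan is to show that both $\iota_*$ and $j_*$ are already isomorphisms at the chain level under the prescribed choices, so their composition is automatically an isomorphism at the homology level, and we only need to recall that the homology on the left is non-trivial.

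First I would fix the constant $\delta > 0$ in Proposition~\ref{prop_morse_bott_approx} to be smaller than $T - T_{(p,q)}$. Since $T > T_{(p,q)}$ and $S > T/c > T$, the last bullet of Proposition~\ref{prop_morse_bott_approx} says that every element of $\mathcal{P}^{\leq S,(p,q)}(f_S\lambda_0)$ has action lying in $(T_{(p,q)}-\delta,T_{(p,q)}+\delta) \subset (0,T]$. Hence
\[
\mathcal{P}^{\leq T,(p,q)}(f_S\lambda_0) \;=\; \mathcal{P}^{\leq T/c,(p,q)}(f_S\lambda_0) \;=\; \mathcal{P}^{\leq S,(p,q)}(f_S\lambda_0),
\]
so the two chain complexes in the domain and codomain of $\iota_*$ have the same set of generators. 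The differentials $\partial(f_S\lambda_0,J_S)_*$ on both sides are defined by counting finite-energy $\jhat_S$-holomorphic cylinders in $\mathcal{M}^{\leq *,(p,q)}_{\jhat_S}(P,P')/\mathbb{R}$ between the same pairs of orbits, so they literally coincide. Consequently $\iota_*$ is the identity map on the chain complex, and in particular an isomorphism on homology.

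Next I would recall from the paragraph preceding~\eqref{map_j_*} that the rescaling diffeomorphism $K$ on $W_{\xi_0}$ sets up a bijective correspondence between finite-energy $\jhat_+$-cylinders and finite-energy $\jhat_-$-cylinders, giving $\partial(cf_S\lambda_0,J_S) \circ j_* = j_* \circ \partial(f_S\lambda_0,J_S)$, and so $j_*$ is an isomorphism of chain complexes. Therefore $j_* \circ \iota_*$ is an isomorphism of chain complexes and induces an isomorphism on homology.

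Finally, by Proposition~\ref{prop_morse_bott_approx} we have $HC^{\leq T,(p,q)}_*(f_S\lambda_0,J_S) \cong H_{*-s}(S^1;\mathbb{F}_2)$, which is non-zero (in fact isomorphic to $\mathbb{F}_2$ in degrees $s$ and $s+1$). Hence $j_* \circ \iota_*$ is non-zero on homology, as claimed. There is no real obstacle here — all the substance was built into Proposition~\ref{prop_morse_bott_approx} and into the construction of $j_*$ and $\iota_*$; this proposition is a bookkeeping consequence packaging those facts into a statement suitable for the later use in the proof of Theorem~\ref{thm-1}.
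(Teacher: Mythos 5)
Your proposal is correct and follows essentially the same route as the paper: observe that, with $\delta$ chosen so that all $(p,q)$-orbits have action below $T$, the chain complexes at action bounds $T$, $T/c$, and $S$ share the same generators and differentials (so $\iota_*$ is the identity), that $j_*$ is a chain isomorphism by construction, and that the homology $\cong H_{*-s}(S^1;\mathbb{F}_2)$ is non-zero. The paper presents this as a short observation immediately preceding the proposition statement rather than as a displayed proof, but the content is identical to yours.
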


\section{Proof of main theorem in the non-degenerate case}\label{proof_non_deg_section}

In this section we prove Theorem~\ref{thm-1} assuming that the tight contact form on $S^3$ as in the statement is non-degenerate. We aim to prove

\begin{proposition}\label{prop-non-degenerate_case}
Consider a sequence $f_n \in \F$ such that $\lambda_n = f_n\lambda_0$ is non-degenerate for each $n$, and assume that there are uniform bounds
\[
0 < m < \inf_{x,n} f_n(x) < \sup_{x,n} f_n(x) < M.
\]
Suppose $(p,q)$ is a relatively prime pair of integers, and also that there are numbers $\theta_0,\theta_1$ satisfying
\begin{equation}\label{non_resonance_prop}
\begin{array}{ccc}
(\theta_0,1) < (p,q) < (1,\theta_1) & \text{or} & (1,\theta_1) < (p,q) < (\theta_0,1)
\end{array}
\end{equation}
and
\[
\lim_{n\to\infty} \theta_i(f_n) = \theta_i \ \ \ (i=0,1).
\]
Then there is a $T > 0$ independent of $n$ such that for each $n$ sufficiently large there is a simple closed $\lambda_n$-Reeb orbit $P_n \subset S^3\setminus K_0$ of period less than $T$ satisfying $\link(P_n, L_0) = p$ and $\link(P_n,L_1) = q$.
\end{proposition}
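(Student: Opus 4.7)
I would deduce the proposition from non-triviality of the cylindrical contact homology $HC^{\leq T,(p,q)}_*(\lambda_n, J_n)$ for a uniform action bound $T$, obtained by sandwiching $\lambda_n$ between two scalar multiples of a single Morse-Bott perturbation of a model form from~\ref{example-2}, and invoking Proposition~\ref{comp_j_iota}.

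Non-degeneracy of $\lambda_n$ forces the transverse Floquet multipliers of every iterate of $L_0, L_1$ to be non-real, so $\theta_i(f_n) \in \R \setminus \Q$. Since~\eqref{non_resonance_prop} is an open condition in $(\theta_0,\theta_1)$ and $\theta_i(f_n) \to \theta_i$, for $n$ large $(p,q)$ satisfies~\eqref{non_resonance_prop} with respect to $(\theta_0(f_n), \theta_1(f_n))$. Setting $\theta_i^{(n)} := \theta_i(f_n)$, consider the model form $f_{\theta_0^{(n)},\theta_1^{(n)}}\lambda_0$ of~\ref{example-2}, built from a curve $\gamma_n$ chosen so that the prime period of the $(p,q)$-orbit torus is uniformly bounded above by $T_0$ independent of $n$. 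Apply Proposition~\ref{prop_morse_bott_approx} to obtain a Morse-Bott perturbation $h_n\lambda_0$ with $h_n \in \F$ agreeing with $f_{\theta_0^{(n)},\theta_1^{(n)}}$ near $K_0$, so that $\theta_i(h_n) = \theta_i^{(n)}$. Fix $c \in (0,1)$ and $T > T_0/c$, and pick positive constants $A_n$ (uniform in $n$, using $m < f_n < M$) so that $A_n h_n > f_n > c A_n h_n$ pointwise on $S^3$.

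Rotation numbers are invariant under positive rescaling, so $\theta_i(A_n h_n) = \theta_i(cA_n h_n) = \theta_i(f_n)$. Condition~\eqref{crucial_chain_map} is thus met (with equality) for both links of the cobordism chain $A_n h_n\lambda_0 \succ \lambda_n \succ cA_n h_n\lambda_0$. The construction of~\ref{chain_map_paragraph} then produces chain maps
\[
\Phi_+ : C^{\leq T,(p,q)}_*(A_n h_n\lambda_0) \to C^{\leq T,(p,q)}_*(\lambda_n), \qquad \Phi_- : C^{\leq T,(p,q)}_*(\lambda_n) \to C^{\leq T,(p,q)}_*(cA_n h_n\lambda_0).
\]
A neck-stretching argument using the splitting almost-complex structures of~\ref{splitting_alm_cpx_str} (in the spirit of Theorem~\ref{comparing_chain_maps_thm}, but for split cobordisms) identifies $\Phi_- \circ \Phi_+$, up to chain homotopy, with a direct chain map between the outer complexes, which by Lemma~\ref{j_iota_relation_chain} is chain-homotopic to $j_* \circ \iota_*$. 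Applied to $A_n h_n$ in the role of $f_S$, Proposition~\ref{comp_j_iota} asserts this $j_* \circ \iota_*$ is non-zero on homology. Therefore $\Phi_- \circ \Phi_+$ is non-zero on homology, which forces $HC^{\leq T,(p,q)}_*(\lambda_n, J_n) \neq 0$ and hence the existence of a simple closed $\lambda_n$-Reeb orbit $P_n \in \mathcal{P}^{\leq T,(p,q)}(\lambda_n)$; simplicity follows automatically since $\gcd(p,q)=1$, as any cover of a simple orbit with linking numbers $(p/m,q/m) \in \Z\times\Z$ forces $m=1$.

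The main obstacles are: (i) the neck-stretching argument identifying $\Phi_- \circ \Phi_+$ with a direct chain map through a single cobordism, which requires a careful compactness and transversality analysis for moduli spaces of cylinders in cobordisms with long necks, more delicate than the proof of Theorem~\ref{comparing_chain_maps_thm}; (ii) verifying condition (b) of~\ref{chain_complex_subsection} for $\lambda_n$ up to action $T/c$---absence of closed Reeb orbits contractible in $S^3 \setminus K_0$---which if necessary can be arranged by a small auxiliary non-degenerate perturbation of $\lambda_n$ still satisfying the pointwise sandwich, followed by an Arzel\`a--Ascoli limit argument to descend the resulting orbit back to $\lambda_n$; and (iii) uniformly selecting the regular almost-complex structure $J_n$, which is standard given the residuality of $\J_{\rm reg}$ recorded in~\ref{chain_complex_subsection}.
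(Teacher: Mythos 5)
Your overall strategy---sandwich $\lambda_n$ between rescalings of a model form, invoke Proposition~\ref{comp_j_iota}, and deduce the orbit from non-triviality of a chain-level map---is in the right spirit, but it diverges from the paper's argument at a point where the divergence creates real gaps.

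The first gap is the claim that non-degeneracy of $\lambda_n$ forces the transverse Floquet multipliers of the iterates of $L_0,L_1$ to be non-real, hence $\theta_i(f_n)\notin\Q$. This is false: a hyperbolic $L_i$ has real Floquet multipliers and every iterate is still non-degenerate, and in that case $\theta_i(f_n)\in\tfrac12\Z\subset\Q$ by Lemma~\ref{inequalities_windings}. Your construction then needs a model $f_{\theta_0^{(n)},\theta_1^{(n)}}$ with $\theta_i^{(n)}=\theta_i(f_n)$, but the models of~\ref{example-2} require irrational parameters, so this step simply does not go through when $L_i$ is hyperbolic.

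The second, more structural, gap is that you route the argument through chain maps $C^{\leq T,(p,q)}_*(A_nh_n\lambda_0)\to C^{\leq T,(p,q)}_*(\lambda_n)\to C^{\leq T,(p,q)}_*(cA_nh_n\lambda_0)$, which requires the chain complex of $\lambda_n$ itself to be defined. That in turn needs conditions (a)--(c) of~\ref{chain_complex_subsection} for $\lambda_n$: in particular (b), no contractible orbits in $S^3\setminus K_0$ of small action, and (c), irrationally elliptic $L_0,L_1$. Neither is guaranteed for an arbitrary non-degenerate $\lambda_n$, and you flag this yourself but propose to ``perturb away'' the difficulty---which reintroduces the very degeneracy-avoidance problem the proposition is trying to solve, and in any case does not repair the failure of condition (c). You also need a composition/stretching identity $\Phi_-\circ\Phi_+\simeq j_*\circ\iota_*$ through the intermediate level $\lambda_n$; this is not among Theorems~\ref{chain_homotopy_thm}--\ref{comparing_chain_maps_thm}, which only compare two chain maps within a single cobordism.

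The paper sidesteps all of this by never defining a chain complex for $\lambda_n$. It fixes one model $f_{\theta_0',\theta_1'}$ with $\theta_0',\theta_1'\notin\Q$ close to the limiting $\theta_0,\theta_1$ and a single $c$ so that $cf_{\theta_0',\theta_1'}<f_n<f_{\theta_0',\theta_1'}$ for all $n$ at once (using $m<f_n<M$); this is what produces the uniform $T$. Then for each $n$ and each stretching parameter $R$ it argues by contradiction: if no $\bar J_R$-cylinder avoiding $\tau^{-1}(K_0)$ existed, the map $\Phi(J'_R)_*$ (for the biholomorphic $J'_R\in\J_{\rm reg}(\jhat_-,\jhat_+:K_0)$) would be well-defined and vanish, yet it is chain-homotopic to $j_*\circ\iota_*$, which Proposition~\ref{comp_j_iota} says is non-zero. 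Sending $R\to\infty$ and applying the SFT-compactness ``existence lemma'' (Lemma~\ref{existence_lemma}) then extracts a closed $\lambda_n$-orbit directly; the only input on $\lambda_n$ is the rotation-number inequalities~\eqref{crack_hyp}, which involve no irrationality. You should replace the chain-map-through-$\lambda_n$ step by this direct neck-stretching plus Lemma~\ref{existence_lemma} argument, and drop the claim about non-real Floquet multipliers.
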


Here $\theta_i(f_n) = \rho(L_i,\lambda_n) - 1$, where $\rho(L_i,\lambda_n)$ is the transverse rotation number of $L_i$ seen as a prime periodic orbit of the Reeb flow associated to the contact form $\lambda_n$ computed with respect to a global positive trivialization of $\xi_0$, see~\eqref{eq_defn_rot_number} in Section~\ref{cz_rotation_orbits}.

Note that for each $n$ the link $K_0$ consists of a pair of closed orbits for the Reeb flow of $\lambda_n$ since $f_n\in \F$, but we do not assume that these orbits are elliptic or that contact homology in the complement of $K_0$ discussed in Section~\ref{section_contact_homology} is well-defined for the contact forms $\lambda_n$. Our argument combines several constructions, such as chain maps, stretching-the-neck, 
SFT compactness, and asymptotic analysis to deduce existence of the desired periodic orbit for this general type of contact form.

Theorem~\ref{thm-1} in the non-degenerate case follows from Proposition~\ref{prop-non-degenerate_case} by considering a constant sequence.

\subsection{Computations with homotopy classes}\label{computing_homotopy}

Let $h\in \F$ and assume that $h\lambda_0$ is a non-degenerate contact form. Then we may view $L_0 = (x_0,T_0)$ and $L_1 = (x_1,T_1)$ as prime closed orbits of the flow associated to the Reeb vector field $X_{h\lambda_0}$.

Let us fix $k\geq 1$ and $i\in\{0,1\}$, and suppose that $\nu$ is a non-zero eigenvalue of the asymptotic operator $A_{L_i^k}$ associated to the contact form $h\lambda_0$, some $J$ and the orbit $L_i^k = (x_i,kT_i)$. If $t\in \R/\Z \mapsto \eta(t)\in \xi_0|_{x_i(kT_it)}$ is a non-zero section in the eigenspace of $\nu$ and $\epsilon>0$ is small enough then $$ t\in \R/\Z \mapsto \eta_\epsilon(t) := \exp_{x_i(kT_it)}(\epsilon\eta(t)) $$ is a closed loop in $S^3\setminus K_0$ and its homotopy class in $S^3\setminus K_0$ does not depend on $\epsilon$.

\begin{lemma}\label{estimating_homotopy_lemma}
Suppose $m = \link(\eta_\epsilon(t),L_0)$ and $n = \link(\eta_\epsilon(t),L_1)$.
\begin{itemize}
  \item If $i=0$ then $n=k>0$ and
  \[
    \begin{array}{ccc}
      \nu > 0 \Rightarrow \frac{m}{n} \geq \theta_0(h) &  & \nu<0 \Rightarrow \frac{m}{n} \leq \theta_0(h).
    \end{array}
  \]
  \item If $i=1$ then $m=k>0$ and
  \[
    \begin{array}{ccc}
      \nu > 0 \Rightarrow \frac{n}{m} \geq \theta_1(h) &  & \nu<0 \Rightarrow \frac{n}{m} \leq \theta_1(h).
    \end{array}
  \]
\end{itemize}
\end{lemma}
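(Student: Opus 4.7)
The plan is as follows. First I would observe that since $\eta_\epsilon$ is $C^0$-close to the loop $t \mapsto x_i(kT_i t)$ and misses $L_j$ for $j \neq i$ when $\epsilon$ is small, it is freely homotopic in $S^3\setminus L_j$ to the $k$-fold cover of $L_i$. Hence $\link(\eta_\epsilon, L_j) = k \cdot \link(L_i, L_j) = k$, using that the two Hopf-link components have pairwise linking $+1$ with the Reeb-induced orientations. This identifies $n = k$ when $i = 0$ and $m = k$ when $i = 1$, confirming in particular that the relevant quantity is positive.

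Next I would compute the remaining linking in terms of the winding of $\eta$ with respect to the global $d\lambda_0$-symplectic trivialization $\beta$. Because $\sl(L_i) = -1$, pushing off $L_i^k$ in the direction of the $k$-th iterate $\beta^k$ produces the $k$-fold cover of the $\beta$-pushoff of $L_i$, which links $L_i$ exactly $k \cdot \sl(L_i) = -k$ times. Replacing the framing $\beta^k$ by $\eta$ rotates the pushoff by $\wind(\eta, \beta^k)$ full turns in the sense of Remark~\ref{winding_numbers}, shifting the linking number by that same amount. Thus
\begin{equation*}
\link(\eta_\epsilon, L_i) \;=\; \wind(\eta, \beta^k) \;-\; k,
\end{equation*}
which combined with the previous paragraph yields the compact formula $\wind(\eta, \beta^k) = m + n$ regardless of whether $i=0$ or $i=1$.

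The last step is to feed in the spectral information from Subsection~\ref{asymptotic_behavior_section}. By definition of the extremal winding numbers, an eigensection of $A_{L_i^k}$ for $\nu > 0$ satisfies $\wind(\eta, \beta^k) \geq \wind^{\geq 0}(L_i^k, \beta)$, while for $\nu < 0$ it satisfies $\wind(\eta, \beta^k) \leq \wind^{<0}(L_i^k, \beta)$. A direct case check using Lemma~\ref{inequalities_windings} (elliptic; positive hyperbolic; negative hyperbolic with $k$ even or odd) yields the uniform sandwich $\wind^{<0}(L_i^k, \beta) \leq k\rho(L_i, h\lambda_0) \leq \wind^{\geq 0}(L_i^k, \beta)$, and by definition $\rho(L_i, h\lambda_0) = \theta_i(h) + 1$. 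For $i = 0$ and $\nu > 0$ this reads $m + n = \wind(\eta, \beta^k) \geq k(\theta_0(h) + 1) = n(\theta_0(h)+1)$, so dividing by $n = k > 0$ gives $m/n \geq \theta_0(h)$; the $\nu < 0$ case reverses the inequality, and the $i = 1$ statements follow identically after swapping $m \leftrightarrow n$ and using $m = k$. The only real subtlety is bookkeeping the signs in the self-linking and Hopf-linking formulas; once those are pinned down the argument is a short chain of inequalities.
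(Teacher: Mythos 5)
Your proposal is correct, and it is essentially the paper's argument with a different choice of reference framing. The paper carries out the computation in the disk framing $\beta_{\rm disk}$ (normalized so that the pushoff of $L_0^n$ has zero linking with $L_0$), deduces $\rho(L_0,\beta_{\rm disk})=\theta_0(h)$ from $\sl(L_0)=-1$, and obtains $m=\wind(\nu,L_0^n,(\beta_{\rm disk})^n)$ directly; you instead stay in the global frame $\beta$, which forces the extra bookkeeping identity $\link(\eta_\epsilon,L_0)=\wind(\eta,\beta^k)-k$ (again from $\sl(L_0)=-1$) before applying the same sandwich $\wind^{<0}\le k\rho\le\wind^{\geq 0}$. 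The two are a Maslov shift of each other, so the content is identical; your formulation has the mild advantage of keeping everything in the one frame used throughout Section~\ref{section_contact_homology}, at the cost of the additional linking computation.
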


\begin{proof}
We only prove the lemma for $i=0$, the case $i=1$ is analogous. Note that $\pi_1(S^3\setminus K_0,{\rm pt}) \simeq \Z\times \Z$, where an explicit isomorphism is given by $$ [\gamma] \simeq (\link(\gamma,L_0),\link(\gamma,L_1)). $$ Thus, since $\eta_\epsilon$ is $C^\infty$-close to $L_0^k$ we get $n = \link(\eta_\epsilon,L_1) = \link(L_0^k,L_1) = k\geq 1$. The orbit $L_0$ is unknotted and spans an embedded disk $D_0\subset S^3$, and we let the orientation of $L_0$ by the Reeb vector field induce an orientation on $D_0$. Choosing non-vanishing sections $W$ of $(\xi_0 \cap TD_0)|_{L_0}$ and $Z$ of $\xi_0|_{D_0}$ we have $\wind(Z|_{L_0},W) = \sl(L_0) = -1$, where the winding is computed seeing $Z|_{L_0}$ and $W$ as sections of the (oriented by $d(h\lambda_0)$) vector bundle $({x_0}_{T_0})^*\xi_0 \to \R/\Z$, see Remark~\ref{winding_numbers}. Here $\sl(L_0)$ denotes the self-linking number of $L_0$. Thus, if we denote by $\beta_{\rm disk}$ the homotopy class of $d(h\lambda_0)$-symplectic frames of $({x_0}_{T_0})^*\xi_0$ induced by a frame containing $W$ we have
\begin{equation}\label{}
  \rho(L_0,\beta_{\rm disk}) = \rho(L_0,\beta_{L_0}) - 1 = \theta_0(h).
\end{equation}
Now we compute
\[
  \begin{array}{c}
    \nu>0 \Rightarrow m = \link(\eta_\epsilon,L_0) = \wind(\nu,L_0^n,(\beta_{\rm disk})^n) \geq \wind^{\geq 0}(L_0^n,(\beta_{\rm disk})^n), \\
    \nu<0 \Rightarrow m = \link(\eta_\epsilon,L_0) = \wind(\nu,L_0^n,(\beta_{\rm disk})^n) \leq \wind^{< 0}(L_0^n,(\beta_{\rm disk})^n).
  \end{array}
\]
Using Lemma~\ref{inequalities_windings}, there are three possibilities:
\begin{itemize}
  \item $L_0$ is elliptic, $\theta_0(h) \not\in \Q$ and $$ \begin{aligned} & \wind^{\geq 0}(L_0^n,(\beta_{\rm disk})^n) = \lfloor n\theta_0(h) \rfloor + 1 > n\theta_0(h), \\ & \wind^{<0}(L_0^n,(\beta_{\rm disk})^n) = \lfloor n\theta_0(h) \rfloor < n\theta_0(h). \end{aligned} $$
  \item $L_0$ is hyperbolic with positive Floquet multipliers, $\theta_0(h) \in \Z$ and $$ \wind^{\geq 0}(L_0^n,(\beta_{\rm disk})^n) = \wind^{<0}(L_0^n,(\beta_{\rm disk})^n) = n\theta_0(h) $$
  \item $L_0$ is hyperbolic with negative Floquet multipliers, $\theta_0(h) \in \frac{1}{2}\Z$ and $$ \begin{aligned} n \text{ is even} \Rightarrow \wind^{\geq 0}(L_0^n,\beta_{\text{disk}}^n) = \wind^{<0}(L_0^n,\beta_{\text{disk}}^n) = n\theta_0(h) \\ n \text{ is odd} \Rightarrow \left\{ \begin{aligned} & \wind^{\geq 0}(L_0^n,\beta_{\text{disk}}^n) = \lfloor n\theta_0(h) \rfloor + 1 > n\theta_0(h), \\ & \wind^{<0}(L_0^n,\beta_{\text{disk}}^n) = \lfloor n\theta_0(h) \rfloor < n\theta_0(h). \end{aligned} \right. \end{aligned} $$
\end{itemize}
In any case $\nu>0 \Rightarrow m \geq n\theta_0(h)$ and $\nu<0 \Rightarrow m \leq n\theta_0(h)$.
\end{proof}

\subsection{An existence lemma}\label{existence_lemma_section}

Let us fix $f^+,f \in \F$ and $0<c< 1$ such that for every $x\in S^3$ we have $cf^+(x)<f(x)<f^+(x)$. We denote $\lambda^+ = f^+\lambda_0$, $\lambda = f\lambda_0$ and $\lambda^- = c\lambda^+ = cf^+\lambda_0$. Let $\theta_0,\theta_1,\vartheta_0,\vartheta_1$ be defined by
\begin{equation}\label{rotation_model}
  \begin{array}{cc}
    \rho(L_0,\beta_{L_0},\lambda^+) = \rho(L_0,\beta_{L_0},\lambda^-) = 1 + \vartheta_0, & \rho(L_0,\beta_{L_0},\lambda) = 1 + \theta_0, \\
    \rho(L_1,\beta_{L_1},\lambda^+) = \rho(L_1,\beta_{L_1},\lambda^-) = 1 + \vartheta_1, & \rho(L_1,\beta_{L_1},\lambda) = 1 + \theta_1,
  \end{array}
\end{equation}
where we follow the notation established in the beginning of Section~\ref{section_contact_homology}. Here we are considering $L_0,L_1$ as prime closed orbits of the Reeb flows of $\lambda^\pm,\lambda$.

Let $\jhat_\pm \in\J(\lambda_\pm)$, $\jhat \in \J(\lambda)$ be cylindrical almost-complex structures on the symplectization $W_{\xi_0}$ of $(S^3,\xi_0)$, and $J_1 \in \J(\jhat_-,\jhat : K_0)$, $J_2 \in \J(\jhat,\jhat_+ : K_0)$ be special almost-complex structures described in Section~\ref{restricted_class_alm_cpx_str}. Then, for each $R>0$, we consider the splitting almost-complex structure $\bar J_R = J_1\circ_R J_2$ as explained in Section~\ref{splitting_alm_cpx_str}. We denote by $\tau : W_{\xi_0} \to S^3$ the projection onto the base point.

\begin{lemma}\label{existence_lemma}
Suppose $\lambda^+$, $\lambda$, $\lambda^-$ as defined above are non-degenerate contact forms, and let $\theta_0,\vartheta_0,\theta_1,\vartheta_1$ be defined by~\eqref{rotation_model}. Let $R_n \to +\infty$ and $\util_n : \R\times \R/\Z \to W_{\xi_0}$ be finite-energy $\bar J_{R_n}$-holomorphic cylinders satisfying $$ \tau\circ \util_n(\R\times \R/\Z) \cap K_0 = \emptyset, \ \ \forall n, $$ with uniformly bounded energies as defined in Section~\ref{fe_curves_splitting_cob}. Identifying $\R\times \R/\Z \simeq \C P^1\setminus \{[0:1],[1:0]\}$ via $(s,t) \simeq [\est:1]$ assume that $[0:1]$ is a negative puncture and $[1:0]$ is a positive puncture of $\util_n$, $\forall n$. We assume also that all $\util_n$ are asymptotic to fixed orbits $\bar P_+ \in \P(\lambda^+)$, $\bar P_- \in \P(\lambda^-)$ at the positive and negative punctures, respectively, which lie in $S^3\setminus K_0$, and define $p,q \in \Z$ by
\[
  \begin{array}{ccc}
    \link(t\mapsto \tau\circ \util_n(s,t),L_0) = p & \text{and} & \link(t\mapsto \tau\circ \util_n(s,t),L_1) = q
  \end{array}
\]
for every $s$ and $n$. If $(p,q)$ is a relatively prime pair of integers and both conditions A) and B) below are satisfied
\begin{equation}\label{crack_hyp}
\begin{aligned}
& \text{A)} \hspace{.5cm} (q\theta_0 - p)(q\vartheta_0 - p) > 0 \ \text{ or } \ q \leq 0 \\
& \text{B)} \hspace{.5cm} (p\theta_1 - q)(p\vartheta_1 - q) > 0 \ \text{ or } \ p \leq 0
\end{aligned}
\end{equation}
then $\exists P\in\P(\lambda)$ in $S^3\setminus K_0$ such that $\link(P,L_0) = p$ and $\link(P,L_1) = q$.
\end{lemma}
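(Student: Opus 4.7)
The plan is to apply the SFT compactness theorem from \cite{sftcomp} to the sequence $\{\tilde u_n\}$ in the splitting almost-complex structures $\bar J_{R_n}$ with $R_n\to\infty$, and then use Lemma~\ref{estimating_homotopy_lemma} to locate the desired orbit. The uniform energy bound together with $R_n\to\infty$ produces, up to a subsequence, a holomorphic building $\mathbf u$ whose levels are the symplectizations of $\lambda^+,\lambda,\lambda^-$ and the two exact cobordisms $\overline W(\lambda,\lambda^+)$, $\overline W(\lambda^-,\lambda)$. Because every $\tilde u_n$ is a cylinder asymptotic to $\bar P_\pm$, the limit has the topology of a broken cylinder: a linear chain $v_0,v_1,\ldots,v_N$ of non-trivial cylinders with $v_0$ asymptotic to $\bar P_+$ at its positive puncture and $v_N$ asymptotic to $\bar P_-$ at its negative puncture, and the negative asymptote of $v_j$ matching the positive asymptote of $v_{j+1}$ at an orbit $Q_j$. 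Since $J_1,J_2\in\J(\cdot,\cdot:K_0)$ keep $\tau^{-1}(K_0)$ complex and the $\tilde u_n$ are disjoint from it, positivity of intersections forces every non-trivial $v_j$ to avoid $\tau^{-1}(K_0)$; hence each $Q_j$ is either a prime orbit in $S^3\setminus K_0$ of the appropriate contact form, or a cover $L_0^k$ or $L_1^k$. At least one $Q_j$ is a closed $\lambda$-orbit because the chain must traverse both cobordism levels.

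The idea is to extract such a $Q_j\in\mathcal P(\lambda)$ lying in $S^3\setminus K_0$ and verify that it has the required linking numbers. For each $j$ let $m_j=\link(\text{small loop on }v_j,L_0)$ and $n_j=\link(\text{small loop on }v_j,L_1)$; these are well-defined and constant on $v_j$, with $(m_0,n_0)=(m_N,n_N)=(p,q)$. Suppose for contradiction that every $Q_j\in\mathcal P(\lambda)$ is a cover of $L_0$ or $L_1$. At an $L_0^k$-match a loop on $v_j$ near $Q_j$ wraps $k$ times around $L_0$, forcing $n_j=k$; chasing $n_j$ upward from $\bar P_+$ shows $k=q$, so in particular $q>0$. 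Theorem~\ref{precise_asymptotics} and Lemma~\ref{estimating_homotopy_lemma} together yield the sandwich
\[
m_j\ \ge\ q\,\theta_0^{\sharp},\qquad m_{j+1}\ \le\ q\,\theta_0^{\sharp},
\]
where $\theta_0^{\sharp}$ equals $\theta_0$ or $\vartheta_0$ according to whether the asymptotic operator at the interface is computed with respect to $\lambda$ or $\lambda^\pm$. Symmetric inequalities hold at an $L_1^k$-match, forcing $k=p>0$; at such a match $m_j$ is actually preserved because loops near $L_1^p$ link $L_0$ exactly $p$ times.

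Summing the jumps $m_{j+1}-m_j$ along the chain gives $m_N-m_0=0$. Each $L_1^p$-jump is zero and each $L_0^q$-jump is $\le 0$ by the sandwich above, hence every $L_0^q$-jump is in fact zero, and the sandwich collapses to $p=q\theta_0^{\sharp}$ at every $L_0^q$-match. Depending on the location of the match this reads $q\theta_0-p=0$ or $q\vartheta_0-p=0$, which together with $q>0$ contradicts hypothesis A). A symmetric argument applied to any $L_1^p$-match contradicts B). Consequently some $Q_j=P$ in $\mathcal P(\lambda)$ lies in $S^3\setminus K_0$. Applying the same analysis to the sub-chain above $P$, whose $K_0$-matches must now be of $L_1^p$-type by the same reasoning and hence preserve $m_j$ and $n_j$, one obtains $\link(P,L_0)=m_j=p$ and $\link(P,L_1)=n_j=q$. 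Primality of $P$ follows from $\gcd(p,q)=1$ and the multiplicativity of linking under iteration.

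The main obstacle is the bookkeeping in the second and third paragraphs: in general the chain may cross $L_0$ and $L_1$ several times at distinct levels, and one must correctly identify which asymptotic operator -- hence which rotation number $\theta_i^{\sharp}\in\{\theta_i,\vartheta_i\}$ -- controls each interface, and verify that the Lemma~\ref{estimating_homotopy_lemma} inequalities consistently point in the direction needed for the jump-sum identity to deliver the equalities $p=q\theta_0^{\sharp}$ and $q=p\theta_1^{\sharp}$ that contradict A) and B).
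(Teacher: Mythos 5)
Your outline follows the same broad strategy as the paper's proof—SFT compactness produces a broken chain, the matching orbits are tested against $K_0$ by combining the refined asymptotics of Theorem~\ref{precise_asymptotics} with the winding estimates of Lemma~\ref{estimating_homotopy_lemma}, and hypothesis~A) or B) is violated if a matching orbit lies on the Hopf link. However, there are two places where your bookkeeping genuinely differs from (and is weaker than) what the paper does, and the second one is a real gap.

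First, the ``jump-sum'' framing is unnecessary and slightly misleading. The paper never needs to argue that jumps sum to zero: the key observation is that \emph{every} loop $t\mapsto\tau\circ v_j(s,t)$ on the stem is approximated in $C^\infty$ by loops $t\mapsto\tau\circ\util_n(s_n,t)$ from the original sequence (item~(g) of the convergence), and these all have linking $(p,q)$ with $(L_0,L_1)$ by hypothesis. Thus the linking numbers on both sides of any $K_0$-matching block are already equal to $p$ (resp.\ $q$) with no jump to chase; one then compares $p$ directly against the extremal winding numbers of the two controlling asymptotic operators.

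Second, and more importantly, your statement that ``the sandwich collapses to $p=q\theta_0^\sharp$ at every $L_0^q$-match'' is only valid when the block of the chain sitting over $L_0^q$ lies entirely within one cobordism or cylindrical level, so that both sides of the sandwich see the same contact form. But between two non-trivial cylinders asymptotic to $L_0^q$ the SFT limit can contain a block of (covers of) trivial cylinders over $L_0^q$ that straddles a level boundary: then the positive eigenvalue at the negative end of the first cylinder belongs to the asymptotic operator of, say, $\lambda^+$, while the negative eigenvalue at the positive end of the last cylinder belongs to that of $\lambda$. In that case the sandwich reads
$$
p\ \ge\ q\vartheta_0,\qquad p\ \le\ q\theta_0,
$$
or the reverse, which does \emph{not} collapse to a single equality. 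Whether or not it still yields a contradiction depends precisely on the assumption $(q\theta_0-p)(q\vartheta_0-p)>0$: this forces the two quantities $q\theta_0-p$ and $q\vartheta_0-p$ to have a common sign, so the mixed pair of inequalities above is impossible. The paper's argument handles this correctly by tracking $\alpha_0$ and $\alpha_1$ separately and appealing to A); your write-up tacitly assumes $\alpha_0=\alpha_1$ and would need to be repaired along these lines. Similarly, the preliminary claim that the non-trivial $v_j$ have images disjoint from $\tau^{-1}(K_0)$ needs the Carleman similarity/positivity-of-intersections argument that the paper spells out; without it the quantities $m_j,n_j$ are not a priori well-defined constants along $v_j$.
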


We now turn to the proof of Lemma~\ref{existence_lemma}. The possible limiting behavior of a sequence $\{\util_n\}$ as in the statement is described by the SFT Compactness Theorem from~\cite{sftcomp}. Loosely speaking, it asserts that a space of (equivalence classes of) pseudo-holomorphic maps with \emph{a priori} energy and genus bounds can be compactified by the addition of so-called holomorphic buildings. However, since we deal with cylinders, the possible limiting holomorphic buildings are of a very simple nature, allowing us to avoid introducing all the necessary definitions for precisely stating the SFT Compactness Theorem.

Let us summarize the conclusions we need. Let $R_n$ and $\util_n$ be as in the statement of Lemma~\ref{existence_lemma}. There exists a subsequence $\util_{n_j}$, a collection $\{\Gamma^1,\dots,\Gamma^m\}$ of finite subsets of $\R\times \R/\Z$, a corresponding collection $\{\vtil^1,\dots,\vtil^m\}$ of smooth maps
\begin{equation}\label{}
  \vtil^i :(\R\times \R/\Z) \setminus \Gamma^i \to W_{\xi_0}
\end{equation}
and numbers $1\leq k'<k''\leq m$ ($\Rightarrow m\geq 2$) satisfying the following properties.
\begin{itemize}
\item[(a)] $\vtil^1,\dots,\vtil^{k'-1}$ are $\jhat_+$-holomorphic.
\item[(b)] $\vtil^{k'}$ is $J_2$-holomorphic, $\vtil^{k''}$ is $J_1$-holomorphic.
\item[(c)] $\vtil^{k'+1},\dots,\vtil^{k''-1}$ are $\jhat$-holomorphic.
\item[(d)] $\vtil^{k''+1},\dots,\vtil^m$ are $\jhat_-$-holomorphic.
\item[(e)] $0<E(\vtil^i)<\infty$ for every $i$. All $\vtil^i$ have a positive puncture at the end $\{+\infty\}\times \R/\Z$ of the cylinder, and a negative puncture at $\{-\infty\}\times \R/\Z$. 
\item[(f)] There are Reeb orbits $\bar P_1,\dots\bar P_{k'-1} \in \P(\lambda^+)$, $\bar P_{k'},\dots,\bar P_{k''-1} \in \P(\lambda)$ and $\bar P_{k''},\dots,\bar P_{m-1} \in \P(\lambda^-)$ such that $\bar P_i$ is the asymptotic limit of $\vtil^i$ at $\{-\infty\}\times \R/\Z$ and also the asymptotic limit of $\vtil^{i+1}$ at $\{+\infty\}\times \R/\Z$, for $1\leq i\leq m-1$. $\vtil^1$ is asymptotic to $\bar P_+$ at $\{+\infty\}\times \R/\Z$ and $\vtil^m$ is asymptotic to $\bar P_-$ at $\{-\infty\}\times \R/\Z$.
\item[(g)] For each $i$ there are sequences $\{s^i_j\},\{c^i_j\}$ of real numbers such that the maps $$ (s,t) \mapsto g_{c^i_j} \circ \util_{n_j}(s+s^i_j,t) $$ converge to $\vtil^i$ in $C^\infty_{{\rm loc}}((\R\times \R/\Z) \setminus \Gamma^i)$, as $j\to +\infty$. Here $g_c(\theta) = e^c\theta$ is the $\R$-action on $W_{\xi_0}$.
\end{itemize}

For simplicity of notation we set $\dot Z_i = (\R\times \R/\Z) \setminus \Gamma^i$ and $C_i = \vtil^i(\dot Z^i) \subset W_{\xi_0}$. Note that $\tau^{-1}(K_0)$ is an embedded surface and its tangent space is invariant by all almost-complex structures $\jhat_\pm$, $\jhat$, $J_1$, $J_2$ and $\bar J_R$.

We will now show that
\begin{equation}\label{Pi_not_K0}
  \bar P_i \cap K_0 = \emptyset, \ \forall i.
\end{equation}
Arguing indirectly, assume that $\bar P_i \subset K_0$ for some $i$, and let
\[
  i_0 = \min \{ i\in \{1,\dots,m-1 \} \mid \bar P_i \subset K_0 \}.
\]
For each $1\leq i\leq i_0$ note that $C_i \not\subset \tau^{-1}(K_0)$, and consider the set
\begin{equation}\label{set_Di}
  D_i = \{ (z,x) \in \dot Z_i \times \tau^{-1}(K_0) \mid \vtil^i(z) = x \} \ \ \ (1\leq i\leq i_0).
\end{equation}
Clearly $D_i$ is closed in $\dot Z_i\times \tau^{-1}(K_0)$. If $D_i$ accumulates in a point of $\dot Z_i\times \tau^{-1}(K_0)$ then one could use Carleman's similarity principle to conclude that $C_i \subset \tau^{-1}(K_0)$. This would imply $\bar P_{i-1} \subset K_0$, a contradiction to the definition of $i_0$. Thus $D_i$ is discrete and if $D_i \neq\emptyset$ then we get isolated intersections of the pseudo-holomorphic map $\vtil^i$ with the embedded surface $\tau^{-1}(K_0)$. By positivity and stability of intersections we get intersections of the image of the maps $\util_{n_j}$ with $\tau^{-1}(K_0)$, for $j$ large, contradicting our hypotheses. We showed $C_i \cap \tau^{-1}(K_0) = \emptyset$ for all $1\leq i\leq i_0$.

Either $\bar P_{i_0} \subset L_0$ or $\bar P_{i_0} \subset L_1$. We assume $\bar P_{i_0} \subset L_0$, the other case is entirely analogous. Thus $\bar P_{i_0} = L_0^m$ for some $m\geq1$. Since $\bar P_{i_0}$ is the asymptotic limit of $\vtil_{i_0}$ at the negative end $\{-\infty\} \times \R/\Z$, $\bar P_{i_0}$ can be approximated in $C^\infty$ by curves of the form $t\mapsto \tau\circ \util_{n_j}(s_j,t)$ with suitable values $s_j$. In particular, $\bar P_{i_0}$ is homotopic to $\bar P_+$ in $S^3\setminus L_1$ and $m = \link(\bar P_{i_0},L_1) = \link(\bar P_+,L_1) = q$, which implies $q\geq 1$. In view of~\eqref{crack_hyp} we can assume $(q\theta_0-p)(q\vartheta_0-p)>0$.

Let us set
\[
  i_1 = \max \{ i\in \{1,\dots,m-1\} \mid \bar P_i = L_0^q \} \geq i_0.
\]
Then $\vtil^{i_0}$ is asymptotic to $L_0^q$ at $\{-\infty\}\times \R/\Z$ and $\vtil^{i_1+1}$ is asymptotic to $L_0^q$ at $\{+\infty\}\times \R/\Z$.

We claim that $C_{i_1+1}$ is not contained in $\tau^{-1}(K_0)$. In fact, if $C_{i_1+1} \subset \tau^{-1}(K_0)$ then $C_{i_1+1} \subset \tau^{-1}(L_0)$ and $\vtil^{i_1+1}$ is asymptotic to $L_0^r$ at $\{-\infty\}\times \R/\Z$, for some $r\neq q$. Here we used the definition of $i_1$. If $j$ is large enough and $s\gg 1$ then $t\mapsto \tau\circ \util_{n_j}(s+s^{i_1+1}_j,t)$ is close to $L_0^q$, and if $s\ll -1$ then $t\mapsto \tau\circ \util_{n_j}(s+s^{i_1+1}_j,t)$ is close to $L_0^r$. However, the image of the cylinders $\tau\circ \util_{n_j}$ are contained in $S^3\setminus K_0 \subset S^3\setminus L_1$, showing that $L_0^q$ is homotopic to $L_0^r$ in $S^3\setminus L_1$. This is a contradiction to $q\neq r$.

We set $\alpha_0 = \lambda^+$ if $i_0<k'$, $\alpha_0 = \lambda$ if $k'\leq i_0<k''$ or $\alpha_0 = \lambda^-$ if $k'' \leq i_0$. Also, we set $\alpha_1 = \lambda^+$ if $i_1<k'$, $\alpha_1 = \lambda$ if $k'\leq i_1<k''$ or $\alpha_1 = \lambda^-$ if $k'' \leq i_1$. Since $f,f^+ \in \F$, the Reeb vector fields $X_{\alpha_0}$ and $X_{\alpha_1}$ are pointwise positive multiples of the Reeb vector field $X_{\lambda_0}$ of $\lambda_0$ on $L_0$, where $\lambda_0$ is the standard Liouville form on $S^3$. $L_0$ is a periodic trajectory of $X_{\alpha_0}$ or of $X_{\alpha_1}$, and we write $L_0 = (x_{\alpha_0},T_{\alpha_0})$ or $L_0 = (x_{\alpha_1},T_{\alpha_1})$ depending on whether we see it as a $\alpha_0$-Reeb orbit or as a $\alpha_1$-Reeb orbit ($T_{\alpha_0}$ and $T_{\alpha_1}$ are minimal periods). For simplicity we denote $\gamma_0(t) = x_{\alpha_0}(T_{\alpha_0}t)$ and $\gamma_1(t) = x_{\alpha_1}(T_{\alpha_1}t)$.

Let $(U_0,\Phi_0)$ and $(U_1,\Phi_1)$ be a Martinet tubes for the contact forms $\alpha_0$ and $\alpha_1$ at $L_0$, respectively, as explained in Definition~\ref{martinet_tube_def}. That is, for $l=0,1$, $U_l$ is a tubular neighborhood of $L_0$ and $\Phi_l: U_l \to \R/\Z\times B$ is a diffeomorphism, where $B \subset \R^2$ is a small ball centered at the origin, such that $\Phi_l(\gamma_l(t)) = (t,0,0)$ and $(\Phi_l)_*\alpha_l = F_l(d\theta+xdy)$. Here $F_l: \R/\Z\times B\to \R^+$ satisfies $F_l|_{\R/\Z\times \{(0,0)\}} \equiv T_{\alpha_l}$ and $dF_l|_{\R/\Z\times \{(0,0)\}} \equiv 0$, and the usual coordinates on $\R/\Z\times \R^2$ are denoted by $(\theta,x,y)$. For $l=0,1$ we have sections $Y_l(t) = d\Phi_l^{-1} \cdot \partial_x|_{(t,0,0)}$ of the bundle $\gamma_l^*\xi_0$, and we assume $\Phi_0,\Phi_1$ were chosen so that the loops $t\mapsto \exp(\epsilon Y_l(t))$ ($\epsilon>0$ small) have linking number $0$ with $L_0$. Then $Y_l$ can be completed to a $d\alpha_l$-symplectic frame of $\gamma_l^*\xi_0$ in certain homotopy classes $\beta_l$ such that
\begin{equation}\label{}
  \begin{array}{ccc}
    \theta_0 = \rho(L_0,\beta_0,\lambda) & \text{and} & \vartheta_0 = \rho(L_0,\beta_1,\lambda^\pm).
  \end{array}
\end{equation}
Here we used that $L_0$ has self-linking number $-1$.

Since $\vtil^{i_0}$ is asymptotic to $L_0^q$ at its negative end $\{-\infty\}\times\R/\Z$, and $\vtil^{i_1+1}$ is asymptotic to $L_0^q$ at its positive end $\{+\infty\}\times\R/\Z$, there exists $s_0\ll -1$ such that $\tau\circ \vtil^{i_0}(s,t) \in U_0$ when $s\leq s_0$ and $\tau\circ \vtil^{i_1+1}(s,t) \in U_1$ when $s\geq -s_0$.

The behavior as $s\to -\infty$ of the functions $$ (\theta^0(s,t),x^0(s,t),y^0(s,t)) = \Phi_0 \circ \tau\circ \vtil^{i_0}(s,t)  \ \ \ (\text{defined for } s\leq s_0) $$ is well understood in view of Theorem~\ref{precise_asymptotics}. The function $\theta^0(s,t)$ satisfies
\begin{equation}
  \theta^0(s,t) \to qt + t_0 \ \text{ uniformly in } t \text{ as } s\to -\infty, \ \text{ for some } t_0.
\end{equation}
To describe the behavior of $x^0, y^0$ let us consider the asymptotic operator $A_0$ associated to the orbit $L_0^q$ of the contact form $\alpha_0$, as explained in Section~\ref{asymptotic_behavior_section}. There is an eigenvalue $\nu^0 \in \sigma(A_0) \cap (0,+\infty)$ of $A_0$ and a section $$ t\in \R/\Z \mapsto \eta^0(t) \in \xi_0|_{\gamma_0(qt+t_0)} $$ in the eigenspace of $\nu^0$ such that the following holds. If $w^0(t) : \R/\Z\to \R^2\setminus 0$ is the representation of $\eta^0(t)$ in the local frame $\{\partial_x , \partial_y\}$ induced by $\Phi_0$ then, perhaps after making $s_0$ more negative, we can write
\begin{equation}\label{xy_coordinates_i}
  (x^0(s,t),y^0(s,t)) = e^{\int_{s_0}^s\alpha^0(r)dr} (w^0(t)+R(s,t))  \ \ \forall s\leq s_0
\end{equation}
where $|R(s,t)| \to 0$ uniformly in $t$ as $s\to -\infty$ and $\alpha^0(r) \to \nu^0$ as $r\to-\infty$. The behavior of the functions
\[
  (\theta^{1}(s,t),x^{1}(s,t),y^1(s,t)) = \Phi_1 \circ \tau\circ \vtil^{i_1+1}(s,t) \ \ \ (\text{defined for } s\geq -s_0)
\]
is entirely analogous. More precisely, let $A_1$ be the asymptotic operator associated to $L_0^q$ seen as an $\alpha_1$-Reeb orbit. Then
\begin{equation}
  \theta^1(s,t) \to qt + t'_0 \ \text{ uniformly in } t \text{ as } s\to +\infty, \ \text{ for some } t'_0,
\end{equation}
and we find an eigenvector $\nu^1 \in \sigma(A_1) \cap (-\infty,0)$ and an eigensection $$ t \in \R/\Z \mapsto \eta^1(t) \in \xi_0|_{\gamma_1(qt+t_0')} $$ for $\nu^1$ such that the following holds. If $w^1: \R/\Z \to \R^2\setminus 0$ is the representation of $\eta^1(t)$ in the local frame $\{\partial_x , \partial_y\}$ induced by $\Phi_1$ then
\begin{equation}\label{xy_coordinates_i+1}
  (x^1(s,t),y^1(s,t)) = e^{\int_{-s_0}^s\alpha^1(r)dr} (w^1(t)+ \tilde R(s,t))  \ \ \forall s\geq -s_0
\end{equation}
where $|\tilde R(s,t)| \to0$ uniformly in $t$ as $s\to +\infty$ and $\alpha^1(r) \to \nu^1$ as $r\to+\infty$.

Now we consider, as we did in Section~\ref{computing_homotopy}, the curves
\[
  \begin{array}{ccc}
    \eta^0_\epsilon(t) = \exp_{\gamma_0(qt+t_0)}(\epsilon \eta^0(t)) & \text{and} & \eta^1_\epsilon(t) = \exp_{\gamma_1(qt+t_0')}(\epsilon \eta^1(t))
  \end{array}
\]
for $\epsilon>0$ small and set $p_0 = \link(\eta^0_\epsilon,L_0)$, $p_1 = \link(\eta^1_\epsilon,L_0)$. Clearly $q = \link(\eta^l_\epsilon,L_1)$ for $l=0,1$ since $\eta^0_\epsilon$, $\eta^1_\epsilon$ are loops close to $L_0^q$. Lemma~\ref{estimating_homotopy_lemma} implies
\begin{equation}\label{estimates_evectors}
  \begin{array}{cc}
    i_0<k' \text{ or } k''\leq i_0 \Rightarrow p_0 \geq q\vartheta_0, & k'\leq i_0<k'' \Rightarrow p_0\geq q\theta_0
  \end{array}
\end{equation}
and
\begin{equation}\label{estimates_evectors_2}
  \begin{array}{cc}
    i_1<k' \text{ or } k'' \leq i_1 \Rightarrow p_1 \leq q\vartheta_0, & k'\leq i_1<k'' \Rightarrow p_1\leq q\theta_0.
  \end{array}
\end{equation}
We use~\eqref{xy_coordinates_i} and~\eqref{xy_coordinates_i+1} to find numbers $s' \ll -1$ and $s'' \gg 1$ such that the curve
\[
  t\mapsto \tau\circ \vtil^{i_0}(s',t) = \Phi_0^{-1}(\theta^0(s',t),x^0(s',t),y^0(s',t))
\]
is homotopic to $\eta^0_\epsilon(t)$ in $U_0 \setminus L_0 \subset S^3\setminus K_0$, and the curve
\[
  t\mapsto \tau\circ \vtil^{i_1+1}(s'',t) = \Phi_1^{-1}(\theta^1(s'',t),x^1(s'',t),y^1(s'',t))
\]
is homotopic to $\eta^1_\epsilon(t)$ in $U_1 \setminus L_0 \subset S^3\setminus K_0$. In view of item (g) described above there are sequences $s'_j,s''_j \in \R$ such that
\[
  \begin{array}{ccc}
    \tau\circ \util_{n_j}(s'_j,t) \to \tau\circ \vtil^{i_0}(s',t) & \text{and} & \tau\circ \util_{n_j}(s''_j,t) \to \tau\circ \vtil^{i_1+1}(s'',t)
  \end{array}
\]
in $C^\infty(\R/\Z,S^3)$ as $j \to \infty$. Taking $j$ large enough
\begin{equation}\label{}
  \begin{aligned}
    & p = \link(t\mapsto \tau\circ \util_{n_j}(s'_j,t),L_0) = \link(\tau\circ \vtil^{i_0}(s',t),L_0) = p_0 \\
    & p = \link(t\mapsto \tau\circ \util_{n_j}(s''_j,t),L_0) = \link(\tau\circ \vtil^{i_1+1}(s'',t),L_0) = p_1.
  \end{aligned}
\end{equation}
Then~\eqref{estimates_evectors} implies
\begin{equation}\label{boolean_1}
\begin{array}{ccc}
p\geq q\theta_0 & \text{or} & p\geq q\vartheta_0
\end{array}
\end{equation}
and~\eqref{estimates_evectors_2} implies
\begin{equation}\label{boolean_2}
\begin{array}{ccc}
p\leq q\theta_0 & \text{or}&  p\leq q\vartheta_0.
\end{array}
\end{equation}
Putting together~\eqref{boolean_1} with~\eqref{boolean_2} we have
\begin{equation}\label{boolean_3}
\begin{aligned}
& p\geq q\theta_0 \ \text{and} \ p\leq q\theta_0, \ \ \ \text{or} \\
& p\geq q\theta_0 \ \text{and} \ p\leq q\vartheta_0, \ \ \ \text{or} \\
& p\geq q\vartheta_0 \ \text{and} \ p\leq q\theta_0, \ \ \ \text{or} \\
& p\geq q\vartheta_0 \ \text{and} \ p\leq q\vartheta_0.
\end{aligned}
\end{equation}
In all four cases we get a contradiction to $(q\theta_0-p)(q\vartheta_0-p)>0$. As remarked before, the argument assuming $\bar P_{i_0} \subset L_1$ is analogous. Thus~\eqref{Pi_not_K0} is proved. At this point the condition~\eqref{crack_hyp} plays its role.

We showed that every $\bar P_i$ lies in $S^3\setminus K_0$ and it follows that they are homotopic to $\bar P_+$ away from $K_0$. Consequently, $\bar P_{k'} \in \P(\lambda)$ is the desired orbit satisfying $\link(\bar P_{k'},L_0) = p$ and $\link(\bar P_{k'},L_1) = q$. The proof of Lemma~\ref{existence_lemma} is complete.

\subsection{Proof of Proposition~\ref{prop-non-degenerate_case}}

First we consider the case of a constant sequence: $f_n = f \ \forall n$, for some $f \in \mathcal{F}$ such that $f\lambda_0$ is non-degenerate. Let $\theta_0 = \theta_0(f),\ \theta_1 = \theta_1(f)$ be the associated rotation numbers. Select a model $f_{\theta'_0,\theta'_1}$ with $\theta'_0,\theta'_1 \not\in \Q$, as described in Section~\ref{example-2}. We choose $\theta_0'$ and $\theta_1'$ close enough to $\theta_0$ and $\theta_1$, respectively, in such a way that
\begin{itemize}
  \item if $(\theta_0,1) < (p,q) < (1,\theta_1)$ then $(\theta'_0,1) < (p,q) < (1,\theta'_1)$,
  \item if $(1,\theta_1) < (p,q) < (\theta_0,1)$ then $(1,\theta'_1) < (p,q) < (\theta'_0,1)$.
\end{itemize}
By rescaling $f_{\theta'_0,\theta'_1}$ if necessary, we may assume that $f_{\theta'_0,\theta'_1} > f$ pointwise. There exists a small constant $0<c<1$ such that $f > cf_{\theta'_0,\theta'_1}$ pointwise as well. Using Proposition~\ref{prop_morse_bott_approx} we may find $f_+ \in \F$ arbitrarily close to $f_{\theta'_0,\theta'_1}$, a suitable $J_+ \in \J_+(\xi_0)$ and some $T>0$ such that $\jhat_+ \in \J_{\rm reg}(f_+\lambda_0)$, the chain complexes
\[
  \begin{array}{ccc}
    (C^{\leq T/c,(p,q)}_*(f_+\lambda_0),\partial_{(f_+\lambda_0,J_+)}) & \text{ and } & (C^{\leq T,(p,q)}_*(f_+\lambda_0),\partial_{(f_+\lambda_0,J_+)})
  \end{array}
\]
are well-defined and their homologies equal the homology of $S^1$ over $\mathbb{F}_2$, up to a common shift in degree. These chain complexes are generated by the same orbits and their differentials count the same holomorphic cylinders. We may assume that $cf_+ < f < f_+$ holds pointwise as well. Recall that $f_+$ coincides with $f_{\theta'_0,\theta'_1}$ near $K_0$, so that $\theta_0(f_+) = \theta'_0$ and $\theta_1(f_+) = \theta'_1$.

Consider the almost complex structure $\jhat_- \in \J(cf_+\lambda_0)$ induced by $J_+$ and the contact form $cf_+\lambda_0$. Then, as explained in Section~\ref{inclusions}, we have $\jhat_- \in \J_{\rm reg}(cf_+\lambda_0)$ which can be used to define the differential of the chain complex $$ (C^{\leq T,(p,q)}_*(cf_+\lambda_0),\partial_{(cf_+\lambda_0,J_+)}). $$ By Proposition~\ref{comp_j_iota} the map
\[
  (C_{*}^{\leq T, (p,q)}(f_+\lambda_0),\partial_{(f_+\lambda_0,J_+)}) \overset{j_* \circ \iota_*}{\longrightarrow} (C_*^{\leq T, (p,q)}(cf_+\lambda_0),\partial_{(cf_+\lambda_0,J_+)})
\]
is non-trivial in homology. Here $j_*$ is the map~\eqref{map_j_*} and $\iota_*$ is the map~\eqref{map_iota_*}.

Let us select $\jhat \in \J(f\lambda_0)$ and consider almost complex structures $\bar J_R = \bar J_1 \circ_R \bar J_2$, where $\bar J_2 \in \J(\jhat,\jhat_+:K_0)$, $\bar J_1 \in \J(\jhat_-,\jhat:K_0)$ and $R>0$. As explained in Section~\ref{splitting_alm_cpx_str}, the almost complex structure $\bar J_R$ is biholomorphic to some $J'_R\in \J(\jhat_-,\jhat_+:K_0)$. We claim that there is a finite-energy $J'_R$-holomorphic cylinder asymptotic to orbits in $\P^{\leq T,(p,q)}(f_+\lambda_0)$ and $\P^{\leq T,(p,q)}(cf_+\lambda_0)$ at the positive and negative punctures, respectively, which do not intersect $\tau^{-1}(K_0)$. Arguing indirectly, if there are none we conclude that $J'_R \in \J_{\rm reg}(\jhat_-,\jhat_+:K_0)$ and, therefore, the map $\Phi(J'_R)_*$ as in~\eqref{chain_map_phi_j_bar} is well-defined and is equal to zero. Lemma~\ref{j_iota_relation_chain} implies that $\Phi(J'_R)_*$ is chain-homotopic to $j_*\circ\iota_*$ and, thus, non-trivial by Proposition~\ref{comp_j_iota}. This contradiction proves our claim.

We found, for every $R>0$, finite-energy $\bar J_R$-holomorphic cylinders asymptotic to Reeb orbits in $\P^{\leq T,(p,q)}(f_+\lambda_0)$ and $\P^{\leq T,(p,q)}(cf_+\lambda_0)$ at the positive and negative punctures, respectively, not intersecting $\tau^{-1}(K_0)$. Since there are only two orbits in each set $\P^{\leq T,(p,q)}(f_+\lambda_0)$ and $\P^{\leq T,(p,q)}(cf_+\lambda_0)$, there is a sequence of $\bar J_{R_n}$-cylinders with the same asymptotic limits for every $n$, where $R_n \to +\infty$. This implies that the energies of these cylinders are uniformly bounded in $n$, see Remark~\ref{exact_nature}. If $\theta_0'$ and $\theta_1'$ are chosen sufficiently close to $\theta_0$ and $\theta_1$, respectively, then conditions A) and B) in~\eqref{crack_hyp} are both satisfied (replacing $\vartheta_i$ by $\theta_i'$). Applying Lemma~\ref{existence_lemma} to these cylinders we find an element of $\P^{\leq T, (p,q)}(f \lambda_0)$, which proves the first assertion of Proposition~\ref{prop-non-degenerate_case}.

To prove the result for families $f_n$ as in the statement, observe that we may select a single model $f_{\theta'_0,\theta'_1}$, with $\theta'_0,\theta'_1 \not\in\Q$, and a constant $c > 0$ such that:
\begin{itemize}
\item if $(\theta_0,1) < (p,q) < (1,\theta_1)$ then $(\theta'_0,1) < (p,q) < (1,\theta'_1)$,
\item if $(1,\theta_1) < (p,q) < (\theta_0,1)$ then $(1,\theta'_1) < (p,q) < (\theta'_0,1)$,
\item $\inf_x f_{\theta'_0,\theta'_1}(x) > M$, $\sup_x cf_{\theta_0',\theta_1'}(x) < m$.
\end{itemize}
The assumptions on $f_n$ guarantee that $c f_{\theta'_0,\theta'_1} < f_n < f_{\theta'_0,\theta'_1}$ pointwise, for each $n$. Since $\theta_i(f_n) \to \theta_i$ we have for large $n$
\begin{itemize}
\item if $(\theta_0,1) < (p,q) < (1,\theta_1)$ then $(\theta_0(f_n),1) < (p,q) < (1,\theta_1(f_n))$,
\item if $(1,\theta_1) < (p,q) < (\theta_0,1)$ then $(1,\theta_1(f_n)) < (p,q) < (\theta_0(f_n),1)$.
\end{itemize}

We may assume that $\theta_0'$ and $\theta_1'$ are chosen close enough to $\theta_0$ and $\theta_1$, respectively, in such a way that both conditions A) and B) in~\eqref{crack_hyp} are satisfied, replacing $(\vartheta_i,\theta_i)$ in~\eqref{crack_hyp} by $(\theta_i',\theta_i(f_n))$ for $n$ large. Applying the above argument to each form $f_n\lambda_0$ with these specific choices of $f_{\theta'_0,\theta'_1}$ and $c f_{\theta'_0,\theta'_1}$, we obtain, for all $n$ sufficiently large, an orbit in $\mathcal{P}^{\leq T, (p,q)}(f_n \lambda_0)$, where $T$ is some upper bound on the action independent of $n$ large. In fact, $T$ could be any number larger than the action of an orbit in the $(p,q)$-orbit torus for $f_{\theta'_0,\theta'_1}$ and is therefore independent of $n$.

\section{Passing to the degenerate case}\label{deg_section}

\subsection{Non-degenerate approximations}

\begin{lemma}\label{lemm1}
Let $f \in \mathcal{F}$.  There is a sequence $f_n \in \mathcal{F}$ such that $f_n \lambda_0$ is non-degenerate for each $n$, $f_n \rightarrow f$ in $C^{\infty}$. In particular, for $i = 0,1$ we have $\theta_i(f_n) \rightarrow \theta_i(f)$ as $n \rightarrow \infty$, where $\theta_i(f_n) = \rho(f_n\lambda_0,L_i)-1$.
\end{lemma}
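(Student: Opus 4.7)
The strategy is a standard Baire-category density argument within $\F$, regarded as a Fr\'echet space under the $C^\infty$ topology. For each $T > 0$, set
\begin{equation*}
\U_T = \left\{ g \in \F \mid \text{every closed } X_{g\lambda_0}\text{-orbit of action } \leq T \text{ is non-degenerate}\right\}.
\end{equation*}
Openness of $\U_T$ in $\F$ is routine: it follows from the implicit function theorem applied to the Poincar\'e return map at each closed orbit, together with compactness of the set of closed orbits of action $\leq T$ modulo time-translation.

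The density of $\U_T$ in $\F$ requires two successive perturbations of a given $f \in \F$. First I would perturb $f$ near $K_0$ to arrange that $\theta_0(\tilde f), \theta_1(\tilde f) \notin \Q$; since an elliptic orbit with irrational transverse rotation number has every iterate non-degenerate, this makes $L_i^n$ non-degenerate for all $n \geq 1$ and $i=0,1$. Working in Martinet coordinates $(\theta,x,y) \in \R/\Z \times B$ around $L_i$, a function $g \in \F$ satisfies $\partial_x g = \partial_y g = 0$ on the axis $\{x=y=0\}$, but the Hessian of $g$ in $(x,y)$ along $K_0$ is unconstrained; since this Hessian determines the transverse linearized Reeb flow along $L_i$, and hence $\theta_i$, I can add a perturbation of the form $\chi(\theta)\eta(x,y)(\alpha x^2 + 2\beta xy + \gamma y^2)$ with $\chi$ and $\eta$ cutoff functions supported in a narrow tubular neighborhood of $L_i$, choosing $(\alpha,\beta,\gamma)$ to shift $\theta_i$ by any prescribed small amount, in particular to an irrational value; the perturbation clearly stays in $\F$. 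Second, having fixed non-degeneracy of every $L_i^n$ up to action $T$, I would further perturb $f$ by a $C^\infty$-small function supported in a compact subset of $S^3 \setminus K_0$. Any such perturbation automatically lies in $\F$ and leaves $\theta_0, \theta_1$ unchanged, and a standard Sard-Smale transversality argument applied to the parametrized zero-set of the return-map fixed-point equation shows that a generic such perturbation makes all remaining closed orbits of action $\leq T$ non-degenerate.

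With $\U_T$ open and dense for every $T > 0$, the intersection $\U = \bigcap_{n \geq 1} \U_n$ is residual in $\F$ by the Baire category theorem, hence in particular dense. Choosing $f_n \in \U$ with $\|f_n - f\|_{C^n} < 1/n$ produces the required sequence: the forms $f_n\lambda_0$ are all non-degenerate, $f_n \to f$ in $C^\infty$, and the claim $\theta_i(f_n) \to \theta_i(f)$ is immediate from the $C^2$-continuity of the transverse rotation number in the contact form.

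The principal obstacle is the first perturbation step: one must justify, through an explicit local computation in the Martinet coordinates $(\theta,x,y)$, that the map sending the $(x,y)$-Hessian of $f$ along $L_i$ to the rotation number $\theta_i$ is non-constant with sufficiently rich image, so that $\F$-admissible perturbations can realize any irrational value in a neighborhood of $\theta_i(f)$. The remaining steps, by contrast, are instances of well-established genericity and Baire-category techniques.
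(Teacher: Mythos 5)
Your approach is correct but genuinely different from the paper's. The paper applies the standard (unconstrained) genericity theorem: after a preliminary perturbation inside $\F$ making $L_0$ and $L_1$ non-degenerate prime orbits, one perturbs freely to a globally non-degenerate form $f''\lambda_0$, which need no longer lie in $\F$, and then corrects with a $C^\infty$-small contact isotopy $\varphi_1$ carrying the (necessarily nearby) perturbed orbits $L'_i$ back onto $L_i$, so that $\varphi_1^*(f''\lambda_0) = f'''\lambda_0$ with $f''' \in \F$ still non-degenerate. Your Baire-category argument instead stays inside $\F$ throughout and in return yields a residual, not merely dense, set of non-degenerate forms; but it requires a \emph{constrained} transversality argument, namely that perturbations supported in a compact subset of $S^3 \setminus K_0$ generically break the degeneracy of every orbit of action $\leq T$ not contained in $K_0$. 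This is true---any such orbit has an arc away from $K_0$ near which $\F$-admissible bump functions are unconstrained, so the usual localization of the Sard--Smale argument goes through---but it is not an off-the-shelf statement and, I would argue, is the more delicate step of your proof, more so than the Hessian perturbation near $L_i$ that you flag as the principal obstacle. The paper's isotopy trick is exactly a device for avoiding the constrained transversality, at the mild cost of the preliminary step that ensures $L'_i$ exists and stays close to $L_i$.
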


\begin{proof}
It is possible to find $f' \in \F$ arbitrarily $C^\infty$-close to $f$ such that $L_0$, $L_1$ are non-degenerate prime Reeb orbits of $f'\lambda_0$; see~\cite[Lemma 6.8]{convex}. Now there exists $f''$ $C^\infty$-close to $f'$ such that $f''\lambda_0$ is non-degenerate, but at this step $f''$ need not belong to $\F$. However, the orbits $L_0$, $L_1$ get perturbed to closed $f''\lambda_0$-Reeb orbits $L'_0$, $L'_1$ in a way that $L'_i$ is $C^\infty$-close to $L_i$, $i=0,1$. Here we used that $L_i$ were non-degenerate orbits of $f'\lambda_0$. We take a $C^\infty$-small contact isotopy $\{\varphi_t\}_{t\in[0,1]}$ of $(S^3,\xi_0)$ satisfying $\varphi_0=id$, $\varphi_1(L_i) = L'_i$, $i=0,1$. Then $\varphi_1^*(f''\lambda_0) = f'''\lambda_0$ for some $f''' \in \F$ which is $C^\infty$-close to the original $f\lambda_0$. Thus $f'''\lambda_0$ is non-degenerate and $\theta_i(f''') \sim \theta_i(f)$, $i=0,1$.
\end{proof}

Let us select a sequence $f_n$ as in Lemma~\ref{lemm1}.  By Proposition~\ref{prop-non-degenerate_case}, for each pair $(p,q)\in\Z\times\Z$ relatively prime satisfying
\[
(\theta_{0}(f_n),1) < (p,q) < (1,\theta_{1}(f_n)), \mbox{ or } (1,\theta_{1}(f_n),1) < (p,q) < (\theta_{0}(f_n),1)
\]
there is a $(p,q)$-closed Reeb orbit for $f_n \lambda_0$, for large $n$, which we shall denote $P_n(p,q)$. Indeed, since $\theta_i(f_n) \rightarrow \theta_i(f)$, if $(p,q)$ satisfies one of the above inequalities for $\theta_0(f), \theta_1(f)$ then for all $n$ large enough the same inequality holds for $f_n$, and therefore the orbit $P_n(p,q)$ is obtained by Proposition~\ref{prop-non-degenerate_case}.  Moreover, since $f_n \rightarrow f$ it is clear that there are constants $m, M$ such that
\[
0 < m < \inf_{n,x} f_n(x) \leq \sup_{n,x} f_n(x) < M.
\]
Therefore the second assertion of Proposition \ref{prop-non-degenerate_case} applies to the sequence $f_n$ and guarantees that we may assume a uniform bound
\[
\int_{P_n(p,q)} f_n\lambda_0 \leq T
\]
for some $T$ independent of $n$ and all $n$ large. Using this period bound, the Arzela-Ascoli theorem guarantees that there exists a subsequence $n_i$ such that $P_{n_i}(p,q) \rightarrow P(p,q)$ in $C^{\infty}(S^1, S^3)$, where $P(p,q)$ is a closed Reeb orbit for $f \lambda_0$.  If $P(p,q)$ does not have image contained in $K_0$, it is clear that it is in the homotopy class $(p,q)$: for, $P_n(p,q)$ is $C^{\infty}$-close to $P(p,q)$, which implies that for all large $n$ the homotopy classes of $P_n(p,q)$, $P(p,q)$ in $S^3 \backslash K_0$ must be the same.  However, at this point it is conceivable that $P(p,q)$ has image in $K_0$; we show next that this cannot be the case.

\subsection{Non-collapsing}

Let us suppose that the sequence of orbits $P_n = P_n(p,q)$ converges to $L_0^q$, $q>0$  (otherwise, the sequence must converge to $L_1^p$ and the argument is analogous). This fact together with~\eqref{non_resonance} implies that $(\theta_0(f),1) \not\in \R(p,q)$, which also implies that $\theta_0(f)\neq p/q$.

Let $(U,\Phi)$ be a Martinet tube for $L_0$, so that we have special coordinates $(\theta,x,y) \in \R/\Z\times B$ on $U$ with respect to which $f_n\lambda_0 \simeq g_n(d\theta+xdy)$ and $f\lambda_0 \simeq g(d\theta+xdy)$. Moreover on $\R/\Z\times 0$ we have $\xi_0 \simeq 0\times \R^2$. In particular $\sigma = \{\partial_x,\partial_y\}$ is a conformal $d(f_n\lambda_0)$-symplectic frame of $\xi_0|_{L_0}$, for every $n$. We may assume, without loss of generality, that given an embedded disk $D_0$ spanning $L_0$ then $\partial_x$ is an outward pointing tangent vector of $D_0$ along $L_0$. Denoting by $\beta_{\rm disk}$ the homotopy class of trivializations of $\xi_0|_{L_0}$ induced by $\sigma$ then $\rho(f_n\lambda_0,L_0,\beta_{\rm disk}) = \theta_0(f_n)$ since $L_0$ has self-linking number $-1$.

Let $\phi_t$ denote the Reeb flow of $f\lambda_0$. Given $N>0$, we may find a smaller neighborhood $U(N)$ so that for $w \in U(N)$, the trajectory $\phi_t(w)$ for $0 \leq t \leq N$ lies in $U$. Let $\phi_{n,t}$ denote the Reeb flow for $f_n \lambda_0$.  For $N$ fixed, if $n$ is sufficiently large then $\phi_{n,t}(w)\in U$ for $w \in U(N)$ and $0 \leq t \leq N$.  We have $\phi_{n} \rightarrow \phi$ in $C^{\infty}([0,N] \times U(N),U)$.

Denote by $T$ the prime $f\lambda_0$-period of $L_0$, and by $T_n$ the $f_n\lambda_0$-period of $P_n$. Recall that $P_n \rightarrow P = L_0^q$. Choose $N > qT + 1$, say, and note that by hypothesis for $n$ large we have $P_n \subset U(N)$. Let $(0,w_n)$ be in the intersection of $P_n$ with the disc $0 \times B$ with respect to the coordinate system above. Note that $w_n \rightarrow 0$.  After passing to a subsequence, we may suppose that $w_n / \|w_n\| \rightarrow h \neq 0 \in \R^2$.

We claim that $d\phi_{qT}(0,0)(0,h) = (0,h)$ and that the winding of the vector $d\phi_t(0,0)(0,h) \in 0\times (\R^2\setminus 0)$, $t\in[0,qT]$, is exactly $p$ around the origin in the plane $0\times \R^2$. This leads to $\theta_0(f)=p/q$, which is a contradiction since $\theta_0(f) \neq p/q$.

In fact, let $\Pi: \R/\Z \times \R^2 \to \R^2$ be the projection on the second factor inducing the tangent projection $\Pi_*$. We use the Euclidean norm in all linear spaces whenever it is necessary.

We have $(0,w_n) = \phi_{n,T_n}(0,w_n)$ for each $n$, where $T_n \rightarrow q T$. Note the following estimates:

\begin{itemize}
\item $\| \phi_{n,t}(0,w_n) - \phi_{n,t}(0,0)-d\phi_{n,t}(0,0) \cdot (0,w_n) \| \leq C_n \| w_n \|^2$,  $\forall t\in[0,N]$, where $C_n$ is the sup-norm of the second derivative of $\phi_{n,t},t\in[0,N],$ in $U(N)$. For $n$ large, $C_n$ can be bounded by twice the sup-norm of the second derivative of $\phi_t,t\in[0,N],$ in $U(N)$ (i.e. $C_n$ is uniformly bounded by an absolute constant $K_1$ for $n$ large). Projecting onto $\R^2$ and using that $\phi_{n,t}(0,0) \in \R/\Z \times \{0\}$ for all $t$, we get
\begin{equation}\label{collapsing-estimate1}
\|\Pi(\phi_{n,t}(0,w_n)) - \Pi_*d\phi_{n,t}(0,0) \cdot (0,w_n)\| \leq K_1 \|w_n\|^2
\end{equation}
for all $t\in[0,N]$ and $n$ large.
\item We have
\begin{equation}\label{collapsing-estimate2}
\| \Pi_*d\phi_{n,t}(0,0)\cdot (0,w_n) - \Pi_*d\phi_{t}(0,0) \cdot (0,w_n) \| \leq \epsilon^{(1)}_n \| w_n \|, \forall t\in[0,N],
\end{equation}
where $\epsilon^{(1)}_n \downarrow 0$ can be taken to be the supremum of the sup-norm of the difference $d \phi_{n,t}(0,0) - d\phi_t(0,0)$ in $t\in[0,N]$.
\item We have
\begin{equation}\label{collapsing-estimate3}
\left\| \Pi_*d\phi_{t}(0,0)\cdot (0,w_n) - \{\|w_n\| \Pi_*d\phi_{t}(0,0) \cdot (0,h)\} \right\| \leq \epsilon^{(2)}_n \| w_n \|,
\end{equation}
for some $\epsilon^{(2)}_n \downarrow 0$.  To see this, let $\|(w_n/\|w_n\|) - h\| = \epsilon^{(3)}_n \rightarrow 0$, so $\|w_n - (\|w_n\| h) \| = \|w_n\| \cdot \epsilon^{(3)}_n$ and we can take the constant $\epsilon^{(2)}_n = \epsilon^{(3)}_n \cdot K_2 \downarrow 0$ where $K_2$ is the supremum of the sup-norm of $d\phi_t(0,0)$ in $t\in[0,N]$ .
\item Putting \eqref{collapsing-estimate1}, \eqref{collapsing-estimate2}, \eqref{collapsing-estimate3} together and dividing by $\|w_n\| \neq 0$, we have for $t\in[0,N]$ and $n$ large
\begin{equation}\label{desig1}
\left\| \frac{\Pi(\phi_{n,t}(0,w_n))}{\|w_n\|} -  \Pi_*d\phi_t(0,0) (0,h) \right\| \leq \left( K_1 \| w_n \| + \epsilon^{(1)}_n + \epsilon^{(2)}_n \right) \downarrow 0.
\end{equation}
\end{itemize}
Since $\phi_{n,T_n}(0,w_n)=(0,w_n)$ we find for $t=T_n$
\[
\left\| \left(0,\frac{w_n}{\|w_n\|}\right) - d\phi_{T_n}(0,0) (0,h) \right\| \to 0 \mbox{ as } n\to \infty,
\]
which implies, since $T_n \to qT$, that $d\phi_{qT}(0,h) = (0,h)$.

Again from~\eqref{desig1} and taking $K_1 \| w_n \| + \epsilon^{(1)}_n + \epsilon^{(2)}_n$ less than the infimum of $\|d\phi_t(0,0) (0,h)\|$ over $t\in[0,N]$, we necessarily have for $n$ large that
\[
  \begin{aligned}
    \mathrm{wind}_{t\in[0,T_n]} \left( \Pi(\phi_{n,t}(0,w_n)) \right) &= \mathrm{wind}_{t\in[0,T_n]}\left( \frac{\Pi ( \phi_{n,t}(0,w_n))}{\|w_n\|} \right) \\
    &= \mathrm{wind}_{t\in[0,qT]}(\Pi_*d\phi_t(0,0) (0,h)).
  \end{aligned}
\]
Here, obviously, we are computing the winding numbers with respect to the standard basis of $\R^2$.

Now since $P_n$ is a Reeb orbit for $f_n\lambda_0$ in the homotopy class $(p,q)$ with respect to the Hopf Link $K_0 = L_0 \cup L_1$, we also have that
\[
  \mathrm{wind}_{t\in[0,T_n]}(\Pi ( \phi_{n,t}(0,w_n) )) = p.
\]
Here we make use of our particular choice of Martinet tube. We conclude that $\theta_0(f) = \mathrm{wind}_{t\in[0,qT]}(\Pi_*d\phi_t(0,0) (0,h))/q = p/q$ which, as mentioned before, gives a contradiction. It follows that the orbits $P_n$ cannot converge to $L_0^q$.

Arguing similarly, they cannot converge to $L_1^p$ either, and thus the sequence $P_n$ has a limit $P$ in $S^3 \backslash K_0$ which is an orbit for $f\lambda_0$ satifying $\link(P,L_0)=p$, $\link(P,L_1)=q$. This completes the proof of Theorem~\ref{thm-1}.

\section{The $SO(3)$-case}\label{sec-SO3}

Our goal in this section is to prove Theorem~\ref{teot} and its corollaries.

\subsection{Geometric set-up}

Let $g_0$ be the Euclidean metric on $\mathbb{R}^3$ and consider the associated unit tangent bundle of the $2$-sphere $$ T^1S^2 := \{ (x,v)\in \mathbb{R}^3 \times \mathbb{R}^3 \mid g_0(x,x)=g_0(v,v)=1 \mbox{, } g_0(x,v)=0\}\simeq SO(3), $$ with bundle projection $\Pi:T^1S^2 \to S^2$, $\Pi(x,v) = x$. There exists a double covering map $$ D:S^3 \stackrel{2:1}{\longrightarrow} T^1S^2, $$ explicitly given by
\begin{equation}
\left[ \begin{array}{c} q_0 \\ p_0 \\ q_1 \\ p_1 \end{array} \right] \mapsto
\left[\begin{array}{cc} (q_0^2-p_0^2+q_1^2-p_1^2) & -2(q_0p_0+q_1p_1) \\ 2(-q_0p_0+q_1p_1) & -(q_0^2-p_0^2-q_1^2 +p_1^2) \\ 2(q_0p_1+p_0q_1) & 2(q_0q_1-p_0p_1) \end{array} \right]
= \left [x \ v \right ],
\end{equation}
with group of deck transformations  generated by the antipodal map $$ A(z) = -z, \ \ z\in S^3. $$ Here $z = (q_0+ip_0,q_1+ip_1) \in \C^2 \simeq \R^4$. Later we will make use of polar coordinates $r_0e^{i\phi_0} = q_0+ip_0 \neq 0$ and $r_1e^{i\phi_1} = q_1+ip_1 \neq 0$.

Recall the standard contact form $\lambda_0 = \frac{1}{2} \sum_{j=0,1} q_jdp_j - p_jdq_j$ on $S^3$ and the standard contact structure $\xi_0 = \ker\lambda_0$, and observe that \begin{equation}\label{antipoda} A^*\lambda_0 = \lambda_0.\end{equation} The contact form $\bar\lambda_0$ on $T^1S^2$ given by $\bar\lambda_0|_{(x,v)} \cdot \zeta = g_0(v,d\Pi\cdot\zeta)$ $\forall \zeta \in T_{(x,v)}T^1S^2$ satisfies $D^*\bar\lambda_0 = 4\lambda_0$. Let $\bar \xi_0:=\ker \bar \lambda_0$ be its (tight) contact structure and $X_{\bar\lambda_0}$ its Reeb vector field. The flow of $\dot x = X_{\bar\lambda_0} \circ x$ is the geodesic flow of $(S^2,g_0)$ on $T^1S^2$. Its orbits are closed (the unit vectors tangent to the great circles) and correspond to the projections of Hopf fibers under $D$. There exists a natural orientation on $T^1S^2$ induced by $\bar \lambda_0 \wedge d \bar \lambda_0>0$.

As before, $K_0 := L_0\cup L_1\subset S^3$ is the Hopf link
\begin{equation}\label{hopflink}
\begin{aligned}
L_0:=\{q_0=p_0=0,\ q_1^2 + p_1^2=1\}, \\
L_1:=\{q_1=p_1=0,\ q_0^2+p_0^2=1\}.
\end{aligned}
\end{equation}
Its components doubly cover $$ l_i:=D(L_i),\ i=0,1, $$ which are the velocity vectors of the geodesics running the equator of $S^2$ in opposite directions. Let
\begin{equation}\label{hopfl}
l:=l_0 \cup l_1
\end{equation}
be the link in $T^1S^2$ which we also call a Hopf link.

\begin{definition}\label{remarkiso}
We call any transverse link $\bar l=\bar l_0 \cup \bar l_1\subset (T^1S^2,\bar\xi_0)$ a Hopf link if $\bar l$ is transversally isotopic to the link $l$ defined in \eqref{hopfl}. This means that there exists an isotopy $\rho_t:S^1\sqcup S^1 \to T^1S^2,t\in[0,1],$ such that $\Img (\rho_0) = \bar l,\ \Img (\rho_1) = l$, $\rho_t$ is an embedding positively transverse to the contact structure for all $t$. It is a theorem that the isotopy $\rho_t,\ t\in[0,1],$ can be extended to a contact isotopy  $\psi_t,\ t\in[0,1],$ of $(T^1S^2,\bar \xi_0)$, i.e., $\psi_{t*} \bar \xi_0 = \bar \xi_0$ and $\psi_t \circ \rho_0 = \rho_t$ for all $t$, see Theorem 2.6.12 in~\cite{geiges}. The Hopf link $l$ is said to be in normal position.
\end{definition}

For each $c\in S^1$, let $u_{1,c},\ u_{0,c}:\mathbb{C} \to S^3$ be the embeddings, defined in polar coordinates $(R,\theta)\in[0,\infty) \times \mathbb{R} / 2\pi \mathbb{Z}$ by
\begin{equation}\label{ucvc}
\begin{array}{ccc} u_{1,c}(R,\theta)=\frac{1}{\sqrt{1+R^2}}(R e^{i\theta},c), & & u_{0,c}(R,\theta )=\frac{1}{\sqrt{1+R^2}}(c,R e^{i\theta}). \end{array}
\end{equation}
Note that  $u_{i,c}$ is transverse to the Reeb vector field $X_{\lambda_0}$ of $\lambda_0$ and satisfies
\begin{equation}\label{asympt}
u_{i,c}(R,\cdot) \to L_i  \mbox{ as } R \to \infty.
\end{equation}
It is clear that the family $\{u_{i,c};\ c\in S^1\}$  smoothly foliates $S^3 \setminus L_i$ for $i=0,1$. Each $u_{i,c}$  has an orientation induced by $d\lambda_0$. Notice that $u_{i,c}(\C) \cap L_j = u_{i,c}(0)$ for all $c$ and $i\neq j$ and that $(c,R,\theta) \simeq u_{i,c}(R,\theta)$ is a diffeomorphism $S^3\setminus K_0 \simeq S^1 \times (0,\infty) \times \R/2\pi\Z$ for each $i=0,1$.

The solutions of $\dot x = X_{\lambda_0} \circ x$ in coordinates $(c,R,\theta)$ are given by
\begin{equation}\label{flowR0}
\begin{array}{ccc} c(t) =c(0)e^{2it}, & R(t) =R(0), & \theta(t) = \theta(0) + 2t. \end{array}
\end{equation}

Let $\bar u_{i,c}:\mathbb{C} \to T^1S^2,\ c\in S^1,$ be defined by $$ \begin{aligned} \bar u_{i,c} := D \circ u_{i,c}. \end{aligned} $$ for $i=0,1$. Now since $u_{i,c}$ does not have antipodal points in their image, we see that $\bar u_{i,c}$ is an embedding. From \eqref{asympt}, we have  \begin{equation}\label{asympt2} \bar u_{i,c}(R,\cdot) \to l_i
\mbox{ as }R \to \infty,\end{equation} the convergence being as double covering maps.

Notice from \eqref{ucvc} that
$$
\begin{array}{ccc}
\bar u_{i,c}(\mathbb{C})=\bar u_{i,c'}(\mathbb{C}) \Leftrightarrow  c'\in\{c,-c\} & \text{and} & \bar u_{i,c}(\mathbb{C}) \cap \bar u_{i,c'}(\mathbb{C})=\emptyset \Leftrightarrow  c'\not\in \{c,-c\},
\end{array}
$$
where the identification under $A$ is given by
\begin{equation}\label{identi}
(c,R,\theta)\sim (-c,R,\theta + \pi)
\end{equation}
in the coordinates $(c,R,\theta)$. Observe that $\bar u_{i,c}(\mathbb{C}) \cap l_j = \bar u_{i,c}(0)$ for all $c$ and $i\neq j$. It follows that $T^1S^2 \setminus l \simeq \bar S^1 \times (0,\infty) \times \R/\pi\Z$ where $\bar S^1 := S^1 / \{c\sim -c\} \simeq S^1$, and we find
\begin{equation}\label{isopi}
\pi_1(T^1S^2 \setminus l) \simeq \mathbb{Z} \oplus \mathbb{Z}
\end{equation}
where the choice of a base point is omitted.

Let $\bar \alpha : [0,1] \to T^1S^2 \setminus l$ be a closed curve and $\alpha : [0,1] \to S^3 \setminus K_0$ be a lift. In polar coordinates we have $\alpha(t) = (r_0(t)e^{i\phi_0(t)},r_1(t)e^{i\phi_1(t)})$ with continuous arguments $\phi_0,\phi_1 : [0,1] \to \R$, and non-vanishing continuous functions $r_0(t),r_1(t)$. We will denote
\begin{equation}\label{windtot}
  \wind_0(\bar \alpha) = \frac{\phi_0(1)-\phi_0(0)}{2\pi} \ \ \ \text{ and } \ \ \ \wind_1(\bar \alpha) = \frac{\phi_1(1)-\phi_1(0)}{2\pi}
\end{equation}
which are half-integers independent of the choice of lifts. It is always the case that
$$
  \wind_0(\bar \alpha)+\wind_1(\bar \alpha)\in \mathbb{Z}.
$$
The pair of half-integers $\wind_0,\wind_1$ determine a homotopy class in the complement of any Hopf link in $T^1S^2$.

We choose $[\bar a_0]$ and $[\bar a_1]$ as generators of $\pi_1(T^1S^2\setminus l)$  where
$$
\begin{array}{ccc}
\bar a_0(t) = D \circ a_0(t), & & \bar a_1(t) = D \circ a_1(t),
\end{array}
$$
and
$$
\begin{array}{ccc}
a_0(t) =\frac{1}{\sqrt{2}}(e^{\pi t i},e^{\pi t i}), & & a_1(t) =  \frac{1}{\sqrt{2}}(e^{\pi t i},e^{-\pi t i}),
\end{array}
$$
for $t\in[0,1]$. We have
\begin{equation}\label{wind1}
\wind_0(\bar a_0)=\wind_0(\bar a_1)=\wind_1(\bar a_0)=1/2 \mbox{ and } \wind_1(\bar a_1)=-1/2.
\end{equation}

Any homotopy class $[\bar \alpha] \in \pi_1(T^1S^2\setminus l)$ is uniquely determined by the half integers $\wind_0(\bar \alpha)$ and $\wind_1(\bar \alpha)$. The isomorphism~\eqref{isopi} can be chosen as
\begin{equation}\label{eq-pi1_isomorphism}
  [\bar \alpha]\mapsto (\wind_0(\bar \alpha)+\wind_1(\bar \alpha),\wind_0(\bar \alpha)-\wind_1(\bar \alpha)).
\end{equation}

The bundle $TTS^2$ decomposes as the direct sum of vertical and horizontal sub-bundles $TTS^2 = VTS^2 \oplus HTS^2$. Here $VTS^2= \ker d \Pi$ where $\Pi:TS^2 \to S^2$ is the projection, and $HTS^2 = \ker K$ where $K:TTS^2 \to TS^2$ is the connection map of $g_0$. Given $v \in T_xS^2$ there are associated isomorphisms
\begin{equation}\label{isos}
  \begin{aligned} i_V : T_xS^2 \to V_vTS^2, \\ i_H : T_xS^2 \to H_vTS^2, \end{aligned}
\end{equation}
and $$ T_vT^1S^2 = i_H(T_xS^2) \oplus i_V(\mathbb{R}v^{\perp})  = \ker \bar \lambda_0|_{(x,v)} \oplus \ker d\bar \lambda_0|_{(x,v)}, $$ where $v^{\perp}\in T_x^1S^2$ is normal to $v$ and $\{v,v^\bot\}$ is positive. In fact $$ \xi_0|_{(x,v)} = \ker \bar \lambda_0|_{(x,v)} = \R i_V(v^{\perp}) \oplus \R i_H(v^{\perp}) \mbox{ and } \ker d\bar \lambda_0|_{(x,v)} = \R i_H(v). $$ The vectors
\begin{equation}\label{vvperp}
\{ v^{\perp,V}:= i_V(v^{\perp}),  v^{\perp,H}:=i_H(v^{\perp})\}
\end{equation}
induce a global symplectic trivialization $\beta$ of the $d\bar \lambda_0$-symplectic vector bundle $\bar\xi_0 \to T^1S^2$ since $d\bar \lambda_0(v^{\perp,V},v^{\perp,H})=1$.

Now let $\lambda = f\bar \lambda_0$ be a contact form on $T^1S^2$ inducing the tight contact structure $\bar \xi_0$. Consider the Reeb flow associated to $\lambda$ and let $P=(x,T)$ be a closed Reeb orbit with prime period $T>0$. Multiplying the vectors $v^{\perp,V}$ and $v^{\perp,H}$ in \eqref{vvperp} by $1/\sqrt{f}$ we find a global $d\lambda$-symplectic trivialization of $\ker \lambda=\ker \bar \lambda_0 = \bar \xi_0$. With respect to this global frame we define the transverse rotation number of $P$ by
\begin{equation}\label{transverse_rotation_so3}
\rho(P) = T \lim_{t\to \infty}\frac{\theta(t)}{2\pi t}
\end{equation}
for any solution $0\neq v(t) \simeq r(t)e^{i\theta(t)}  \in \bar \xi_0|_{x(t)}$ of the linearized Reeb flow over $P$.

\subsection{Tight Reeb flows on $T^1S^2$}\label{sec-t1s2}

Here we prove Theorem~\ref{teot}, and for that we will use the same model as in the $S^3$ case. Let $\gamma:[0,1]\to [0,+\infty)\times [0,+\infty)$ be a curve associated to the numbers $\eta_0$ and $\eta_1$ and satisfying the conditions explained in Section~\ref{example-2}. With this curve we have the star-shaped hypersurface $S_\gamma \subset \mathbb{C}^2$ defined in polar coordinates by $$ S_\gamma := \{ (r_0,\phi_0,r_1,\phi_1) : (r_0^2,r_1^2) \in \Img(\gamma) \}. $$ Let $f_{\eta_0,\eta_1} \lambda_0$ be the contact form on $S^3$ associated to $S_\gamma$, as explained in Section~\ref{example-2}, and let $\bar \lambda_{\eta_0,\eta_1}$ be the tight contact form on $T^1S^2$ induced by the double covering map $D$. This form is well defined since $f_{\eta_0,\eta_1} \circ A = f_{\eta_0,\eta_1}$. It is clear that the Reeb flow of $f_{\eta_0,\eta_1} \lambda_0$ admits the Hopf link $K_0 = L_0 \cup L_1 \subset S^3$ as closed Reeb orbits, where $L_i$, $i=0,1$, is defined in~\eqref{hopflink}. Their transverse rotation numbers are $\rho(L_i) = 1 + \eta_i$, $i=0,1$. Thus the flow of $\bar \lambda_{\eta_0,\eta_1}$ on $T^1S^2$ admits the Hopf link $l = l_0 \cup l_1$ in normal position as closed Reeb orbits. Their transverse rotation numbers are equal to $$ \rho(l_i) = \rho(L_i)/2. $$

\begin{lemma}\label{lem-s1family}
The conclusions of Theorem~\ref{teot} hold for the Reeb flow of $\bar \lambda_{\eta_0,\eta_1}$ on $T^1S^2$. Moreover, for each pair of relatively prime integers $(p,q)$ satisfying~\eqref{non-resonance-eta-i}, the closed Reeb orbits satisfying $\wind_0=p$, $\wind_1=q$ in case $p+q$ is odd, or $\wind_0=p/2$, $\wind_1=q/2$ in case $p+q$ is even, appear as an $S^1$-family filling an embedded $2$-torus in $T^1S^2\setminus l$.
\end{lemma}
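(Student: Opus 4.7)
The plan is to exploit the very explicit nature of the model: by construction, $\bar\lambda_{\eta_0,\eta_1}$ is the descent of $f_{\eta_0,\eta_1}\lambda_0$ under the double cover $D: S^3 \to T^1S^2$, and the Reeb flow of $f_{\eta_0,\eta_1}\lambda_0$ has already been described explicitly in section~\ref{example-2}. So every closed $\bar\lambda_{\eta_0,\eta_1}$-Reeb orbit arises from an $A$-invariant union of $f_{\eta_0,\eta_1}\lambda_0$-Reeb orbits, and the entire argument reduces to understanding the $\Z/2$-action of $A$ on the invariant tori of the $S^3$-model.

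First, given a relatively prime pair $(p,q)$ satisfying \eqref{non-resonance-eta-i}, I would invoke section~\ref{example-2} to extract the unique invariant $2$-torus $\mathbb T_{(p,q)}\subset S^3\setminus K_0$ foliated by prime closed Reeb orbits of homotopy class $(p,q)$. Next I would study $A|_{\mathbb T_{(p,q)}}$: in the angular coordinates $(\phi_0,\phi_1)\in(\R/2\pi\Z)^2$ parametrizing the torus, $A$ acts by translation by $(\pi,\pi)$ while the closed orbits are the parallel lines of slope $(p,q)$. A short lattice computation using $\gcd(p,q)=1$ shows that $A$ preserves each orbit setwise iff $p+q$ is even; otherwise $A$ interchanges pairs of disjoint orbits.

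Then I would push the picture down to $T^1S^2$ via $D$, tracking prime periods and winding numbers carefully. In the case $p+q$ even, each orbit $\gamma$ is $A$-invariant with $A$ acting on it as translation by half the prime period, so $D|_\gamma$ is a $2{:}1$ cover onto an embedded prime closed $\bar\lambda_{\eta_0,\eta_1}$-Reeb orbit $\gamma_{p,q}$; reading off $\wind_0,\wind_1$ from \eqref{windtot} one finds $\wind_0(\gamma_{p,q})=p/2$ and $\wind_1(\gamma_{p,q})=q/2$. Since $\gcd(p,q)=1$ and $p+q$ is even force $p$ and $q$ both odd, these are genuine half-integers, and $\gamma_{p,q}$ is therefore non-contractible in $T^1S^2$. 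In the case $p+q$ odd, $A\gamma\cap\gamma=\emptyset$ and $D|_\gamma$ is injective, so windings are preserved: $\wind_0(\gamma_{p,q})=p$ and $\wind_1(\gamma_{p,q})=q$, both integers, whence $\gamma_{p,q}$ lifts to a closed loop in $S^3\setminus K_0$ and is contractible in $T^1S^2$. This produces the orbit asserted in Theorem~\ref{teot}.

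Finally, for the last assertion I would check that $D(\mathbb T_{(p,q)})\subset T^1S^2\setminus l$ is still an embedded torus foliated by an $S^1$-family of the $\gamma_{p,q}$: in the even case $A$ acts trivially on the circle of orbits and by a half-period shift along each orbit, so the quotient is a torus with the same foliation; in the odd case $A$ acts as a fixed-point-free involution on the circle of orbits and trivially within each pair of swapped orbits, and again the quotient is an embedded torus with the induced $S^1$-family. There is no real obstacle to the argument: the delicate existence and non-degeneracy analysis for the $S^3$-model was already carried out in section~\ref{example-2}, and the only subtle point here is the parity dichotomy governing whether $A$ preserves orbits or swaps them, together with the corresponding bookkeeping of winding numbers and prime periods under the $2{:}1$ projection $D$.
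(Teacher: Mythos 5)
Your proposal is correct and follows essentially the same route as the paper's proof: both work with the explicit polar-coordinate description of the model Reeb flow from section~\ref{example-2} on $S_\gamma\subset S^3$, single out the invariant torus normal to $(p,q)$, and track what happens under the double cover $D$, with the parity of $p+q$ governing whether the antipodal map preserves or exchanges orbits. The paper states this somewhat more computationally (reading off whether the prime period on $T^1S^2$ is $\pi/m$ or $2\pi/m$ directly from $\phi_i(t)=mpt, mqt$), whereas you phrase it conceptually in terms of the $\Z/2$-action of $A$ on $\mathbb T_{(p,q)}$; these are equivalent, and your lattice argument for the parity dichotomy is exactly the content of the paper's period computation.
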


\begin{proof}
There is a unique point $z_0\in \Img (\gamma)$ such that the vector $(p,q)$ is normal to $\gamma$ at $z_0$. Using polar coordinates, solutions corresponding to $z_0$ satisfy $\dot r_i=0,\ i=0,1$, $\dot \phi_0 = mp$ and $\dot \phi_1 = mq$ for some $m > 0$. The values of $r_i,\ i=0,1,$ are determined by $z_0$. Assuming $\phi_0(0)=\phi_1(0)=0$ we find $\phi_0(t) = mpt$ and $\phi_1(t)=mqt$.

If $p+q$ is even then both $p$ and $q$ are odd since $(p,q)$ is relatively prime. A period of the corresponding Reeb orbit is $\pi/m$. Moreover
\begin{equation}\label{eqpsi} \begin{aligned} \phi_0(\pi/m)=p \pi, \\ \phi_1(\pi/m)=q \pi,\end{aligned} \end{equation} and this corresponds to a non-contractible closed orbit $\gamma_{p,q}$ on $T^1S^2$. Since $(p,q)$ is a relatively prime pair, this orbit is simple. From \eqref{eqpsi}, we have $\wind_0(\gamma_{p,q})=p/2$ and $\wind_1(\gamma_{p,q})=q/2$. Varying the initial condition $\phi_1(0)$ we find the $S^1$-family of such orbits filling a $2$-torus in $T^1S^2$.

Let us consider the case $p+q$ is odd. A period of the corresponding Reeb orbit is $2\pi/m$ since
\begin{equation}\label{eqpsi2}
\begin{aligned} \phi_0(2 \pi/m)=p2 \pi, \\ \phi_1(2 \pi/m)=q2 \pi,
\end{aligned}
\end{equation}
which obviously corresponds to a contractible closed orbit $\gamma_{p,q}$ on $T^1S^2$. Again, since $(p,q)$ is a relatively prime pair this orbit is simple, $\wind_0(\gamma_{p,q})=p$ and $\wind_1(\gamma_{p,q})=q$. We also find an $S^1$-family of such orbits varying the initial condition $\phi_1(0)$.
\end{proof}

\begin{proof}[Proof of Theorem~\ref{teot}]
The case $p+q$ odd follows almost immediately from Theorem~\ref{thm-1}. Consider the pulled back Reeb flow on $S^3 \backslash K_0$, corresponding to $\tilde f \lambda_0 = D^*(f \bar \lambda_0)$, it follows that $\tilde f \in \mathcal{F}$ on $S^3 \backslash K_0$; the numbers $\theta_i$ in Theorem~\ref{thm-1} coincide with $\eta_i$ for $i = 0,1$. Theorem~\ref{thm-1} implies that there is a simple closed orbit $\gamma_{p,q}$ satisfying $\link(\gamma_{p,q},L_0) = p$ and $\link(\gamma_{p,q},L_1) = q$.  Since $p+q$ is odd and $(p,q)$ is relatively prime it follows that one of $p,q$ is even and the other is odd and that $\bar \gamma_{p,q} = D \circ \gamma_{p,q}$ is a simple closed orbit in $T^1 S^2 \backslash l$ with
\[
\begin{array}{ccc}
\wind_0(\bar \gamma_{p,q}) = \link(\gamma_{p,q},L_0) = p, & & \wind_1(\bar \gamma_{p,q}) = \link(\gamma_{p,q},L_1) = q,
\end{array}
\]
as claimed.

In the case $p+q$ even, one can argue the same way, but the orbit $\bar \gamma_{p,q} = D \circ \gamma_{p,q}$ obtained may be simple or it may be a double cover of another simple orbit $\bar \gamma_{p,q}'$ with
\[
\begin{array}{ccc}
\wind_0(\bar \gamma_{p,q}') = p/2, & & \wind_1(\bar \gamma_{p,q}') = q/2.
\end{array}
\]
We wish to show that we can indeed find a simple orbit $\bar \gamma_{p,q}'$ as such, that is, in the homotopy class
\[
  [\bar a] =  \left( \frac{p+q}{2}, \frac{p-q}{2} \right)
\]
under the isomorphism~\eqref{eq-pi1_isomorphism}. Notice that loops in this homotopy class in $T^1 S^2 \setminus l$ are not contractible in $T^1 S^2$ since $\wind_0(\bar a)$ and $\wind_1(\bar a)$ are half-integers. Thus, if $\bar a \in [ \bar a]$ then $D (D^{-1}(\bar a))$ is a double-cover of $\bar a$.

To find the desired simple closed orbits, one can follow the argument that proves Theorem~\ref{thm-1} but working directly on the manifold $T^1S^2$. Let $h\bar\lambda_0$, $h>0$, be a contact form such that the associated Reeb flow is tangent to $l$, $\rho(l_i) \not\in \Q$, all closed Reeb orbits with action up to some number $T>0$ are non-degenerate, and no closed Reeb orbit with action $\leq T$ in $T^1S^2\setminus l$ is contractible in $T^1S^2\setminus l$. A pair $(m,n) \in \Z$ represents a homotopy class in $\pi_1(T^1S^2\setminus l)$ under the isomorphism~\eqref{eq-pi1_isomorphism}, and we denote by $\P^{\leq T,(m,n)}(h\bar\lambda_0)$ the set of closed Reeb orbits in $T^1S^2\setminus l$ representing this homotopy class with action $\leq T$. As explained in Section~\ref{cyl_alm_cpx_str}, we consider the symplectization $W_{\bar\xi_0} \subset T^*T^1S^2$ with projection $\tau:W_{\bar\xi_0}\to T^1S^2$ onto the base point. Every $P \in \P^{\leq T,(m,n)}(h\bar\lambda_0)$ has a well-defined Conley-Zehnder index $\mu_{CZ}(P)$ which is computed using the global trivialization~\eqref{vvperp}, with associated degree $|P| = \mu_{CZ}(P)-1$. The vector space $C_k^{\leq T,(m,n)}(h\bar\lambda_0)$ is freely generated, with coefficients in $\mathbb F_2$, by the elements of $\P^{\leq T,(m,n)}(h\bar\lambda_0)$ with degree $k$, and on the graded vector space $\bigoplus_k C_k^{\leq T,(m,n)}(h\bar\lambda_0)$ we have a differential which is defined by counting finite-energy $\jhat$-holomorphic cylinders in $\tau^{-1}(T^1S^2\setminus l) \subset W_{\bar \xi_0}$, asymptotic to orbits in $\P^{\leq T,(m,n)}(h\bar\lambda_0)$ with Fredholm index $1$. Here the almost complex structure $\jhat \in \J(h\bar\lambda_0)$ is induced by some $d\bar\lambda_0$-compatible complex structure $J:\bar\xi_0\to\bar\xi_0$, see Section~\ref{cyl_alm_cpx_str}, and is assumed to be Fredholm regular for the homotopy class $(m,n)$ and action bound $T$ in an analogous fashion as was explained in Section~\ref{chain_complex_subsection}. The associated homology is denoted by $HC_*^{\leq T,(m,n)}(h\bar\lambda_0,J)$. Analogous versions of Theorems~\ref{compactness_index_0_differential}-\ref{comparing_chain_maps_thm}, of Lemma~\ref{j_iota_relation_chain} and of Propositions~\ref{prop_morse_bott_approx},~\ref{comp_j_iota} can be proved similarly as before.

Suppose $f \bar \lambda_0$ is a non-degenerate contact form with Reeb vector field tangent to the Hopf link $l$, with associated numbers $\eta_i(f) = 2\rho(l_i)-1$, $i = 0,1$. Let $(p,q)\in\Z\times\Z$ be a relatively prime pair and assume that~\eqref{non-resonance-eta-i} holds. Following Section~\ref{section_computing}, we can choose numbers $\eta_0',\eta_1' \not\in \Q$ close to $\eta_1(f),\eta_2(f)$ and construct a contact form $h_+ \bar \lambda_0$ as a small perturbation of $\bar \lambda_{\eta'_0,\eta'_1}$, find a number $T>0$ and a suitable $d\bar\lambda_0$-compatible complex structure $J$ on $\bar\xi_0$ with the properties:
\begin{itemize}
\item the Reeb flow of $h_+ \bar \lambda_0$ is tangent to $l$, each $l_i$ is an elliptic orbit with associated transverse rotation number~\eqref{transverse_rotation_so3} equal to $\eta_i'$, $i = 0,1$;
\item for this pair $(p,q)$ we have $$ (\eta_0',1) < (p,q) < (1,\eta_1') \ \mbox{ if } \ (\eta_0(f),1) < (p,q) < (1,\eta_1(f)), $$ or $$ (1,\eta_1') < (p,q) < (\eta_0',1) \ \mbox{ if } \ (1,\eta_1(f)) < (p,q) < (\eta_0(f),1); $$
\item $ch_+<f<h_+$ pointwise for some $0<c<1$;
\item only two orbits in the $S^1$-family of closed Reeb orbits of $\bar\lambda_{\eta'_0,\eta'_1}$ as described in Lemma~\ref{lem-s1family} representing the homotopy class $(\frac{p+q}{2},\frac{p-q}{2})$ survive as closed Reeb orbits of $h_+\bar\lambda_0$, up to action $T/c + 1$, and these correspond to the elements in the sets $\P^{\leq T,(\frac{p+q}{2},\frac{p-q}{2})}(h_+\bar\lambda_0)$ and $\P^{\leq T,(\frac{p+q}{2},\frac{p-q}{2})}(ch_+\bar\lambda_0)$;
\item $h_+ \bar \lambda_0$ is non-degenerate up to action $T/c$ and has no Reeb orbits of action less than $T/c$ which are contractible in $T^1 S^2 \setminus l$;
\item $J$ induces $\R$-invariant almost complex structures $\jhat_+ \in \J(h_+\bar\lambda_0)$ and $\jhat_- \in \J(ch_+\bar\lambda_0)$ which are Fredholm regular with respect to the homotopy class $(\frac{p+q}{2},\frac{p-q}{2})$ and action bound $T/c$. This notion of regularity is defined exactly as in Section~\ref{chain_complex_subsection}. Hence the corresponding cylindrical contact homologies of $h_+ \bar \lambda_0$ and of $ch_+\bar\lambda_0$ in $T^1 S^2 \setminus l$ up to action $T$ in the homotopy class $(\frac{p+q}{2},\frac{p-q}{2})$ are well-defined and isomorphic to $H_*(S^1;\mathbb{F}_2)$ up to a common grade-shift:
\[
HC^{\leq T, (\frac{p+q}{2},\frac{p-q}{2})}_*(h_+ \bar \lambda_0,J) \simeq HC^{\leq T, (\frac{p+q}{2},\frac{p-q}{2})}_*(c h_+ \bar \lambda_0,J) \simeq H_{*-s}(S^1;\mathbb F_2),
\]
\item the map $j_* \circ \iota_*$ explained in Section~\ref{inclusions} is non-zero
\[
HC^{\leq T, (\frac{p+q}{2},\frac{p-q}{2})}_*(h_+ \bar \lambda_0,J) \xrightarrow{j_* \circ \iota_* \neq 0} HC^{\leq T, (\frac{p+q}{2},\frac{p-q}{2})}_*(c h_+ \bar \lambda_0,J).
\]
\end{itemize}

Following the argument in the $S^3$-case, consider $\jhat \in \J(f\bar\lambda_0)$. Recall the sets $\J(\jhat_-,\jhat)$, $\J(\jhat,\jhat_+)$ defined in Section~\ref{cyl_ends_alm_cpx_str}. The sets $\J(\jhat_-,\jhat:l)$, $\J(\jhat,\jhat_+:l)$ of almost complex structures for which $\tau^{-1}(l)$ is a pseudo-holomorphic surface are defined as in Section~\ref{restricted_class_alm_cpx_str}. We select $J_1 \in \J(\jhat_-,\jhat:l)$ and $J_2 \in \J(\jhat,\jhat_+:l)$. The family of almost-complex structures $\bar J_R = J_1 \circ_R J_2$ in $W_{\bar\xi_0}$, splitting along the hypersurface defined by the contact form $f \bar \lambda_0$, is defined as in Section~\ref{splitting_alm_cpx_str}.

For each $R>0$, there must exist a finite-energy $\bar J_R$-holomorphic cylinder $\util_R$ contained in the complement of $\tau^{-1}(l)$ with one positive and one negative puncture. In the positive puncture it is asymptotic to an orbit in $\P^{\leq T,(\frac{p+q}{2},\frac{p-q}{2})}(h_+\bar\lambda_0)$, and in the negative puncture it is asymptotic to an orbit in $\P^{\leq T,(\frac{p+q}{2},\frac{p-q}{2})}(ch_+\bar\lambda_0)$. To see this recall that, as in the $S^3$-case, $\bar J_R$ is diffeomorphic to some element in $\bar J'_R \in \J(\jhat_-,\jhat_+:l)$ and, if there are no $\bar J_R$-cylinders as claimed, we conclude that $\bar J'_R$ is Fredholm regular (for homotopy class $(\frac{p+q}{2},\frac{p-q}{2})$ and action bound $T$) and induces the zero map from $HC^{\leq T, (\frac{p+q}{2},\frac{p-q}{2})}_*(h_+ \bar \lambda_0,J)$ to $HC^{\leq T, (\frac{p+q}{2},\frac{p-q}{2})}_*(c h_+ \bar \lambda_0,J)$. However this map equals $j_*\circ\iota_* \not= 0$ by versions of Lemma~\ref{j_iota_relation_chain} and Proposition~\ref{comp_j_iota} in the $T^1S^2$-case, a contradiction.

Considering a sequence $R_n \to +\infty$, we may assume that the asymptotic orbits of the cylinders $\util_{R_n}$ do not depend on $n$, which guarantees uniform energy bounds for the sequence $\util_{R_n}$. This is so since the sets $\P^{\leq T,(\frac{p+q}{2},\frac{p-q}{2})}(h_+\bar\lambda_0)$ and $\P^{\leq T,(\frac{p+q}{2},\frac{p-q}{2})}(ch_+\bar\lambda_0)$ have two elements. We denote these limiting orbits by $\bar P_+$ and $\bar P_-$ at the positive and negative punctures, respectively.

The double cover $D$ induces a double cover $\widetilde D : W_{\xi_0} \to W_{\bar \xi_0}$. Since $p+q$ is even, the loops $t\mapsto \tau\circ \util_{R_n}(s,t)$ are non-contractible on $T^1S^2$. We can lift the maps
\[
\begin{array}{cc}
\util_{R_n}^2 : \R\times\R/2\Z \to W_{\bar\xi_0}, & (s,t) \mapsto \util_{R_n}(s,t)
\end{array}
\]
to finite-energy cylinders $\widetilde U_n : \R\times\R/2\Z \to W_{\xi_0}$ holomorphic with respect to $\widetilde D^*(J_1\circ_R J_2)$ with uniform energy bounds. There exists a SFT-convergent subsequence $\widetilde U_{n_j}$. Denoting by $\widetilde \tau : W_{\xi_0} \to S^3$ the projection onto the base point, the loops $t \mapsto \widetilde\tau \circ \widetilde U_{n_j}(s,t)$ link $p$ times with $L_0$ and $q$ times with $L_1$. Arguing just as in the proof of Lemma~\ref{existence_lemma}, using the relations satisfied by the numbers $q$, $p$, $\eta_0(f)$, $\eta_1(f)$, $\eta_0'$ and $\eta_1'$, we find a closed orbit $\widetilde P_{p,q} \subset S^3\setminus (L_0\cup L_1)$ of the Reeb flow associated to the contact form $D^*(f\bar\lambda_0) = (f\circ D)4\lambda_0$ satisfying $\link(\widetilde P_{p,q},L_0)=p$ and $\link(\widetilde P_{p,q},L_1)=q$. Moreover, the orbit $\widetilde P_{p,q}$ can be approximated in $C^\infty$ by loops of the form $t\mapsto \widetilde\tau \circ \widetilde U_{n_j}(s_j,t+t_j)$, $t\in\R/2\Z$, for suitable $s_j,t_j$ and $j$ large. Since these loops project down to $T^1S^2$ via the map $D$ to doubly covered loops, the same is true for the loop $\widetilde P_{p,q}$. This means that $D\circ \widetilde P_{p,q}$ is the double cover of a prime closed orbit $(f\bar\lambda_0)$-Reeb orbit $\bar P_{p,q}$. It follows that $\bar P_{p,q}$ is in the homotopy class $(\frac{p+q}{2},\frac{p-q}{2})$ since $\wind_0(\bar P_{p,q}) = p/2$ and $\wind_1(\bar P_{p,q}) = q/2$.
\end{proof}

\begin{proof}[Proof of Corollary~\ref{cor-geod2}] Corollary \ref{cor-geod2} is immediate from Theorem \ref{teot}, since the Reeb flow on the unit cotangent bundle of a Finsler metric $F$ is the geodesic flow of $F$. \end{proof}

In the case that the metric is reversible, if an embedded curve is a geodesic when traversed in one direction it will automatically be a geodesic when traversed in the opposite direction so that Corollary \ref{cor-geod2} applies, and moreover one finds the relation $\eta_0 = \eta_1$.  We will explore this case in greater detail in the next section.

\subsection{Reversible simple geodesics of Finsler metrics} \label{sec-reversibleFinsler}

We now recover Angenent's theorem at the level of homotopy classes for the more general framework of Finsler metrics on $S^2$ with reversible simple geodesics and prove Corollary~\ref{cor-geod3}.  Although it is a particular case of Corollary \ref{cor-geod2}, we will be more explicit; in particular we will clarify the relationship between the geodesics we find and the satellites found by Angenent \cite{angenent}.

A simple closed geodesic $\gamma$ with unit speed of a Finsler metric $F$ on $S^2$ is called \emph{reversible} if the curve $t\mapsto \gamma(-t)$ is a reparametrization of another geodesic $\gamma_r$ and if, in addition, the inverse rotation numbers $\rho = \rho(\gamma) = \rho(\gamma_r)$ coincide.  The geodesics $\gamma$ and $\gamma_r$ determine a Hopf link $l_\gamma = \bar\gamma \cup \bar\gamma_r \subset T^1S^2$, where the special lifts $\bar\gamma = G^{-1}(\dot\gamma)$, $\bar\gamma_r = G^{-1}(\dot\gamma_r)$ are defined in~\eqref{lift_gamma_bar} using the special contactomorphism $G$ described in~\eqref{diffeo_G}. Now consider a contactomorphism $\varphi$ of $(T^1S^2,\bar\xi_0)$ such that $\varphi(\bar\gamma) = l_0$ and $\varphi(\bar\gamma_r) = l_1$, where $l = l_0 \cup l_1$ is the standard Hopf link in $T^1S^2$ defined by $l_i = D(L_i)$, see~\eqref{hopfl}. Such a contactomorphism exists since $l_\gamma$ is transversally isotopic to $l$.

\begin{lemma}\label{lempq}
Let $\gamma_{p,q}$ be a $(p,q)$-satellite of $\gamma$ with unit speed with respect to $F$, and consider $\varphi(\bar \gamma_{p,q}) \subset T^1S^2\setminus l$.
\begin{enumerate}
\item If $q>0$ then
$$
  \begin{aligned}
    \wind_0(\varphi(\bar \gamma_{p,q})) & = |p|-q/2, \\
    \wind_1(\varphi(\bar \gamma_{p,q}))& = q/2.
  \end{aligned}
$$
\item If $q<0$ then
$$
  \begin{aligned}
    \wind_0(\varphi(\bar \gamma_{p,q}))& = q/2, \\
    \wind_1(\varphi(\bar \gamma_{p,q}))& = |p|-q/2.
  \end{aligned}
$$
\end{enumerate}
\end{lemma}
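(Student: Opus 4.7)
Since $\wind_0$ and $\wind_1$ are invariants of the free homotopy class of the loop in $T^1S^2\setminus l$, and since $\varphi$ is a contactomorphism, it suffices to exhibit one convenient geometric model in which to compute. The plan is to reduce to the round metric on $S^2$, write the satellite explicitly, lift the velocity loop through the double cover $D:S^3\to T^1S^2$, and read off the variations of the angles $\phi_0,\phi_1=\arg(q_0+ip_0),\arg(q_1+ip_1)$ along the lift.

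First I would replace $F$ by the Euclidean metric $g_0$, take $\gamma$ to be the oriented equator $\gamma(s)=(\cos 2\pi s,\sin 2\pi s,0)$ with constant unit normal $n=(0,0,1)$, so that $\bar\gamma=l_0$, $\bar\gamma_r=l_1$, $G=\mathrm{id}$, and $\varphi$ may be taken to be the identity. Writing $\gamma_{p,q}(t)=\exp_{\gamma(qt)}(\varepsilon\sin(2\pi pt)n)$ explicitly and normalizing $\dot\gamma_{p,q}$ gives a loop $(x(t),v(t))$ in $T^1S^2\setminus l$ whose lift to $S^3\setminus K_0$ can be computed from the formulas for $D$ in polar coordinates $(r_0e^{i\phi_0},r_1e^{i\phi_1})$; the key identities are $x_3=2r_0r_1\sin(\phi_0+\phi_1)$ and $v_3=2r_0r_1\cos(\phi_0+\phi_1)$, together with the leading-order expressions $(x_1,x_2)\approx(\cos 2\phi_1,\sin 2\phi_1)$ near $L_0$ (when $r_0\to 0$) and $(x_1,x_2)\approx(\cos 2\phi_0,-\sin 2\phi_0)$ near $L_1$ (when $r_1\to 0$).

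For $q>0$ the velocity stays close to $l_0=\bar\gamma$, so the continuous lift stays near $L_0$ with $r_0=O(\varepsilon)$. Matching the leading-order expression of $(x_1,x_2)$ yields $\phi_1(t)=\pi qt$, so $\wind_1=q/2$. Matching $(x_3,v_3)$ to the explicitly computed $(\varepsilon\sin 2\pi pt,\,(p\varepsilon/|q|)\cos 2\pi pt)$ shows that $(\sin(\phi_0+\phi_1),\cos(\phi_0+\phi_1))$ traces out a (possibly degenerate) ellipse $|p|$ times in a definite orientation as $t\in[0,1]$, so $\Delta(\phi_0+\phi_1)=\pm 2\pi|p|$; a direct sign check at $t=0,1/(4p),\ldots$ fixes the sign, yielding $\wind_0=|p|-q/2$. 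These are topological (integer- or half-integer-valued) invariants, so the leading-order matching suffices. The case $q<0$ is treated by the symmetric argument: the velocity is close to $l_1=\bar\gamma_r$, the lift stays near $L_1$ with $r_1=O(\varepsilon)$, and the roles of $\phi_0$ and $\phi_1$ are interchanged, giving $\wind_0=q/2$ and $\wind_1=|p|-q/2$. The main obstacle is a careful sign bookkeeping in the $q<0$ case, in particular comparing the direction in which $\dot\gamma_{p,q}$ traverses $l_1$ against the Reeb orientation of $l_1$ and verifying that the transverse oscillation contributes to $\phi_1$ with the correct sign; alternatively, one can appeal to the orientation-preserving contact involution of $(T^1S^2,\bar\xi_0)$ induced by the $\Z/2$-symmetry $(q_0,p_0,q_1,p_1)\mapsto(q_1,p_1,q_0,p_0)$ of $S^3$, which exchanges $L_0\leftrightarrow L_1$ and hence $l_0\leftrightarrow l_1$, to deduce the $q<0$ formulas from the $q>0$ ones.
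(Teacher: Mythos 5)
The proposed reduction to the round metric is the crux of the argument, and it is asserted without justification. Saying that $\wind_0,\wind_1$ are free-homotopy invariants and $\varphi$ is a contactomorphism does not, by itself, explain why the class of $\varphi(\bar\gamma_{p,q})$ in $T^1S^2\setminus l$ is independent of the data $(F,\gamma,\varphi)$: the loop you get for a general Finsler metric $F$ and geodesic $\gamma$ is a priori a \emph{different} loop in $T^1S^2\setminus l$ from the one obtained from the round model, and the assertion that they are homotopic is essentially the statement of the lemma. A rigorous reduction would require, for example, a deformation argument through a family $(F_s,\gamma_s,\varphi_s)$ connecting the given data to the model data, or an argument that any two contactomorphisms of $(T^1S^2,\bar\xi_0)$ bringing a Hopf link to normal position are isotopic through such maps and that this isotopy class acts canonically on $\pi_1(T^1S^2\setminus l)$; neither is addressed.

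The paper sidesteps this by working intrinsically in the general Finsler setting. It computes the first variation
$\zeta(t)=\left.\frac{d}{d\epsilon}\right|_{\epsilon=0}\dot\alpha_\epsilon(t)/F(\dot\alpha_\epsilon(t))$
using the Jacobi field along the transverse geodesic and shows that in the global frame $\{N^{\rm vert},N^{\rm hor}\}$ of $\bar\xi_F$ its components are $\bigl(\tfrac{2\pi p}{q}\cos(2\pi pt/q),\sin(2\pi pt/q)\bigr)$, so the winding over one circuit is $|p|$ independently of $F$. This intrinsic count is then pushed through $G^{-1}$, $\varphi$, the double cover $D$, and finally compared with the disk frame via $\sl(L_0)=-1$ to produce $2|p|-q$ and hence $|p|-q/2$. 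Your explicit round-metric computation of the angles $(\phi_0,\phi_1)$ from the formulas $x_3=2r_0r_1\sin(\phi_0+\phi_1)$, $v_3=2r_0r_1\cos(\phi_0+\phi_1)$ and $(x_1,x_2)\approx(\cos2\phi_1,\sin2\phi_1)$ near $L_0$ is correct and reproduces the formulas of the lemma, including the sign check that gives $\Delta(\phi_0+\phi_1)=2\pi|p|$. So the model verification is sound; what is missing is the argument that the model captures the general case, which is the non-trivial content and which your plan assumes rather than proves.
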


\begin{proof}
Assume $q>0$. First we work directly on $F^{-1}(1)$ and recall some basic facts about Finsler geometry. $F$ determines inner-products $g_v(\cdot,\cdot)$ on $T_{\Pi(v)}S^2$ for each $v\not = 0$ by $$ g_v(w_1,w_2) = \frac{1}{2} \left. \frac{\partial^2}{\partial s \partial t}\right|_{s=t=0} F^2(v+tw_1+sw_2). $$ The 1-form $\bar\lambda_F$ defined in the introduction is written as $\bar\lambda_F|_v \cdot \zeta = g_v(v,d\Pi \cdot \zeta)$. We choose a Riemannian metric $h$ such that $h_{\gamma(t)} = g_{\dot\gamma(t)}$ for every $t \in \R/T\Z$ and $\gamma(t)$ is an $h$-geodesic. Here $T>0$ denotes the prime period of $\gamma$. We assume $T=1$ for simplicity. The 1-form $\bar\lambda_h$ on $TS^2$, defined by $\bar\lambda_h|_v \cdot \zeta = h_x(v,d\Pi \cdot \zeta)$ where $x = \Pi(v)$, coincides with $\bar\lambda_F$ on $T_{\dot\gamma}TS^2$, and in particular on $T_{\dot\gamma}F^{-1}(1)$. Let $N(t)$ be a vector field along $\gamma(t)$ such that $\{\dot\gamma(t),N(t)\}$ is a positive orthonormal basis of $T_{\gamma(t)}S^2$, and consider the $(p,q)$-satellite $$ \alpha_\epsilon(t) = \exp_{\gamma(t)}(\epsilon\sin(2\pi pt/q)N(t)) $$ defined in $\R/q\Z$. Here $\exp$ is the exponential map associated to $h$. As usual, there is a connection map $K:TTS^2 \to TS^2$ associated to $h$ inducing a splitting $TTS^2 = VTS^2 \oplus HTS^2$ where $VTS^2 = \ker d\Pi$ and $HTS^2 = \ker K$. Moreover, for every $v\not=0$ there are isomorphisms $i_v : T_{\Pi(v)}S^2 \to V_vTS^2$ and $(d\Pi|_{HTM})^{-1} : T_{\Pi(v)}S^2 \to H_vTS^2$, where $i_v(w) = \left. \frac{d}{dt} \right|_{t=0} (v+tw)$. So we always view a vector in $T_vTS^2$ as a pair of vectors in $T_{\Pi(v)}S^2$ referred to as the vertical and horizontal parts.

These objects allow us to understand the velocity vector $\dot\alpha_\epsilon$, in fact, $$ \dot\alpha_\epsilon(t) = J(\epsilon,t) $$ where $s \mapsto J(s,t)$ is the Jacobi field along the $h$-geodesic $$ s \mapsto \exp_{\gamma(t)}(s\sin(2\pi pt/q)N(t)) $$ with initial conditions $J(0,t) = \dot\gamma(t)$ and $\left. \frac{DJ}{ds} \right|_{s=0} = \frac{D}{dt}(\sin(2\pi pt/q)N(t))$. All covariant derivatives are taken with respect to metric $h$. Thus $\left. \frac{d}{d\epsilon} \right|_{\epsilon=0} \dot\alpha_\epsilon(t)$ is a vector in $T_{\dot\gamma(t)}TS^2$ with vertical part equal to $(2\pi p/q)\cos(2\pi pt/q)N(t)$ and horizontal part $\sin(2\pi pt/q)N(t)$. Consider the vector $\zeta(t) := \left. \frac{d}{d\epsilon} \right|_{\epsilon=0} \frac{\dot\alpha_\epsilon(t)}{F(\dot\alpha_\epsilon(t))}$. Then the horizontal part of $\zeta(t)$ is still equal to $\sin(2\pi pt/q)N(t)$ and its vertical part equals
\[
(2\pi p/q)\cos(2\pi pt/q)N(t) + \left( \left. \frac{d}{d\epsilon} \right|_{\epsilon=0} \frac{1}{F(J(\epsilon,t))} \right) \dot\gamma(t).
\]
However $\zeta(t)$ must be tangent to $F^{-1}(1)$, which implies $\left( \left. \frac{d}{d\epsilon} \right|_{\epsilon=0} \frac{1}{F(J(\epsilon,t))} \right) = 0$, so that $\zeta(t) = \left. \frac{d}{d\epsilon} \right|_{\epsilon=0} \dot\alpha_\epsilon(t)$. Let $N^{\rm hor}(t),N^{\rm vert}(t) \in T_{\dot\gamma(t)}TS^2$ be the horizontal and vertical lifts of $N(t)$. Then $\{N^{\rm vert}(t),N^{\rm hor}(t)\}$ is a $d\bar\lambda_F$-positive basis along $\bar\xi_F|_{\dot\gamma(t)}$ which extends to a global $d\bar\lambda_F$-positive basis of $\bar\xi_F$. In particular, we computed that $\zeta(t) \in \bar\xi_F|_{\dot\gamma(t)} \ \forall t$ and the representation of $\zeta(t)$ in this global frame as vector in $\R^2$ is $$ \left( \frac{2\pi p}{q}\cos(2\pi pt/q) , \sin(2\pi pt/q) \right). $$ Hence its winding equals $|p|$ when $t$ does one full turn in the circle $\R/q\Z$.

Defining $\bar\zeta(t) = dG^{-1}|_{\dot\gamma} \cdot \zeta(t)$ we obtain a section of $\bar\xi_0|_{\bar\gamma}$ which winds $|p|$ times with respect to a global positive frame when $t$ goes from $0$ to $q$. We have the \emph{a priori} fixed contactomorphism $\varphi$ of $(T^1S^2,\bar\xi_0)$ that brings $\bar\gamma$ into normal position, that is, $\hat\gamma(t) = \varphi \circ\bar\gamma(t)$, $t\in[0,1]$, is a reparametrization of the knot $l_0 = D(L_0)$ where $D:S^3 \to T^1S^2$ is the double covering map discussed before and $L_0 = S^3 \cap (0\times \C)$. Again, $\hat\zeta = d\varphi|_{\bar\gamma} \cdot \bar\zeta$ winds $|p|$ times with respect to a global positive frame when $t$ goes from $0$ to $q$.

We see $\hat\gamma(t)$ as a smooth $1$-periodic function of $t\in\R$, and $\hat\zeta(t)$ as smooth and $q$-periodic. Consider $\mathcal D \subset S^3$ an embedded disk spanning $L_0$ and $W:S^3 \to \xi_0$ a smooth non-vanishing section which is symmetric with respect to the covering group of $D$: $A_*W = W$ where $A$ is the antipodal map. A choice of lift $\hat\Gamma(t)$ of $\hat\gamma$ must be $2$-periodic and equivariant: $\hat\Gamma(t+1) = A \circ \hat\Gamma(t)$. Choose also a lift $\hat Z(t) \in \xi_0|_{\hat\Gamma(t)}$ of $\hat\zeta(t)$. Then $\hat Z(t+q) = dA|_{\hat\Gamma(t)} \cdot \hat Z(t)$ if $q$ is odd, or $\hat Z(t+q) = \hat Z(t)$ if $q$ is even.

In the following all windings of sections of $\xi_0$ along curves are computed using the orientation of $\xi_0$ induced by the standard symplectic form in $\C^2 \supset S^3$. By the symmetry of $W$, $D_*W$ is a well-defined non-vanishing section of $\bar\xi_0$. Our previous computations imply that
\[
  \wind_{[0,q]}(\hat Z(t),W \circ \hat\Gamma(t)) = |p|.
\]
Let $Y(t) \in \xi_0|_{\hat\Gamma(t)} \cap T_{\hat\Gamma(t)}\mathcal D$ be a non-vanishing $2$-periodic vector. Since $L_0$ has self-linking number $-1$ we have $\wind_{[0,2]}(W\circ\hat\Gamma(t),Y(t)) = -1$. This implies
\begin{equation}\label{wind_eqn_sat}
  \begin{aligned}
    \wind_{[0,2q]}(\hat Z(t),Y(t)) &= \wind_{[0,2q]}(\hat Z(t),W\circ \hat\Gamma(t)) \\
    &+ \wind_{[0,2q]}(W\circ \hat\Gamma(t),Y(t)) \\
    &= 2|p|-q.
  \end{aligned}
\end{equation}
Consider the point $c(\epsilon,t) \in S^3$ given by lifting $\varphi \circ G^{-1} (\dot\alpha_\epsilon(t) / F(\dot\alpha_\epsilon(t) ))$. Choosing the correct lift we obtain
\begin{equation}\label{Z_hat_formula}
\hat Z(t) = \partial_{\epsilon}c(0,t).
\end{equation}
Let us consider $q_j+ip_j = r_je^{i\phi_j}$ polar coordinates in $\C^2$, $j=0,1$, and write $$ c(\epsilon,t) = (r_0(\epsilon,t)e^{i\phi_0(\epsilon,t)},r_1(\epsilon,t)e^{i\phi_1(\epsilon,t)}) $$ where $\phi_0(\epsilon,t),\phi_1(\epsilon,t)$ are continuous lifts of the angles to $\R$. This is well-defined since $c(\epsilon,t) \not\in L_0 \cup L_1$. Note that $\mathcal D$ and $Y(t)$ can be chosen to satisfy $Y = \partial_{q_0}$. Using~\eqref{wind_eqn_sat} and~\eqref{Z_hat_formula} we get $(\phi_0(\epsilon,2q)-\phi_0(\epsilon,0))/2\pi = 2|p|-q$. By symmetry we get
\[
\wind_0 \left( \varphi \circ G^{-1} \left( \frac{\dot\alpha_\epsilon(t)}{F(\dot\alpha_\epsilon(t))} \right) \right)|_{t\in[0,q]} = \frac{\phi_0(\epsilon,q)-\phi_0(\epsilon,0)}{2\pi} = |p|- \frac{q}{2},
\]
which is the desired conclusion. Since for $\epsilon$ small the curve $\varphi \circ G^{-1} \left( \dot\alpha_\epsilon(t)/F(\dot\alpha_\epsilon(t)) \right)$ is $C^\infty$-close to $l_0^q$, its $\wind_1$ is equal to $q/2$.

If $q<0$ then $\dot\gamma_{p,q}$ is close to a $q$-fold cover of the curve $\dot\gamma_r \in F^{-1}(1)$ and the proof follows analogously.
\end{proof}

\begin{proof}[Proof of Corollary \ref{cor-geod3}]
By reversibility we can assume $q>0$. By hypothesis, we get $p>0$. Choosing a suitable contactomorphism $\varphi$ of $(T^1S^2,\bar\xi_0)$, as in the proof of Lemma~\ref{lempq}, we can assume $l_0 = \varphi\circ G^{-1}(\dot\gamma)$, $l_1 = \varphi\circ G^{-1}(\dot\gamma_r)$ is the standard Hopf link~\eqref{hopfl}, where $G$ is the diffeomorphism~\eqref{diffeo_G}. The contact form $\lambda_F$ gets transformed by pushing forward via the map $\varphi \circ G^{-1}$ to $f\bar\lambda_0$, for some $f>0$. If $q$ is odd let $p'=2p-q$ and $q'=q$, otherwise let $p'=p-q/2$, $q'=q/2$. The integers $p'$ and $q'$ are relatively prime. Assume first that $\rho > 1$. Then $$ 1 < \frac{p}{q}< \rho \Leftrightarrow 1 < \frac{2p-q}{q} = \frac{p'}{q'} < 2\rho-1. $$ This implies that
\[
(\eta_0,1) = (2\rho - 1,1) < (p',q') < (1,1) < (1,2\rho - 1) = (1,\eta_1).
\]
Therefore we can apply Theorem~\ref{teot} to the pair $(p',q')$ to find a simple closed $f\lambda_0$-Reeb orbit on $T^1S^2$, denoted here by $c_{p,q}$, such that $$ \begin{aligned} \wind_0(c_{p,q}) & = p-q/2, \\ \wind_1(c_{p,q}) & = q/2. \end{aligned} $$ By Lemma~\ref{lempq}, the closed geodesic $\gamma_{p,q}$ satisfying $\varphi\circ G^{-1}(\dot\gamma_{p,q}) = c_{p,q}$ is in the same homotopy class in $F^{-1}(1) \setminus (\dot\gamma \cup \dot\gamma_r)$ as the velocity vector of a  $(p,q)$-satellite of $\gamma$ when normalized by $F$. In case $0\leq\rho<1$ we have
\begin{equation}
\rho < \frac{p}{q}< 1 \Leftrightarrow 2\rho-1 < \frac{2p-q}{q} = \frac{p'}{q'} < 1
\end{equation}
and this implies
\[
(1,\eta_1) = (1,2\rho - 1) < (1,1) < (p',q') < (2\rho - 1,1) = (\eta_0,1).
\]
Applying Theorem~\ref{teot} to $(p',q')$ we obtain the desired closed geodesic $\gamma_{p,q}$.
\end{proof}

\appendix

\section{Theorems on contact homology}\label{app_thms_contact_hom}

In this appendix we provide self-contained proofs of the theorems from Section~\ref{section_contact_homology}.

\subsection{Proofs of Theorems~\ref{compactness_index_0_differential} and~\ref{d2=0}}\label{proofs_thms_chain_complex}

The argument relies on a careful analysis of the compactification of moduli spaces of the form $\M^{\leq T,(p,q)}_{\jhat}(P,P'')/\R$, where $P,P'' \in \P^{\leq T,(p,q)}(\lambda)$ satisfy
\begin{itemize}
\item[{\bf A)}] $\mu_{CZ}(P'') + 1 = \mu_{CZ}(P)$, or
\item[{\bf B)}] $\mu_{CZ}(P'') + 2 = \mu_{CZ}(P)$.
\end{itemize}

We denote by $\P(\lambda)$ the set of all closed Reeb orbits of $\lambda$. As remarked in Section~\ref{chain_complex_subsection}, the space $\M^{\leq T,(p,q)}_{\jhat}(P,P'')/\R$ has the structure of a smooth manifold of dimension equal to 0 in case A, or equal to 1 in case B, since $\jhat \in \J_{\rm reg}(\lambda)$ and all cylinders are somewhere injective. It admits a compactification described in~\cite{sftcomp}, which is obtained by adding holomorphic buildings of height $\geq 1$. In our particular situation where the buildings arise as limits of cylinders, they can be given a slightly different and simpler description as a finite collection $\{\util_v\}$ of finite-energy $\jhat$-holomorphic spheres with one positive puncture, where $v$ ranges in the set of vertices of a finite tree $\mathcal T$ with a root $\overline v$ and a distinguished leaf $\underline v$. Every map $\util_v$ is not a trivial cylinder over some periodic orbit. After a reparametrization we will always assume that $\infty \in \C\sqcup \{\infty\}\simeq \CP^1$ is the positive puncture of each $\util_v$. The edges are oriented as going away from the root, so at each vertex $v\neq \overline v$ there is exactly one incoming edge from its parent, and possibly many outgoing edges to its children. The negative punctures of $\util_v$ are in 1-1 correspondence with the outgoing edges of $v$, so that all leaves are planes, with the exception of the distinguished leaf $\underline v$ which has one negative puncture where $\util_{\underline v}$ is asymptotic to $P''$. The curve $\util_{\overline v}$ is asymptotic to $P$ at its positive puncture, and we have the following compatibility conditions:
\begin{itemize}
  \item Let $e$ be an edge from $v$ to $v'$. There exists an orbit $P_e \in \P(\lambda)$ such that $\util_v$ is asymptotic to $P_e$ at the negative puncture corresponding to $e$, and $\util_{v'}$ is asymptotic to $P_e$ at its positive puncture.
  \item Let $e$, $v$, $v'$, $P_e = (x_e,T_e)$ be as above and $z\in \CP^1$ be the negative puncture of $\util_v$ corresponding to $e$. There is an orientation reversing isometry\footnote{The conformal structure naturally induces a metric structure and an orientation on each circle $(T_z\CP^1\setminus 0)/\R^+$.} $$ r_e:(T_z\CP^1\setminus 0)/\R^+ \to (T_\infty\CP^1\setminus 0)/\R^+ $$ such that if $c,C:[0,\epsilon) \to \CP^1$ are curves satisfying $c(0) = z$, $\dot c(0)\neq0$, $C(0)=\infty$, $\dot C(0)\neq 0$ and $r_e(\R^+\dot c(0))=\R^+\dot C(0)$ then $\tau\circ \util_v(c(t))$ and $\tau\circ \util_{v'}(C(t))$ converge to the same point in $x_e(\R)$ as $t\to 0$.
\end{itemize}
The set $\{r_e\}$ of isometries as above is called, in the language of SFT, a decoration of the underlying nodal sphere. We say that $\util_{v}$ is in level $k$ if the number of edges connecting $v$ to $\overline v$ is $k-1$. We shall briefly refer to $\{\util_v\}$ as a bubbling-off tree of finite-energy spheres. Clearly, the structure just described is different but equivalent to that of a holomorphic building in the boundary of $\M^{\leq T,(p,q)}_{\jhat}(P,P'')/\R$ explained in~\cite{sftcomp}.

Let $\{[\util_n]\} \subset \M^{\leq T,(p,q)}_{\jhat}(P,P'')/\R$ be a sequence. We will see underlying maps $\util_n$ representing this sequence as defined in $\CP^1 \setminus \{[0:1],[1:0]\} \simeq \C\setminus 0$, where $\infty$ is the positive puncture and $0$ is the negative puncture. Then $\{[\util_n]\}$ will be said to converge to a bubbling-off tree $\{\util_v\}$ as above if for each vertex $v$ one finds constants $c_n\in \R$, $A_n,B_n \in \C$, $A_n \neq0$, such that
\begin{equation}\label{general_vertex_parametrization}
  \{z\mapsto g_{c_n} \circ \util_n(A_nz+B_n)\} \to \util_v \ \text{ in } \ C^\infty_{\rm loc}(\C\setminus \Gamma_v)
\end{equation}
as $n\to\infty$, where $\Gamma_v \subset \C$ is the set of negative punctures of $\util_v$. The limiting tree has a stem\footnote{Alternatively, the stem consists precisely of the vertices which have $\underline v$ among its descendants.} $S = (v_0,\dots,v_N)$ which is the unique path connecting the root $\overline v = v_0$ to the distinguished leaf $\underline v = v_N$, where $v_{i+1}$ is a child of $v_i$. The edge connecting $v_i$ to $v_{i+1}$ will be denoted by $e_i$ ($i=0,\dots,N-1$). Since the positive and negative punctures of $\util_n$ are $\infty,0$ respectively, we may assume, after further reparametrization and without loss of generality, that for every $i=0,\dots,N-1$ we have $0\in \Gamma_{v_i}$, the edge $e_i$ corresponds to $0$, and there are constants $c_n \in\R$, $A_n \in \C\setminus 0$ such that
\begin{equation}\label{stem_parametrization}
  \{z\mapsto g_{c_n} \circ \util_n(A_nz)\} \to \util_{v_i}(z)
\end{equation}
in $C^\infty_{\rm loc}(\C\setminus \Gamma_{v_i})$ as $n\to \infty$.

From now on we assume that $\mathcal T$ has more than one vertex, and split the argument into a few steps. \\

\noindent {\bf (I)} Every orbit $P_{e_i}$, corresponding to an edge $e_i$ connecting vertices $v_i$ and $v_{i+1}$ in the stem, is not contained in $K_0$. \\

By our assumptions, there are $\lambda$-Reeb trajectories $x_j$ with minimal periods $T_j>0$ such that $L_j = x_j(\R)$, $j=0,1$. We write $L_j = (x_j,T_j)$. Arguing indirectly, suppose that $P_{e_i} \subset K_0$ for some $i$ and set
\[
  i_0 = \min \{ i=0,\dots,N-1 \mid P_{e_i} \subset K_0\}.
\]
We treat the case $P_{e_{i_0}} \subset L_0$, the case $P_{e_{i_0}} \subset L_1$ is analogous. Then $\exists k\geq 1$ such that $P_{e_{i_0}} = (x_0,kT_0) = L_0^k$. Since $P,P''$ do not intersect $K_0$ we can define
\[
  i_1 = \max \{ i=0,\dots,N-1 \mid P_{e_i} = L_0^k \} \geq i_0.
\]

The $d\lambda$-energy of the curves $\util_{v_{i_0}},\util_{v_{i_1+1}}$ do not vanish. This is obvious for the curve $\util_{v_{i_0}}$ in view of the definition of $i_0$. The curve $\util_{v_{i_1+1}}$ is asymptotic to $L_0^k$ at its positive puncture $\infty$. If its $d\lambda$-energy vanishes then $\util_{v_{i_1+1}}$ is asymptotic to $L_0^r$ at the negative puncture $0$ for some $r<k$, in view of the definition of $i_1$. If $\rho>0$ is large enough and $t',t''$ are suitably chosen the loops
\[
  \begin{array}{ccc}
    c(t) = \tau \circ \util_{v_{i_1+1}}(\rho e^{i2\pi (t+t')}) & \text{and} & C(t) = \tau\circ \util_{v_{i_1+1}}(\rho^{-1} e^{i2\pi (t+t'')})
  \end{array}
\]
are $C^0$-close to $x_0(kT_0t)$ and $x_0(rT_0t)$, respectively. However, by~\eqref{stem_parametrization}, $c$ and $C$ can be approximated by loops of the form $t\mapsto \tau\circ\util_n (\rho_ne^{i2\pi(t+t_n)})$, with suitable $\rho_n,t_n$, so that they are homotopic to each other in $S^3\setminus L_1$, which implies that $t\mapsto x_0(kT_0t)$ and $t\mapsto x_0(rT_0t)$ have the same linking number with $L_1$, contradicting $k\neq r$.

Let $\U$ be a tubular neighborhood of $L_0$ and $\Phi: \U \to \R/\Z\times B$ be a diffeomorphism, where $B\subset \R^2$ is a small ball centered at the origin, so that $(\U,\Phi)$ is a Martinet tube for $L_0$, as in Definition~\ref{martinet_tube_def}. The coordinates in $\R/\Z\times B$ will be denoted by $(\theta,x,y)$. We write
\[
  \begin{aligned}
    U_0(s,t) &= \tau \circ \util_{v_{i_0}}(\est), \\
    U_1(s,t) &= \tau \circ \util_{v_{i_1+1}}(\est),
  \end{aligned}
\]
and
\[
  \begin{aligned}
    (\theta_0(s,t),x_0(s,t),y_0(s,t)) &= \Phi \circ U_0(s,t) \ \text{ for } s\ll -1, \\
    (\theta_1(s,t),x_1(s,t),y_1(s,t)) &= \Phi \circ U_1(s,t) \ \text{ for } s\gg +1.
  \end{aligned}
\]
Let $A$ be the asymptotic operator at $L_0^k$. Then, by Theorem~\ref{precise_asymptotics}, we find eigenvalues $\nu_+>0$ and $\nu_-<0$ of $A$, and corresponding eigenfunctions $\eta_+,\eta_-$ satisfying $A\eta_\pm = \nu_\pm\eta_\pm$, such that the following holds: if $\zeta_\pm(t) : \R/\Z \to \R^2\setminus 0$ are the representations of $\eta_\pm$ in the frame $\{\partial_x,\partial_y\}$ of $(0\times \R^2)|_{\R/\Z\times 0} \simeq \xi_0|_{x_0(\R)}$, respectively, then
\[
  \begin{aligned}
    (x_0(s,t),y_0(s,t)) &= e^{\int_{s_0}^s\alpha^+(q)dq} (\zeta_+(t) + R_+(s,t)) \ \text{ for } \ s\leq s_0, \\
    (x_1(s,t),y_1(s,t)) &= e^{\int_{-s_0}^s\alpha^-(q)dq} (\zeta_-(t) + R_-(s,t)) \ \text{ for } \ s\geq -s_0
  \end{aligned}
\]
where $s_0 \ll -1$, $|R_\pm(s,t)| \to 0$ and $|\alpha^\pm(s)-\nu_\pm|\to0$ as $s\to \mp\infty$. Moreover we have
\begin{equation}\label{functions_theta_0_theta_1}
  \begin{aligned}
    \theta_0(s,t) \to kt + t_0 & \ \text{ as } \ s\to-\infty, \ \text{uniformly in} \ t \\
    \theta_1(s,t) \to kt + t_1 & \ \text{ as } \ s\to+\infty, \ \text{uniformly in} \ t
  \end{aligned}
\end{equation}
for some $t_0,t_1$. By our assumptions $\mu_{CZ}(L_0^m)$ is odd, for every $m\geq 1$. Thus, in view of the definition of the Conley-Zehnder index in terms of asymptotic eigenvalues discussed in Section~\ref{analytical_description_section}, see~\eqref{analytical_cz_def}, the winding numbers of $\zeta_+$ and $\zeta_-$ are different. Consequently, by the local representations above, we find that, for $s\gg1$, the loops $t\mapsto U_0(-s,t)$ and $t\mapsto U_1(s,t)$ are not homotopic in $S^3\setminus (L_0 \cup L_1)$. Here we used the existence of an isotopy of embeddings $f_t : S^3\setminus L_1 \to S^3\setminus L_1$, $t\in[0,1]$, satisfying $f_0 = id$, $f_1(S^3\setminus L_1) = \mathcal U$ and $f_t(L_0) = L_0, \ \forall t$. Let us fix $s_0 \gg 1$. By~\eqref{stem_parametrization}, the loop $t\mapsto U_0(-s_0,t)$ can be approximated by loops of the form $t\mapsto \tau\circ \util_n(r_ne^{i2\pi(t+t_n)})$ for suitable values of $r_n$ and $t_n$. Also the loop $t\mapsto U_1(s_0,t)$ can be approximated by loops of the same form. This is a contradiction to the fact that $\tau\circ\util_n(\R\times\R/\Z) \subset S^3\setminus (L_0\cup L_1)$ $\forall n$. Thus, all orbits $P_{e_i}$ lie in $S^3\setminus K_0$ and belong to the homotopy class $(p,q)$ since the cylinders $\util_n$ do not touch $\tau^{-1}(K_0)$. \\

\noindent {\bf (II)} $\Gamma_{v_i} = \{0\}$ for every $i=1,\dots,N$, that is, the tree coincides with the stem. \\

Arguing by contradiction, let $z\in \Gamma_{v_i}\setminus \{0\}$ be a negative puncture corresponding to an edge $e\neq e_i$ connecting $v_i$ to one of its children $v^*\neq v_{i+1}$. The vertex $v^*$ together with all its descendants and all edges connecting them form a proper sub-tree $\mathcal T_1 \subset \mathcal T$ with root $v^*$ that does not contain the distinguished leaf $\underline v$ ($e$ is not an edge of $\mathcal T_1$). Consider a leaf $\hat v$ of $\mathcal T_1$. The curve $\util_{\hat v}$ is a finite-energy plane asymptotic to some $\hat P \in \P(\lambda)$ at its positive puncture. We claim that $D := \{z\in\C \mid \util_{\hat v}(z) \in \tau^{-1}(K_0)\}$ is non-empty and discrete. Either $\hat P$ is contained in $K_0$ or not. In the first case, $\hat P$ lies in one component of $K_0$ and, consequently, $\exists z\in\C$ such that $\util_{\hat v}(z)$ belongs the other component of $K_0$. In the second case, note that $\hat P$ is not contractible in $S^3\setminus K_0$, which again implies $D\not=\emptyset$. If $D$ accumulates at some point of $\C$ then Carleman's similarity principle implies that $\util_{\hat v}$ is a branched cover of some trivial cylinder (over $K_0$), which is absurd since $\util_{\hat v}$ is a plane. By~\eqref{general_vertex_parametrization} we obtain intersections of the image of $\tau\circ\util_n$ with $K_0$ for large $n$, a contradiction. \\

\noindent {\bf (III)} We conclude the argument. \\

It is simple to conclude from steps {\bf (I)} and {\bf (II)}, using positivity of intersections and Carleman's similarity principle, that the images of the cylinders $\util_{v_i}$ do not intersect $\tau^{-1}(K_0)$. It follows that $P_{e_i} \in \P^{\leq T,(p,q)}(\lambda)$ for every $i=0,\dots,N-1$ (in particular the $P_{e_i}$ are simply covered) and
\begin{itemize}
  \item $\util_{v_0} \in \M^{\leq T,(p,q)}_{\jhat}(P,P_{e_0})/\R$,
  \item $\util_{v_i} \in \M^{\leq T,(p,q)}_{\jhat}(P_{e_i},P_{e_{i+1}})/\R$ for $i=0,\dots,N-1$,
  \item $\util_{v_N} \in \M^{\leq T,(p,q)}_{\jhat}(P_{e_{N-1}},P'')/\R$.
\end{itemize}

All these cylinders are somewhere injective since $(p,q)$ is a relatively prime pair of integers, and regular since $\jhat \in \J_{\rm reg}(\lambda)$ by assumption. Usual arguments using the additivity properties of the Fredholm indices show that we get a contradiction in case A if $\mathcal T$ has more than one vertex. In this case $\M^{\leq T,(p,q)}_{\jhat}(P,P'')/\R$ is therefore compact. Moreover, by regularity, it is also discrete, and hence finite. This proves Theorem~\ref{compactness_index_0_differential}. Analogously, $\mathcal T$ has precisely two vertices $\overline v,\underline v$ in case B, and the boundary of $\M^{\leq T,(p,q)}_{\jhat}(P,P'')/\R$ consists of bubbling-off trees with precisely two vertices: these will be shortly denoted by $[\util_+,\util_-]$ where $\util_+ \in \M^{\leq T,(p,q)}_{\jhat}(P,P')/\R$ and $\util_- \in \M^{\leq T,(p,q)}_{\jhat}(P',P'')/\R$, for some $P' \in \P^{\leq T,(p,q)}(\lambda)$ satisfying $\mu_{CZ}(P') = \mu_{CZ}(P)-1 = \mu_{CZ}(P'')+1$ (the decoration is not explicit in the notation $[\util_+,\util_-]$ but should not be forgotten). Conversely, every $[\util_+,\util_-]$ as above is a boundary point of $\M^{\leq T,(p,q)}_{\jhat}(P,P'')/\R$. This is proved using the gluing map and noting that the closures of the images of the maps $\tau \circ \util_\pm$ are contained in $S^3\setminus K_0$: note here that all curves involved are somewhere injective and regular and, consequently, gluing can be done in a standard fashion, moreover, the glued cylinders have image in $\tau^{-1}(S^3\setminus K_0)$. As in Floer theory, the number of terms $q_{P''}$ appearing in $\partial^2 (q_{P})$ is even, proving Theorem~\ref{d2=0}.

\subsection{Proofs of Theorems~\ref{compactness_index_0_chain_map} and~\ref{chain_homotopy_thm}}\label{proofs_thms_chain_homotopy}

As in the proof of Theorems~\ref{compactness_index_0_differential} and~\ref{d2=0}, the argument relies on the careful analysis of the compactification of the moduli space $\M^{\leq T,(p,q)}_{\bar J}(P,P'')$ where $P\in \P^{\leq T,(p,q)}(\lambda_+)$ and $P'' \in \P^{\leq T,(p,q)}(\lambda_-)$ satisfy
\begin{itemize}
\item[{\bf A)}] $\mu_{CZ}(P'') = \mu_{CZ}(P)$, or
\item[{\bf B)}] $\mu_{CZ}(P'') + 1 = \mu_{CZ}(P)$.
\end{itemize}

Taking $\bar J$ regular, these are smooth manifolds of dimension 0 in case~A, and dimension 1 in case~B, since there are no orbifold points (all cylinders are somewhere injective because their asymptotic limits are simply covered Reeb orbits).

By results of~\cite{sftcomp}, $\M^{\leq T,(p,q)}_{\bar J}(P,P'')$ is compactified by adding certain holomorphic buildings but, in our particular situation, these will be given the simpler description of a bubbling-off tree of finite-energy spheres, similarly as was done in the proofs of Theorems~\ref{compactness_index_0_differential} and~\ref{d2=0} above. These are again collections $\{\util_v\}$ of finite-energy spheres with one positive puncture, where the index $v$ runs on the set of vertices of a finite tree with a root $\overline v$ and a distinguished leaf $\underline v$. Each sphere is pseudo-holomorphic with respect to $\jhat_+$, $\jhat_-$ or $\bar J$, and the $\jhat_\pm$-spheres are not trivial cylinders over periodic orbits (although they might be branched covers over such trivial cylinders). Moreover, for each path $w^1,w^2,\dots,w^m$ connecting the root $w^1 = \overline v$ to a leaf $w^m$ ($w^{i+1}$ is a child of $w^i$) there is at most one vertex $w^{m_*}$ such that $\util_{w^{m_*}}$ is $\bar J$-holomorphic, in which case $\util_{w^j}$ is $\jhat_+$-holomorphic if $j<m_*$ or $\jhat_-$-holomorphic if $j>m_*$. The stem is a particular example of such a path, and it must always contain a $\bar J$-holomorphic (punctured) sphere.

As before, we may assume that $\infty \in \C \sqcup\{\infty\} \simeq \CP^1$ is the positive puncture of all $\util_v$. All the other data described in the proofs of Theorems~\ref{compactness_index_0_differential} and~\ref{d2=0} is still present. The negative punctures of $\util_v$ are in 1-1 correspondence with the outgoing edges of $v$. For every edge $e$ connecting $v$ to its child $v'$ there is an associated periodic trajectory $P_e$ which is a $\lambda_+$-Reeb orbit if $\util_{v'}$ is $\jhat_+$ or $\bar J$-holomorphic, or it is a $\lambda_-$-Reeb orbit if $\util_v$ is $\jhat_-$ or $\bar J$-holomorphic. Moreover, $\util_v$ is asymptotic to $P_e$ at the corresponding negative puncture, and $\util_{v'}$ is asymptotic to $P_e$ at its positive puncture. A set of decorations $\{r_e\}$ is given exactly as before.

Consider any sequence $[\util_n] \in \M^{\leq T,(p,q)}_{\bar J}(P,P'')$, where $\bar J \in \J_{\rm reg}(\jhat_-,\jhat_+:K_0)$ and $\jhat_\pm \in \J_{\rm reg}(\lambda_\pm)$. We make all assumptions explained in Section~\ref{chain_map_paragraph}, and the argument that follows strongly relies on~\eqref{crucial_chain_map}. By results from~\cite{sftcomp} we may assume $[\util_n]$ converges to a bubbling-off tree of finite-energy spheres as just described, in the sense that for each vertex $v$ one finds constants $c_n\in \R$, $A_n,B_n \in \C$, $A_n \neq0$, such that~\eqref{general_vertex_parametrization} holds. For vertices in the stem we may take $B_n = 0$ as in~\eqref{stem_parametrization}.

We assume that this tree has more than one vertex, let $$ S = (v_0 = \overline v,v_1,\dots,v_{N-1},v_N = \underline v) $$ be the stem ($v_{i+1}$ is a child of $v_i$), and let $e_i$ be the edge connecting $v_i$ to $v_{i+1}$. First we argue that all orbits $P_{e_i}$ are not contained in $K_0$, then we proceed to show that the tree coincides with the stem, and then we argue that the image of the spheres do not intersect $\tau^{-1}(K_0)$. \\

\noindent {\bf (I)} All orbits $P_{e_i}$ are not contained in $K_0$. \\

For $j=0,1$ there are $\lambda_\pm$-Reeb trajectories $x_j^\pm$ of minimal period $T^\pm_j$ such that $L_j = x_j^\pm(\R)$. We may write $L_j = (x_j^-,T_j^-)$ or $L_j = (x_j^+,T_j^+)$ depending on whether we want to see $L_j$ as a closed $\lambda_-$-Reeb orbit or as a closed $\lambda_+$-Reeb orbit. Arguing indirectly, we assume that some $P_{e_i}$ is contained in $K_0$ and define
\[
  i_0 = \min \{ i=0,\dots, N-1 \mid P_{e_i} \subset K_0 \}.
\]
We only treat the case $P_{e_{i_0}} \subset L_0$, the other case is analogous, and find $k>0$ such that $P_{e_{i_0}} = L_0^k$ (we could have $P_{e_{i_0}} = (x_0^+,kT^+_0)$ or $P_{e_{i_0}} = (x_0^-,kT^-_0)$ depending on the value of $i_0$, but always write $L_0^k$ to denote one of these orbits). Now we set
\[
  i_1 = \max \{ i=0,\dots, N-1 \mid P_{e_i} = L_0^k \} \geq i_0.
\]

The image of the curves $\util_{v_{i_0}}$ and $\util_{v_{i_1+1}}$ are not contained in $\tau^{-1}(K_0)$. In fact, if $\util_{v_{i_1+1}}$ has image contained in $\tau^{-1}(K_0)$ then it is contained in $\tau^{-1}(L_0)$. It is important to note that $\tau^{-1}(L_0)$ is an embedded cylinder with tangent space invariant under $\jhat_-$, $\bar J$ or $\jhat_+$. Thus, using Carleman's similarity principle, we conclude that $\util_{v_{i_1+1}}$ is asymptotic to $L_0^r$ at the distinguished negative puncture $0$ and, by the definition of $i_1$, we must have $r\neq k$. As before, we find $\rho$ very large and $t',t'' \in \R/\Z$ such that $t\mapsto \tau \circ \util_{v_{i_1+1}}(\rho e^{i2\pi(t+t')})$ is a loop close to $L_0^k$ and $t\mapsto \tau \circ \util_{v_{i_1+1}}(\rho^{-1}e^{i2\pi (t+t'')})$ is a loop close to $L_0^r$. Each of these loops can be approximated by loops of the form $t\mapsto \tau\circ \util_n (\rho_ne^{i2\pi (t+t_n)})$, with suitable $\rho_n$, $t_n$. Thus $L_0^k$ is homotopic to $L_0^r$ in $S^3\setminus L_1$, contradicting $k\neq r$. The image of $\util_{v_{i_0}}$ is not contained $\tau^{-1}(L_0)$ since we can use again the similarity principle to conclude that $\util_{v_{i_0}}$ is asymptotic to an orbit contained in $K_0$ at its positive puncture, contradicting the definition of $i_0$.

Let $(\U_\pm,\Phi_\pm)$ be Martinet tubes for $L_0$ as in Definition~\ref{martinet_tube_def} with respect to $\lambda_\pm$, and let $(\theta,x,y)$ denote standard coordinates in $\R/\Z\times \R^2$, so that $\Phi_\pm(x^\pm_0(T^\pm_0t)) = (t,0,0)$. We have sections $Y_\pm(t) = d\Phi_\pm^{-1} \cdot \partial_x|_{(t,0,0)}$ of the $d\lambda_\pm$-symplectic vector bundles $({x^\pm_0}_{T^\pm_0})^*\xi_0 \to \R/\Z$, and $\Phi_\pm$ may be constructed so that the loops $t\mapsto \exp(\epsilon Y_\pm(t))$ ($\epsilon>0$ small) have linking number $0$ with $L_0$. Then $Y_\pm$ can be completed to $d\lambda_\pm$-symplectic frames of $({x^\pm_0}_{T^\pm_0})^*\xi_0$ whose homotopy classes are denoted by $\beta_\pm$. As remarked in the beginning of Section~\ref{section_contact_homology}, $\rho(L_0,\lambda_\pm)$ are the rotation numbers computed with respect to a global frame, so that we get
\begin{equation}\label{}
\rho(L_0,\lambda_\pm,\beta_\pm) = \rho(L_0,\lambda_\pm)-1 = \theta_0(h_\pm)
\end{equation}
where here we used that the self-linking number of $L_0$ is $-1$.

We denote
\[
  \begin{aligned}
    U_0(s,t) &= \tau \circ \util_{v_{i_0}}(\est) \ \text{ for } s\ll -1, \\
    U_1(s,t) &= \tau \circ \util_{v_{i_1+1}}(\est) \ \text{ for } s\gg 1.
  \end{aligned}
\]
Let $i^* \in \{1,\dots,N-1\}$ be the unique index such that $\util_{v_{i^*}}$ is $\bar J$-holomorphic, and write
\[
\begin{aligned}
& (\theta_0(s,t),x_0(s,t),y_0(s,t)) = \left\{ \begin{aligned} \Phi_+ \circ U_0(s,t) \ \text{ if } i_0 < i^* \\ \Phi_- \circ U_0(s,t) \ \text{ if } i_0 \geq i^* \end{aligned} \right. \ \ \ (s\ll -1) \\
& (\theta_1(s,t),x_1(s,t),y_1(s,t)) = \left\{ \begin{aligned} \Phi_+ \circ U_1(s,t) \ \text{ if } i_1 < i^* \\ \Phi_- \circ U_1(s,t) \ \text{ if } i_1 \geq i^* \end{aligned} \right. \ \ \ (s\gg 1).
\end{aligned}
\]
Let $A_0$ be the asymptotic operator at the orbit $L_0^k$ with respect to $\lambda_+$ if $i_0<i^*$ or with respect to $\lambda_-$ if $i_0 \geq i^*$. We can use Theorem~\ref{precise_asymptotics} to find an eigenvalue $\nu_0>0$ of $A_0$ and an eigensection $\eta_0$ satisfying $A_0\eta_0 = \nu_0\eta_0$ such that if $t\mapsto \zeta_0(t) \in \R^2\setminus 0$ is the representation of $\eta_0$ with respect to the frame $\{\partial_x,\partial_y\}$ of $(0\times \R^2)|_{\R/\Z\times (0,0)}$ (here we use $\Phi_+$ or $\Phi_-$ depending on $i_0$) then
\[
  (x_0(s,t),y_0(s,t)) = e^{\int_{s_0}^s\alpha_0(q)dq}(\zeta_0(t)+R_0(s,t)), \ \ \text{ for } s\leq s_0 \ll -1,
\]
where $|R_0(s,t)| + |\alpha_0(s)-\nu_0| \to 0$ as $s\to -\infty$. Analogously we consider the asymptotic operator $A_1$ at the orbit $L_0^k$ with respect to $\lambda_+$ if $i_1<i^*$ or with respect to $\lambda_-$ if $i_1 \geq i^*$, and find eigenvalue $\nu_1<0$ of $A_1$ and corresponding eigensection $\eta_1$ such that if we represent $\eta_1$ as $t\mapsto \zeta_1(t)$ similarly then
\[
  (x_1(s,t),y_1(s,t)) = e^{\int_{-s_0}^s\alpha_1(q)dq}(\zeta_1(t)+R_1(s,t)), \ \ \text{ for } s\geq -s_0 \gg1,
\]
where $|R_1(s,t)| + |\alpha_1(s)-\nu_1| \to 0$ as $s\to +\infty$. The functions $\theta_0(s,t),\theta_1(s,t)$ behave exactly as in~\eqref{functions_theta_0_theta_1}. Since $L_0$ is irrationally elliptic (for both $\lambda_\pm$), $\nu_0>0$ and $\nu_1<0$ we can apply Lemma~\ref{inequalities_windings} to obtain, if $s\gg 1$, that
\begin{equation}\label{est_chain_homot_0}
  \link(t\mapsto U_0(-s,t),L_0) = \wind(\zeta_0) > \left\{ \begin{aligned} &k\theta_0(h_+) \ \text{ if } i_0<i^* \\ &k\theta_0(h_-) \ \text{ if } i_0\geq i^* \end{aligned} \right.
\end{equation}
and
\begin{equation}\label{est_chain_homot_1}
  \link(t\mapsto U_1(s,t),L_0) = \wind(\zeta_1) < \left\{ \begin{aligned} &k\theta_0(h_+) \ \text{ if } i_1<i^* \\ &k\theta_0(h_-) \ \text{ if } i_1\geq i^* \end{aligned} \right.
\end{equation}
Since $i_0 \geq i^*$ implies $i_1 \geq i^*$ we get from~\eqref{est_chain_homot_0}-\eqref{est_chain_homot_1} that
\begin{equation}\label{contradiction_link_1}
\link(t\mapsto U_0(-s,t),L_0) > \link(t\mapsto U_1(s,t),L_0) \ \ \ (s\gg1)
\end{equation}
in view of the crucial assumption $\theta_0(h_-)\leq \theta_0(h_+)$. But, as done before, the loops $t\mapsto U_0(-s,t)$ and $t\mapsto U_1(s,t)$ ($s\gg 1$) can be approximated by loops of the form $t\mapsto \tau \circ \util_n (R_ne^{i2\pi (t+t_n)})$ for suitable $R_n,t_n$, which implies that $t\mapsto U_0(-s,t)$ and $t\mapsto U_1(s,t)$ must be homotopic to each other in $S^3\setminus K_0$ and, consequently, must have the same linking number with $L_0$. We used that the cylinders $\util_n$ do not touch $\tau^{-1}(K_0)$. This contradicts~\eqref{contradiction_link_1} and (I) is proved. \\

\noindent {\bf (II)} $\Gamma_{v_i} = \{0\}$ for every $i=1,\dots,N$, that is, the tree coincides with the stem. \\

\noindent {\bf (III)} Each cylinder does not intersect $\tau^{-1}(K_0)$ and the asymptotic orbits corresponding to the edges lie in $\P^{\leq T,(p,q)}(\lambda_+)$ or in $\P^{\leq T,(p,q)}(\lambda_-)$.  \\

The arguments to prove (II) and (III) are entirely analogous to those explained in Section~\ref{proofs_thms_chain_complex}, we do not repeat them here. Note only that, since $\tau^{-1}(K_0)$ is a pair of disjoint embedded cylinders in $W_{\xi_0}$ with a $\C$-invariant tangent space regardless of the almost complex structure $\jhat_-$, $\bar J$ or $\jhat_+$, we can repeat all the steps using positivity of intersections and the similarity principle. The assumption that no closed $\lambda_\pm$-Reeb orbit contained in $S^3\setminus K_0$ is contractible in $S^3\setminus K_0$ is crucial.

We assumed that the tree has at least two vertices. In case~A, additivity of the Fredholm indices gets us a contradiction. Thus, in case~A, the limiting tree has exactly one vertex which is an element of $\M^{\leq T,(p,q)}_{\bar J}(P,P'')$, the sequence $[\util_n]$ is eventually constant and Theorem~\ref{compactness_index_0_chain_map} is proved. In case~B again additivity of the Fredholm indices shows that the tree has two vertices, so the boundary of $\M^{\leq T,(p,q)}_{\bar J}(P,P'')$ consists of broken cylinders, one level is $\bar J$-holomorphic index 0 cylinder corresponding to a term of $\Phi(\bar J)$, and the other is either a $\jhat_-$ or a $\jhat_+$-holomorphic cylinder corresponding to a term of $\partial(\lambda_+,J_+)$ or of $\partial(\lambda_-,J_-)$. These facts together with a glueing argument, which is standard in view of regularity of the curves involved, show the converse: any such broken cylinder is in the boundary. The important observation here is that when we glue two regular cylinders with images in $\tau^{-1}(S^3\setminus K_0)$ asymptotic to orbits in the complement of $K_0$ we obtain a cylinder with image in $\tau^{-1}(S^3\setminus K_0)$. Consequently, every generator $q_{P''} \in C_{*-1}^{\leq T,(p,q)}(h_-\lambda_0)$ appears an even number of times in the chain $\Phi_{*-1}(\bar J) \circ \partial_*(\lambda_+,J_+)(q_P) - \partial_*(\lambda_-,J_-) \circ \Phi_*(\bar J)(q_P)$, for every generator $q_P \in C_{*}^{\leq T,(p,q)}(h_+\lambda_0)$, as in Floer theory. Theorem~\ref{chain_homotopy_thm} follows.

\subsection{Proofs of Theorems~\ref{degree_1_map_compactness} and~\ref{comparing_chain_maps_thm}}\label{proofs_thms_degree_1_map}

We need to study the compactifi\-ca\-tion of moduli spaces $\M^{\leq T,(p,q)}_{\{\bar J_t\}}(P,P'')$, where $\jhat_\pm \in \J_{\rm reg}(\lambda_\pm)$, $\bar J_0,\bar J_1 \in \J_{\rm reg}(\jhat_-,\jhat_+:K_0)$, $\{\bar J_t\} \subset \widetilde \J_{\rm reg}(\jhat_-,\jhat_+ : K_0)$, and $P \in \P^{\leq T,(p,q)}(\lambda_+)$, $P'' \in \P^{\leq T,(p,q)}(\lambda_-)$ satisfy
\begin{itemize}
\item[{\bf A)}] $\mu_{CZ}(P) - \mu_{CZ}(P'') = -1$, or
\item[{\bf B)}] $\mu_{CZ}(P) - \mu_{CZ}(P'') = 0$.
\end{itemize}

We describe this compactification again appealing to the notion of bubbling-off tree of finite-energy spheres: any sequence $(t_n,[\util_n]) \in \M^{\leq T,(p,q)}_{\{\bar J_t\}}(P,P'')$ admits a subsequence, still denoted $(t_n,[\util_n])$, which converges to a pair $(t_*,\{\util_v\})$ where $t_n \to t_* \in [0,1]$ and $\{\util_v\}$ is a bubbling-off tree exactly as described in the proofs of theorems~\ref{compactness_index_0_chain_map} and~\ref{chain_homotopy_thm}. All $\util_v$ are finite-energy spheres with one positive puncture, each being pseudo-holomorphic with respect to either $\jhat_-$, $\bar J_{t_*}$ or $\jhat_+$. As before, we take representatives $\util_n$ with domains $\C\setminus \{0\} \simeq \CP^1 \setminus \{[0:1],[1:0]\}$, assume $0,\infty$ are the negative and positive puncture, respectively, and ask that for each vertex $v$ there are constants $c_n,A_n\neq0,B_n$ such that~\eqref{general_vertex_parametrization} holds. For vertices in the stem we can arrange that~\eqref{stem_parametrization} holds. All the other properties and compatibility conditions hold as before.

We can then repeat the same argument, using the important assumption that $\theta_j(h_-) \leq \theta_j(h_+)$ for $j=0,1$ to conclude that the tree coincides with its stem, all edges correspond to orbits either in $\P^{\leq T,(p,q)}(\lambda_+)$ or in $\P^{\leq T,(p,q)}(\lambda_-)$, and the image of the cylinders corresponding to the vertices do not intersect $\tau^{-1}(K_0)$.

Then we strongly rely on the genericity assumptions on the chosen almost complex structures to argue, using the additivity of the Fredholm indices, that in case~A the tree has precisely one vertex (the root) $\overline v$ and $(t_*,[\util_{\overline v}]) \in \M^{\leq T,(p,q)}_{\{\bar J_t\}}(P,P'')$. Since this space is 0-dimensional, its points are isolated and the sequence $(t_n,[\util_n])$ is eventually constant. This proves Theorem~\ref{degree_1_map_compactness}.

Similarly, in case~B the tree is one of the following two types:
\begin{itemize}
\item It has exactly one vertex $\overline v = \underline v$, $(t_*,[\util_{\overline v}]) \in \M^{\leq T,(p,q)}_{\{\bar J_t\}}(P,P'')$. If $t_* \in (0,1)$ then $(t_*,[\util_{\overline v}])$ is an interior point of the moduli space.
\item It has exactly two vertices $\overline v \neq \underline v$, $t_* \in (0,1)$ and there exists an orbit $P'$ such that either
\[
\begin{array}{ccc}
P' \in \P^{\leq T,(p,q)}(\lambda_-), & (t_*,[\util_{\overline v}]) \in \M^{\leq T,(p,q)}_{\{\bar J_t\}}(P,P'), & [\util_{\underline v}] \in \M^{\leq T,(p,q)}_{\jhat_-}(P',P'')
\end{array}
\]
or
\[
\begin{array}{ccc}
P' \in \P^{\leq T,(p,q)}(\lambda_+) & [\util_{\overline v}] \in \M^{\leq T,(p,q)}_{\jhat_+}(P,P'), & (t_*,[\util_{\underline v}]) \in \M^{\leq T,(p,q)}_{\{\bar J_t\}}(P',P'').
\end{array}
\]
\end{itemize}

A well-known argument, using the glueing map, proves that there is a bijective correspondence between boundary points of $\M^{\leq T,(p,q)}_{\{\bar J_t\}}(P,P'')$ and the set of trees which are of the second type  or of the first type with $t_* = 0,1$. Here it is crucial that all relevant curves are somewhere injective and regular. Theorem~\ref{comparing_chain_maps_thm} follows as in Floer theory.

\end{document}